\documentclass{amsart}

\newcommand{\cJ}{\mathcal{J}}

\newcommand{\bL}{\mathbb{L}}

\newcommand{\bQ}{\mathbb{Q}}\newcommand{\bR}{\mathbb{R}}
\newcommand{\bS}{\mathbb{S}}

\newcommand{\bZ}{\mathbb{Z}}

\usepackage[dvipsnames,svgnames,x11names,hyperref]{xcolor}
\usepackage{lmodern,mathtools,url,graphicx,verbatim,amssymb,enumerate,stmaryrd,nicefrac}
\usepackage[pagebackref,colorlinks,citecolor=Mahogany,linkcolor=Mahogany,urlcolor=Mahogany,filecolor=Mahogany]{hyperref}
\usepackage{microtype}
\usepackage[all,cmtip]{xy}
\usepackage[margin=1.5in]{geometry}
\usepackage{setspace}
\setstretch{1.07}
\usepackage{caption}
\captionsetup{
	font=small}

\usepackage{tikz,tikz-cd}
\usetikzlibrary{matrix,calc,positioning,arrows,decorations.pathreplacing,fit,patterns}

\newtheorem{theorem}{Theorem}[section]
\newtheorem{lemma}[theorem]{Lemma}
\newtheorem{proposition}[theorem]{Proposition}
\newtheorem{corollary}[theorem]{Corollary}
\newtheorem*{corollary*}{Corollary}

\newtheorem{atheorem}{Theorem}

\newtheorem{acorollary}[atheorem]{Corollary}

\theoremstyle{definition}
\newtheorem{definition}[theorem]{Definition}
\newtheorem{example}[theorem]{Example}
\newtheorem{remark}[theorem]{Remark}
\newtheorem{notation}[theorem]{Notation}

\newcommand{\half}{\nicefrac{1}{2}}
\newcommand{\cat}[1]{\mathsf{#1}}
\newcommand{\mr}[1]{{\rm #1}}

\newcommand{\fS}{\mathfrak{S}}

\newcommand{\diff}{\mr{Diff}}
\newcommand{\pl}{\mr{PL}}
\newcommand{\topo}{\mr{Top}}

\newcommand{\longto}{\longrightarrow}

\setcounter{tocdepth}{1}

\title{Some finiteness results for groups of automorphisms of manifolds}

\author{Alexander Kupers}
\thanks{Alexander Kupers was partially supported by a William R. Hewlett Stanford Graduate Fellowship, Department of Mathematics, Stanford University, by NSF grant DMS-1105058, by the Danish National Research Foundation through the Centre for Symmetry and Deformation (DNRF92), by the European Research Council (ERC) under the European Union's Horizon 2020 research and innovation programme (grant agreement No.\ 682922), and by NSF grant DMS-1803766.}
\email{kupers@math.harvard.edu}
\address{Harvard University \\
	Department of Mathematics \\
	One Oxford Street \\
	Cambridge MA, 02138 \\USA}

\date{\today}

\begin{document}

\begin{abstract}We prove that in dimension $\neq 4,5,7$ the homology and homotopy groups of the classifying space of the topological group of diffeomorphisms of a disk fixing the boundary are finitely generated in each degree. The proof uses homological stability, embedding calculus and the arithmeticity of mapping class groups. From this we deduce similar results for the homeomorphisms of $\bR^{n}$ and various types of automorphisms of $2$-connected manifolds. \end{abstract}

\maketitle

\tableofcontents

\section{Introduction} Inspired by work of Weiss on Pontryagin classes of topological manifolds \cite{weissdalian}, we use several recent advances in the study of high-dimensional manifolds to prove a structural result about diffeomorphism groups. We prove the classifying spaces of such groups are often ``small'' in one of the following two algebro-topological senses:

\begin{definition}Let $X$ be a path-connected space.\begin{itemize}
\item $X$ is said to be of \emph{homologically finite type} if for all $\bZ[\pi_1(X)]$-modules $M$ that are finitely generated as abelian groups, $H_*(X;M)$ is finitely generated in each degree.
\item $X$ is said to be of \emph{finite type} if $\pi_1(X)$ is finite and $\pi_i(X)$ is finitely generated for $i \geq 2$.
\end{itemize}
Being of finite type implies being of homologically finite type, see Lemma \ref{lem.pfinhfin}.
\end{definition}

Let $\diff(-)$ denote the topological group of diffeomorphisms in the $C^\infty$-topology, $\pl(-)$ the simplicial group of PL-homeomorphisms, $\topo(-)$ the topological group of homeomorphisms in the compact-open topology. A subscript $\partial$ means we restrict to the subgroup of those automorphisms that are the identity on the boundary, and a superscript $+$ means we restrict to orientation-preserving automorphisms. The following solves Problems 1(b) and 1(d) of Burghelea in \cite{burgheleaproblems}.

\begin{atheorem}\label{thm.main} Let $n \neq 4,5,7$, then $B\diff_\partial(D^{n})$ is of finite type.\end{atheorem}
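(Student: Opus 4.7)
The plan splits by dimension. For $n \leq 3$ the space $B\diff_\partial(D^n)$ is contractible---trivially for $n=0,1$, by Smale for $n=2$, and by Hatcher for $n=3$---so the theorem is automatic. Concentrating on $n \geq 6$, $n \neq 7$: first, $\pi_1 B\diff_\partial(D^n) = \pi_0\diff_\partial(D^n)$ is identified (by Cerf's pseudo-isotopy theorem for $n\geq 5$) with the finite group $\Theta_{n+1}$ of homotopy $(n+1)$-spheres, so $\pi_1$ is automatically finite. The remaining task is to show $\pi_i B\diff_\partial(D^n)$ is finitely generated for $i \geq 2$ (which, by Lemma \ref{lem.pfinhfin}, suffices).

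The strategy is to transfer finite type from the stable manifolds $W_{g,1} = (\#^g S^p \times S^q) \setminus \mathrm{int}(D^n)$ for an admissible decomposition $n = p + q$ with $p, q \geq 3$; the exclusions $n \neq 4, 5, 7$ are precisely those where no decomposition controllable by current homological stability and embedding calculus is available. For such $W_{g,1}$ with $g$ large, I would prove $B\diff_\partial(W_{g,1})$ is of finite type by combining: the Galatius--Randal-Williams homological stability theorem (with its extensions by Perlmutter and Krannich covering the odd-dimensional cases), which identifies the stable homology with $H_*(\Omega^\infty_0 MT\theta)$ for a suitable tangential structure $\theta$; the fact that $\Omega^\infty_0 MT\theta$ is itself of finite type, a consequence of the cellular finite type of the Thom spectrum $MT\theta$ over its base $BO\langle k\rangle$; and the arithmeticity of the mapping class group $\pi_0\diff_\partial(W_{g,1})$ due to Sullivan (and refined by Berglund--Madsen) to control $\pi_1$ and allow the stable homology to detect integral information.

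Finally, I would pass from $W_{g,1}$ back to $D^n$ using embedding calculus, via a fiber sequence of the form
\[ B\diff_\partial(D^n) \longrightarrow B\emb_\partial(W_{g,1}, W_{g,1}) \longrightarrow B\diff_\partial(W_{g,1}), \]
analyzing the middle term with the Goodwillie--Weiss Taylor tower, whose layers are section spaces over ordered configuration spaces of $W_{g,1}$ with fibers built from finite-type based mapping spaces out of wedges of spheres. Each layer inherits the finite type property from $W_{g,1}$, and by convergence so does the middle term; together with Step 2 this gives finite type of $B\diff_\partial(D^n)$. The main obstacle is this last assembly: verifying convergence of the embedding tower in the required range and propagating the finiteness property uniformly through all layers, and it is precisely the codimension and connectivity hypotheses needed here (together with the availability of the stability theorem for the relevant $W_{g,1}$) that force the exclusion $n \neq 4, 5, 7$.
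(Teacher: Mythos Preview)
Your fiber sequence is written backwards. The inclusion $\diff_\partial(M) \hookrightarrow \emb^{\cong}_{\half\partial}(M)$ goes from diffeomorphisms to self-embeddings, not the other way, so the correct sequence (the paper's (\ref{eqn.weissfib})) is
\[
B\diff_\partial(D^n) \longrightarrow B\diff_\partial(W_{g,1}) \longrightarrow B\emb^{\cong}_{\half\partial}(W_{g,1}),
\]
with $B\diff_\partial(D^n)$ the fiber, $B\diff_\partial(W_{g,1})$ the total space, and the embedding space the base.

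Even after this correction, there is a genuine gap. You propose to first prove that $B\diff_\partial(W_{g,1})$ is of \emph{finite type} and then read off the fiber. But the Galatius--Randal-Williams theorem gives only finitely generated \emph{homology} of $B\diff_\partial(W_{g,1})$ in a range; it does not give finitely generated homotopy groups, and the conversion from $\cat{HFin}$ to $\cat{Fin}$ in Lemma~\ref{lem.finitefin} requires finite $\pi_1$, whereas $\pi_0\diff_\partial(W_{g,1})$ is an infinite arithmetic group. In the paper, $B\diff_\partial(W_{g,1}) \in \cat{Fin}$ is established only \emph{after} the disk case, as Corollary~\ref{cor.evenmfd}; taking it as input is circular. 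Moreover, to extract the fiber from the sequence above via Lemma~\ref{lem.propfinitetypespaces}(iii) you would need $B\diff_\partial(D^n)$ simply-connected and trivial $\pi_1$-action of the base on its homology, neither of which is available.

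The paper's key move, which your outline misses, is to \emph{deloop} the sequence to (\ref{eqn.weissfib2}):
\[
B\diff_\partial(W_{g,1}) \longrightarrow B\emb^{\cong}_{\half\partial}(W_{g,1}) \longrightarrow B(B\diff_\partial(D^n),\natural).
\]
Now the disk term sits in the base, which is simply-connected because $B\diff_\partial(D^n)$ is connected. Lemma~\ref{lem.propfinitetypespaces}(i) then needs only finitely generated \emph{homology} of the fiber (supplied by GRW for large $g$) and $\cat{HFin}$ for the total space (supplied by embedding calculus, Theorem~\ref{thm.emb}), yielding $\ast\sslash\cat{BD} \in \cat{HFin}$; simple connectivity upgrades this to $\cat{Fin}$, and looping once recovers $B\diff_\partial(D^n)$.

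Finally, in odd dimensions the paper does not use $\#_g S^p \times S^q$ but the handlebodies $H_g = \natural_g(D^{n+1}\times S^n)$, with Botvinnik--Perlmutter replacing GRW. Because $\partial H_g \cong W_g$, controlling the relevant embedding space requires the even-dimensional result as input (see the proof of Corollary~\ref{cor.odddisk}); this logical dependence is absent from your sketch.
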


\begin{acorollary}Let $n \neq 4,5,7$, then $B\diff(S^{n})$ is of finite type\end{acorollary}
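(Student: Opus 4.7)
The plan is to derive this from Theorem~A using a standard fiber sequence. The action of $\diff(S^n)$ on $\emb(D^n, S^n)$ is transitive by isotopy extension, and the stabilizer of a fixed embedding $\iota : D^n \hookrightarrow S^n$ is the group of diffeomorphisms that pointwise fix $\iota(D^n)$; restricting such a diffeomorphism to the closed complementary disk identifies this stabilizer with $\diff_\partial(D^n)$. Since $\emb(D^n, S^n) \simeq \mr{Fr}(TS^n) \cong O(n+1)$ (an embedding of $D^n$ is determined up to isotopy by its $1$-jet at the origin), this produces a homotopy fiber sequence of topological groups
$$\diff_\partial(D^n) \to \diff(S^n) \to O(n+1),$$
and hence, after delooping, a fibration
$$O(n+1) \to B\diff_\partial(D^n) \to B\diff(S^n).$$

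The finite type of $B\diff(S^n)$ will then follow from the associated long exact sequence of homotopy groups. Since $O(n+1)$ is a compact Lie group, $\pi_0(O(n+1))$ is finite and $\pi_i(O(n+1))$ is finitely generated for all $i \geq 1$. By Theorem~A, $\pi_1(B\diff_\partial(D^n))$ is finite and $\pi_i(B\diff_\partial(D^n))$ is finitely generated for $i \geq 2$. For $i \geq 2$, the exact sequence
$$\pi_i(B\diff_\partial(D^n)) \to \pi_i(B\diff(S^n)) \to \pi_{i-1}(O(n+1))$$
realizes $\pi_i(B\diff(S^n))$ as an extension of a subgroup of $\pi_{i-1}(O(n+1))$ by a quotient of $\pi_i(B\diff_\partial(D^n))$, and so is finitely generated. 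For $i = 1$, the segment
$$\pi_1(B\diff_\partial(D^n)) \to \pi_1(B\diff(S^n)) \to \pi_0(O(n+1)) \to \pi_0(B\diff_\partial(D^n)) = *$$
sandwiches $\pi_1(B\diff(S^n))$ between the two finite groups $\pi_1(B\diff_\partial(D^n))$ and $\pi_0(O(n+1))$, so it too is finite.

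All the substantive content resides in Theorem~A; the remaining inputs — construction of the fibration, identification of $\emb(D^n, S^n)$ with $O(n+1)$, and finite generation of $\pi_* O(n+1)$ — are classical. The only point requiring real care is the identification of the stabilizer and of the embedding space up to homotopy, which is standard via parametrized isotopy extension and the contractibility of tubular neighborhood data at a point.
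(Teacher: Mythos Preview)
Your argument is correct, and it is essentially the same strategy as the paper's, though the paper takes a shortcut you did not: it invokes the well-known product splitting $\diff(S^n) \simeq \diff_\partial(D^n) \times O(n+1)$ (the fiber sequence you wrote down has a section given by the inclusion $O(n+1) \hookrightarrow \diff(S^n)$), so that $B\diff(S^n) \simeq B\diff_\partial(D^n) \times BO(n+1)$ and the conclusion is immediate from Theorem~A and the fact that $BO(n+1) \in \cat{Fin}$. Your route via the fiber sequence $O(n+1) \to B\diff_\partial(D^n) \to B\diff(S^n)$ and the long exact sequence reaches the same conclusion with a little more work but without needing to know the splitting; the only place to be slightly careful is the $\pi_1$ step, where the connecting map $\pi_1(B\diff(S^n)) \to \pi_0(O(n+1))$ is a priori only a map of pointed sets, but since $\pi_0(O(n+1))$ has two elements and the stabilizer of the basepoint is the (finite) image of $\pi_1(B\diff_\partial(D^n))$, finiteness follows anyway.

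One small quibble of phrasing: the first sequence $\diff_\partial(D^n) \to \diff(S^n) \to O(n+1)$ is not literally a sequence of group homomorphisms (the right-hand map is evaluation, not a homomorphism), so calling the passage to $O(n+1) \to B\diff_\partial(D^n) \to B\diff(S^n)$ a ``delooping'' is slightly off. What you are really using is the standard fiber sequence $G/H \to BH \to BG$ for the subgroup $H = \diff_\partial(D^n)$ of $G = \diff(S^n)$, together with the identification $G/H \simeq \emb(D^n,S^n) \simeq O(n+1)$. This is a cosmetic point and does not affect the validity of the proof.
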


\begin{acorollary}\label{cor.diff} Let $n \neq 4,5,7$. Suppose that $M$ is a closed 2-connected oriented smooth manifold of dimension $n$, then $B\mr{Diff}^+(M)$ is of homologically finite type.\end{acorollary}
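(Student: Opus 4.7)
I would deduce this from Theorem \ref{thm.main} by combining the embedding-space fibration, Sullivan's theorem on the arithmeticity of $\pi_0\diff^+(M)$, and a handle-by-handle reduction of $B\diff_\partial(N)$ to $B\diff_\partial(D^n)$, where $N := M \setminus \mr{int}(D^n)$. The transitive action of $\diff^+(M)$ on $\mr{Emb}^+(D^n, M) \simeq \mr{Fr}^+(M)$ with stabilizer $\diff_\partial(N)$ yields a fibration
\[
\diff_\partial(N) \longrightarrow \diff^+(M) \longrightarrow \mr{Fr}^+(M).
\]
The base is a closed manifold with finite $\pi_1$ (a quotient of $\pi_1 \mr{SO}(n) = \bZ/2$, since $\pi_1 M = 0$) and finitely generated higher homotopy; by the long exact sequence, $\pi_i \diff^+(M)$ is finitely generated for each $i \geq 1$ whenever $\pi_i \diff_\partial(N)$ is. Separately, for $n \geq 6$ (the case $n = 3$ forces $M = S^3$ by the Poincar\'e conjecture and is classical), Sullivan's theorem gives that $\pi_0 \diff^+(M)$ is commensurable with an arithmetic group, hence of type $VFP_\infty$, so $B\pi_0 \diff^+(M)$ is of homologically finite type. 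Together with Lemma \ref{lem.pfinhfin}, the Serre spectral sequence of $B\diff^+(M)_0 \to B\diff^+(M) \to B\pi_0 \diff^+(M)$ then reduces the proof to showing that $\pi_j \diff_\partial(N)$ is finitely generated for each $j \geq 1$.

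For the latter, the $2$-connectivity of $M$ and $n \geq 6$ imply that $N$ admits a handle decomposition (relative to $\partial N$) with handles of index $k \in [3, n-3]$. I induct on the number of handles, starting from $D^n$, for which $B\diff_\partial(D^n)$ is of finite type by Theorem \ref{thm.main}. For each handle attachment $W \rightsquigarrow W' = W \cup_\phi h_k$, use the fibration
\[
\diff_\partial(W) \longrightarrow \diff_\partial(W') \longrightarrow \mr{Emb}_\partial(h_k, W')
\]
arising from the transitive action of $\diff_\partial(W')$ on the space of embeddings of the handle extending its free boundary $D^k \times S^{n-k-1} \subset \partial W'$. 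By Weiss--Goodwillie embedding calculus, which converges in codimension $n - k \geq 3$, this embedding space has finitely generated homotopy groups: its Taylor tower layers are mapping spaces into finite-type targets (built from configuration spaces of $W'$, which is simply-connected by induction). The long exact sequence of the fibration and the inductive hypothesis then yield finite generation of $\pi_j \diff_\partial(W')$, closing the induction.

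The main obstacle is the handle induction: verifying at each step, via embedding calculus, that the handle embedding space has finitely generated homotopy groups. Convergence of the Taylor tower requires codimension $\geq 3$, i.e., handle index $k \leq n-3$, and this together with $k \geq 3$ from $2$-connectivity of $M$ and $n \geq 6$ keeps us throughout in the range where embedding calculus is applicable.
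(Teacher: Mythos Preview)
Your overall architecture is sound and close in spirit to the paper's: both reduce to showing that $B\diff_\partial(M_1)$ (with $M_1 = M\setminus\mr{int}(D^n)$) is of finite type on identity components, invoke Sullivan for $\pi_0$, and use embedding calculus together with Theorem~\ref{thm.main}. The difference is in how you pass from $\diff_\partial(D^n)$ to $\diff_\partial(M_1)$. The paper does this in \emph{one step}, via the Weiss fiber sequence
\[
B\diff_\partial^H(D^n) \longrightarrow B\diff_\partial^{\mr{id}}(M_1) \longrightarrow B\mr{Emb}^{\mr{id}}_{\half\partial}(M_1),
\]
and applies embedding calculus to the \emph{self}-embedding space on the right (Proposition~\ref{prop.embidhfin}). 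You instead attempt a handle-by-handle induction, applying embedding calculus to each handle-embedding space $\mr{Emb}_\partial(h_k,W')$.

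There is a genuine gap in your induction at $\pi_1$. From the long exact sequence of
\[
\diff_\partial(W) \longrightarrow \diff_\partial(W') \longrightarrow \mr{Emb}_\partial(h_k,W'),
\]
the group $\pi_1\diff_\partial(W')$ is an extension of $\ker\bigl(\pi_1\mr{Emb}_\partial(h_k,W')\to\pi_0\diff_\partial(W)\bigr)$ by a quotient of $\pi_1\diff_\partial(W)$. To conclude that this kernel is finitely generated you need one of the conditions in Lemma~\ref{lem.les}(ii$'$): either $\pi_0\diff_\partial(W)$ is finite, or $\pi_1\mr{Emb}_\partial(h_k,W')$ is finite or nilpotent. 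But $\pi_0\diff_\partial(W)$ is \emph{not} finite for the intermediate manifolds $W$ (already for $W=W_{g,1}$ it surjects onto an infinite arithmetic group), so you cannot carry that hypothesis through the induction. And you have not argued that $\pi_1\mr{Emb}_\partial(h_k,W')$ is nilpotent; the paper's mechanism for this (Lemma~\ref{lem.tkhspacenilpotent}) uses the monoid structure on \emph{self}-embedding spaces coming from composition, which is unavailable for embeddings of a handle into a larger manifold. Without this, a finitely generated $\pi_1\mr{Emb}_\partial(h_k,W')$ can have non-finitely-generated subgroups, and the induction breaks.

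The paper's single-step approach sidesteps this entirely: the base $B\mr{Emb}^{\mr{id}}_{\half\partial}(M_1)$ is the classifying space of a connected monoid, hence simply-connected, so no $\pi_1$ bookkeeping is needed. If you want to rescue the handle induction, you would need an independent argument that $\pi_1$ of each handle-embedding space is nilpotent (plausible for disk sources in codimension $\geq 3$, but not supplied), or else pass to classifying spaces at each step and mimic the delooping trick---at which point you are essentially reproducing the Weiss fiber sequence.
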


It is convention to denote $\pl(\bR^n)$ by $\pl(n)$ and $\topo(\bR^n)$ by $\topo(n)$.

\begin{acorollary}Let $n \neq 4,5,7$, then $B\mr{Top}(n)$ and $B\mr{PL}(n)$ are of finite type.\end{acorollary}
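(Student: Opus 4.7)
The strategy is smoothing theory: reduce to Theorem~A via Morlet's disjunction lemma, which together with Kirby--Siebenmann produces weak equivalences
$$ B\diff_\partial(D^n) \simeq \Omega^n_0\bigl(\pl(n)/\mr{O}(n)\bigr) \simeq \Omega^n_0\bigl(\topo(n)/\mr{O}(n)\bigr) $$
for $n \geq 5$, where the subscript denotes the basepoint component. By Theorem~A, for $n \geq 6$ with $n \neq 7$ the left-hand side is of finite type.

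I would first upgrade this to finite type of $\pl(n)/\mr{O}(n)$ and $\topo(n)/\mr{O}(n)$ themselves. Their homotopy groups in degrees $>n$ are, up to a shift, homotopy groups of $B\diff_\partial(D^n)$, hence finitely generated. For degrees $\leq n$ I would use the stabilization maps $\pl(n)/\mr{O}(n) \to \pl/\mr{O}$ and $\topo(n)/\mr{O}(n) \to \topo/\mr{O}$, which are highly connected; combined with the classical facts that $\pi_*(\pl/\mr{O})=\Theta_*$ is finite in every degree (Kervaire--Milnor) and $\topo/\pl \simeq K(\bZ/2,3)$ (Kirby--Siebenmann), this gives finiteness in low degrees. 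The fundamental group vanishes by stability. Hence both $\pl(n)/\mr{O}(n)$ and $\topo(n)/\mr{O}(n)$ are of finite type.

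The second step is the fibration $\pl(n)/\mr{O}(n) \to B\mr{O}(n) \to B\pl(n)$ and its Top analogue. Since $\mr{O}(n)$ is a compact Lie group, $B\mr{O}(n)$ is of finite type with $\pi_1 = \bZ/2$; the long exact sequence of homotopy groups then yields finitely generated $\pi_i(B\pl(n))$ and $\pi_i(B\topo(n))$ for $i \geq 2$, while $\pi_1 = \pi_0(\pl(n)) = \pi_0(\topo(n)) = \bZ/2$ is finite. This settles the corollary for $n \geq 6$, $n \neq 7$. For $n \in \{1,2,3\}$ one instead invokes classical results giving $\topo(n) \simeq \pl(n) \simeq \mr{O}(n)$ (trivially for $n=1$, by Rad\'o/Kneser for $n=2$, by Cerf/Hatcher for $n=3$), reducing directly to finite type of $B\mr{O}(n)$. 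The main obstacle is tracking the dimensional ranges in Morlet's equivalence and the stabilization map, in particular handling the boundary degree $n$ cleanly; once these classical inputs are marshalled, the rest is a routine long-exact-sequence chase.
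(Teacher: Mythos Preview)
Your proposal is correct and follows essentially the same route as the paper: smoothing theory identifies $B\diff_\partial(D^n)$ with an $n$-fold loop space of $\pl(n)/O(n)$ (the paper cites Burghelea--Lashof rather than Morlet), Kirby--Siebenmann handles the low-degree homotopy groups via stability, and the fiber sequence $\pl(n)/O(n) \to BO(n) \to B\pl(n)$ finishes. The only cosmetic difference is that the paper deduces the $\topo$ case from the $\pl$ case via $\topo(n)/\pl(n) \simeq K(\bZ/2,3)$ rather than running the argument in parallel.
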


\begin{acorollary}Let $n \neq 4,5,7$, then $B\mr{Top}(S^n)$ and $B\mr{PL}(S^n)$ are of finite type.\end{acorollary}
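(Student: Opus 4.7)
The plan is to deduce this from the preceding corollary (which gives $B\topo(n)$ and $B\pl(n)$ of finite type) by means of a disk-restriction fibration.

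Fix a standard embedding $i \colon D^n \hookrightarrow S^n$ (say the lower hemisphere). Restriction along $i$ gives a principal $\topo_\partial(D^n)$-fibration
$$\topo_\partial(D^n) \longrightarrow \topo(S^n) \longrightarrow \mr{Emb}^\topo(D^n, S^n),$$
where $\mr{Emb}^\topo$ denotes the space of locally flat topological embeddings. By the Alexander trick the fiber is contractible, so $\topo(S^n) \simeq \mr{Emb}^\topo(D^n, S^n)$.

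Evaluating an embedding at $0 \in D^n$ gives a fiber bundle $\mr{Emb}^\topo(D^n, S^n) \to S^n$, which up to homotopy is the topological frame bundle of $S^n$: using a chart around a chosen basepoint together with Alexander scaling to push embeddings into the chart, the fiber is weakly equivalent to $\topo(n)$. Combining the two fibrations gives
$$\topo(n) \longrightarrow \topo(S^n) \longrightarrow S^n.$$

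By the preceding corollary, $B\topo(n)$ is of finite type for $n \neq 4,5,7$, so $\topo(n)$ has finite $\pi_0$ and finitely generated $\pi_i$ for $i \geq 1$. Since $S^n$ is simply connected and of finite type, the long exact sequence of homotopy for $\topo(n) \to \topo(S^n) \to S^n$ shows $\topo(S^n)$ has the same property, so $B\topo(S^n)$ has finite $\pi_1$ and finitely generated $\pi_i$ for $i \geq 2$, i.e.\ is of finite type. The $\pl$ case proceeds verbatim with $\pl$ replacing $\topo$ throughout and the PL Alexander trick used in place of its TOP counterpart. The only subtle point is the existence of the two fibrations and the identification of fibers in the TOP and PL categories, both of which follow from standard isotopy extension theorems (Edwards--Kirby, Hudson) and the Alexander trick; no new obstruction arises in the allowed dimensions.
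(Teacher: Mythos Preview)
Your proof is correct and follows essentially the same route as the paper's. The paper phrases the intermediate space as the $\mr{CAT}$-frame bundle $\mr{Fr}^\mr{CAT}(TS^n)$ of the tangent microbundle rather than as $\mr{Emb}^\mr{CAT}(D^n,S^n)$, but these are weakly equivalent (Kister--Mazur identifies germs of embeddings of $D^n$ at a point with $\mr{CAT}(n)$), and both arguments then finish with the Alexander trick and the fiber sequence $\mr{CAT}(n) \to (\text{frame bundle}) \to S^n$ together with Corollary~\ref{cor.toppl}.
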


\begin{acorollary}\label{cor.pltop} Let $n \neq 4,5,7$. Suppose that $M$ is a closed 2-connected oriented smoothable manifold of dimension $n$, then $B\pl^+(M)$ and $B\topo^+(M)$ are of homologically finite type.\end{acorollary}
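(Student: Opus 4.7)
The plan is to reduce to Corollary~\ref{cor.diff}, which gives homological finite type of $B\diff^+(M)$. Since $M$ is smoothable, fix a smooth structure; since $n \neq 4,5,7$ we are in the range where smoothing theory and Kirby--Siebenmann triangulation theory apply cleanly.

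First I would invoke smoothing theory (Burghelea--Lashof--Rothenberg, or Morlet) and Kirby--Siebenmann triangulation theory to produce the two comparison fibrations
\[ \cS(M) \longrightarrow B\diff^+(M) \longrightarrow B\pl^+(M), \qquad \cT(M) \longrightarrow B\pl^+(M) \longrightarrow B\topo^+(M), \]
where $\cS(M)$ and $\cT(M)$ are the components of the spaces of smoothings and PL-triangulations of $M$ containing the chosen structures. The relevant structure theorems identify each with a component of the space of sections of a bundle over $M$ with fibre $\pl(n)/O(n)$ and $\topo(n)/\pl(n)$ respectively, classified by the PL (resp.\ topological) tangent microbundle of $M$.

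Next I would show $\cS(M)$ and $\cT(M)$ are of homologically finite type. From the fibre sequences $O(n) \to \pl(n) \to \pl(n)/O(n)$ and $\pl(n) \to \topo(n) \to \topo(n)/\pl(n)$, the long exact sequences together with the finite type of $BO(n)$ (classical) and of $B\pl(n)$ and $B\topo(n)$ (earlier corollaries in this paper) force $\pi_*$ of the fibres to be finitely generated in each degree. An obstruction-theoretic argument climbing the Moore--Postnikov tower of the fibre over the finite $2$-connected CW complex $M$ then shows that $\pi_*(\cS(M))$ and $\pi_*(\cT(M))$ are also finitely generated in each degree; $2$-connectedness of $M$ suppresses low-dimensional complications, and Lemma~\ref{lem.pfinhfin} (or a small extension of it to the case of spaces with finitely generated---rather than finite---fundamental group) then delivers homological finite type.

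The main obstacle is the remaining step: deducing homological finite type of the \emph{base} from that of total space and fibre in each of the two fibrations above, which is the opposite of the direction the Serre spectral sequence most naturally provides. My strategy would be to run the Serre spectral sequence with finitely generated local coefficient systems and argue inductively in degree, using that the action of $\pi_1$ of the base on $H_*$ of the fibre is controlled by the arithmeticity of the mapping class groups $\pi_0(\pl^+(M))$ and $\pi_0(\topo^+(M))$, which follows from arithmeticity of $\pi_0(\diff^+(M))$ (the input to Corollary~\ref{cor.diff}) together with the finiteness of $\pi_0$ of the section spaces just established. With this transfer in hand, the first fibration yields $B\pl^+(M)$ of homologically finite type, and then the second gives $B\topo^+(M)$ of homologically finite type.
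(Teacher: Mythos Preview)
Your approach is correct in outline and genuinely different from the paper's. The paper does not reduce to Corollary~\ref{cor.diff}; instead it works with $M_1 = M \setminus \mr{int}(D^n)$, uses the Alexander trick in the form of Corollary~\ref{cor.weissfibersequenceothercat} to identify $B\mr{CAT}^{\mr{id}}_\partial(M_1) \simeq B\mr{Emb}^{\mr{CAT},\mr{id}}_{\half\partial}(M_1)$, and then applies smoothing theory for \emph{embeddings} (Lashof) to compare smooth and $\mr{CAT}$ self-embedding spaces via a section space with fibre $\mr{CAT}(n)/O(n)$. Embedding calculus (Proposition~\ref{prop.embidhfin}) controls the smooth side directly, and one solves for the $\mr{CAT}$ side using Lemma~\ref{lem.les}(i); the group of components is handled by Sullivan--Triantafillou arithmeticity as in your sketch. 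What this buys the paper is a statement in $\cat{\Pi Fin}$ (and even $\cat{Fin}$) for the identity components, not just $\cat{HFin}$ for the whole classifying space; your route through Corollary~\ref{cor.diff} is shorter but only sees $\cat{HFin}$.

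Two points where your write-up is looser than it needs to be. First, the ``main obstacle'' you flag is precisely Lemma~\ref{lem.propfinitetypespaces}(i): since $B\diff^+(M) \in \cat{HFin}$, its homology with \emph{any} finitely generated local system is finitely generated, in particular with systems pulled back from $\pi_1(B\pl^+(M))$; no separate control of the monodromy is required. Second, your appeal to Lemma~\ref{lem.pfinhfin} for $\cS(M)$ does not need an extension of that lemma. By Theorem~\ref{thm.ks} the fibre $\pl(n)/O(n)$ has \emph{finite} homotopy groups through degree $n+1$, so over the $n$-dimensional complex $M$ the section space has finite $\pi_0$ and finite $\pi_1$; combined with Lemma~\ref{lem.sectionspace} and Corollary~\ref{cor.toppl} this puts $\cS(M)$ in $\cat{Fin}$, hence in $\cat{HFin}$. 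The same argument (with $\topo(n)/\pl(n) \simeq K(\bZ/2\bZ,3)$) gives $\cT(M)$ simply-connected. With these two clarifications your argument goes through.
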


For completeness, Propositions \ref{prop.haut} and \ref{prop.block} give similar results for homotopy automorphisms, block automorphisms and the quotient of block automorphisms by automorphisms. Furthermore, using the relationship between diffeomorphisms and algebraic K-theory, we deduce finiteness properties of the spectra $\mr{Wh}^\mr{Diff}(*)$ and $A(*)$. 

%
%

The input to our proofs is a number of deep theorems in manifold theory: the homological stability results of Galatius and Randal-Williams \cite{grwstab1,grwcob} and Botvinnik and Perlmutter \cite{perlmutterbotvinnik,perlmutterstab}, the embedding calculus of Weiss \cite{weissembeddings,weisspedrosheaves}, the excision estimates of Goodwillie and Klein \cite{goodwillieklein}, and the arithmeticity results of Sullivan \cite{sullivaninf,trianta}. 

These are combined with a fiber sequence we call the \emph{Weiss fiber sequence}, as it underlies the work of Weiss in \cite{weissdalian}. This fiber sequence expresses the diffeomorphisms of a disk as the difference between the diffeomorphisms of a manifold and certain self-embeddings of that manifold. The setup is as follows: let $M$ be an $n$-dimensional smooth manifold with non-empty boundary $\partial M$ and an embedded disk $D^{n-1} \subset \partial M$, then $\mr{Emb}^{\cong}_{\half\partial}(M)$ is  the space of embeddings $M \hookrightarrow M$ that are the identity on $\partial M \backslash \mr{int}(D^{n-1})$ and isotopic fixing $\partial M \backslash \mr{int}(D^{n-1})$ to a diffeomorphism that fixes the boundary. In Section \ref{sec.weissfiber} we construct a fiber sequence
\begin{equation} \label{eqn.weissfib} B\mr{Diff}_\partial(D^{n}) \longto B\mr{Diff}_\partial (M) \longto B\mr{Emb}^{\cong}_{\half\partial}(M)\end{equation}
and show it deloops once to a fiber sequence
\begin{equation} \label{eqn.weissfib2} B\mr{Diff}_\partial (M) \longto B\mr{Emb}^{\cong}_{\half\partial}(M) \longto B(B\mr{Diff}_\partial(D^{n}),\natural)\end{equation}
with $\natural$ denoting boundary connected sum. The advantage of (\ref{eqn.weissfib2}) is that the base is 1-connected, so we can use technical input about spaces of homologically and homotopically finite type discussed in Section \ref{sec.fintype}. 

The aforementioned deep theorems tells us there are manifolds $W_{g,1}$ and $H_g$ such that the group of the diffeomorphisms of a disk is the only unknown term in (\ref{eqn.weissfib2}). Thus we can solve for $B\mr{Diff}_{\partial}(D^n)$. This insight is due to Weiss, and was used by him in \cite{weissdalian} to study the rational cohomology of $B\mr{Top}(n)$, related to $\mr{Diff}_\partial(D^{n})$ by smoothing theory. We instead use it to study finiteness properties of $\mr{Diff}_\partial(D^{n})$.

\subsection{Historical remarks} We discuss related results in the literature. This discussion is not complete and does not cover the results used in our argument, e.g.\ \cite{grwcob,grwstab1,weissdalian,perlmutterbotvinnik,perlmutterstab}. We start with diffeomorphisms of a disk. As often in smooth manifold theory, there is the following trichotomy: (i) \emph{low dimensions} $\leq 3$, (ii) \emph{dimension 4}, (iii) \emph{high dimensions} $n \geq 5$.

\begin{enumerate}[(i)]
\item In low dimensions $\leq 3$, the diffeomorphisms of a disk are contractible. For $n=1$ this is folklore, for $n=2$ this is due to Smale \cite{smaledisk}, and for $n=3$ to Hatcher \cite{hatcherdisk}.
\item In dimension 4, nothing is known.
\item In high dimensions $\geq 5$, the homotopy groups of diffeomorphisms of a disk are only understood in low degrees. 

The connected components are known: the group $\pi_0( \mr{Diff}_\partial(D^n))$ is isomorphic to the group $\Theta_{n+1}$ of diffeomorphism classes of homotopy $(n+1)$-spheres under connected sum \cite{levine}. The $h$-cobordism theorem proves surjectivity and Cerf's theorem proves injectivity \cite{cerf}. This group is known to be finite abelian and is related to the stable homotopy groups of spheres \cite{kervairemilnor}.

The higher homotopy groups can be determined in a range using the pseudoisotopy stability theorem \cite{igusastab} and algebraic $K$-theory \cite{waldhausenalgk}. In \cite{farrellhsiang}, Farrell and Hsiang  proved that in the so-called concordance stable range $0 < i < \frac{n}{6}-7$ (improved to $0<i<\min(\frac{n-7}{2},\frac{n-4}{3})$ by Igusa, and recently Randal-Williams gave an upper bound \cite{oscarupper})
\[ \pi_{i}(\mr{Diff}_\partial(D^n)) \otimes \bQ \cong \begin{cases} 0 & \text{if $n$ is even,} \\
K_{i+2}(\bZ) \otimes \bQ & \text{if $n$ is odd.} \end{cases}\]
The latter is given by $\bQ$ if $i \equiv 3 \pmod 4$ and $0$ otherwise \cite{borelstable}. Igusa's work on higher torsion invariants gives an alternative proof \cite[Section 6.5]{igusatorsion}.

There are known examples of non-zero homotopy groups. Rationally, Watanabe showed that  $\pi_{2n-2}(B\mr{Diff}_\partial(D^{2n+1})) \otimes \bQ \neq 0$ for many $n$ \cite{watanabe1}, giving lower bounds on the dimension of $\pi_{kn-k}(B\mr{Diff}_\partial(D^{2n+1})) \otimes \bQ$ for any $k \geq 2$ \cite{watanabe2}. 
Novikov found torsion elements in $\pi_i(B\mr{Diff}_\partial(D^n))$ \cite{novikovspheres}. Burghelea and Lashof proved that there is an infinite sequence $(p_i,k_i,n_i)$ with $p_i$ an odd prime and $\lim_{i \to \infty} n_i/k_i = 0$ such that $\pi_{k_i}(\mr{Diff}_\partial(D^{n_i})) \otimes \bZ/p_i \bZ \neq 0$ \cite[Section 7]{burglashof}, improving on an earlier result by Miller \cite{millerdiff}. Crowley, Schick and Steimle proved that for $n \geq 6$, there are infinitely many $i$ such that $\pi_i( \mr{Diff}_\partial(D^n))$ is non-zero, containing an element of order 2 \cite{crowleyschick,crowleyschicksteimle}.
\end{enumerate}

Next we discuss results about the homotopy type of automorphism groups of manifolds.
\begin{enumerate}[(i)]
\item In low dimensions $\leq 3$, any topological or PL manifold admits a unique smooth structure and for any smooth manifold $M$ (possibly with non-empty boundary) we have
\[\mr{Diff}_\partial(M) \simeq \mr{PL}_\partial(M) \simeq \mr{Top}_\partial(M).\]

In dimension $n=1$, the only path-connected manifolds are $D^1$ and $S^1$, with $\mr{Diff}_\partial(D^1) \simeq \ast$ and $\mr{Diff}(S^1) \simeq O(2)$. In dimension $n=2$, $B\mr{Diff}_\partial(\Sigma)$ of homologically finite type for all $\Sigma$. This can be proven using analysis (of quadratic differentials \cite{strebel} or harmonic functions \cite{bodigheimer}), or using homotopy theory (\cite{gramain} for the identity components and an induction using arc complexes for the mapping class groups \cite{hatcherarc}). 

In dimension $n=3$, using \cite{hendrikslaudenbach} one reduces to studying a space related to outer-space (which can be approached using the techniques of \cite{hatchervogtmann}) and the case of prime 3-manifolds. For the latter, the diffeomorphisms groups of nearly all prime 3-manifolds are understood. We will only give a selection of relevant references: the case of Haken manifolds was settled in \cite{hatcher3mfds,ivanovhaken} (which also shows how to reduce from prime 3-manifolds to atoroidal 3-manifolds), and the case of irreducible $M$ with non-empty boundary was settled in \cite{hatchermccullough}. Kalliongis and McCullough showed that for many 3-manifolds $M$,  $\pi_1(\mr{Diff}(M))$ is not finitely generated \cite{kalliongismccullough}.

\item In dimension 4, the best results concern mapping class groups of topological 4-manifolds with good fundamental groups  \cite{quinnisotopy}. For smooth 4-manifolds, the same result holds stably, i.e.\ after taking a connected sum with some number of $S^2 \times S^2$ (such stabilizations are known to be necessary \cite{rubermanisotopy}).

\item In high dimensions $\geq 5$, Farrell and Hsiang also did computations for spheres and aspherical manifolds \cite{farrellhsiang}, which were extended by Burghelea to simply-connected manifolds. In particular, he proved the higher rational homotopy groups of the identity component of $\mr{Diff}(M)$ are finite-dimensional in the concordance stable range \cite{burgheleafiniteness}. For the latest results concerning homeomorphisms of aspherical manifolds, see \cite{ELPUW}. Sullivan proved that $\pi_0$ is commensurable with an arithmetic group if $M$ is simply-connected \cite{sullivaninf}. This fails if the manifold is not simply-connected: for $n \geq 5$, $\pi_0(\mr{Diff}(T^n))$ is not finitely generated \cite{abk2,hatcherconcordance}. This was generalized to higher homotopy groups of $\mr{Diff}(T^n)$ in \cite{hsiangsharpe} (see \cite[Corollary II.4.6]{hatcherwagoner} for a similar example). In general the identity component of $\mr{Diff}(M)$ does not have the homotopy type of a finite CW-complex \cite{hirschnonfinite,lawsonnonfinite,antonelli}.
\end{enumerate}

%

\subsection{Acknowledgments} We thank S\o ren Galatius and Oscar Randal-Williams for many interesting and helpful conversations, and in particular for suggesting Proposition \ref{prop.collapse} and suggesting to deloop the fiber sequence (\ref{eqn.weissfib}) to obtain (\ref{eqn.weissfib2}). We thank Sam Nariman, Jens Reinhold, Michael Weiss and Allen Hatcher for comments on early versions of this paper, and the referees for helpful suggestions.

\section{Homologically and homotopically finite type spaces} \label{sec.fintype} In this section we recall some basic technical results for proving that homology groups or homotopy groups of spaces are finitely generated. These are elementary and similar to results in the literature, e.g.\ \cite{serrehomotopy,mccleary}, so our proofs will be succinct. 

\subsection{Homologically finite type spaces} We start with  finiteness conditions that one can impose on the homology groups of a space.

\begin{definition}\begin{itemize}
\item A path-connected space $X$ is said to be of \emph{homologically finite type} if for all $\bZ[\pi_1(X)]$-modules $M$ that are finitely generated as an abelian group, $H_*(X;M)$ is finitely generated in each degree.
\item A space $X$ is said to be of \emph{homologically finite type} if it has finitely many path components and each of these path components is of homologically finite type. We use the notation $X \in \cat{HFin}$.
\end{itemize} \end{definition}

 Using cellular homology, one sees that a CW-complex with finitely many cells in each dimension has finitely generated homology groups. The following lemma generalizes this:

\begin{lemma}\label{lem.cwhfin}A CW complex $X$ with finitely many cells in each dimension is in $\cat{HFin}$.\end{lemma}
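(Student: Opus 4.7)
The plan is to reduce to the connected case and then compute via cellular chains on the universal cover. Since $X$ has finitely many cells in each dimension, in particular it has finitely many $0$-cells, so it has finitely many path components; hence it suffices to treat a single path-connected CW complex $X$ with finitely many cells in each dimension.

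Fix a basepoint and let $\pi = \pi_1(X)$, with universal cover $\widetilde{X} \to X$. The key observation is that the cellular chain complex $C_*(\widetilde{X})$ is a complex of free left $\bZ[\pi]$-modules, with one free generator for each cell of $X$ of the corresponding dimension (any choice of lifts of cells works). Thus $C_n(\widetilde{X})$ is a finitely generated free $\bZ[\pi]$-module for every $n$, since $X$ has only finitely many $n$-cells.

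For any $\bZ[\pi]$-module $M$, homology with local coefficients is computed by
\[ H_*(X,M) = H_*\bigl( C_*(\widetilde{X}) \otimes_{\bZ[\pi]} M\bigr). \]
In each degree $n$, the chain group $C_n(\widetilde{X}) \otimes_{\bZ[\pi]} M$ is a finite direct sum of copies of $M$. If $M$ is finitely generated as an abelian group, then this chain group is finitely generated as an abelian group. Because $\bZ$ is Noetherian, every subquotient of a finitely generated abelian group is finitely generated, so $H_n(X,M)$ is finitely generated for every $n$. This gives $X \in \cat{HFin}$.

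There is no real obstacle here; the only point that requires a sentence of care is the identification of $H_*(X,M)$ with $H_*(C_*(\widetilde{X}) \otimes_{\bZ[\pi]} M)$, which is the standard definition of homology with coefficients in a local system, and the fact that choosing lifts of cells makes $C_*(\widetilde{X})$ free of the expected rank over $\bZ[\pi]$. Everything else is immediate from Noetherianity of $\bZ$.
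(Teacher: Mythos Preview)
Your proof is correct and follows essentially the same route as the paper: compute $H_*(X,M)$ via the cellular chain complex of the universal cover, which is a degreewise finitely generated free $\bZ[\pi_1(X)]$-module, and observe that tensoring with $M$ yields a chain complex of finitely generated abelian groups. The only addition is your explicit reduction to the path-connected case, which the paper leaves implicit.
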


\begin{proof}A CW decomposition of $X$ induces a $\pi_1(X)$-equivariant CW decomposition of its universal cover $\tilde{X}$. Letting $\tilde{C}_*$ denote the cellular chains on $\tilde{X}$, a finitely generated complex of free $\bZ[\pi_1(X)]$-modules. Then $H_*(X;M)$ is isomorphic to the homology of the chain complex $\tilde{C}_* \otimes_{\bZ[\pi_1(X)]} M$, which is finitely generated in each degree if $M$ is finitely generated as an abelian group.\end{proof}

\begin{example}\label{exam.handlemfdfin} Compact $n$-dimensional topological manifolds are in $\cat{HFin}$, because they have the homotopy type of finite CW complexes \cite[Corollary 1]{milnorcw}.\end{example}

We will next discuss the behavior of $\cat{HFin}$ under fiber sequences.

\begin{notation}A \emph{fiber sequence with fiber taken over $b \in B$} is a pair of maps $F \overset{i}{\longrightarrow} E \overset{p}{\longrightarrow} B$ and a null-homotopy from the composite $p \circ i \colon F \to B$ to the constant map at $b$, such that the induced map from $F \to \mr{hofib}_b(E \to B)$ is a weak equivalence.\end{notation}

Here the homotopy fiber is defined by replacing $E \to B$ with a fibration $\tilde{p} \colon \tilde{E} \to B$ and taking  $\mr{hofib}_b(E \to B) = \tilde{p}^{-1}(b)$. Thus we have a Serre spectral sequence and long exact sequence of homotopy groups for a fiber sequence. Given a fiber sequence $F \to E \to B$ with $B$ path-connected and $\pi_1(B)$ acting trivially on $H_*(F)$, one may use the Serre spectral sequence to prove that if two of $F$, $E$, $B$ have finitely generated homology in each degree, then so does the third. The following lemma generalizes this:

\begin{lemma}\label{lem.propfinitetypespaces} Let $F \overset{i}{\longrightarrow} E \overset{p}{\longrightarrow} B$ be a fiber sequence with $B$ path-connected.
\begin{enumerate}[\indent (i)]
	\item Bases: if $H_*(F)$ is finitely generated in each degree and $H_*(E;p^*M)$ is finitely generated in each degree for all $\bZ[\pi_1(B)]$-modules $M$ that are finitely generated as abelian groups, then $B \in \cat{HFin}$.
	\item Total spaces: if $B \in \cat{HFin}$, $E$ is path-connected and $H_*(F;i^*M)$ is finitely generated in each degree for all $\bZ[\pi_1(E)]$-modules $M$ that are finitely generated as abelian groups, then $E \in \cat{HFin}$.
	\item Fibers: if $F$ is simply-connected, $H_*(B)$ and $H_*(E)$ are finitely generated in each degree, and $\pi_1(B)$ acts trivially on $H_*(F)$, then $F \in \cat{HFin}$.
\end{enumerate}
\end{lemma}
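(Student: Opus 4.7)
The plan is to apply the Serre spectral sequence in each case and combine it with the elementary fact that subgroups, quotients, and extensions of finitely generated abelian groups are again finitely generated. For a fibration $F \to E \to B$ with $B$ path-connected and a $\bZ[\pi_1(E)]$-module $N$, this sequence takes the form $E^2_{p, q} = H_p(B; \cH_q(F; N|_F)) \Rightarrow H_{p+q}(E; N)$, where $\cH_q(F; N|_F)$ is the local system on $B$ whose stalk at $b$ is $H_q(F_b; N|_{F_b})$. I will assume throughout that $F$ is path-connected; the hypothesis that $H_*(F)$ is finitely generated in each degree (in (i) and (iii)) forces $\pi_0(F)$ to be finite, and the general case reduces to the path-connected one after summing over components.

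Part (ii) is the most direct. With coefficients in a $\pi_1(E)$-module $M$ finitely generated as an abelian group, the local system $\cH_q(F; i^*M)$ has finitely generated stalks by hypothesis, so each $E^2_{p, q}$ is finitely generated since $B \in \cat{HFin}$. Every $E^\infty_{p, q}$ is then a subquotient of a finitely generated abelian group, hence finitely generated, and the filtration of $H_n(E; M)$ whose subquotients are the $E^\infty_{p, n-p}$ gives $H_n(E; M)$ finitely generated.

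For part (i), I induct on $p$ to show that $H_p(B; M)$ is finitely generated for every $\pi_1(B)$-module $M$ that is finitely generated as an abelian group. Applying the spectral sequence with coefficients $p^*M$, the identification $\cH_0(F; p^*M) \cong M$ as $\pi_1(B)$-modules, valid because $F$ is path-connected and $\pi_1(F)$ acts trivially on $i^*p^*M$, gives $E^2_{p, 0} \cong H_p(B; M)$; from the top filtration quotient of $H_p(E; p^*M)$ it follows that $E^\infty_{p, 0}$ is finitely generated. For $r \geq 2$ the only nonzero differentials at $E^r_{p, 0}$ are the outgoing ones $d_r \colon E^r_{p, 0} \to E^r_{p-r, r-1}$, whose images sit inside subquotients of $E^2_{p-r, r-1} = H_{p-r}(B; \cH_{r-1}(F; p^*M))$. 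The stalks $H_{r-1}(F; p^*M|_F)$ of this local system are finitely generated by universal coefficients (since $H_*(F;\bZ)$ and $M$ are), so the inductive hypothesis applied in degree $p-r < p$ to this local system yields $E^2_{p-r, r-1}$ finitely generated. The short exact sequences $0 \to E^{r+1}_{p, 0} \to E^r_{p, 0} \to \mathrm{image}(d_r) \to 0$ then lift finite generation from $E^\infty_{p, 0}$ up to $E^2_{p, 0} = H_p(B; M)$ by downward induction on $r$, starting from $r = p+1$ where $E^{r+1}_{p,0} = E^\infty_{p,0}$.

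For part (iii), I induct on $q$ to show that $H_q(F; \bZ)$ is finitely generated; once this is established, the universal coefficient theorem gives $H_*(F; M)$ finitely generated for every abelian group $M$ that is finitely generated as such, and since $\pi_1(F) = 1$ this covers all possible coefficients. The Serre spectral sequence $E^2_{p, q} = H_p(B; H_q(F; \bZ)) \Rightarrow H_{p+q}(E; \bZ)$, untwisted by the triviality hypothesis on $\pi_1(B)$, shows that the bottom filtration subgroup $E^\infty_{0, q} \subseteq H_q(E; \bZ)$ is finitely generated. There are no outgoing differentials at $E^r_{0, q}$, only the incoming ones $d_r \colon E^r_{r, q-r+1} \to E^r_{0, q}$, whose images are subquotients of $H_r(B; H_{q-r+1}(F; \bZ))$; these are finitely generated because $H_r(B)$ is finitely generated and $H_{q-r+1}(F; \bZ)$ is finitely generated by the inductive hypothesis (as $q-r+1 < q$ for $r \geq 2$). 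Iterating the extensions $0 \to \mathrm{image}(d_r) \to E^r_{0, q} \to E^{r+1}_{0, q} \to 0$ downward in $r$ lifts finite generation from $E^\infty_{0, q}$ up to $E^2_{0, q} = H_q(F; \bZ)$. The main technical care point in all three parts is the local-coefficient bookkeeping: one must verify at each step that the relevant $\pi_1(B)$-modules $\cH_q(F; N|_F)$ are finitely generated as abelian groups so that the inductive hypotheses apply.
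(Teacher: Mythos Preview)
Your proof is correct and follows essentially the same approach as the paper's: all three parts use the Serre spectral sequence with local coefficients, part (ii) is a direct read-off from the $E^2$-page, and parts (i) and (iii) proceed by the same induction along the bottom row and leftmost column respectively, using that the stalks $H_q(F;(p\circ i)^*M)$ are finitely generated via universal coefficients. The paper cites Moerdijk--Svensson \cite{moerdijksvensson} for the local-coefficient Serre spectral sequence and spells out the base case $H_0(B;M)=M_{\pi_1(B)}$ explicitly, but the structure of the argument is identical to yours; the paper also omits the details of (iii), which you have supplied.
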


\begin{proof}\begin{enumerate}[(i)]
\item The $\bZ[\pi_1(B)]$-module $M$ may be pulled back along $p$ to obtain a local coefficient system $p^* M$ on $E$. Applying the equivariant local coefficient Serre spectral sequence of \cite[Theorem 3.1]{moerdijksvensson} with $G = \{e\}$ to the fiber sequence $F \to E \to B$ with local coefficient system $p^* M$ on $E$, we get a spectral sequence
\[E^2_{p,q} = H_p(B;H_q(F;(p \circ i)^* M)) \Longrightarrow H_{p+q}(E;p^* M).\]

Suppose that $M$ is finitely generated as an abelian group. Then, on the one hand, $H_{p+q}(E,p^* M)$ is finitely generated and so are the entries on the $E_\infty$-page. On the other hand, we claim that $H_q(F;(p \circ i)^* M)$ is finitely generated for all $q$. To see this, note that the action of $\pi_1(F)$ on $(p \circ i)^* M$ is trivial and the universal coefficient theorem gives a short exact sequence
\[0 \longto H_q(F) \otimes M \longto H_q(F;(p \circ i)^* M) \longto \mr{Tor}(H_{q-1}(F),M) \longto 0,\]
with both the left and right term finitely generated by our assumptions.

Using these two observations, we prove by induction over $n$ that for all $\bZ[\pi_1(B)]$-modules $M$ that are finitely generated as abelian groups, the groups $H_p(B,M)$ for $p \leq n$ are finitely generated. The initial case is $n=0$: $H_0(B,M)$ is given by the coinvariants $M_{\pi_1(B)}$, which are finitely generated if $M$ is.

For the induction step, we assume the case $n$ and prove the case $n+1$. Since $H_q(F;(p \circ i)^* M)$ is finitely generated for all $q$, the entries $E^2_{p,q} = H_p(B;H_q(F;(p \circ i)^* M))$ on the $E^2$-page of the Serre spectral sequence are  finitely generated for $p \leq n$ by the inductive hypothesis. The entries on further pages are obtained by taking homology,  hence $E^k_{p,q}$ is finitely generated for $p \leq n$ and $k \geq 2$, i.e.\ the first $n$ columns are.

We next claim that for all $k \geq 2$, the entry $E^k_{n+1,0}$ on the first row is finitely generated if and only if $E^2_{n+1,0}$ is finitely generated. The proof is by induction over $k$, the case $k=2$ being obvious. For the induction step, we assume the case $k$ and prove the case $k+1$. As $E^{k+1}_{n+1,0} = \mr{ker}(d^k \colon E^k_{n+1,0} \to E^k_{n+1-k,k-1})$ and the target is finitely generated, $E^{k+1}_{n+1,0}$ is finitely generated if and only if $E^k_{n+1,0}$ is. By the induction hypothesis the latter happens if and only if $E^2_{n+1,0}$ is finitely generated.

The Serre spectral sequence is first-quadrant, so that $E^{n+2}_{n+1,0} \cong E^\infty_{n+1,0}$. The latter is a subquotient of the finitely generated abelian group $H_{n+1}(E;p^* M)$, and thus $E^{n+2}_{n+1,0}$ is finitely generated. Using the previous claim we conclude that $E^2_{n+1,0} = H_{n+1}(B;M)$ is finitely generated as well.
\item This is similar to (i) but easier, using the Serre spectral sequence
\[E^2_{p,q} = H_p(B;H_q(F;i^* M)) \Longrightarrow H_{p+q}(E;M).\]
By the hypothesis all entries on the $E^2$-page are finitely generated. The entries on further pages of the spectral sequence are obtained by taking homology, so $E^k_{p,q}$ is finitely generated for all $k \geq 2$, $p$ and $q$. The entries on the line $p+q$ stabilize after the $(n+1)$-page of the Serre spectral sequence, and thus that $E^\infty_{p,q}$ is finitely generated for all $p$ and $q$. Since $H_{n}(E;M)$ is an iterated extension of the entries $E^\infty_{p,q}$ with $p+q = n$, it also is finitely generated.
\item This is similar to (i), using the Serre spectral sequence $E^2_{p,q} = H_p(B; H_q(F;A)) \Rightarrow H_{p+q}(E;A)$ with $A$ a finitely generated abelian group. We will not give a detailed proof, since this case is not used in the paper.\qedhere
\end{enumerate}
\end{proof}

\subsection{Homologically finite type groups}  We now apply the results of the previous subsection to classifying spaces of groups. By applying Lemma \ref{lem.propfinitetypespaces}(i) to the fiber sequence $G \to EG \to BG$, we see:

\begin{lemma}\label{lem.classifyingspace} If $G$ is a topological group with underlying space in $\cat{HFin}$, then $BG \in \cat{HFin}$. \end{lemma}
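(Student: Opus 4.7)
The plan is to split into the path-connected case and reduce the general case to it using the identity component.

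\textbf{Step 1: Path-connected case.} Suppose $G$ is path-connected. Apply Lemma \ref{lem.propfinitetypespaces}(i) to the universal principal $G$-bundle fiber sequence $G \to EG \to BG$. Since $G \in \cat{HFin}$ is path-connected, $H_*(G;\bZ)$ is finitely generated in each degree (take trivial coefficients). Since $EG$ is contractible, for any finitely generated abelian group $M$ we have that $H_*(EG; p^*M)$ equals $M$ in degree zero and vanishes otherwise, hence is finitely generated. The lemma yields $BG \in \cat{HFin}$.

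\textbf{Step 2: General case.} Let $G_0$ denote the identity component of $G$. Since $G$ has finitely many path components, $G_0$ is clopen, so $\pi_0(G)$ is discrete and finite, and the short exact sequence of topological groups $1 \to G_0 \to G \to \pi_0(G) \to 1$ deloops to a fiber sequence
\[BG_0 \to BG \to B\pi_0(G).\]
Now $G_0$ is a path component of $G$ and hence lies in $\cat{HFin}$, so $BG_0 \in \cat{HFin}$ by Step 1. The finite group $\pi_0(G)$ admits the bar construction as a CW model for $B\pi_0(G)$ with finitely many cells in each dimension, giving $B\pi_0(G) \in \cat{HFin}$ via Lemma \ref{lem.cwhfin}.

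\textbf{Step 3: Conclusion.} Apply Lemma \ref{lem.propfinitetypespaces}(ii) to the fiber sequence $BG_0 \to BG \to B\pi_0(G)$. The base is in $\cat{HFin}$ by Step 2, the total space $BG$ is path-connected by construction, and for any $\bZ[\pi_1(BG)]$-module $M$ that is finitely generated as an abelian group, the restriction $i^*M$ to $BG_0$ has trivial monodromy (since $\pi_1(BG_0) = \pi_0(G_0) = 0$). Hence $H_*(BG_0; i^*M)$ is finitely generated in each degree by Step 1, and the lemma gives $BG \in \cat{HFin}$.

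The main subtlety lies in Step 3, where one must verify the local-coefficient hypothesis of Lemma \ref{lem.propfinitetypespaces}(ii); once one notes that local systems on $BG$ restrict to constant coefficients on the simply-connected fiber $BG_0$, the finiteness reduces to Step 1 and the rest of the argument is formal.
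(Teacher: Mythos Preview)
Your proof is correct, and Step~1 is exactly the paper's one-line argument: apply Lemma~\ref{lem.propfinitetypespaces}(i) to $G \to EG \to BG$. However, Steps~2 and~3 are unnecessary. The hypothesis in Lemma~\ref{lem.propfinitetypespaces} is that the \emph{base} $B$ be path-connected, not the fiber; and $BG$ is always path-connected regardless of whether $G$ is, since $\pi_0(BG)=*$. So your Step~1 already handles the general case once you drop the spurious assumption that $G$ be path-connected: $H_*(G;\bZ)$ is finitely generated because $G\in\cat{HFin}$ has finitely many components each with finitely generated integral homology, and $EG$ is contractible. The decomposition via $G_0$ and $B\pi_0(G)$ is correct but superfluous.
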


\begin{example}\label{exam.bon} Since the orthogonal group $O(n)$ is a compact manifold, Lemma's \ref{lem.cwhfin} and \ref{lem.classifyingspace} imply that $BO(n) \in \cat{HFin}$.\end{example}

The following lemma applies Lemma \ref{lem.propfinitetypespaces} to classifying spaces of discrete groups:

\begin{lemma}\label{lem.groupsprops} The class of groups with classifying space in $\cat{HFin}$ is closed under the following operations:
\begin{enumerate}[\indent (i)]
	\item Quotients: if $1 \to H \to G \to G' \to 1$ is a short exact sequence of groups and $BH,BG \in \cat{HFin}$, then $BG' \in \cat{HFin}$.
	\item Extensions: if $1 \to H \to G \to G' \to 1$ is a short exact sequence of groups and $BH,BG' \in \cat{HFin}$, then $BG \in \cat{HFin}$. This includes products as trivial extensions.
	\item Finite index subgroups: if $G' \subset G$ has finite index then $BG' \in \cat{HFin}$ if and only if $BG \in \cat{HFin}$.
\end{enumerate}
\end{lemma}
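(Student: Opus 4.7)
The plan is to reduce all three parts to Lemma \ref{lem.propfinitetypespaces} by interpreting each group-theoretic operation as a suitable fiber sequence of classifying spaces, and then verifying the hypotheses of the appropriate part of that lemma.

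For parts (i) and (ii), a short exact sequence $1 \to H \to G \to G' \to 1$ gives rise to a fiber sequence
\[BH \overset{i}{\longrightarrow} BG \overset{p}{\longrightarrow} BG'\]
with $BG'$ path-connected. For part (i), I apply Lemma \ref{lem.propfinitetypespaces}(i) with $F=BH$, $E=BG$, $B=BG'$: the hypothesis $BH \in \cat{HFin}$ gives that $H_*(BH)$ is finitely generated in each degree (taking trivial coefficients), and for any $\bZ[\pi_1(BG')] = \bZ[G']$-module $M$ that is finitely generated as an abelian group, the pullback $p^*M$ is a $\bZ[G]$-module which is still finitely generated as an abelian group, so $H_*(BG, p^*M)$ is finitely generated in each degree by $BG \in \cat{HFin}$. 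For part (ii), I apply Lemma \ref{lem.propfinitetypespaces}(ii) to the same fiber sequence: $BG$ is path-connected since $G$ is a group, and the pullback of a $\bZ[G]$-module along $i$ is simply restriction along $H \hookrightarrow G$, which preserves the property of being finitely generated as an abelian group, so $H_*(BH, i^*M)$ is finitely generated by $BH \in \cat{HFin}$.

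For part (iii), the inclusion $G' \subset G$ of a finite index subgroup produces a covering space $BG' \to BG$ with discrete fiber $G/G'$ of cardinality $[G:G']$, giving a fiber sequence
\[G/G' \longrightarrow BG' \longrightarrow BG.\]
Since $G/G'$ is a finite discrete space, $H_q(G/G', N) = 0$ for $q>0$ and $H_0(G/G', N)$ is a finite direct sum of copies of subgroups of $N$, hence finitely generated whenever $N$ is. If $BG \in \cat{HFin}$, I apply Lemma \ref{lem.propfinitetypespaces}(ii) to this fiber sequence to conclude $BG' \in \cat{HFin}$. Conversely, if $BG' \in \cat{HFin}$, I apply Lemma \ref{lem.propfinitetypespaces}(i): the hypothesis on $H_*(BG', p^*M)$ follows because pulling a $\bZ[G]$-module back along $BG' \to BG$ is restriction along $G' \hookrightarrow G$, which preserves finite generation as an abelian group.

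The arguments are essentially routine; the only point requiring care is tracking how the local coefficient systems transform under pullback and restriction in each application, which in every case reduces to the observation that restricting the group action does not change the underlying abelian group.
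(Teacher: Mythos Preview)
Your proof is correct. Parts (i), (ii), and the direction $\Leftarrow$ of (iii) are argued exactly as in the paper: apply the corresponding part of Lemma~\ref{lem.propfinitetypespaces} to the fiber sequence $BH \to BG \to BG'$ (for (i) and (ii)) or to $G/G' \to BG' \to BG$ (for (iii)$\Leftarrow$).

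The direction $\Rightarrow$ of (iii) is where you diverge from the paper. The paper first reduces to the case where $G'$ is normal in $G$ by passing to the normal core $H \subset G'$, then uses that $B(G/G') \in \cat{HFin}$ (since $G/G'$ is finite) together with the already-established part (ii) applied to $1 \to G' \to G \to G/G' \to 1$. You instead apply Lemma~\ref{lem.propfinitetypespaces}(i) directly to the covering fiber sequence $G/G' \to BG' \to BG$: the fiber is finite discrete so has finitely generated homology, and for any $\bZ[G]$-module $M$ finitely generated as an abelian group, the pullback $p^*M$ is its restriction to $G'$, so $H_*(BG',p^*M)$ is finitely generated by hypothesis. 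Your route is shorter and avoids both the normal-core trick and the appeal to Lemma~\ref{lem.classifyingspace}; the paper's route has the minor advantage of only invoking part (ii) of Lemma~\ref{lem.propfinitetypespaces}, which is the easiest case.
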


\begin{proof}\begin{enumerate}[(i)]
\item This follows from part (i) of Lemma \ref{lem.propfinitetypespaces}, since a short exact sequence  of groups $H \to G \to G'$ induces a fiber sequence of classifying spaces $BH \to BG \to BG'$.
\item Similarly, this follows by applying part (ii) of Lemma \ref{lem.propfinitetypespaces} to $BH \to BG \to BG'$.
\item The direction $\Leftarrow$ follows from part (ii) of Lemma \ref{lem.propfinitetypespaces} applied to $G/G' \to BG' \to BG$.

We first prove the direction $\Rightarrow$ when $G'$ is normal in $G$. As $G/G'$ is finite, from Lemma \ref{lem.classifyingspace} it follows that $B(G/G') \in \cat{HFin}$. The result then follows from part (ii) of this lemma applied to $1 \to G' \to G \to G/G' \to 1$. To reduce the general case to $G'$ normal, use that for any finite index subgroup $G' \subset G$ there is a finite index subgroup $H \subset G'$ such that $H \subset G$ is normal. That $BH \in \cat{HFin}$ follows from the direction $\Leftarrow$.\qedhere
\end{enumerate}\end{proof}

In particular, whether $BG \in \cat{HFin}$ is independent under changing $G$ by finite groups.

\begin{definition}\label{def.diffbyfinite}Two groups $G$, $G'$ \emph{differ by finite groups} (or are said to be \emph{virtually isomorphic}) if there is a finite zigzag of homomorphisms with finite kernel and cokernel:
\[G \longleftarrow G_1 \longto \ldots \longleftarrow G_k \longto G'.\]
\end{definition}

\begin{remark}Two groups $G$, $G'$ are said to be \emph{commensurable} if there is a group $H$ and injective homomorphisms $G \hookleftarrow H \hookrightarrow G'$, each with finite cokernel. Commensurable groups differ by finite groups, and using the fact that an intersection of two finite index subgroups is a finite index subgroup, we see that if $G$ and $G'$ differ by finite groups, they are commensurable.\end{remark}

A second class of groups with classifying space in $\cat{HFin}$: arithmetic groups. We use the definition in \cite{serrearithmetic} (in particular, we do \emph{not} assume the $\bQ$-algebraic group $G$ is reductive).

\begin{definition}\begin{itemize} \item A subgroup $\Gamma$ of a $\bQ$-algebraic group $G \subset GL_n(\bQ)$ is \emph{arithmetic} if the intersection $\Gamma \cap GL_n(\bZ)$ has finite index in both $\Gamma$ and $G \cap GL_n(\bZ)$.
\item A group $\Gamma$ is \emph{arithmetic} if it can be embedded in a $\bQ$-algebraic group $G$ as an arithmetic subgroup.\end{itemize}\end{definition}

Examples of arithmetic groups include all finite groups, and all finitely generated abelian groups, see Section 1.2 of \cite{serrearithmetic}. 

\begin{theorem}[Borel-Serre]\label{thm.arithmetic} If $G$ is an arithmetic group, then $BG \in \cat{HFin}$.
\end{theorem}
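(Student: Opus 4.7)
The plan is to reduce the statement to the classical Borel--Serre theorem for reductive arithmetic groups via the Levi decomposition, and then glue the pieces together using the closure properties of $\cat{HFin}$ established in Lemma \ref{lem.groupsprops}.

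First I would invoke the Levi decomposition $G = U \rtimes L$, where $U$ is the unipotent radical and $L$ is reductive. After passing to a finite-index subgroup (permissible by part (iii) of Lemma \ref{lem.groupsprops}), the arithmetic subgroup $\Gamma \subset G$ fits into a short exact sequence
\[ 1 \to \Gamma_U \to \Gamma \to \Gamma_L \to 1, \]
where $\Gamma_U := \Gamma \cap U(\bQ)$ is arithmetic in $U$ and $\Gamma_L$ is arithmetic in $L$. By part (ii) of Lemma \ref{lem.groupsprops} applied to this extension, it then suffices to show that $B\Gamma_U$ and $B\Gamma_L$ both lie in $\cat{HFin}$.

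The unipotent case is straightforward: $\Gamma_U$ is a finitely generated torsion-free nilpotent group, so its classifying space is a compact nilmanifold, which is a finite CW complex and hence in $\cat{HFin}$ by Lemma \ref{lem.cwhfin}. For the reductive case, I would first replace $\Gamma_L$ by a torsion-free finite-index subgroup $\Gamma_L'$ via Selberg's lemma, and exhibit a finite CW model for $B\Gamma_L'$. Concretely, $\Gamma_L$ acts properly on the symmetric space $X = L(\bR)/K$ (with $K \subset L(\bR)$ maximal compact), and Borel and Serre construct a contractible bordification $\bar{X}$: a manifold with corners containing $X$ as a dense open subset, on which $\Gamma_L$ acts properly with compact quotient. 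Since $\Gamma_L'$ is torsion-free, its action on $\bar{X}$ is free, so that $\Gamma_L' \backslash \bar{X}$ is a compact manifold with corners providing a finite CW model for $B\Gamma_L'$. This is in $\cat{HFin}$ by Lemma \ref{lem.cwhfin}, and $B\Gamma_L \in \cat{HFin}$ then follows from part (iii) of Lemma \ref{lem.groupsprops}.

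The main obstacle is of course the construction of the Borel--Serre bordification $\bar{X}$ and the verification that it is contractible with compact $\Gamma_L$-quotient; this is the deep content of the original Borel--Serre paper, and I would import it as a black-box input rather than attempt to reprove it. The other mildly delicate point is the arrangement of the Levi decomposition on the level of arithmetic subgroups, but this is routine once one is willing to replace $\Gamma$ by a finite-index subgroup.
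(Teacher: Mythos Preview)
Your argument is correct, but it takes a longer route than the paper. The paper simply cites two facts from Serre's survey \cite{serrearithmetic}: every arithmetic group (reductive or not) has a torsion-free subgroup of finite index, and every torsion-free arithmetic group has a classifying space which is a finite CW complex. These two facts plus Lemma~\ref{lem.cwhfin} and part (iii) of Lemma~\ref{lem.groupsprops} finish the proof in two lines.

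What you have done is to unpack what lies behind Serre's property (5) in the non-reductive case: you split off the unipotent radical via the Levi decomposition, handle the unipotent part by the nilmanifold argument, and handle the reductive Levi factor by invoking the Borel--Serre bordification directly. This is exactly how one would prove the general statement if one only had the original Borel--Serre paper (which treats the reductive case) as input, and it is good that you see the structure of the argument. The cost is that you have to manage the exact sequence $1 \to \Gamma_U \to \Gamma \to \Gamma_L \to 1$ on the level of arithmetic subgroups, which, as you note, requires some care with finite-index passages. The paper avoids all of this by citing the already-packaged general statement; your version is more self-contained but correspondingly longer.
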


\begin{proof} By property (5) of Section 1.3 of \cite{serrearithmetic}, every torsion-free arithmetic group $\Gamma$ has a $B\Gamma$ that is a finite CW complex and using Lemma \ref{lem.cwhfin} we conclude $B\Gamma \in \cat{HFin}$. By property (4) of Section 1.3 of \cite{serrearithmetic} every arithmetic group $G$ has a finite index torsion-free subgroup $\Gamma$, so from part (iii) of Lemma \ref{lem.groupsprops} it follows that any arithmetic group $G$ has $BG \in \cat{HFin}$.
\end{proof}

\subsection{Homotopically finite type spaces} Alternatively we can impose finiteness conditions on the homotopy groups of a space. The following distinction will be useful.

\begin{definition}\begin{itemize}\item We say $X$ is of \emph{homotopically finite type} if it has finitely many path components and each of its path components has the property that $B\pi_1 \in \cat{HFin}$ and $\pi_i$ is finitely generated for $i \geq 2$. We use the notation $X \in \cat{\Pi Fin}$.
\item We say $X$ is of \emph{finite type} if it has finitely many path components and each of its path components has the property that $\pi_1$ is finite and $\pi_i$ is finitely generated for $i \geq 2$.  We use the notation $X \in \cat{Fin}$.\end{itemize}
\end{definition}

By Lemma \ref{lem.classifyingspace}, finite groups have classifying spaces in $\cat{HFin}$, so that $\cat{Fin} \subset \cat{\Pi Fin}$. Using Postnikov towers, we shall prove that $\cat{\Pi Fin} \subset \cat{HFin}$ and that $\cat{HFin} \cap \{\text{$\pi_1$ is finite}\} \subset \cat{Fin}$. If $X$ is a path-connected space, it has a \emph{Postnikov tower with $n$th stage $P_n(X)$} \cite[Section VI.2]{goerssjardine}. This has the following properties: firstly, the homotopy groups of $P_n(X)$ are given by
\[\pi_i( P_n(X)) = \begin{cases} \pi_i(X) & \text{if $i \leq n$,}\\
0 & \text{if $i>n$,}\end{cases}\] 
and secondly there are fiber sequences
\begin{equation}\label{eqn.postnikov} K(\pi_n (X),n) \longto P_n (X) \longto P_{n-1} (X).\end{equation}

\begin{lemma}\label{lem.emhfin}If $A$ is a finitely generated abelian group and $n \geq 1$, then $K(A,n) \in \cat{HFin}$.\end{lemma}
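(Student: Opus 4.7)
The plan is to induct on $n$, treating $n=1$ as the base case and using the path-loop fibration to pass from $n-1$ to $n$ for $n \geq 2$.

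For the base case $n=1$, I would use the structure theorem to write $A \cong \bZ^r \oplus \bigoplus_{j=1}^{k} \bZ/m_j$, so $K(A,1) = BA$ is a finite product of copies of $B\bZ = S^1$ and of $B\bZ/m_j$, where the latter can be modeled as the infinite lens space with finitely many cells in each dimension. By Lemma \ref{lem.cwhfin}, each factor lies in $\cat{HFin}$. To conclude that the product is in $\cat{HFin}$ I would iterate the observation that if $X,Y \in \cat{HFin}$ then the projection $X\times Y \to Y$ is a fiber sequence with fiber $X$, so part (ii) of Lemma \ref{lem.propfinitetypespaces} applies (the restriction of any $\bZ[\pi_1(X\times Y)]$-module $M$ that is finitely generated as an abelian group to $\pi_1(X)$ remains such). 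Equivalently, one can invoke Lemma \ref{lem.groupsprops}(ii) for iterated extensions together with Theorem \ref{thm.arithmetic}, since finitely generated abelian groups are arithmetic.

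For the inductive step, assume $n \geq 2$ and $K(A, n-1) \in \cat{HFin}$. Consider the path-loop fibration
\[ K(A, n-1) \longrightarrow PK(A,n) \longrightarrow K(A,n), \]
whose total space $PK(A,n)$ is contractible. Since $n \geq 2$, the base $K(A,n)$ is simply connected, so there are no local coefficient issues on the base. Taking $M=\bZ$ in the inductive hypothesis, $H_*(K(A,n-1))$ is finitely generated in each degree. The total space is contractible, so $H_*(PK(A,n), p^*M)$ is just $M$ concentrated in degree $0$, which is finitely generated for any finitely generated abelian group $M$. Part (i) of Lemma \ref{lem.propfinitetypespaces} then delivers $K(A,n) \in \cat{HFin}$.

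The only step requiring any care is the base case: one must check that $\cat{HFin}$ is stable under finite products, including the local coefficient version at $n=1$ where $\pi_1(BA) = A$ is nontrivial. This is not an obstacle but rather the reason we package the argument through Lemma \ref{lem.propfinitetypespaces}(ii) rather than through the plain Serre spectral sequence. The inductive step is a formal consequence of the loop-path fibration and causes no difficulty.
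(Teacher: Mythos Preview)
Your proof is correct and follows essentially the same route as the paper: induction on $n$ with the path-loop fibration $K(A,n-1)\to *\to K(A,n)$ and part (i) of Lemma~\ref{lem.propfinitetypespaces} for the inductive step. For the base case the paper simply invokes Theorem~\ref{thm.arithmetic} (finitely generated abelian groups are arithmetic), which you also mention as an alternative to your more explicit structure-theorem argument.
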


\begin{proof}The proof is by induction over $n$. In the initial case $n=1$, we use that $K(A,1) = BA$ is well-known to have a CW model with finitely cells in each dimension, and hence is in $\cat{HFin}$ by Lemma \ref{lem.cwhfin}. For the induction step, apply Lemma \ref{lem.propfinitetypespaces}(i) to the fiber sequence $K(A,n-1) \to * \to K(A,n)$.\end{proof} 

\begin{lemma}\label{lem.pfinhfin} If $X \in \cat{\Pi Fin}$, then $X \in \cat{HFin}$.\end{lemma}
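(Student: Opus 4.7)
The plan is to induct over the Postnikov tower of $X$. Since both $\cat{\Pi Fin}$ and $\cat{HFin}$ are defined componentwise and include a finiteness-of-components hypothesis, we may assume $X$ is path-connected.

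First, I claim that each Postnikov stage $P_n(X)$ lies in $\cat{HFin}$. The base case $n=1$ holds because $P_1(X) \simeq B\pi_1(X)$ is in $\cat{HFin}$ by the definition of $\cat{\Pi Fin}$. For the inductive step, apply Lemma \ref{lem.propfinitetypespaces}(ii) to the Postnikov fiber sequence (\ref{eqn.postnikov})
\[ K(\pi_n(X),n) \to P_n(X) \to P_{n-1}(X). \]
The base is in $\cat{HFin}$ by induction, and the fiber is in $\cat{HFin}$ by Lemma \ref{lem.emhfin} since $\pi_n(X)$ is finitely generated. For $n \geq 2$ the fiber is simply-connected, so any finitely generated $\bZ[\pi_1(P_n(X))]$-module $M$ restricts to a trivial local system on $K(\pi_n(X),n)$, and by the universal coefficient theorem its homology is finitely generated in each degree; this verifies the local coefficient hypothesis of part (ii).

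Second, I transfer finite generation back to $X$ one degree at a time. Fix $k \geq 1$ and a $\bZ[\pi_1(X)]$-module $M$ finitely generated as an abelian group; because $\pi_1(X) \to \pi_1(P_k(X))$ is an isomorphism, we may regard $M$ as a local system on $P_k(X)$ whose pullback to $X$ recovers $M$. Convert $X \to P_k(X)$ into a fibration; the long exact sequence of homotopy groups shows the fiber $F$ is $k$-connected, so in the Serre spectral sequence $H_q(F)$ vanishes for $0 < q \leq k$, and the only entry on the diagonal $p+q=k$ of the $E^2$-page is $E^2_{k,0} = H_k(P_k(X); M)$. A dimension count shows all differentials out of this entry land in zero groups, so $H_k(X; M) \cong H_k(P_k(X); M)$, which is finitely generated by the first step. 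The case $k=0$ follows from $H_0(X;M) = M_{\pi_1(X)}$, a quotient of $M$.

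The only genuine obstacle is the bookkeeping of local coefficients: at each invocation of a spectral sequence one must check that the relevant restriction or pullback of $M$ still lies in the class for which the inductive hypothesis gives finite generation. Once these compatibilities are made explicit, no further input is needed.
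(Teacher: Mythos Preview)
Your proof is correct and follows essentially the same approach as the paper: both induct up the Postnikov tower using Lemma~\ref{lem.emhfin} and part~(ii) of Lemma~\ref{lem.propfinitetypespaces} to show each $P_n(X)\in\cat{HFin}$. You are simply more explicit than the paper about the second step---using the $k$-connectivity of the fibre of $X\to P_k(X)$ and the resulting Serre spectral sequence to identify $H_k(X;M)\cong H_k(P_k(X);M)$---which the paper leaves tacit.
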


\begin{proof}Without loss of generality $X$ is path-connected. Because $P_0(X) \simeq *$, $P_1(X) \simeq K(\pi_1 (X),1) \simeq B\pi_1(X) \in \cat{HFin}$ by assumption. To finish the proof, apply Lemma \ref{lem.emhfin} and Lemma \ref{lem.propfinitetypespaces}(ii) inductively to the fiber sequence (\ref{eqn.postnikov}).\end{proof}

We continue with the proof that $\cat{HFin} \cap \{\text{$\pi_1$ is finite}\} \subset \cat{Fin}$.

\begin{lemma}\label{lem.finitefinprep} If $X \in \cat{HFin}$ and each component is simply-connected, then $X \in \cat{Fin}$.\end{lemma}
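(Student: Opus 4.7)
The plan is to use a Postnikov tower argument combined with the mod-$\cC$ Hurewicz theorem (for $\cC$ the class of finitely generated abelian groups), inducting simultaneously on the homotopy groups and on the Postnikov stages.

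Without loss of generality assume $X$ is path-connected (and hence simply-connected). I will prove by induction on $n \geq 1$ the joint statement:
\begin{equation*}
(\star_n) \qquad \pi_i(X) \text{ is finitely generated for } i \leq n, \quad \text{and} \quad P_n(X) \in \cat{HFin}.
\end{equation*}
The base case $n=1$ is immediate since $X$ is simply-connected, giving $\pi_1(X)=0$ and $P_1(X) \simeq *$.

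For the inductive step, suppose $(\star_{n-1})$ holds with $n \geq 2$. Replace $X \to P_{n-1}(X)$ by a fibration and let $F$ be its homotopy fiber. From the long exact sequence, $F$ is $(n-1)$-connected and $\pi_n(F) \cong \pi_n(X)$. Because $X$ is simply-connected, so is $P_{n-1}(X)$, and hence $\pi_1(P_{n-1}(X))$ acts trivially on $H_*(F)$. By hypothesis $P_{n-1}(X) \in \cat{HFin}$, and by assumption $X \in \cat{HFin}$, so both $H_*(X)$ and $H_*(P_{n-1}(X))$ are finitely generated in each degree. Part (iii) of Lemma \ref{lem.propfinitetypespaces} then yields that $H_*(F)$ is finitely generated in each degree; in particular $H_n(F)$ is finitely generated. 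Since $F$ is $(n-1)$-connected with $n \geq 2$, the classical Hurewicz theorem gives an isomorphism $\pi_n(X) \cong \pi_n(F) \cong H_n(F)$, so $\pi_n(X)$ is finitely generated.

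It remains to upgrade this to $P_n(X) \in \cat{HFin}$. By Lemma \ref{lem.emhfin}, $K(\pi_n(X),n) \in \cat{HFin}$. Applying part (ii) of Lemma \ref{lem.propfinitetypespaces} to the Postnikov fiber sequence
\[
K(\pi_n(X),n) \to P_n(X) \to P_{n-1}(X)
\]
(whose base $P_{n-1}(X)$ lies in $\cat{HFin}$ by the inductive hypothesis) gives $P_n(X) \in \cat{HFin}$, completing the induction. Thus $\pi_i(X)$ is finitely generated for every $i \geq 2$, and since $\pi_1(X)=0$ is finite, $X \in \cat{Fin}$ as required.

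The only substantive step is the fiber-finiteness part (iii) of Lemma \ref{lem.propfinitetypespaces}, which is exactly what allows the Hurewicz translation from homology to homotopy to propagate up the Postnikov tower; everything else is bookkeeping.
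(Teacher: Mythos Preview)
Your proof is correct and follows the same Postnikov-tower induction as the paper, but with one noteworthy difference in implementation. You take the homotopy fiber $F$ of $X \to P_{n-1}(X)$ (the $(n-1)$-connective cover) and invoke part (iii) of Lemma~\ref{lem.propfinitetypespaces} to conclude $H_*(F)$ is finitely generated, then apply the absolute Hurewicz theorem to $F$. The paper instead works with the map $P_{n+1}(X) \to P_n(X)$: it bounds $H_*(P_{n+1}(X))$ in the relevant range directly (using connectivity of $X \to P_{n+1}(X)$ and of $P_{n+1}(X) \to P_n(X)$), then applies the \emph{relative} Hurewicz theorem to identify $H_{n+2}(P_n(X),P_{n+1}(X)) \cong \pi_{n+1}(X)$. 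In particular the paper deliberately avoids part (iii) of Lemma~\ref{lem.propfinitetypespaces}, remarking that ``this case is not used in the paper'' and leaving its proof as a sketch. Your route is slightly more streamlined, at the cost of leaning on a statement the paper chose not to verify in detail; the paper's route is more self-contained within what has actually been proved.
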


\begin{proof} Without loss of generality $X$ is path-connected. Our proof is by induction over $n$ of the statement that $\pi_i$ with $i \leq n$ is finitely generated. The initial case $n = 1$ follows from the fact that $X$ is simply-connected. 
	
Suppose we have proven the case $n$, then we will prove the case $n+1$. We claim that $H_{*}(P_{n+1}(X))$ is finitely generated for $* \leq n+2$. We start with $* = n+1,n+2$: the map $X \to P_{n+1}(X)$ is $(n+2)$-connected (i.e.\ an isomorphism on $\pi_i$ for $i \leq n+1$ and a surjection for $* = n+2$). Thus $H_{*}(X)$ surjects onto $H_{*}(P_{n+1}(X))$ for $* = n+1,n+2$ and the latter are finitely generated. For $* \leq n$, note that $P_{n+1}(X) \to P_n(X)$ is $(n+1)$-connected so $H_*(P_{n+1}(X)) \cong H_*(P_{n}(X))$ for $* \leq n$. By the inductive hypothesis $P_{n}(X)$ is simply-connected with finitely generated higher homotopy groups, so in $\cat{HFin}$ by Lemma \ref{lem.pfinhfin}. The long exact sequence for homology of a pair then tells us that $H_{n+2}(P_n(X),P_{n+1}(X))$ is also finitely generated. The relative Hurewicz theorem completes the proof
\[H_{n+2}(P_n(X),P_{n+1}(X)) \cong  \pi_{n+2}(P_n(X),P_{n+1}(X)) \cong \pi_{n+1}(X).\qedhere\]
\end{proof}

\begin{lemma}\label{lem.finitefin} If $X \in \cat{HFin}$ and $\pi_1$ of each component is finite, then $X \in \cat{Fin}$.\end{lemma}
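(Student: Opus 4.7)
The plan is to reduce to the simply-connected case (Lemma \ref{lem.finitefinprep}) by passing to the universal cover, using the finiteness of $\pi_1$ to transfer the $\cat{HFin}$ hypothesis across the covering map. Without loss of generality $X$ is path-connected; write $G = \pi_1(X)$, which is finite by assumption, and let $\tilde{X} \to X$ be the universal cover.

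The key observation is that because $G$ is finite, the group ring $\bZ[G]$ is a free abelian group of rank $|G|$, and in particular is finitely generated as an abelian group. Regarding $\bZ[G]$ as a $\bZ[G]$-module via left multiplication, the hypothesis $X \in \cat{HFin}$ therefore gives that $H_*(X; \bZ[G])$ is finitely generated in each degree. The standard identification
\[ C_*(X; \bZ[G]) \;\cong\; C_*(\tilde{X}) \otimes_{\bZ[G]} \bZ[G] \;\cong\; C_*(\tilde{X}) \]
then yields $H_*(\tilde{X}; \bZ)$ finitely generated in each degree.

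Since $\tilde{X}$ is simply-connected, any $\bZ[\pi_1(\tilde{X})]$-module is just an abelian group. The universal coefficient theorem for a finitely generated abelian group $M$ gives a short exact sequence
\[ 0 \to H_n(\tilde{X}) \otimes M \to H_n(\tilde{X}; M) \to \mr{Tor}(H_{n-1}(\tilde{X}), M) \to 0 \]
whose outer terms are finitely generated by the previous step, so $\tilde{X} \in \cat{HFin}$. Applying Lemma \ref{lem.finitefinprep} gives $\tilde{X} \in \cat{Fin}$, i.e.\ $\pi_i(\tilde{X})$ is finitely generated for $i \geq 2$. Since $\pi_i(X) \cong \pi_i(\tilde{X})$ for $i \geq 2$ and $\pi_1(X) = G$ is finite by hypothesis, $X \in \cat{Fin}$.

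There is no serious obstacle here; the substantive content of the argument is the use of $M = \bZ[G]$ as a coefficient system, which is permitted precisely because $G$ is finite. An alternative route via part (iii) of Lemma \ref{lem.propfinitetypespaces} applied to $\tilde{X} \to X \to BG$ would fail because the deck action of $G$ on $H_*(\tilde{X})$ need not be trivial; the transfer-style argument above sidesteps this difficulty.
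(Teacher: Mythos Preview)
Your proof is correct and follows essentially the same route as the paper: pass to the universal cover $\tilde{X}$, use that $\bZ[G]$ is finitely generated as an abelian group when $G$ is finite to deduce $\tilde{X} \in \cat{HFin}$, and then invoke Lemma~\ref{lem.finitefinprep}. The only cosmetic difference is that the paper uses the identification $H_*(\tilde{X};M) \cong H_*(X;\bZ[G]\otimes M)$ for all $M$ at once, whereas you first treat $M=\bZ$ and then appeal to the universal coefficient theorem; these are equivalent.
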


\begin{proof} Without loss of generality $X$ is path-connected. Let $\tilde{X}$ denote the universal cover of $X$. We start by proving $\tilde{X} \in \cat{HFin}$. We have $H_*(\tilde{X};M) \cong H_*(X;\bZ[\pi_1(X)] \otimes M)$, so $\tilde{X} \in \cat{HFin}$ follows from the assumptions that $X \in \cat{HFin}$ and that $\pi_1(X)$ is finite. The higher homotopy groups of $\tilde{X}$ are those of $X$, so it suffices to show that $\pi_i(\tilde{X})$ is finitely generated, which follows from Lemma \ref{lem.finitefinprep}.\end{proof}



If $X$ is path-connected, let $X\langle n \rangle$ denote the \emph{$n$-connective cover}, the homotopy fiber of $X \to P_n(X)$. This is well-defined  up to homotopy, and has the following property:
\[\pi_i( X\langle n \rangle) = \begin{cases} 0 & \text{if $i \leq n$,}\\
\pi_i (X) & \text{if $i>n$.}\end{cases}\]

\begin{lemma}\label{lem.connectivecover} If $X$ is path-connected, $\pi_1(X)$ is finite and $X \in \cat{HFin}$, then $X\langle n \rangle \in \cat{Fin}$ for all $n \geq 0$. \end{lemma}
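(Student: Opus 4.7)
The plan is to reduce this to Lemma \ref{lem.finitefin} and then read off the homotopy groups of $X\langle n\rangle$ from the defining property of the connective cover.

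First I would invoke Lemma \ref{lem.finitefin}: since $X \in \cat{HFin}$ and $\pi_1(X)$ is finite, we conclude that $X \in \cat{Fin}$, i.e.\ $\pi_1(X)$ is finite (already known) and $\pi_i(X)$ is finitely generated for all $i \geq 2$.

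Next I would examine the homotopy groups of $X\langle n\rangle$. For $n = 0$, we have $P_0(X) \simeq *$ (since $X$ is path-connected), so $X\langle 0\rangle \simeq X$ and we are done by the previous step. For $n \geq 1$, the connective cover $X\langle n\rangle$ is by construction $n$-connected, so it is path-connected with $\pi_1(X\langle n\rangle) = 0$, which is finite. By the stated property of connective covers,
\[
\pi_i(X\langle n\rangle) = \begin{cases} 0 & \text{if } i \leq n, \\ \pi_i(X) & \text{if } i > n, \end{cases}
\]
so each higher homotopy group is either trivial or a finitely generated group (by the first step). Hence $X\langle n\rangle \in \cat{Fin}$.

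There is no real obstacle here: the work was already done in Lemma \ref{lem.finitefin}, and the present lemma is essentially a repackaging which emphasizes that once one controls $\pi_1$ and $H_*$, the connective covers automatically live in $\cat{Fin}$, a fact that will be convenient to cite later when studying fibers of maps of classifying spaces.
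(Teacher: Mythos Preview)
Your proof is correct and follows essentially the same approach as the paper's: invoke Lemma~\ref{lem.finitefin} to upgrade $X \in \cat{HFin}$ to $X \in \cat{Fin}$, then read off the homotopy groups of $X\langle n\rangle$ from the defining property of the connective cover. The paper's version is slightly terser (it does not split into the cases $n=0$ and $n\geq 1$), but the argument is the same.
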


\begin{proof}By Lemma \ref{lem.finitefin} we can replace $\cat{HFin}$ by $\cat{Fin}$ and in particular the homotopy groups of $X$ are finitely generated. Since $\pi_i(X \langle n \rangle)$ is given by $0$ for $i \leq n$ and $\pi_i(X)$ for $i > n$, it is finitely generated.
\end{proof}

\begin{example}\label{exam.bo2nn} Since $\pi_0(O(n)) \cong \bZ/2\bZ$, from Example \ref{exam.bon} and Lemma \ref{lem.finitefin} we conclude that $BO(n) \in \cat{Fin}$. Using Lemma \ref{lem.connectivecover} we conclude that $BO(2n)\langle n \rangle \in \cat{Fin}$ as well.\end{example}

For $\cat{\Pi Fin}$ or $\cat{Fin}$, the long exact sequence of homotopy groups is useful: if $p \colon E \to B$ be a Serre fibration with base point $e_0 \in E$ and $F$ the fiber of $p$ over $p(e_0)$, then there is a long exact sequence 
\begin{equation}\label{eqn.les}\cdots \longto \pi_1(F,e_0) \longto \pi_1(E,e_0) \longto \pi_1(B,p(e_0)) \longto \pi_0(F) \longto \pi_0(E) \longto \pi_0(B),\end{equation}
where the right-most three entries are pointed sets, with path components containing $e_0$ and $p(e_0)$ providing the base points, the next three entries are groups, and the remaining entries are abelian groups.

\begin{lemma}\label{lem.les} Let $p \colon E \to B$ be a Serre fibration, $e_0 \in E$ a base point and $F = p^{-1}(p(e_0))$. Let $F_0$, $E_0$ and $B_0$ denote the path components of $F$, $E$ and $B$ containing $e_0$, $e_0$ and $p(e_0)$ respectively. Then the following holds for $\pi_i$ for $i \geq 2$:
\begin{enumerate}[\indent (i)]
\item Bases: if $F_0$ has finitely generated $\pi_i$ for $i \geq 2$ and $E_0$ has finitely generated $\pi_i$ for $i \geq 3$, then $B_0$ has finitely generated $\pi_i$ for $i \geq 3$. $\pi_2(B_0)$ is finitely generated if additionally $\pi_2(E_0)$ is finitely generated and either 
\begin{enumerate}[(a)]
\item $\pi_1(F_0)$ is finite,
\item $\pi_1(F_0)$ is a finitely generated abelian group, or
\item $\pi_1(F_0)$ is finitely generated and $\pi_1(E_0)$ is finite.
\end{enumerate}
\item Total spaces: if $F_0$, $B_0$ have finitely generated $\pi_i$ for $i \geq 2$ then so does $E_0$.
\item Fibers: if $E_0$ has finitely generated $\pi_i$ for $i \geq 2$ and $B_0$ has finitely generated $\pi_i$ for $i \geq 3$, then $F_0$ has finitely generated $\pi_i$ for $i \geq 2$.
\end{enumerate}

Furthermore, the following holds for $\pi_1$:
\begin{enumerate}[\indent (i')]
\item Bases: if $\pi_1(E_0)$ is finitely generated and $\pi_0(F)$ is finite, then $\pi_1(B_0)$ is finitely generated.
\item Total spaces: $\pi_1(E_0)$ is finitely generated, if $\pi_1(F_0)$ and $\pi_1(B_0)$ are finitely generated and either 
\begin{enumerate}[(a)]
\item $\pi_1(B_0)$ is finite,
\item $\pi_1(B_0)$ is abelian, or
\item $\pi_0(F)$ is finite.
\end{enumerate} 
\item Fibers: $\pi_1(F_0)$ is finitely generated, if $\pi_2(B_0)$ and $\pi_1(E_0)$ are finitely generated and either 
\begin{enumerate}[(a)]
\item $\pi_1(E_0)$ is finite,
\item $\pi_1(E_0)$ is abelian, or
\item $\pi_1(B_0)$ is finite.
\end{enumerate}
\end{enumerate}
\end{lemma}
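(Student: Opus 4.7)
The plan is to work systematically with the long exact sequence (\ref{eqn.les}) together with three standard facts: (1) subgroups of finitely generated nilpotent groups are finitely generated, since such groups are Noetherian; (2) finite-index subgroups of finitely generated groups are finitely generated (Schreier), and conversely a group with a finitely generated finite-index subgroup is itself finitely generated; (3) an extension of one finitely generated group by another is finitely generated. For the $\pi_i$ claims with $i\geq 2$, the LES is a sequence of abelian groups, so each of (ii), (iii), and the main part of (i) follows at once by writing the relevant $\pi_i$ as an extension of a subgroup of one neighbor by a quotient of the other, both finitely generated by hypothesis.

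For the additional clause in (i) concerning $\pi_2(B_0)$, the segment
\[
\pi_2(E_0)\to\pi_2(B_0)\to\pi_1(F_0)\to\pi_1(E_0)
\]
exhibits $\pi_2(B_0)$ as an extension of $K:=\ker(\pi_1(F_0)\to\pi_1(E_0))$ by a quotient of the finitely generated $\pi_2(E_0)$. Under hypothesis (a) the group $K\subseteq\pi_1(F_0)$ is finite; under (b) it is a subgroup of the Noetherian group $\pi_1(F_0)$; under (c) $\pi_1(F_0)/K$ injects into the finite $\pi_1(E_0)$, so $K$ has finite index in the finitely generated $\pi_1(F_0)$ and Schreier applies.

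For the $\pi_1$ statements I would use the terminal segment
\[
\pi_2(B_0)\to\pi_1(F_0)\to\pi_1(E_0)\to\pi_1(B_0)\to\pi_0(F),
\]
exact as a sequence of pointed sets. The image of $\pi_1(E_0)\to\pi_1(B_0)$ is the stabilizer of $[e_0]\in\pi_0(F)$ under the monodromy action and so has index at most $|\pi_0(F)|$ in $\pi_1(B_0)$. This gives (i') immediately: the image is a finitely generated (being a quotient of $\pi_1(E_0)$) finite-index subgroup of $\pi_1(B_0)$. For (ii'), $\pi_1(E_0)$ is an extension of $Q:=\mr{image}(\pi_1(E_0)\to\pi_1(B_0))$ by a quotient of $\pi_1(F_0)$; the quotient is finitely generated, and $Q$ is finitely generated in case (a) (it is finite), (b) (it lies in the Noetherian $\pi_1(B_0)$), and (c) (it has finite index in the finitely generated $\pi_1(B_0)$ by the observation above). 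For (iii'), $\pi_1(F_0)$ is an extension of $Q':=\ker(\pi_1(E_0)\to\pi_1(B_0))$ by the finitely generated abelian quotient of $\pi_2(B_0)$, and $Q'$ is finitely generated in (a) and (b) because $\pi_1(E_0)$ is finite, respectively Noetherian, and in (c) because $\pi_1(E_0)/Q'$ injects into the finite $\pi_1(B_0)$, placing $Q'$ as a finite-index subgroup of the finitely generated $\pi_1(E_0)$. The only real subtlety is keeping the pointed-set exactness straight at the $\pi_0$ boundary and recognizing that (a), (b), (c) are precisely the minimal natural hypotheses needed to force a subgroup or quotient of a possibly non-abelian finitely generated group to remain finitely generated.
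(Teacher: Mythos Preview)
Your argument is correct and follows the same approach as the paper: use the long exact sequence, handle the $\pi_i$ with $i\geq 2$ via closure of finitely generated abelian groups under subgroups, quotients and extensions, and for the low-degree cases isolate exactly the three group-theoretic facts (finite, Noetherian/nilpotent, finite index via Schreier) that make the relevant subgroup finitely generated. If anything you are slightly more explicit than the paper, particularly in spelling out the orbit--stabilizer interpretation of the map $\pi_1(B_0)\to\pi_0(F)$ to bound the index of $\mr{im}(\pi_1(E_0)\to\pi_1(B_0))$; the paper simply asserts that the same three facts suffice for (i$'$)--(iii$'$) without writing this step out.
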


\begin{proof}Since $\pi_i$ for $i \geq 1$ depends only the component containing the base point, (\ref{eqn.les}) equals
\[\cdots \longto \pi_2(B_0,p(e_0)) \longto \pi_1(F_0,e_0) \longto \pi_1(E_0,e_0) \longto \pi_1(B_0,p(e_0)) \to \pi_0(F).\]

Parts (ii), (iii) and the first claim in (i) follow from this and the fact that in the category of abelian groups, the class of finitely generated abelian groups is closed under taking subgroups, quotients and extensions. This argument fails for the second claim in part (i), i.e.\ for  $\pi_2(B_0)$, because it only says that $\pi_2(B_0)$ is an extension of a subgroup of $\pi_1(F_0)$ by a quotient of $\pi_2(E_0)$, but subgroups of finitely generated non-abelian groups need not be finitely generated. The remainder of (i) gives conditions to guarantee this problem does not occur, using the following facts: 
\begin{enumerate}[(a)]
\item a subgroup of a finite group is finite,
\item a subgroup of a finitely generated abelian group is a finitely generated abelian group,
\item a group is finitely generated if and only if a finite index subgroup is finitely generated.
\end{enumerate}
Parts (i'), (ii') and (iii') are obtained by applying these same facts and using that finitely generated groups are closed under extensions.\end{proof}

\subsection{Section spaces} We continue with examples of spaces that can be proven to be in $\cat{Fin}$ or $\cat{\Pi Fin}$. If $E \to B$ is a Serre fibration, let $\Gamma(E,B)$ denote the space of sections in the compact-open topology. Given a section $s$ and a subspace $A \subset B$, then $\Gamma(E,B;A) \subset \Gamma(E,B)$ denotes the subspace of sections that are equal to $s$ on $A$.

\begin{lemma}\label{lem.sectionspace} Let $E \to B$ be a Serre fibration with fiber $F$ and section $s \colon B \to E$. Suppose that $B$ is a path-connected finite CW-complex, $A \subset B$ is a non-empty subcomplex of $B$, and each component of $F$ has finitely generated homotopy groups. Then each component of $\Gamma(E,B;A)$ has finitely generated homotopy groups. The condition that $A \neq \varnothing$ can be dropped if $\pi_1$ of each component of $F$ is finite or abelian, or $F$ has finitely many components.\end{lemma}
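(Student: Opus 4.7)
I would prove the first assertion by induction on the number of cells of the relative CW-pair $(B,A)$. Since $A$ is non-empty and $B$ is path-connected, I may choose a relative CW-structure with no relative $0$-cells, so every attached cell has dimension $k \geq 1$. The base case $B = A$ is trivial, as $\Gamma(E,B;A) = \{s|_A\}$ is a point.

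For the inductive step, write $B' = B \cup_\phi D^k$ with $\phi\colon S^{k-1} \to B$ and $k \geq 1$, and consider the restriction map
\[ r \colon \Gamma(E, B'; A) \longrightarrow \Gamma(E, B; A), \]
which is a Serre fibration. Pulling back $E$ along the characteristic map $\chi\colon D^k \to B'$ gives a fibration over a contractible base, which is trivializable to $D^k \times F_{\chi(x_0)}$ for $x_0$ the center of the cell. Under this trivialization, the fiber of $r$ over $\tau$ identifies with the space of maps $D^k \to F_{\tau(x_0)}$ extending a prescribed boundary, which is either empty or a torsor for $\mr{Map}_*(S^k, F_{\tau(x_0)}) \simeq \Omega^k F_{\tau(x_0)}$. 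Its homotopy groups are $\pi_i(\Omega^k F_{\tau(x_0)}) \cong \pi_{i+k}(F_{\tau(x_0)})$, finitely generated by hypothesis. Fixing a component $X_0 \subset \Gamma(E, B'; A)$ mapping to a component $Y_0 \subset \Gamma(E, B; A)$, I obtain a Serre fibration $X_0 \to Y_0$ whose fiber $F_0$ is a component of $\Omega^k F_{\tau(x_0)}$, and Lemma \ref{lem.les}(ii) yields that $\pi_i(X_0)$ is finitely generated for $i \geq 2$.

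Finite generation of $\pi_1(X_0)$ is the main technical obstacle. Here I would apply Lemma \ref{lem.les}(ii'): both $\pi_1(F_0) \cong \pi_{k+1}(F_{\tau(x_0)})$ (abelian since $k \geq 1$) and $\pi_1(Y_0)$ (by induction) are finitely generated. Verifying one of the side conditions (a)--(c) is where care is needed; the connecting map $\pi_1(Y_0) \to \pi_0(F_0) \cong \pi_k(F_{\tau(x_0)})$ is a homomorphism to a finitely generated abelian group thanks to the loop-space structure of the fiber, and I would argue that its kernel is finitely generated by an appropriate strengthening of the inductive hypothesis (tracking enough algebraic structure on the sections spaces) and the fact that each intermediate fibration has a loop-space fiber.

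To drop the non-emptyness of $A$, choose a $0$-cell $b_0 \in B$ and apply the evaluation Serre fibration
\[ \Gamma(E, B; \{b_0\}) \longrightarrow \Gamma(E, B) \longrightarrow F_{b_0}. \]
The fiber has finitely generated homotopy groups by the first part of the lemma (with $A = \{b_0\}$), and $F_{b_0}$ does by hypothesis. Lemma \ref{lem.les}(ii) then gives $\pi_i$ finitely generated for $i \geq 2$, and Lemma \ref{lem.les}(ii') gives $\pi_1$ via condition (a) or (b) when $\pi_1$ of each component of $F$ is assumed finite or nilpotent, respectively. If instead $F$ has only finitely many components, then $\pi_0(F_{b_0})$ is finite, and $\Gamma(E, B)$ has only finitely many components coming from each component of $F_{b_0}$, reducing the claim componentwise to the previous cases.
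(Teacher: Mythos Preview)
Your inductive framework matches the paper's, but there is a genuine gap in the inductive step, precisely where you flag that ``care is needed.'' You set up the restriction fibration $\Gamma(E,B';A) \to \Gamma(E,B;A)$ with fiber $\Omega^k F$, so that the inductive hypothesis controls the \emph{base} and the loop space sits in the \emph{fiber}. But the side conditions (a)--(c) in Lemma~\ref{lem.les}(ii') concern $\pi_1$ of the base or $\pi_0$ of the fiber; none is available here, since $\pi_1(\Gamma(E,B;A))$ need not be finite or nilpotent, and $\pi_0(\Omega^k F)\cong\pi_k(F)$ is only finitely generated, not finite. Your proposed fix---strengthening the induction to control the kernel of the connecting map $\pi_1(Y_0)\to\pi_k(F)$---is left vague, and there is no reason it should work as stated: a normal subgroup of a finitely generated group with finitely generated abelian quotient need not itself be finitely generated.

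The paper avoids this by running the restriction the other way. Rather than restricting to the smaller complex, it restricts to the newly attached cell, obtaining
\[\Gamma(E,B;A\cup D^d)\longrightarrow\Gamma(E,B;A)\longrightarrow\Gamma(E|_{D^d},D^d;S^{d-1})\simeq\Omega^d F.\]
Now the loop space is the \emph{base}, so $\pi_1(\text{base})\cong\pi_{d+1}(F)$ is abelian and condition (b) of Lemma~\ref{lem.les}(ii') applies directly; the inductive hypothesis controls the fiber $\Gamma(E,B;A\cup D^d)$, which has one fewer cell outside the constraint set. This reversal is the key move you are missing. (Your device of choosing a relative CW-structure without $0$-cells is fine and slightly slicker than the paper's separate treatment of $0$-cells.) Your argument for the second assertion is essentially the paper's for the finite/nilpotent $\pi_1$ hypotheses; for the finitely-many-components hypothesis, ``reducing componentwise to the previous cases'' is not a valid reduction---the paper instead invokes condition (c) directly.
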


\begin{proof}We prove the first part by induction over the number $k$ of cells in $B$ that are not in $A$. In the initial case $k=0$, $\Gamma(E,B;B) = \{s\}$. For the induction step, suppose we have proven the case $k$, then we will prove the case $k+1$. That is, we have $A' = A \cup_{S^{d-1}} D^d$ where there are $k$ cells of $B$ not in $A'$. The induction hypothesis says the claim is true for $A'$, and we want to prove it for $A$.

There are two cases, the first of which is $d \geq 1$. Restriction of sections to $D^d$ fits into a fiber sequence
\[\Gamma(E,B;A \cup D^d) \longto \Gamma(E,B;A) \longto \Gamma(E|_{D^d},D^d;S^{d-1}),\]
with fiber taken over $s|_{D^d} \in \Gamma(E|_{D^d},D^d;S^{d-1})$.  We apply the induction hypothesis to the fiber, which says each component of $\Gamma(E,B;A \cup D^d)$ has finitely generated $\pi_i$ for $i \geq 1$. The base $\Gamma(E|_{D^d},D^d;S^{d-1})$ is equivalent to $\Omega^d F$, whose components have finitely generated $\pi_i$ for $i \geq 1$ by our hypothesis. The result now follows from the parts (ii) and (ii') of Lemma \ref{lem.les}. For part (ii') we use that condition (b) is satisfied: $\pi_1(\Omega^d F)$ is abelian if $d \geq 1$.

The second case is $d=0$, so that $A' = A \sqcup D^0$. Since $B$ is path-connected, by adding some $1$-cells to $A'$ we can obtain a subcomplex $A''$ of $B$ containing $A'$ such that the inclusion $A \hookrightarrow A''$ is a homotopy equivalence. This implies that the inclusion map $\Gamma(E,B;A'') \to \Gamma(E,B,A)$ is a weak equivalence, which completes the proof since $A''$ has more cells than $A$ and hence we can apply the inductive hypothesis. 

For the second claim, we consider the fiber sequence
\[\Gamma(E,B;D^0) \longto \Gamma(E,B) \longto E|_{D^0},\]
with fiber taken over $s \in E|_{D_0}$. By the first part, each component of $\Gamma(E,B;D^0)$ has finitely generated homotopy groups. Furthermore, the base is equivalent to $F$. Thus the result follows part (ii) and (ii') of Lemma \ref{lem.les}, using that condition (a), (b) or (c) holds by assumption.
\end{proof}
	
\section{Self-embeddings}\label{sec.selfemb} In this section we  study spaces of self-embeddings. Let $M$ be a smooth $n$-dimensional connected manifold with non-empty boundary $\partial M$ and an embedded disk $D^{n-1} \subset \partial M$. Let $\mr{Emb}_{\half\partial}(M)$ be the space of embeddings $M \hookrightarrow M$ that are the identity on $ \partial M \backslash \mr{int}(D^{n-1})$. They do \emph{not} need to take the entire boundary to itself. There is an inclusion $\mr{Diff}_\partial(M) \hookrightarrow \smash{\mr{Emb}_{\half\partial}}(M)$ and its image is contained in the following subspace:

\begin{definition}We let $\smash{\mr{Emb}^{\cong}_{\half\partial}}(M)$ denote the subspace of $\mr{Emb}_{\half\partial}(M)$ consisting of those embeddings $e$ that are isotopic through embeddings $e_t$ that are the identity on $\partial M \backslash \mr{int}(D^{n-1})$ to a diffeomorphism $e_1$ of $M$ which is the identity on $\partial M$.\end{definition}

That is, $\smash{\mr{Emb}^{\cong}_{\half\partial}}(M)$ is the union of the path components of $\smash{\mr{Emb}_{\half\partial}}(M)$ in the image of $\pi_0(\mr{Diff}_\partial(M))$. Composition gives $\smash{\mr{Emb}^{\cong}_{\half\partial}}(M)$ the structure of a topological monoid, and hence its connected components are a monoid. However, since every path component is represented by a diffeomorphism and hence has an inverse, $\pi_0 (\smash{\mr{Emb}^{\cong}_{\half \partial}}(M))$ is in fact a group. The main result of this section is the following:

\begin{theorem}\label{thm.emb} If $n \geq 6$, $M$ is $2$-connected and $\partial M = S^{n-1}$, then $B\mr{Emb}^{\cong}_{\half\partial}(M) \in \cat{\Pi Fin}$ and hence $\cat{HFin}$.\end{theorem}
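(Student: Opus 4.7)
The plan is to combine Weiss's embedding calculus with Sullivan's arithmeticity theorem for simply-connected mapping class groups. Since $\pi_i(B\mr{Emb}^{\cong}_{\half\partial}(M)) = \pi_{i-1}(\mr{Emb}^{\cong}_{\half\partial}(M))$ for $i \geq 1$ and $B\mr{Emb}^{\cong}_{\half\partial}(M)$ is path-connected, proving the theorem reduces to showing (a) $\pi_i(\mr{Emb}^{\cong}_{\half\partial}(M))$ is finitely generated for each $i \geq 1$, and (b) $B\pi_0(\mr{Emb}^{\cong}_{\half\partial}(M)) \in \cat{HFin}$.

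For (a), observe that every component of $\mr{Emb}^{\cong}_{\half\partial}(M)$ is a component of the larger space $\mr{Emb}_{\half\partial}(M)$, so it suffices to establish finite generation of higher homotopy groups for components of the latter. The main step is to apply the Weiss Taylor tower to $\mr{Emb}_{\half\partial}(M)$. Its successive fibers $L_k \to T_k \mr{Emb}_{\half\partial}(M) \to T_{k-1} \mr{Emb}_{\half\partial}(M)$ can be modeled as section spaces of bundles over compactified ordered configuration spaces of $k$ points in $M$, with fibers that are iterated loop spaces on finite CW-complexes built from the tangent bundle of $M$. Both base and fiber then have finite CW type with finitely generated homotopy groups, so Lemma \ref{lem.sectionspace} shows each component of $L_k$ has finitely generated homotopy groups in every degree. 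Iterating Lemma \ref{lem.les}(ii) along the tower then yields finite generation for each component of $T_k \mr{Emb}_{\half\partial}(M)$. Because $M$ is $2$-connected with $\partial M = S^{n-1}$, its handle dimension relative to the boundary is at most $n-3$, so the Goodwillie-Klein excision estimates force the map $\mr{Emb}_{\half\partial}(M) \to T_k \mr{Emb}_{\half\partial}(M)$ to have connectivity growing linearly in $k$; fixing $i$ and choosing $k$ large then shows $\pi_i(\mr{Emb}_{\half\partial}(M))$ is finitely generated.

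For (b), invoke Sullivan's arithmeticity theorem: because $M$ is simply-connected of dimension $n \geq 6$, $\pi_0(\mr{Diff}_\partial(M))$ is commensurable with an arithmetic group, and Theorem \ref{thm.arithmetic} combined with Lemma \ref{lem.groupsprops}(iii) yield $B\pi_0(\mr{Diff}_\partial(M)) \in \cat{HFin}$. The forgetful map $\pi_0(\mr{Diff}_\partial(M)) \to \pi_0(\mr{Emb}^{\cong}_{\half\partial}(M))$ is surjective by the very definition of $\mr{Emb}^{\cong}$, and the long exact sequence of the Weiss fiber sequence (\ref{eqn.weissfib}) identifies its kernel as a quotient of the finite group $\pi_0(\mr{Diff}_\partial(D^n)) \cong \Theta_{n+1}$. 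Thus $\pi_0(\mr{Emb}^{\cong}_{\half\partial}(M))$ differs from an arithmetic group by finite groups, and parts (i) and (iii) of Lemma \ref{lem.groupsprops} complete (b).

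The main obstacle is the technical input from embedding calculus: producing the Taylor layers in section-space form with the correct fiber, and verifying the Goodwillie-Klein connectivity bound from only a $2$-connectedness hypothesis on $M$. A secondary subtlety is that Sullivan's theorem is classically stated for closed simply-connected manifolds, so care is needed to transfer it to $\mr{Diff}_\partial(M)$ with $\partial M = S^{n-1}$; this should follow by capping off the boundary with a disk and comparing mapping class groups via the restriction-to-disk fibration.
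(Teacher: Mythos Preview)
Your overall strategy matches the paper's: split into the identity component (handled by embedding calculus) and $\pi_0$ (handled by Sullivan/Triantafillou arithmeticity after capping off the boundary). Part (b) is essentially identical to the paper's Proposition~\ref{prop.embpi0hfin}, including the reduction to the closed manifold $N = M \cup_{S^{n-1}} D^n$.

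There is, however, a genuine gap in part (a). You invoke only Lemma~\ref{lem.les}(ii), which controls $\pi_i$ for $i \geq 2$ of the total space of a fibration. But you also need $\pi_1(\mr{Emb}^{\cong}_{\half\partial}(M)) = \pi_2(B\mr{Emb}^{\cong}_{\half\partial}(M))$ to be finitely generated, and for that you must climb the tower at the $\pi_1$ level, which requires Lemma~\ref{lem.les}(ii'). That part has an extra hypothesis: one of $\pi_1$ of the base being finite or nilpotent, or $\pi_0$ of the fiber being finite. None of these is automatic for the layers $L_k \to T_k \to T_{k-1}$ when $k \geq 2$. The paper handles this by proving separately (Lemma~\ref{lem.tkhspacenilpotent}) that the identity component of each $T_k(\mr{Emb}_{\half\partial}(M))$ has nilpotent $\pi_1$; the argument uses the configuration-category model of embedding calculus to exhibit $T_k$ as a homotopy pullback of $H$-spaces. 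This is not a formality and is the reason the paper introduces the second model at all.

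A smaller point: your description of the layer fibers as ``iterated loop spaces on finite CW-complexes built from the tangent bundle of $M$'' is accurate only for $k=1$ (immersions, via Smale--Hirsch). For $k \geq 2$ the fiber is the total homotopy fiber of a cube of ordered configuration spaces $F_I(M)$, and one must argue separately that this is simply-connected with finitely generated homotopy groups (the paper uses the Goodwillie--Klein connectivity estimate and Fulton--MacPherson compactifications). This does not break your argument, but the description as written is not correct.
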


\begin{proof}Proposition \ref{prop.embpi0hfin} says that the classifying space of the group of components is in $\cat{HFin}$ and Proposition \ref{prop.embidhfin}  that the identity component is in $\cat{\Pi Fin}$; thus $B\mr{Emb}^{\cong}_{\half\partial}(M) \in \cat{\Pi Fin}$.\end{proof}

\subsection{Variations of the definitions} Depending on the result to be proven, it may useful to work with alternative models for $\mr{Emb}_{\half \partial}(M)$. 

\subsubsection{Embeddings rel boundary} In an embedding space we may change the domain or target up to isotopy equivalence rel boundary without changing its homotopy type. Let $M$ and $N$ be $n$-dimensional manifolds with codimension zero submanifold $K$ contained in both $\partial M$ and $\partial N$, then we define $\mr{Emb}_K(M,N)$ to be the space of embeddings $M \hookrightarrow N$ that are the identity on $K$. Thus $\smash{\mr{Emb}^{\cong}_{\half\partial}}(M)$ is the case $M = N$ and $K = \partial M \setminus \mr{int}(D^{n-1})$. If $M'$ is another $n$-dimensional manifold such that $K$ is a codimension zero submanifold of $\partial M$, then we say that $M'$ is \emph{isotopy equivalent to $M$ rel $K$} if there are embeddings $M' \hookrightarrow M$ and $M \hookrightarrow M'$ rel $K$ which are inverse up to isotopy rel $K$.

\begin{lemma}If $M'$ is isotopy equivalent to $M$ rel $K$, and $N'$ is isotopy equivalent to $N$ rel $K$, then $\mr{Emb}_K(M,N) \simeq \mr{Emb}_{K}(M',N')$.\end{lemma}

\begin{proof}We give the proof for replacing $M$ by $M'$, leaving the similar proof for replacing $N$ by $N'$ to the reader. Let $f \colon M \hookrightarrow M'$ and $g \colon M' \hookrightarrow M$ denote the embeddings, then precomposition gives two maps
	\[f^* \colon \mr{Emb}_K(M',N) \longto \mr{Emb}_K(M,N) \qquad \text{and} \qquad g^* \colon \mr{Emb}_K(M,N) \longto \mr{Emb}_K(M',N),\]
and the isotopies $g \circ f \sim \mr{id}_{M}$ and $f \circ g \sim \mr{id}_{M'}$ induce homotopies $f^* \circ g^* \sim \mr{id}_{\mr{Emb}_K(M,N)}$ and $g^* \circ f^* \sim \mr{id}_{\mr{Emb}_K(M',N)}$.	
\end{proof}

Using this we may replace $\mr{Emb}_{\half \partial}(M)$ by a space of embeddings rel the entire boundary: let $M^\ast \coloneqq M \setminus \mr{int}(D^{n-1})$, which has boundary $\partial M^\ast = \partial M \setminus \mr{int}(D^{n-1})$. 

\begin{lemma}$M$ and $M^\ast$ are isotopy equivalent rel $\partial M \setminus \mr{int}(D^{n-1})$.\end{lemma}

\begin{proof}One of the embeddings is given by the inclusion $M^\ast \hookrightarrow M$. For the other we pick a collar $\partial M \times [0,1) \hookrightarrow M$, a smooth function $\lambda \colon \partial M \to [0,1)$ which equals $0$ on $\partial M \setminus \mr{int}(D^{n-1})$ and is strictly positive on $\mr{int}(D^{n-1})$, and a family $\rho_s \colon [0,1) \to [0,1)$ for $s \in [0,1)$ of embeddings such that $\rho_s([0,1)) = [s,1)$ and $\rho_s$ is the identity near $1$. Then there is an embedding $M \hookrightarrow M^\ast$ given by
\[m \longmapsto \begin{cases} (m',\rho_{\lambda(m')}(t)) & \text{if $m = (m',t') \in \partial M \times [0,1)$,}\\
m & \text{otherwise.}\end{cases}\]
By linearly interpolating $\lambda$ to the function that is constant equal to $0$ one obtains the desired isotopies.
\end{proof}

Using the previous lemma we conclude that:

\begin{lemma}\label{lem.embweqbdy} There is a weak equivalence $\mr{Emb}_{\half \partial}(M) \simeq \mr{Emb}_{\partial}(M^\ast,M^\ast)$.\end{lemma}

\subsubsection{Adding semi-infinite collars} 
Let $M_\propto$ be the non-compact manifold  (the symbol $\propto$ is meant to evoke a half-open $\infty$) obtained by gluing a semi-infinite collar to $\partial M$:
\[M_\propto \coloneqq M \cup_{(\partial M \times \{0\})} (\partial M \times [0,\infty)).\]
It has a submanifold with corners
\[M'_\propto \coloneqq M \cup_{(\partial M \backslash \mr{int}(D^{n-1})) \times \{0\}} (\partial M \backslash \mr{int}(D^{n-1})) \times [0,\infty).\] 

We describe two variants of $\mr{Emb}_{\half \partial}(M)$
\[\mr{Emb}^{\cong}_{\half\partial}(M) \hookrightarrow \mr{Emb}^{\cong,\mr{im}}_{\half\partial}(M'_\propto,M_\propto) \hookrightarrow \mr{Emb}^{\cong}_{\half\partial}(M'_\propto,M_\propto)\]
such that the first inclusion is a homeomorphism and the second is a weak equivalence.

\begin{figure}
\begin{tikzpicture}
	\draw (5,-0.6) -- (2,-0.6) to[out=180,in=0] (0,-1) to[out=180,in=-90] (-1,0) to[out=90,in=180] (0,1) to[out=0,in=180] (2,0.6) -- (5,0.6);
	\draw (.4,0) to[out=-90,in=-90] (-.4,0);
	\draw (.3,-0.15) to[out=90,in=90] (-.3,-0.15);
	\draw [dotted] (2,0.6) to[out=0,in=0] (2,-0.6);
	\draw [densely dotted] (2,0.6) to[out=180,in=180] (2,-0.6);
	\node at (0,-1.3) {$M$};
	\node at (2,-.95) {$\partial M$};
	\node at (4,0) {$\partial M \times [0,\infty)$};
	\node at (2,1.5) {$M_\propto$};
	\draw (1.55,.2) -- (1.8,.2);
	\draw (1.55,-.2) -- (1.8,-.2);
	\node at (1.7,0) [left] {\tiny $D^{n-1}$};
	
	\begin{scope}[xshift=8cm]
	\draw (5,-0.6) --(2,-0.6) to[out=180,in=0] (0,-1) to[out=180,in=-90] (-1,0) to[out=90,in=180] (0,1) to[out=0,in=180] (2,0.6) -- (5,0.6);
	\draw [densely dotted](2,-0.6) to[out=180,in=-80] (1.65,-.2);
	\draw [densely dotted](2,0.6) to[out=180,in=80] (1.65,.2);
	\draw (5,-.2) -- (1.65,-.2) to[out=100,in=-100] (1.65,.2) --(5,.2);
	\draw (.4,0) to[out=-90,in=-90] (-.4,0);
	\draw (.3,-0.15) to[out=90,in=90] (-.3,-0.15);
	\draw [dotted] (2,0.6) to[out=0,in=0] (2,-0.6);
	\node at (0,-1.3) {$M$};
	\node at (2,-.95) {$\partial M$};
	\node at (2,1.5) {$M'_\propto$};
	\end{scope}
\end{tikzpicture}
\label{fig.mpropto}
\caption{The manifolds $M'_\propto \subset M_\propto$.}
\end{figure}

\begin{definition}\label{def.embmpm}Let $\mr{Emb}^{\cong}_{\half\partial}(M'_\propto,M_\propto)$ be the space of embeddings $e \colon M'_\propto \hookrightarrow M_\propto$ that are the identity on $(\partial M \backslash \mr{int}(D^{n-1})) \times [0,\infty)$ and are isotopic through embeddings $e_t$ that are the identity on $(\partial M \backslash \mr{int}(D^{n-1})) \times [0,\infty)$ to an embedding $e_1$ which is the identity on $M'_\propto \setminus \mr{int}(M)$.\end{definition}

\begin{remark}An equivalent definition of $\mr{Emb}^{\cong}_{\half\partial}(M'_\propto,M_\propto)$ uses complements instead of isotopies; it equals those embeddings $e \colon M'_\propto \hookrightarrow M_\propto$ that are the identity on $(\partial M \backslash \mr{int}(D^{n-1})) \times [0,\infty)$ and have the property that the closure of the complement of their image admits a compactly-supported diffeomorphism to $D^{n-1} \times [0,\infty)$ rel boundary.\end{remark}


\begin{definition}\label{def.embmpmim} Let $\mr{Emb}^{\cong,\mr{im}}_{\half\partial}(M'_\propto,M_\propto)$ be the subspace of $\mr{Emb}^{\cong}_{\half\partial}(M'_\propto,M_\propto)$ consisting of embeddings $e$ that satisfy $e(M) \subset M$.
\end{definition}

\begin{lemma}\label{lem.embimageweq} The inclusion $\mr{Emb}^{\cong,\mr{im}}_{\half\partial}(M'_\propto,M_\propto) \hookrightarrow \mr{Emb}^{\cong}_{\half\partial}(M'_\propto,M_\propto)$ is a weak equivalence.\end{lemma}

\begin{proof}Suppose we are given a commutative diagram
	\[\begin{tikzcd}S^i \dar \rar & \mr{Emb}^{\cong,\mr{im}}_{\half\partial}(M'_\propto,M_\propto) \dar \\
	D^{i+1} \rar[swap]{f} \arrow[dotted]{ru}& \mr{Emb}^{\cong}_{\half\partial}(M'_\propto,M_\propto),\end{tikzcd}\]
	then we must provide a dotted lift making the diagram commute, possibly after changing it through a homotopy of commutative diagrams.
	
	It suffices to push the image of $M$ under $f_s$ out of $\mr{int}(D^{n-1}) \times (0,\infty)$ for each $s \in D^{i+1}$. To do this, let $\mr{Emb}^{\cong}_{\half\partial}(M'_\propto,M_\propto)'$ be the subspace of $\mr{Emb}^{\cong}_{\half\partial}(M'_\propto,M_\propto)$ consisting of those $e$ that satisfies the property that $e(M'_\propto) \subset M'_\propto \cup (D^{n-1}\setminus \frac{1}{2}D^{n-1}) \times [0,1)$. The inclusion $\mr{Emb}^{\cong}_{\half\partial}(M'_\propto,M_\propto)' \hookrightarrow \mr{Emb}^{\cong}_{\half\partial}(M'_\propto,M_\propto)$ is weak equivalence by a collar-sliding argument, so we may assume $f_s \in \mr{Emb}^{\cong}_{\half\partial}(M'_\propto,M_\propto)'$.
	
	 Using a collar $\partial M \times [0,1) \hookrightarrow M$, we construct a family of diffeomorphisms $\psi_r \colon M_\propto \to M_\propto$ for $r\in [0,1]$ with the following properties:  
	\begin{enumerate}[\indent (i)]
		\item $\psi_r$ is the identity on $M'_\propto$
		\item $\psi_0 = \mr{id}$,
		\item $\psi_r(M \cup (\frac{1}{2} D^{n-1} \times [0,t])) \subset M$.
	\end{enumerate} 
	Then the formula 
	\begin{align*}D^{i+1} \times [0,1] &\longto \mr{Emb}^{\cong}_{\half\partial}(M'_\propto,M_\propto)' \\
	(s,r) &\longmapsto \psi_r f_s\end{align*} gives us a homotopy which starts at $r=0$ with $f_s$ by (ii), and sends $D^{i+1} \times \{1\} \cup S^i \times [0,1]$ into $\mr{Emb}^{\cong,\mr{im}}_{\half\partial}(M'_\propto,M_\propto)$ by (i) and (iii). After this homotopy through commutative diagrams an evident lift exists.
\end{proof}

We can extend an embedding $e \in \mr{Emb}_{\half \partial}(M)$ to an embedding $\bar{e} \in \mr{Emb}_{\half \partial}(M'_\propto,M_\propto)$ by defining it to be the identity on $(\partial M \backslash \mr{int}(D^{n-1})) \times [0,\infty)$. 

\begin{lemma}\label{lem.embextweq}Extension induces a homeomorphism
	\[\mr{Emb}^{\cong}_{\half\partial}(M) \hookrightarrow \mr{Emb}^{\cong,\mr{id}}_{\half\partial}(M'_\propto,M_\propto).\]
\end{lemma}

\begin{proof}We need to verify that $e \colon M \hookrightarrow M$ is isotopic to a diffeomorphism if and only if $\bar{e}$ is isotopic to an embedding that is the identity on $M'_\propto \setminus \mr{int}(M)$. The direction $\Rightarrow$ is obvious. For $\Leftarrow$, given $\bar{e}_t \colon M'_\propto \hookrightarrow M_\propto$ we may use the proof of Lemma \ref{lem.embimageweq} to modify it to an isotopy which always maps $M$ into $M$ and thus induces the desired isotopy of $e$.
\end{proof}

\subsection{The group of path components} We start by studying the group of path components using a result of Sullivan.

\begin{proposition}\label{prop.embpi0hfin} Let  $n \geq 5$ and suppose we are given an $n$-dimensional oriented smooth manifold $M$ with finite fundamental group and $\partial M = S^{n-1}$. Then $B\pi_0( \diff_\partial(M))$ and $B\pi_0(\mr{Emb}^{\cong}_{\half\partial}(M))$ are in $\cat{HFin}$.\end{proposition}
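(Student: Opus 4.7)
The plan is to deduce both statements from Sullivan's arithmeticity theorem for mapping class groups, as extended by Triantafillou to the finite-fundamental-group case: for a closed smooth manifold $N$ of dimension $\geq 5$ with finite fundamental group, $\pi_0(\diff^+(N))$ is commensurable with an arithmetic group. Combined with Theorem \ref{thm.arithmetic} and Lemma \ref{lem.groupsprops}(iii), this gives $B\pi_0(\diff^+(N)) \in \cat{HFin}$.

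I would first handle $\pi_0(\diff_\partial(M))$ by capping off the boundary. Set $\hat M := M \cup_{S^{n-1}} D^n$, a closed oriented smooth $n$-manifold with $\pi_1(\hat M) = \pi_1(M)$ finite, so $B\pi_0(\diff^+(\hat M)) \in \cat{HFin}$ by the above. To relate $\pi_0(\diff_\partial(M))$ to $\pi_0(\diff^+(\hat M))$ I would use the restriction (isotopy extension) fibration
\[
\diff_\partial(M) \longrightarrow \diff^+(\hat M) \longrightarrow \mr{Emb}^+(D^n,\hat M) \simeq \mr{Fr}^+(\hat M),
\]
obtained by restricting an orientation-preserving self-diffeomorphism of $\hat M$ to the capping disk; the identification of the base with the oriented frame bundle is the standard $1$-jet map. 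Since $\hat M$ is connected, $\pi_0(\mr{Fr}^+(\hat M))$ is trivial, and since $\pi_1(\hat M)$ is finite, $\pi_1(\mr{Fr}^+(\hat M))$ is finite (as an extension of $\pi_1(\hat M)$ by $\pi_1(SO(n))$). The long exact sequence then presents $\pi_0(\diff_\partial(M))$ as an extension of $\pi_0(\diff^+(\hat M))$ by a finite quotient of $\pi_1(\mr{Fr}^+(\hat M))$, and parts (i)--(iii) of Lemma \ref{lem.groupsprops} yield $B\pi_0(\diff_\partial(M)) \in \cat{HFin}$.

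To pass from $\pi_0(\diff_\partial(M))$ to $\pi_0(\mr{Emb}^{\cong}_{\half\partial}(M))$ I would invoke the Weiss fiber sequence (\ref{eqn.weissfib}) from the introduction. Looping once yields a fibration $\diff_\partial(D^n) \to \diff_\partial(M) \to \mr{Emb}^{\cong}_{\half\partial}(M)$, whose long exact sequence displays $\pi_0(\mr{Emb}^{\cong}_{\half\partial}(M))$ as the quotient of $\pi_0(\diff_\partial(M))$ by the image of $\pi_0(\diff_\partial(D^n)) \cong \Theta_{n+1}$, which is finite by Kervaire--Milnor. A final application of Lemma \ref{lem.groupsprops}(i), together with the fact that finite groups have classifying spaces in $\cat{HFin}$, finishes the argument.

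The main obstacle is invoking Sullivan's theorem in exactly the right generality: the original statement in \cite{sullivaninf} concerns closed simply-connected manifolds, and since the hypothesis here only asks that $\pi_1(M)$ be finite, Triantafillou's extension in \cite{trianta} is essential. Secondary technical points include verifying that the restriction fibration has fiber $\diff_\partial(M)$, justifying the $1$-jet equivalence $\mr{Emb}^+(D^n,\hat M) \simeq \mr{Fr}^+(\hat M)$, and keeping track of orientations so that the passage from $\pi_0(\diff(\hat M))$ to $\pi_0(\diff^+(\hat M))$ only costs a finite-index subgroup.
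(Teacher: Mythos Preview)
Your proposal is correct and follows essentially the same route as the paper: cap off the boundary to get a closed manifold $\hat M$, use the restriction fibration $\diff_\partial(M) \to \diff^+(\hat M) \to \mr{Emb}^+(D^n,\hat M)$ together with Sullivan--Triantafillou arithmeticity to handle $\pi_0(\diff_\partial(M))$, and then use the Weiss fiber sequence and finiteness of $\Theta_{n+1}$ to pass to $\pi_0(\mr{Emb}^{\cong}_{\half\partial}(M))$. The only cosmetic difference is that the paper packages the finite-kernel/finite-cokernel reasoning via the notion of groups ``differing by finite groups'' (Definition~\ref{def.diffbyfinite}), whereas you invoke Lemma~\ref{lem.groupsprops} directly; the content is the same.
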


\begin{proof} Let $N = M \cup_{S^{n-1}} D^n$ and $\mr{Emb}^+(D^n,M)$ denote the orientation-preserving embeddings. Then there is a fiber sequence
\[\mr{Diff}_\partial(M) \longto \mr{Diff}^+(N) \longto \mr{Emb}^+(D^n,N)\]
with fiber taken over the standard embedding $D^n \hookrightarrow N$. Using the hypothesis on $M$, the space $\mr{Emb}^+(D^n,N)$ is path-connected and has finite $\pi_1$. Thus $\pi_0(\mr{Diff}_\partial(M))$ differs by finite groups from $\pi_0(\mr{Diff}^+(N))$. Sullivan proved that for closed oriented 1-connected $N$ of dimension $\geq 5$, $\pi_0(\mr{Diff}^+(N))$ is commensurable with an arithmetic group \cite[Theorem 13.3]{sullivaninf} and Triantafillou generalized this to oriented manifolds with finite fundamental groups \cite{trianta}. We conclude $\pi_0(\mr{Diff}_\partial(M))$ differs by finite groups from an arithmetic group, and thus Theorem \ref{thm.arithmetic} implies the first part.
 
For $\pi_0(\mr{Emb}^{\cong}_{\half\partial}(M))$, note that Theorem \ref{thm.weissfibersequence} implies there is an exact sequence of groups
 \[\pi_0(\mr{Diff}_\partial(D^{n})) \longto \pi_0 (\mr{Diff}_\partial(M)) \longto \pi_0 (\mr{Emb}^{\cong}_{\half\partial}(M)) \longto 1.\]
Since $\pi_0(\mr{Diff}_\partial(D^{n})) \cong \Theta_{n+1}$ is finite, $\pi_0 (\mr{Diff}_\partial(M))$ and $\pi_0 (\mr{Emb}^{\cong}_{\half\partial}(M))$ differ by finite groups and the first part suffices.
\end{proof}

\subsection{Embedding calculus} Embedding calculus is the ``pointillistic'' study of embeddings \cite[Remark 4.5.4]{weissdalian}. An embedding is an immersion that is injective when evaluated on any finite subset. If we replace this condition with homotopy-theoretic data, we get an object accessible to homotopy theory. The multiple disjunction theory results of \cite{goodwillieklein} imply that the space of these homotopy-theoretic alternatives is weakly equivalent to the space of embeddings when the codimension is at least 3. Our references for embedding calculus are \cite{weissembeddings,weisspedrosheaves} (see \cite{weisspedro} for a different perspective). 

\subsubsection{Manifolds without boundary}
Fix two manifolds $M$ and $N$. Then $\mr{Emb}(M,N)$ is the value on $M$ of a continuous functor $\mr{Emb}(-,N) \colon \cat{Mfd}_n^\mr{op} \to \cat{Top}$. Here $\cat{Mfd}_n$ is the topological category with objects given by $n$-dimensional smooth manifolds and morphism spaces given by spaces of embeddings, and $\cat{Top}$ is the topological category with objects given by CGWH spaces and morphism spaces given by spaces of continuous maps. The category $\cat{Mfd}_n$ admits a collection of Grothendieck topologies $\cJ_k$ for $k \geq 1$; in $\cJ_k$ a collection $\{U_i\}$ of open subsets of $M$ is a cover if every subset of $M$ of cardinality $\leq k$ is contained in some $U_i$. 

The $k$th Taylor approximation $T_k(\mr{Emb}(-,N))$ is the homotopy sheafification of $\mr{Emb}(-,N)$ with respect to $\cJ_k$. This means it is up to homotopy the best approximation to $\mr{Emb}(M,N)$ built out of the restrictions of embeddings to $\leq k$ disks in $M$, and hence is explicitly given a right homotopy Kan extension, cf.\ \cite[Definition 4.2]{weisspedrosheaves}: 

\begin{lemma}\label{lem:derived-mapping-space}The $k$th Taylor approximation $T_k(\mr{Emb}(M,N))$ is the derived mapping space 
	\[\bR \mr{Map}_{\cat{PSh}(\cat{Disc}_k)}(\mr{Emb}(-, M),\mr{Emb}(-,N))\]
between the objects $\mr{Emb}(-, M)$ and $\mr{Emb}(-,N)$ of the topological category of space-valued presheaves on the full subcategory $\cat{Disc}_k \subset \cat{Mfd}_n$ on $n$-dimensional manifolds diffeomorphic to a disjoint union of $\leq k$ disks.\end{lemma}

Since every $\cJ_k$-cover is a $\cJ_{k-1}$-cover, we have a Taylor tower as in \cite[Section 3.1]{weisspedrosheaves}
\[\begin{tikzcd} & \vdots \dar\\
\mr{Emb}(M,N) \arrow{ru} \rar \arrow{rd} \arrow{rdd} & T_k(\mr{Emb}(M,N)) \dar \\
& T_{k-1}(\mr{Emb}(M,N)) \dar \\
& \vdots \end{tikzcd}\]
starting at $T_1(\mr{Emb}(M,N))$. By \cite[Proposition 8.3]{weisspedrosheaves}, this tower coincides with the one obtained in \cite[page 84]{weissembeddings}, and hence has the following properties:
\begin{itemize}
	\item Suppose $M$ has handle dimension $h$, then $\mr{Emb}(M,N) \to T_k(\mr{Emb}(M,N))$ is $(-(n-1)+k(n-2-h))$-connected \cite[Corollary 2.6]{goodwillieweiss} (which depends on the results of  \cite{goodwillieklein}).
	\item $T_1(\mr{Emb}(M,N))$ is weakly equivalent to the space $\mr{Imm}(M,N)$ of immersions \cite[page 97]{weissembeddings}. 
	\item Fix an embedding $\iota \colon M \hookrightarrow N$. For a finite set $I$, let $F_I(M) = \mr{Emb}(I,M)$ be the ordered configuration space. Let $C_k(M) = F_{\{1,\ldots,k\}}(M)/\fS_k$ be the unordered configuration space. Then for $k \geq 2$, the homotopy fiber of $T_k(\mr{Emb}(M,N)) \to T_{k-1}(\mr{Emb}(M,N))$ over the image $\iota$ is weakly equivalent to a relative section space: the space of sections of the bundle over $C_k(M)$ with fiber over a configuration $c \in C_k(M)$ given by $\mr{tohofib}_{I \subset c}(F_I(N))$ that are equal to a section $s^\iota$ near the fat diagonal. We describe the section $s^\iota$ by giving a base point in each $F_I(N)$: recall that $I$ is a collection of points in $M$, then it is the inclusion of $I$ into $N$ by $\iota$. This description appears as \cite[Theorem 9.2]{weissembeddings}.
\end{itemize}

The last point uses total homotopy fibers, whose definition we recall. Let $[1]$ be the category $0 \to 1$, so that a functor $T \colon [1]^k \to \cat{Top}_*$ is a $k$-dimensional cube of pointed spaces. The total homotopy fiber of $T$ is given by
\[\mr{tohofib}(T) = \mr{hofib}[T(0,\ldots,0) \to \mr{holim}_{[1]^k \backslash (0,\ldots,0)} T].\]
It can also be computed by iteratively taking homotopy fibers in each of the $k$ directions. In the non-pointed setting, if $T \colon [1]^k \to \cat{Top}$ has $T(0,\ldots,0)$ path-connected, then $\mr{tohofib}(T)$ is well-defined up to homotopy. 

\subsubsection{Manifolds with boundary}
Now suppose we are interested in manifolds with fixed boundary $Z$. As in \cite[Section 9]{weisspedrosheaves}, we replace $\cat{Mfd}_n$ with the topological category $\cat{Mfd}_{n,Z}$ with objects given by $n$-dimensional smooth manifolds with boundary identified with $Z$ and morphism spaces given by spaces of embeddings that are the identity on $Z$. We can define $\cJ_k$ by letting a collection of open subsets $\{U_i\}$ of $M$ be an cover if every subset in $\mr{int}(M)$ of cardinality $\leq k$ is contained in some $U_i$. If we are given an $n$-dimensional manifold $N$ with an embedding $Z \hookrightarrow \partial N$, we can homotopy sheafify the continuous functor $\mr{Emb}_\partial(-,N) \colon \cat{Mfd}_{n,Z}^\mr{op} \to \cat{Top}$ with respect to these Grothendieck topologies to obtain a Taylor tower. As before, an explicit model for $T_k \mr{Emb}_\partial(M,N)$ is given by $\bR \mr{Map}_{\cat{PSh}(\cat{Disc}_{k,Z})}(\mr{Emb}_\partial(-, M),\mr{Emb}_\partial(-,N))$. Here $\cat{Disc}_{k,Z}$ is the full subcategory of $\cat{Mfd}_{n,Z}$ on $n$-dimensional manifolds that are diffeomorphic rel boundary to a disjoint union of $\leq k$ disks and $Z \times [0,1)$. By \cite[Section 10]{weissembeddings}, this Taylor tower has the same properties as above upon adding ``rel boundary'' where appropriate.

\subsection{Identity component} We will study the identity component using embedding calculus. The following lemma explains how to use towers to prove that a space is in $\cat{\Pi Fin}$. 

\begin{lemma}\label{lem.tower} Suppose that a path-connected space $X$ with base point $x$ has the following properties:
\begin{enumerate}[\indent (i)]	
\item There is a tower 
\[\begin{tikzcd} & \vdots \dar\\
	X \arrow{ru} \rar \arrow{rd} \arrow{rdd} & T_k X \dar \\
	& T_{k-1} X \dar \\
	& \vdots \end{tikzcd}\] 
starting at $T_0 X$, such that the map from $X$ to the path component of $T_k X$ containing the image of $x$ is $f(k)$-connected with $\limsup_{k \to  \infty} f(k) = \infty$.
\item The path component of $T_0 X$ containing the image of $x$ has finitely generated homotopy groups.
\item The path component of $T_k X$ containing the image of $x$ has finite or abelian $\pi_1$.
\item The homotopy fiber of $T_k X \to T_{k-1} X$ over the image of $x$ is weakly equivalent to a relative section space $\Gamma_k = \Gamma(E_k,B_k;A_k)$ of a fibration $E_k \to B_k$ with a section, where $B_k$ is a path-connected finite CW-complex, $A_k \subset B_k$ is a non-empty subcomplex and each component of $F_k$ has finitely generated homotopy groups.\end{enumerate}
Then $X \in \cat{\Pi Fin}$.\end{lemma}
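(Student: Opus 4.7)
The plan is to induct on $k \geq 0$ to prove that the path component $(T_k X)_x$ of $T_k X$ containing the image of $x$ lies in $\cat{\Pi Fin}$, and then pass this property up to $X$ itself using the connectivity assumption (i). For the base case $k=0$, condition (ii) provides finite generation of the higher homotopy groups of $(T_0 X)_x$, and condition (iii) says $\pi_1((T_0 X)_x)$ is finite or nilpotent. Since $\pi_1$ is then both finitely generated and either finite or finitely generated nilpotent, it is arithmetic (by Section 1.2 of \cite{serrearithmetic}), so Theorem \ref{thm.arithmetic} gives $B\pi_1((T_0 X)_x) \in \cat{HFin}$ and hence $(T_0 X)_x \in \cat{\Pi Fin}$.

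For the induction step, assume $(T_{k-1} X)_x \in \cat{\Pi Fin}$. First apply Lemma \ref{lem.sectionspace} to the section space $\Gamma_k$ of condition (iv); the non-empty subcomplex $A_k$ verifies the hypothesis of that lemma, yielding that each component of $\Gamma_k$ has finitely generated homotopy groups. Then feed the fiber sequence $\Gamma_k \to T_k X \to T_{k-1} X$ into Lemma \ref{lem.les}: part (ii) transfers finite generation of $\pi_i$ for $i \geq 2$ from fiber and base to $(T_k X)_x$, and part (ii') does the same for $\pi_1$. Part (ii') requires an extra hypothesis on the base $\pi_1$, and condition (iii) at level $k-1$ supplies precisely this—the base $\pi_1((T_{k-1} X)_x)$ is finite or nilpotent, so case (a) or (b) of (ii') applies. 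Condition (iii) at level $k$ combines with the freshly established finite generation of $\pi_1((T_k X)_x)$ to make it arithmetic, giving $B\pi_1((T_k X)_x) \in \cat{HFin}$ and hence $(T_k X)_x \in \cat{\Pi Fin}$.

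Finally, to descend to $X$, observe that by (i), for any $i \geq 1$ we can choose $k$ large enough that the map $X \to (T_k X)_x$ induces an isomorphism on $\pi_i$; this transfers finite generation of all higher homotopy groups, and taking $k$ with $f(k) \geq 2$ also gives $\pi_1(X) \cong \pi_1((T_k X)_x)$, which is finite or nilpotent by (iii) and finitely generated by the induction. Hence $\pi_1(X)$ is arithmetic and $B\pi_1(X) \in \cat{HFin}$. The main obstacle throughout is propagating finite generation of $\pi_1$ up the tower, since subgroups and quotients of an arbitrary finitely generated group can fail to be finitely generated; this is precisely why condition (iii) is imposed, so that hypothesis (b) (respectively (a)) of Lemma \ref{lem.les}(ii') applies and the induction goes through.
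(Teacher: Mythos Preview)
Your proof is correct and follows essentially the same approach as the paper's: induct up the tower using the fiber sequence $\Gamma_k \to T_k X \to T_{k-1} X$, invoke Lemma~\ref{lem.sectionspace} on the fiber and Lemma~\ref{lem.les}(ii),(ii') for the inductive step, with hypothesis (iii) supplying the extra condition needed in (ii'), and then read off the result for $X$ from the connectivity assumption (i). The only cosmetic difference is that you phrase the inductive hypothesis as ``$(T_k X)_x \in \cat{\Pi Fin}$'' whereas the paper phrases it as ``$(T_k X)_x$ has finitely generated homotopy groups'' and handles the $B\pi_1 \in \cat{HFin}$ condition separately at the end; given hypothesis (iii) these formulations are equivalent.
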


\begin{proof}Conditions (i) and (iii) imply that $\pi_1(X)$ will be finite or abelian. Finite groups and finitely generated abelian groups have classifying spaces in $\cat{HFin}$, so it suffices to prove that $\pi_i(X)$ is finitely generated for $i \geq 1$.

Let $x_k \in T_k X$ denote the image of $X$. Using hypothesis (i), if we care about a fixed homotopy group $\pi_i(X,x)$, we can assume that the tower ends at some finite stage $T_K X$ and do an induction over $K$ of the statement that the component of $T_K X$ containing $x_K$ has finitely generated homotopy groups. The initial case $K=0$ is provided by hypothesis (ii).

For the induction step we assume the case $k$ and prove the case $k+1$, using the long exact sequence of homotopy groups for the fiber sequence in hypothesis (iv):
\[\cdots \longto \pi_i(\Gamma_{k+1},x_{k+1}) \longto \pi_i(T_{k+1} X,x_{k+1}) \longto \pi_i(T_{k} X,x_{k}) \longto \cdots, \]
where without loss of generality we may assume $x_{k+1} \in T_{k+1} X$ is the image of $x_{k+1} \in \Gamma_{k+1}$. Using hypothesis (iii), Lemma \ref{lem.sectionspace} says each component of $\Gamma_{k+1}$ has finitely generated homotopy groups. The induction hypothesis says that the path component of $T_{k} X$ containing $x_{k}$ has finitely generated homotopy groups. We then use Lemma \ref{lem.les}(ii), (ii') to finish the induction step (for (ii') use that condition (a) or (b) holds by hypothesis (iii)).\end{proof}

By Lemma \ref{lem.embweqbdy}, $\mr{Emb}_{\half \partial}(M)$ is weakly equivalent to the type of embedding space to which we may apply embedding calculus, and to limit notation we will write $T_k(\mr{Emb}_{\half \partial}(M))$ for the Taylor tower thus obtained. For convergence of the Taylor tower we must verify certain bounds on the dimensions of the handles in a handle decomposition of $M$:

\begin{lemma}\label{lem.handle} Suppose that $n \geq 6$, $M$ is $2$-connected and $\partial M = S^{n-1}$. Then $M$ has a handle decomposition relative to $\partial M \backslash \mr{int}(D^{n-1})$ with only handles of dimension $< n-2$.\end{lemma}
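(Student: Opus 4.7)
The plan is to reduce to classical handle-trading for cobordisms. Let $D_- := \partial M \setminus \mr{int}(D^{n-1})$, so that $\partial M = D_- \cup_{S^{n-2}} D^{n-1}$ is the union of two $(n-1)$-disks glued along $S^{n-2}$. After smoothing the corner along $S^{n-2}$, one views $M$ as a cobordism from $D_-$ to $D^{n-1}$. A handle decomposition of $M$ relative to $D_-$ corresponds to a Morse function $f \colon M \to [0,1]$ with $f^{-1}(0) = D_-$ and $f^{-1}(1) = D^{n-1}$; replacing $f$ by $1-f$ converts it into a handle decomposition relative to $D^{n-1}$ in which each index-$k$ handle becomes an index-$(n-k)$ handle. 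Thus the statement is equivalent to producing a handle decomposition of $M$ relative to $D^{n-1}$ with no handles of index $\leq 2$.

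First I would verify that $(M, D^{n-1})$ is $2$-connected. Since $D^{n-1}$ is contractible, the long exact sequence of the pair gives $\pi_i(M, D^{n-1}) \cong \pi_i(M) = 0$ for $i = 1, 2$, and $\pi_0(M, D^{n-1})$ vanishes because $M$ is path-connected and $D^{n-1}$ is non-empty. Then I would invoke Smale's classical handle-cancellation theorem (see, e.g., Milnor, \emph{Lectures on the $h$-cobordism theorem}, or Kosinski, \emph{Differential Manifolds}, Chapter VI): for a cobordism $W$ of dimension $n \geq 6$ from $V_0$ to $V_1$ such that $(W, V_0)$ is $k$-connected with $k \leq n-4$, there is a handle decomposition of $W$ relative to $V_0$ with no handles of index $\leq k$. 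Applied with $W = M$, $V_0 = D^{n-1}$, and $k = 2$ (and indeed $2 \leq n-4$ since $n \geq 6$), this produces the required decomposition, which dualizes to a handle decomposition of $M$ relative to $D_-$ with handles only of index $< n-2$.

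The main technical input is the handle-cancellation machinery itself: eliminating index-$2$ handles means cancelling them against index-$3$ handles via the Whitney trick on intersections of a $2$-sphere with an $(n-3)$-sphere inside an $(n-1)$-dimensional level set. This is precisely where the hypothesis $n \geq 6$ is used; in dimension $5$ the trick fails for this combination of dimensions, which is the obstruction behind Question \ref{que.dim}. Granted this classical input, the proof is just the connectivity computation above combined with the dualization of Morse functions.
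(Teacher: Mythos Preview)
Your argument is correct and is essentially a dual reformulation of the paper's proof, but the packaging differs. The paper removes an interior disk $D^n\subset M$ and works with the resulting \emph{closed} cobordism $W=M\setminus\mr{int}(D^n)$ between $V=\partial D^n\cong S^{n-1}$ and $V'=\partial M\cong S^{n-1}$; it then applies Milnor's Theorems~8.1 and~7.8 from \emph{both} ends to produce a Morse function on $W$ with no critical points of index $0,1,2,n-2,n-1,n$, and reassembles $M=D^n\cup_V W$ (the $D^n$ playing the role of a collar on $D_-$). You instead regard $M$ itself as a relative cobordism from $D^{n-1}$ to $D_-$, trade away handles of index $\leq 2$ from the $D^{n-1}$ side only, and then dualise. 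Your route is more direct and avoids the reassembly step; the paper's route has the advantage that Milnor's book literally treats only cobordisms between closed manifolds, so the citations apply verbatim.

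Two small points. First, your phrase ``after smoothing the corner along $S^{n-2}$'' is backwards: $\partial M=S^{n-1}$ is already smooth, and what you actually need is to \emph{introduce} a corner structure (equivalently, take a bicollar of $S^{n-2}\subset\partial M$ to obtain a side $S^{n-2}\times I$) so that $M$ becomes an honest relative cobordism. Second, the handle-cancellation statement you invoke is for a relative cobordism $(W;V_0,V_1)$ with $\partial V_0\neq\varnothing$, which is not literally what is proved in Milnor's lectures; the relative version is of course standard (and the proof goes through unchanged since all handle manipulations occur in the interior and everything in sight is simply-connected), but you should either cite a source that states it in this generality or note that the argument carries over. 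This is precisely the issue the paper sidesteps by excising an interior disk first.
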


\begin{proof}Pick a disk $D^{n} \subset M$ and write $W = M \backslash \mr{int}(D^{n})$, $V = \partial D^{n} \cong S^{n-1}$, $V' = \partial M \cong S^{n-1}$. It suffices to prove that $W$ admits a Morse function with value $0$ on $V$ and value $1$ on $V'$ with no critical points of index $0$, $1$, $2$, $n-2$, $n-1$ or $n$.

Pick a Morse function $f$ on $W$ with value $0$ on $V$ and value $1$ on $V'$. Remove the critical points of $f$ with index $0$ and $1$ using \cite[Theorem 8.1]{milnorhcobord}. Next remove the critical points of index $2$ using the proof of \cite[Theorem 7.8]{milnorhcobord}, which requires $H_2(W,V) = 0$. This homology group equals $H_2(M,D^{n}) \cong H_2(M) = 0$. A similar argument with $V'$ works to remove the critical points of index $n-2$, $n-1$, or $n$. \end{proof}

\begin{proposition}\label{prop.embidhfin} Suppose that $n \geq 5$ and $M$ has a handle decomposition rel $\partial M \backslash \mr{int}(D^{n-1})$ with handles of dimension $h < n-2$. Then the identity component $\mr{Emb}^\mr{id}_{\half\partial}(M)$ of $\mr{Emb}_{\half\partial}(M)$ is in $\cat{\Pi Fin}$.
\end{proposition}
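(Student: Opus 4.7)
The plan is to apply Lemma~\ref{lem.tower} to $X = \mr{Emb}^\mr{id}_{\half\partial}(M)$, taking the tower to be the path components of the identity in the embedding calculus tower $\{T_k(\mr{Emb}_{\half\partial}(M))\}_{k \geq 1}$ of Section~\ref{subsec.sheaf}. I will verify the four hypotheses of that lemma in turn; hypotheses~(i)--(iii) amount to direct appeals to results already recalled, and the real work lies in~(iv).

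For~(i), the Goodwillie--Klein connectivity estimate from Section~\ref{subsec.sheaf} says that $\mr{Emb}_{\half\partial}(M) \to T_k(\mr{Emb}_{\half\partial}(M))$ is $(-(n-1) + k(n-2-h))$-connected, and under $h < n-2$ the slope $n-2-h \geq 1$ is positive, so the connectivity tends to $+\infty$ with $k$. For~(ii) I take the base of the tower to be $T_1(\mr{Emb}_{\half\partial}(M)) = \mr{Imm}_{\half\partial}(M)$: by Smale--Hirsch with boundary conditions, this is weakly equivalent to a space of sections of a bundle associated to $TM$ over $M$ with fiber $\simeq GL(n) \simeq O(n)$, required to agree with a fixed section on the nonempty subcomplex $\partial M \setminus \mr{int}(D^{n-1})$ of the finite CW complex $M$. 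Since $O(n)$ has finitely generated homotopy groups, Lemma~\ref{lem.sectionspace} gives~(ii). Hypothesis~(iii) is exactly Lemma~\ref{lem.tkhspacenilpotent}.

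The main work is hypothesis~(iv). By the last bullet of Section~\ref{subsec.sheaf}, the homotopy fiber of $T_k \to T_{k-1}$ over the identity is a relative section space $\Gamma(E_k, C_k(M); A_k)$ whose fiber over a configuration $c$ is $\mr{tohofib}_{I \subset c} F_I(M)$. Passing to a Fulton--MacPherson model, $C_k(M)$ can be realized as a path-connected finite CW complex (using that $M$ is compact, path-connected, and of dimension $\geq 2$), with the neighborhood $A_k$ of the fat diagonal as a nonempty subcomplex. To check that each component of the fiber has finitely generated homotopy groups, observe that each ordered configuration space $F_I(M) = M^I \setminus \Delta$ is the complement of a codimension-$n$ stratified subspace in the compact manifold $M^I$, hence deformation-retracts onto a compact manifold with corners (its Fulton--MacPherson compactification) and in particular has the homotopy type of a finite CW complex. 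Since $M$ is $2$-connected and $n \geq 6$, the general-position fact that removing a codimension-$\geq 4$ subspace does not affect $\pi_i$ for $i \leq 2$ implies that each $F_I(M)$ is $2$-connected, hence lies in $\cat{Fin}$ by Lemma~\ref{lem.finitefinprep}, so its homotopy groups are finitely generated.

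The total homotopy fiber is then computed by iteratively taking homotopy fibers across the cube directions; at each stage the spaces involved remain simply-connected (by the high connectivity of the $F_I(M)$ and of the intermediate fibers), so the $\pi_1$-conditions in parts~(ii) and~(iii) of Lemma~\ref{lem.les} are automatic and finite generation of homotopy groups is preserved. The main obstacle in carrying out this plan is exactly this bookkeeping of connectivity and nilpotence through the iterated fibers and the section space construction; once done, Lemma~\ref{lem.sectionspace} supplies the remaining piece of~(iv) and Lemma~\ref{lem.tower} concludes.
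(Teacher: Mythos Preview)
Your overall strategy matches the paper's: apply Lemma~\ref{lem.tower} to the embedding calculus tower and verify its four hypotheses. Starting the tower at $T_1 = \mr{Imm}_{\half\partial}(M)$ rather than at $T_0 = \mr{Map}_{\half\partial}(M)$ is a harmless variation (the paper handles the passage $T_1 \to T_0$ as the $k=1$ case of (iv), you fold it into (ii)). Two points deserve comment.

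First, a minor one: you invoke ``$M$ is $2$-connected and $n \geq 6$'' to argue that each $F_I(M)$ is $2$-connected, but those are not the hypotheses of the proposition --- they belong to Theorem~\ref{thm.emb} and Lemma~\ref{lem.handle}. The proposition only assumes $n \geq 5$ and the handle bound $h < n-2$. (The paper's own assertion that $F_I[M]$ is $1$-connected also tacitly uses simple connectivity of $M$, so this imprecision is shared; still, you should flag which hypotheses you are actually using.)

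Second, and more seriously, your verification of (iv) has a real gap. You claim that in the iterated homotopy fiber computation of $\mr{tohofib}_{I \subset [k]} F_I(M)$, ``at each stage the spaces involved remain simply-connected.'' This is not correct: taking a homotopy fiber of a map between $c$-connected spaces yields a $(c-1)$-connected space, so starting from $2$-connected corners you reach $0$-connected spaces after two iterations and lose all control of $\pi_1$ (and even of $\pi_0$) thereafter. Once the intermediate spaces are no longer simply-connected, neither part~(iii) nor part~(iii') of Lemma~\ref{lem.les} applies without further input, and the inductive step breaks down for $k \geq 3$. The paper closes this gap not by tracking connectivity through the iteration, but by invoking Theorem~B of Goodwillie--Klein \cite{goodwillieklein}, which gives directly that $\mr{tohofib}_{I \subset [k]} F_I(M)$ is $\bigl(-(n-3)+k(n-2)\bigr)$-connected; for $k \geq 2$ and $n \geq 5$ this is at least $n-1 \geq 4$, so the total homotopy fiber is simply-connected outright. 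With that in hand, $\pi_1$ is trivially finitely generated, and part~(iii) of Lemma~\ref{lem.les} handles $\pi_i$ for $i \geq 2$ via the iterated-fiber description. This appeal to the Goodwillie--Klein multiple disjunction estimate is the substantive ingredient your argument is missing.
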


\begin{proof}We will verify that Lemma \ref{lem.tower} applies to the embedding calculus tower, extended once at the bottom by Smale-Hirsch.\footnote{The reader familiar with embedding calculus may note that our $T_0$ is not the $0$th Taylor approximation. However, we find our notation convenient for this particular argument.}
\[\begin{tikzcd} & \vdots \dar\\
	\mr{Emb}^\mr{id}_{\half\partial}(M) \arrow{ru} \rar \arrow{rd} \arrow{rdd} & T_k(\mr{Emb}_{\half\partial}(M)) \dar \\
	& T_{k-1}(\mr{Emb}_{\half\partial}(M)) \dar \\
	& \vdots\end{tikzcd}\]

\begin{enumerate}[(i)]
	\item If $M$ has handle dimension $h$, the map from $\mr{Emb}^\mr{id}_{\half\partial}(M)$ to the path component of $T_k(\mr{Emb}_{\half\partial}(M))$ containing the image of the identity is $(-(n-1)+k(n-2-h))$-connected. Since $h<n-2$, this goes to $\infty$ as $k \to \infty$.
	\item The identity path component of $T_0(\mr{Emb}_{\half\partial}(M))$ is the space $\mr{Map}_{\half\partial}^{\mr{id}}(M)$ of maps $M \to M$ that are the identity on $\partial M \backslash \mr{int}(D^{n-1})$ and homotopic to the identity rel $\partial M \backslash \mr{int}(D^{n-1})$. Since a mapping space is an example of a section space and simply-connected compact manifolds are in $\cat{Fin}$, by Lemma \ref{lem.sectionspace} $\mr{Map}_{\half\partial}^{\mr{id}}(M)$ has $\pi_i$ for $i \geq 1$ finitely generated.
	\item The identity path component of $T_k(\mr{Emb}_{\half\partial}(M))$ is the identity component of a derived mapping space by Lemma \ref{lem:derived-mapping-space}, which is a path-connected $H$-space and hence has abelian $\pi_1$.	
	\item The cases $k=1$ and $k \geq 2$ are different. We start with the former, and then $T_1(\mr{Emb}_{\half\partial}(M))$ is the space $\mr{Imm}_{\half\partial}(M)$ of immersions $M \looparrowright M$ that are the identity on $\partial M \backslash \mr{int}(D^{n-1})$. The map $T_1(\mr{Emb}_{\half\partial}(M)) \to T_0(\mr{Emb}_{\half\partial}(M))$ is inclusion of immersions into continuous maps. Smale-Hirsh \cite{smaleimm} says that there is a fiber sequence with fiber taken over the identity
	\[\Gamma(\mr{Iso}(TM),M;\partial M \backslash \mr{int}(D^{n-1})) \longto \mr{Imm}_{\half\partial}(M) \longto \mr{Map}_{\half\partial}(M).\]
	
	The fiber given by the space of sections of the bundle over $M$ with fiber over $m \in M$ given by $\mr{Iso}(T_m M) \simeq O(2n)$. By Lemma \ref{lem.sectionspace} the components of the section space $\Gamma(\mr{Iso}(TM),M;\partial M \backslash \mr{int}(D^{n-1}))$ have finitely generated homotopy groups.
		
	Next we discuss the case $k \geq 2$. Recall that for a finite set $I$, $F_I(M) = \mr{Emb}(I,M)$ is the ordered configuration space. There is also an unordered configuration space $C_k(M) = F_{\{1,\ldots,k\}}(M)/\fS_k$. For $k \geq 2$, there is a bundle over $C_k(M)$ with fiber over a configuration $c \in C_k(M)$ given by $\mr{tohofib}_{I \subset c}(F_I(M))$. It has a section $s^\mr{id}$, which can be described by giving compatible base points in the spaces $F_I(M)$; these are given by $\mr{id} \in \mr{Emb}(I,M) = F_I(M)$, after recalling that $I$ is a collection of points in $M$. Then the homotopy fiber of $T_k(\mr{Emb}_{\half\partial}(M)) \to T_{k-1}(\mr{Emb}_{\half\partial}(M))$ is the space of sections of this bundle that are equal $s^\mr{id}$ near the fat diagonal and $\partial M \backslash \mr{int}(D^{n-1})$.
	
	Firstly, we may replace $C_k(M)$ with its homotopy equivalent Fulton-MacPherson compactification $C_k[M]$ \cite{sinha}. This is a finite CW complex and the condition that the sections are equal to $s^\mr{id}$ near the fat diagonal and $\partial M \backslash \mr{int}(D^{n-1})$, becomes that the sections are equal to $s^\mr{id}$ on a certain non-empty subcomplex. Next, we prove that $\mr{tohofib}_{I \subset [k]}(F_I(M))$ is in $\cat{Fin}$. The Fulton-MacPherson compactification $F_I[M]$ of $F_I(M)$ is a finite CW complex and 1-connected, so is in $\cat{Fin}$ by Lemma's \ref{lem.cwhfin} and \ref{lem.finitefin}. By \cite[Theorem B]{goodwillieklein}  $\mr{tohofib}_{I \subset [k]}(F_I(M))$ is $(-(n-3)+k(n-2))$-connected, so $1$-connected as $k \geq 2$. Since a total homotopy fiber is obtained by iterated homotopy fibers and we can disregard the lower homotopy groups, Lemma \ref{lem.les}(iii) suffices. \qedhere

\end{enumerate}\end{proof}

\section{The Weiss fiber sequence} \label{sec.weissfiber} In this section we construct the fiber sequence (\ref{eqn.weissfib}) and its delooping (\ref{eqn.weissfib2}), as discussed in the introduction. There the latter was informally described as
\[ B\mr{Diff}_\partial (M) \longto B\mr{Emb}^{\cong}_{\half\partial}(M) \longto B(B\mr{Diff}_\partial(D^{n}),\natural).\] 
It will arise as the homotopy quotient of an action of a topological monoid on a module:
\[\cat{BM} \longto \cat{BM} \sslash \cat{BD} \longto * \sslash \cat{BD}.\]Subsection \ref{subsec.monoids} discusses the relevant background material on topological monoids and Subsection \ref{subsec.moore} defines $\cat{BD}$ and $\cat{BM}$. In Subsection \ref{subsec.delooping} we then construct the fiber sequence and identify its terms. In Subsection \ref{subsec.gen} we give several useful generalizations.

\begin{remark} There is a choice whether $\mr{Diff}_{\partial}(-)$ consists of diffeomorphisms that (a) are the identity on the boundary, (b) have jet equal to that of the identity on the boundary, or (c) are the identity on a neighborhood of the boundary. The inclusions (c) $\subset$ (b) $\subset$ (a) are weak equivalences, so we will not distinguish between these.\end{remark}

\begin{remark} There is a choice whether to model the classifying space $BM$ for a unital topological monoid $M$ by the thin or thick geometric realization of the nerve $N_\bullet M$. There is always a map $||N_\bullet M|| \to |N_\bullet M|$ which by \cite[Appendix A]{segalcategories} is a weak equivalence if the inclusion of the unit $\{e\} \hookrightarrow M$ is a cofibration. This will always be the case for us, as diffeomorphisms or embeddings are open subsets of infinite-dimensional manifolds \cite{michor}.
\end{remark}

\subsection{Classifying spaces of topological monoids} \label{subsec.monoids} Though we will recall some notation and results below, we assume that the reader is familiar with topological monoids, simplicial spaces, geometric realization, and the double bar construction. These results are well-known, and the expository account \cite{ebertrw} provides proofs and references to the literature.

A \emph{topological monoid} is a unital monoid object in the category of compactly generated weakly Hausdorff spaces,\footnote{Our spaces are always implicitly replaced by their compactly generated weakly Hausdorff replacement, if they are not yet of this type.} i.e.\ a space $\cat{A}$ with maps $m \colon \cat{A} \times \cat{A} \to \cat{A}$ and $u \colon * \to \cat{A}$ satisfying associativity and unit axioms. A left module $\cat{B}$ over a topological monoid $A$ is a left module object over $A$ in the same category, i.e.\ a space $\cat{B}$ with a map $m \colon \cat{A} \times \cat{B} \to \cat{B}$ satisfying associativity and unit axioms. There is a similar definition for right modules.

If $\cat{A}$ is a topological monoid, $\cat{B}$ is a right $\cat{A}$-module and $\cat{C}$ is a left $\cat{A}$-module, the bar construction is the simplicial space $B_\bullet(\cat{B},\cat{A},\cat{C})$ with $p$-simplices given by
\[B_p(\cat{B},\cat{A},\cat{C}) = \cat{B} \times \cat{A}^p \times \cat{C}.\]
The face maps are induced by the monoid multiplication and action maps, and the degeneracy maps induced by the unit of $\cat{A}$. Note that $*$ is always a left and right $\cat{A}$-module, it is in fact both the terminal left $\cat{A}$-module and the terminal right $\cat{A}$-module.

\begin{definition}Let $\cat{A}$ be a topological monoid and $\cat{B}$ be a right $\cat{A}$-module. Then the \emph{homotopy quotient} $\cat{B} \sslash \cat{A}$ is defined to be the thick geometric realization of $B_\bullet(\cat{B},\cat{A},\ast)$. \end{definition}

The thick geometric realization $||-||$ only makes identifications using the face maps, not the degeneracy maps, and is homotopically more well-behaved. In particular, we have \cite[Theorem 2.2]{ebertrw}:

\begin{lemma}\label{lem.geomrelweq} If $f_\bullet \colon X_\bullet \to Y_\bullet$ is a levelwise weak equivalence of simplicial spaces, then $||f_\bullet|| \colon ||X_\bullet|| \to ||Y_\bullet||$ is a weak equivalence.\end{lemma}



The following is a consequence of \cite[Theorem 2.12]{ebertrw} (it more generally holds when the topological monoid $\cat{A}$ is \emph{group-like}, i.e.\ if $\pi_0(\cat{A})$ is a group).

\begin{theorem}\label{thm.fibersequencemodule} Let $\cat{A}$ be a topological monoid and $\cat{B}$ be a right $\cat{A}$-module. If $\cat{A}$ is path-connected, then there is a fiber sequence
\[\cat{B} \longto \cat{B} \sslash \cat{A} \longto * \sslash\cat{A}.\]\end{theorem}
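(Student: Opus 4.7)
The plan is to deduce the fiber sequence from Proposition 1.6 of \cite{segalcategories}, applied to the simplicial map
\[f_\bullet \colon B_\bullet(\cat{B},\cat{A},*) \longrightarrow B_\bullet(*,\cat{A},*)\]
induced by the terminal map $\cat{B}\to *$. At level $p$ this is the projection $\cat{B}\times \cat{A}^p\to \cat{A}^p$, and on geometric realizations it becomes $\cat{B}\sslash\cat{A}\to *\sslash\cat{A}$. Segal's proposition says that if each square coming from a morphism $[m]\to[n]$ in $\Delta$ is homotopy Cartesian, and the two simplicial spaces are proper, then the square
\[\xymatrix{X_0 \ar[r] \ar[d] & |X_\bullet| \ar[d] \\ Y_0 \ar[r] & |Y_\bullet|}\]
is homotopy Cartesian. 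Since here $X_0=\cat{B}$ and $Y_0=*$, this is exactly the sought fiber sequence.

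Properness of both bar constructions follows from well-pointedness of $\cat{A}$ via the lemma preceding the theorem. The homotopy Cartesian condition is automatic for the squares induced by the degeneracy maps, by the inner face maps, and by the last face map $d_{p+1}$: these operate only on the $\cat{A}^p$-factor (inserting the unit, multiplying adjacent entries, or deleting the last one) and commute strictly with the projection $f_p$, so the squares are in fact strict pullbacks.

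The only substantive check is for $d_0$, which uses the right action $\cat{B}\times \cat{A}\to \cat{B}$. The fibre-wise comparison over $(a_1,\ldots,a_{p+1})\in \cat{A}^{p+1}$ is right multiplication $r_{a_1}\colon \cat{B}\to \cat{B}$, $b\mapsto ba_1$. Path-connectedness of $\cat{A}$ enters precisely here: a path from $a_1$ to the unit $e$, combined with continuity of the action, yields a homotopy $r_{a_1}\simeq r_e=\mathrm{id}_{\cat{B}}$, so $r_{a_1}$ is a weak equivalence and the $d_0$-square is homotopy Cartesian. This is the only non-trivial step, and the main obstacle: it is the input that fails for a general topological monoid, and the same argument extends to any group-like $\cat{A}$, since then every element of $\pi_0(\cat{A})$ is invertible and a path to $e$ exists after multiplication by a suitable representative.
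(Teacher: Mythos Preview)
Your argument is correct and is exactly the approach the paper has in mind: the paper records the theorem only as ``a well-known consequence of Proposition 1.6 of \cite{segalcategories}'' without spelling out the verification, and you have supplied the standard details, correctly isolating the $d_0$-square as the one place where path-connectedness (or group-likeness) of $\cat{A}$ is used.
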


An important special case is when $\cat{B} = \cat{A}$. Since $\cat{A}$ has a unit, an extra degeneracy argument as in \cite[Lemma 1.12]{ebertrw} implies that the augmentation $||B_\bullet(\cat{A},\cat{A},*)|| \to *$ is a weak equivalence:

\begin{lemma}\label{lem.aquotienta} If $\cat{A}$ is a path-connected topological monoid, then $\cat{A} \sslash \cat{A} \simeq *$.
\end{lemma}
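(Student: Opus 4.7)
The plan is to exhibit an extra degeneracy for the bar construction $B_\bullet(\cat{A},\cat{A},\ast)$ whose realization defines $\cat{A}\sslash\cat{A}$. Concretely, I would define $s_{-1}\colon B_p(\cat{A},\cat{A},\ast) \to B_{p+1}(\cat{A},\cat{A},\ast)$ by inserting the unit on the module side,
\[ (a_0,a_1,\ldots,a_p)\longmapsto (e,a_0,a_1,\ldots,a_p), \]
and then check that $d_0 s_{-1}=\mr{id}$, $d_{i+1}s_{-1}=s_{-1}d_i$ for $i\geq 0$, and $s_{i+1}s_{-1}=s_{-1}s_i$ for $i\geq 0$. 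These identities are immediate consequences of the unit and associativity axioms for the monoid and module structures on $\cat{A}$; the first uses that $e\cdot a_0=a_0$, and the rest are just a matter of where the multiplications and unit insertions occur in the string.

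With the extra degeneracy in place, the classical extra-degeneracy lemma yields a simplicial homotopy between the identity of $B_\bullet(\cat{A},\cat{A},\ast)$ and the constant simplicial map at $(e,e,\ldots,e)\in\cat{A}^{p+1}$, viewed as the composition $B_\bullet(\cat{A},\cat{A},\ast)\to\ast\to B_\bullet(\cat{A},\cat{A},\ast)$. Since geometric realization preserves simplicial homotopies, this descends to a strong deformation retraction of $\cat{A}\sslash\cat{A}$ onto a point. The well-pointedness hypothesis is used to ensure that $B_\bullet(\cat{A},\cat{A},\ast)$ is proper, by the lemma stated just before Theorem \ref{thm.fibersequencemodule}, so that its realization interacts well with the usual homotopy-theoretic manipulations; path-connectedness of $\cat{A}$ actually plays no role in this particular argument.

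I expect no serious obstacle: this is the familiar argument that shows $EG\simeq\ast$ for a topological group $G$, transported verbatim to an arbitrary well-pointed topological monoid via the right-action bar construction. The only item that deserves careful bookkeeping is the verification of the simplicial identities for $s_{-1}$, which is a routine but slightly fiddly diagram chase.
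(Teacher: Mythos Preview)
Your proposal is correct and is exactly the extra-degeneracy argument the paper invokes in one line; you have simply spelled out the details the paper omits. One small remark: the extra degeneracy yields a genuine simplicial contracting homotopy, and geometric realization takes simplicial homotopies to honest homotopies without any properness assumption, so neither well-pointedness nor path-connectedness is actually needed for this particular lemma (as you already noted for the latter).
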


\begin{corollary}\label{cor.fibersequencemonoid} If $\cat{A}$ is a path-connected topological monoid, then the natural map $\cat{A} \to \Omega(* \sslash \cat{A})$ is a weak equivalence.\end{corollary}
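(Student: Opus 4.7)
The plan is to combine Theorem \ref{thm.fibersequencemodule} and Lemma \ref{lem.aquotienta} in the obvious way. First, I would take the right $\cat{A}$-module $\cat{B} = \cat{A}$ (with action given by the monoid multiplication) in Theorem \ref{thm.fibersequencemodule}. Since $\cat{A}$ is assumed to be well-pointed and path-connected, the theorem applies and yields a fiber sequence
\[ \cat{A} \to \cat{A} \sslash \cat{A} \to * \sslash \cat{A}. \]

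Next, I would invoke Lemma \ref{lem.aquotienta}, which tells us that the total space $\cat{A} \sslash \cat{A}$ is weakly contractible under exactly the hypotheses we have. In a fiber sequence $F \to E \to B$ where the total space $E$ is weakly contractible, the long exact sequence of homotopy groups (or equivalently the comparison with the path-loop fibration) shows that the fiber inclusion fits into a weak equivalence $F \simeq \Omega B$. Applying this here gives $\cat{A} \simeq \Omega(* \sslash \cat{A})$.

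The only nontrivial point that I would take care to verify is that the weak equivalence produced by this abstract argument agrees with the ``natural map'' referred to in the statement, i.e. the unit-like map sending $a \in \cat{A}$ to the loop in $|B_\bullet(*,\cat{A},*)|$ given by the $1$-simplex represented by $a$. This is a standard check: the connecting map in the homotopy long exact sequence of the fiber sequence $\cat{A} \to \cat{A} \sslash \cat{A} \to * \sslash \cat{A}$ can be modeled by the natural map $\cat{A} \to \Omega(* \sslash \cat{A})$, and since $\cat{A} \sslash \cat{A}$ is contractible this connecting map is a weak equivalence.

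I do not anticipate any real obstacle; the content of the corollary is entirely carried by the preceding theorem and lemma, and the argument is essentially a two-line deduction.
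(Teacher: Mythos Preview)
Your proposal is correct and is exactly the argument the paper intends: the corollary is stated immediately after Theorem \ref{thm.fibersequencemodule} and Lemma \ref{lem.aquotienta} with no separate proof, so the deduction you spell out (take $\cat{B}=\cat{A}$ in the theorem, then use the lemma to see the total space is contractible) is precisely what is meant. Your extra remark verifying that the resulting equivalence is the natural map is more than the paper bothers to say.
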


%

We shall need a result about commuting homotopy limits and geometric realization. If $X_\bullet$ is a simplicial space with augmentation to $X_{-1}$, for $x \in  X_{-1}$ we can form the levelwise homotopy fiber $\mr{hofib}_x(X_\bullet \to X_{-1})$ by defining $\mr{hofib}_x(X_p \to X_{-1})$ to be the space of pairs $(y,\gamma) \in X_p \times \mr{Map}([0,1],X_{-1})$ such that $\gamma(0) = \epsilon(y)$ and $\gamma(1) = x$. The following is a consequence of \cite[Lemma 2.14]{ebertrw}:

\begin{lemma}\label{lem.hofibaugmentation} Let $X_\bullet$ be a simplicial space with augmentation to $X_{-1}$, then for each $x \in X_{-1}$, $||\mr{hofib}_x(X_\bullet \to X_{-1})|| \simeq \mr{hofib}_x(||X_\bullet|| \to X_{-1})$.\end{lemma}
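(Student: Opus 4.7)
The plan is to replace the augmentation $\epsilon\colon X_\bullet \to X_{-1}$ levelwise by a Hurewicz fibration and then apply a standard quasifibration criterion for simplicial spaces. Concretely, let $PX_{-1} = \mr{Map}([0,1], X_{-1})$ and define $\tilde X_p = X_p \times_{X_{-1}} PX_{-1}$, with pullback taken along $\epsilon_p$ and evaluation at $0$. Give $\tilde X_\bullet$ the evident simplicial structure (face and degeneracy maps act as those of $X_\bullet$ on the first factor and as the identity on paths), and set $\tilde\epsilon_p = \mr{ev}_1\colon \tilde X_p \to X_{-1}$. This is a Hurewicz fibration whose strict fiber at $x$ is, by construction, $\mr{hofib}_x(\epsilon_p)$. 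The inclusion $X_p \hookrightarrow \tilde X_p$, $y \mapsto (y, \mr{const}_{\epsilon_p(y)})$, is a simplicial deformation retract, hence a levelwise weak equivalence.

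First, I would verify that $\tilde X_\bullet$ is proper. Since the degeneracies act trivially on the path-space factor, the inclusion $\bigcup_i s_i\tilde X_p \hookrightarrow \tilde X_{p+1}$ is obtained from $\bigcup_i s_i X_p \hookrightarrow X_{p+1}$ by pullback along the fibration $\tilde X_{p+1} \to X_{p+1}$, and pullback along a fibration preserves closed Hurewicz cofibrations. So properness of $X_\bullet$ gives properness of $\tilde X_\bullet$, and the basic fact about realization of levelwise weak equivalences of proper simplicial spaces (recalled just above the statement of the lemma) yields $|X_\bullet| \simeq |\tilde X_\bullet|$. Consequently $\mr{hofib}_x(|X_\bullet| \to X_{-1}) \simeq \mr{hofib}_x(|\tilde X_\bullet| \to X_{-1})$.

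Second, I would invoke the standard quasifibration theorem for simplicial spaces (Theorem 12.7 of \cite{M}): if $Z_\bullet$ is a proper simplicial space and $f_\bullet\colon Z_\bullet \to W$ is a levelwise Hurewicz fibration into a constant simplicial space $W$, then $|f_\bullet|\colon |Z_\bullet| \to W$ is a quasifibration, and the inclusion of its strict fiber $|f_\bullet^{-1}(w)|$ into the homotopy fiber is a weak equivalence. Applied to $\tilde\epsilon_\bullet$, this gives
\[\mr{hofib}_x(|\tilde X_\bullet| \to X_{-1}) \simeq |\tilde\epsilon_\bullet^{-1}(x)| = |\mr{hofib}_x(X_\bullet \to X_{-1})|,\]
and composing with the equivalence from the previous paragraph proves the lemma.

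The main obstacle is the invocation of the quasifibration theorem in the second step — this is the only non-formal ingredient — together with the cofibration bookkeeping needed to know $\tilde X_\bullet$ is proper. Everything else is straightforward once the fibrant replacement is set up.
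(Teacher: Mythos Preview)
Your argument is correct and gives a self-contained proof, but it is not the route the paper takes. The paper simply cites Lemma~2.1 of \cite{oscarresolutions}, which already proves the statement for \emph{thick} geometric realization, and then observes that both $X_\bullet$ and $\mr{hofib}_x(X_\bullet \to X_{-1})$ are proper (the latter by the subsequent Lemma~\ref{lem.hofibproper}), so thick and thin realizations agree. Your properness argument for $\tilde X_\bullet$ is essentially the content of Lemma~\ref{lem.hofibproper}: the pullback-of-a-cofibration-along-a-fibration step you invoke is exactly Kieboom's theorem \cite{kieboomcof}, which the paper cites there. So you have in effect folded Lemma~\ref{lem.hofibproper} into your proof and replaced the external citation to \cite{oscarresolutions} by an appeal to May's quasifibration criterion. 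What your approach buys is a proof that stays inside the circle of references already used in the paper (May plus Kieboom); what the paper's approach buys is brevity, at the cost of an additional reference. Two small points worth tightening: state explicitly that the strict fiber of $|\tilde\epsilon_\bullet|$ over $x$ coincides with $|\tilde\epsilon_\bullet^{-1}(x)|$ (realization commutes with pullback along a map to a constant simplicial object), and give a citation for the cofibration-pullback fact rather than asserting it.
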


%

\begin{remark}\label{rem.simplicialset} The proofs of some of the results above rely on quasifibrations. However, for topological and PL-manifolds one needs to work in simplicial sets. Since the adjunction $|-| \dashv \mr{Sing}$ is a Quillen equivalence between the category $\cat{sSet}$ of simplicial sets and the category $\cat{Top}$ of CGWH spaces with the Quillen model structures, the geometric realization of the homotopy fiber of a map in $\cat{sSet}$ is weakly equivalent to the homotopy fiber in $\cat{Top}$ of its geometric realization. The same holds for thick geometric realizations by \cite[Lemma 1.7]{ebertrw}. Thus the analogous theorems in $\cat{sSet}$ follow from those in $\cat{Top}$.\end{remark} 

\subsection{Moore versions of diffeomorphism groups} \label{subsec.moore} Using a Moore-loop type construction, we will define a topological monoid $\cat{BD}$ which is a strict model for the $H$-space $B\mr{Diff}_{\partial}(D^{n})$ under boundary connected sum $\natural$ and a module $\cat{BM}$ over $\cat{BD}$ which is a strict model for $B\cat{Diff}_\partial(M)$ under the action of $B\mr{Diff}_{\partial}(D^{n})$ by $\natural$.

\subsubsection{Moore monoid of diffeomorphisms of a disk} We start by defining a topological monoid model for $\mr{Diff}_\partial(D^n)$ with boundary connected sum as multiplication. To do this we add a real parameter to constrain the support, and use this parameter to define boundary connected sum by juxtaposition. Hence we will think of our diffeomorphisms as a subspace of the $[0,\infty) \times \mr{Diff}_{\partial}(D^{n-1} \times [0,\infty))$, with the latter having the topology of $C^\infty$-convergence on compacts. Even though $[0,\infty) \times \mr{Diff}_{\partial}(D^{n-1} \times [0,\infty))$ is contractible in this topology, our subspace is not.

\begin{definition}\label{def.mooredisk}
$\cat{D}$ is the \emph{Moore monoid of diffeomorphisms of a disk}, given by the subspace of pairs $(t,\phi) \in [0,\infty) \times \mr{Diff}_{\partial}(D^{n-1} \times [0,\infty))$ such that $\mr{supp}(\phi) \subset D^{n-1} \times [0,t]$.

The multiplication map is given by $\cat{D} \times \cat{D} \ni ((t,\phi),(t',\phi')) \mapsto (t+t',\phi \sqcup \phi') \in \cat{D}$ with $\phi \sqcup \phi' \in  \mr{Diff}_{\partial}(D^{n-1} \times [0,\infty))$ given by
\[(\phi \sqcup \phi')(x,s) \coloneqq \begin{cases}\phi(x,s) & \text{if $s\leq t$,}\\
(\phi'_1(x,s-t),\phi'_2(x,s-t)+t) & \text{otherwise,} \end{cases}\]
and the element $(0,\mr{id})$ is the unit.
\end{definition}

We check this has the desired homotopy type.

\begin{lemma}\label{lem.diffdweq} The inclusion $\mr{Diff}_{\partial}(D^{n-1} \times [0,1]) \hookrightarrow \cat{D}$ given by $\phi \mapsto (1,\phi)$ is a homotopy equivalence.\end{lemma}

\begin{proof}We will homotope $\cat{D}$ onto $\{1\} \times \mr{Diff}_{\partial}(D^{n-1} \times [0,1])$ in two steps. In the first step we decrease the size of the support: the pair $(t,\phi)$ is sent to the path $[0,1] \ni r \mapsto \left(\frac{t}{1+tr},\phi_r\right) \in \cat{D}$ with $\phi_r$ given by
\[\phi_r(x,s) \coloneqq \begin{cases} \left(\phi_1(x,s(1+tr)),\frac{1}{1+tr}\phi_2(x,s(1+tr))\right) & \text{if $s \in [0,\frac{t}{1+tr}]$,} \\
(x,s) & \text{otherwise.} \end{cases}\]
Now that $t < 1$, in the second step we linearly increase $t$ to $1$: 
\[[0,1] \ni r \longmapsto ((1-r)t + r,\phi) \in \cat{D}.\] It is clear that this is homotopic to the identity on $\{1\} \times \mr{Diff}_{\partial}(D^{n-1} \times [0,1])$.
\end{proof}

%

\subsubsection{Moore monoid of classifying spaces of diffeomorphisms of a disk} By \cite[Corollary 11.7]{M}, a unital monoid object in simplicial spaces geometrically realizes to a topological monoid. The same holds for thick geometric realizations using \cite[Remark 2.23]{gepnerhenriques}. Using this and a variation of the nerve construction, we will produce a topological monoid model for $B\mr{Diff}_\partial(D^n)$.

\begin{definition}
The simplicial space $[p] \mapsto \cat{ND}_p$ has $p$-simplices given by $(p+1)$-tuples $(t,\phi_1,\ldots,\phi_p)$ in $[0,\infty) \times \mr{Diff}_{\partial}(D^{n-1} \times [0,\infty))^p$ such that $\bigcup_i \mr{supp}(\phi_i) \subset D^{n-1} \times [0,t]$. The face maps compose diffeomorphisms and the degeneracy maps insert identities. The operations $\sqcup$ described before makes $\cat{ND}_\bullet$ a unital monoid object in simplicial spaces.

We call its thick geometric realization $\cat{BD} \coloneqq ||\cat{ND}_\bullet||$  the \emph{Moore monoid of classifying spaces of diffeomorphisms of a disk}.
\end{definition}

We check this has the desired homotopy type.

\begin{lemma}\label{lem.bdiffbdweq} The inclusion $B\mr{Diff}_{\partial}(D^{n-1} \times [0,1]) \hookrightarrow \cat{BD}$ given by $x \mapsto (1,x)$ is a weak equivalence.\end{lemma}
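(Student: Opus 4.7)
The plan is to realize both sides as geometric realizations of proper simplicial spaces and reduce the claim to a levelwise homotopy equivalence. On the right, $\cat{BD} = |\cat{D}_\bullet|$ by construction. On the left, I model $B\mr{Diff}_{\partial}(D^{n-1}\times[0,1])$ as $|N_\bullet|$, where $N_\bullet$ is the ordinary bar construction with $N_k = \mr{Diff}_\partial(D^{n-1}\times[0,1])^k$, face maps composing adjacent diffeomorphisms, and degeneracies inserting the identity. Extending each diffeomorphism by the identity on $D^{n-1}\times[1,\infty)$ and taking $t=1$ defines a map $\iota_k\colon N_k \to \cat{D}_k$ sending $(\phi_1,\ldots,\phi_k)$ to $(1,\phi_1,\ldots,\phi_k)$. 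Because each $\phi_i$ has support in $D^{n-1}\times[0,1]$, composition in $\mr{Diff}_\partial(D^{n-1}\times[0,\infty))$ agrees with composition in $\mr{Diff}_\partial(D^{n-1}\times[0,1])$, so $\iota_\bullet$ is a simplicial map whose realization is the map of the statement.

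Next I would show that $\iota_k$ is a homotopy equivalence for each $k$. For $k=0$ this reduces to the inclusion of $\{1\}$ into the contractible space $\cat{D}_0=[0,\infty)$. For $k\geq 1$, the argument of Lemma~\ref{lem.diffdweq} adapts with all diffeomorphisms rescaled simultaneously: given $(t,\phi_1,\ldots,\phi_k)\in\cat{D}_k$, the two-step retract first sends it to $(t/(1+tr),\phi_{1,r},\ldots,\phi_{k,r})$, with each $\phi_{i,r}$ defined by the same formula as in that lemma, shrinking the common support into $D^{n-1}\times[0,t/(1+t)]$ at $r=1$, and then linearly grows the parameter from $t/(1+t)<1$ to $1$. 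Both steps preserve the shared-support condition and yield a strong deformation retraction of $\cat{D}_k$ onto $\iota_k(N_k)$.

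To conclude that $|\iota_\bullet|$ is a weak equivalence, I need both simplicial spaces to be good; then a levelwise weak equivalence of proper simplicial spaces induces a weak equivalence on realizations. For $N_\bullet$ this is standard: the inclusion of the identity into $\mr{Diff}_\partial(D^{n-1}\times[0,1])$ is a cofibration by essentially the argument of Lemma~\ref{lem.dgood}, which makes the standard bar construction good. For $\cat{D}_\bullet$ one adapts Lemma~\ref{lem.dgood} at each simplicial level: a neighborhood of a degenerate $(k{+}1)$-simplex deformation retracts onto it by first linearly interpolating each $\phi_i$ toward the identity via the inverse function theorem, then linearly shrinking $t$ to $0$. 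Verifying goodness of $\cat{D}_\bullet$ uniformly in $k$ is the main technical point, but it is a routine modification of Lemma~\ref{lem.dgood}.
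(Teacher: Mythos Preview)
Your approach is exactly that of the paper: exhibit the inclusion as the realization of a simplicial map $N_\bullet(\mr{Diff}_\partial(D^{n-1}\times[0,1])) \to \cat{D}_\bullet$, check it is a levelwise weak equivalence via Lemma~\ref{lem.diffdweq}, and conclude via properness of both simplicial spaces.

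One small correction: your description of the goodness argument for $\cat{D}_\bullet$ is not quite right. To show that $s_i(\cat{D}_k) \hookrightarrow \cat{D}_{k+1}$ is a cofibration, you need to retract a neighborhood onto the subspace of tuples $(t,\phi_1,\ldots,\phi_{k+1})$ where the $(i{+}1)$st coordinate is the identity, not onto the basepoint. The deformation from Lemma~\ref{lem.dgood} should be applied only to that one coordinate (which is near $\mr{id}$), leaving $t$ and the other $\phi_j$ fixed; interpolating all $\phi_i$ to the identity and shrinking $t$ to $0$ gives a retraction to the wrong target. With this adjustment the argument goes through, and indeed the paper's Lemma~\ref{lem.bdgood} handles the analogous verification.
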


\begin{proof}There is a map of simplicial spaces $N_\bullet (\mr{Diff}_{\partial}(D^{n-1} \times [0,1])) \to \cat{ND}_\bullet$, which is a levelwise weak equivalence by Lemma \ref{lem.diffdweq}. Using Lemma \ref{lem.geomrelweq} its thick geometric realization is also a weak equivalence.\end{proof}

%

\begin{figure}
	\begin{tikzpicture}
	\draw (5,-0.6) -- (2,-0.6) to[out=180,in=0] (0,-1) to[out=180,in=-90] (-1,0) to[out=90,in=180] (0,1) to[out=0,in=180] (2,0.6) -- (5,0.6);
	\draw (.4,0) to[out=-90,in=-90] (-.4,0);
	\draw (.3,-0.15) to[out=90,in=90] (-.3,-0.15);
	\draw [dotted] (2,0.6) to[out=0,in=0] (2,-0.6);
	\draw [densely dotted] (2,0.6) to[out=180,in=180] (2,-0.6);
	\node at (0,-1.3) {$M$};
	\node at (2,-.95) {$\partial M$};
	\node at (4,0) {$\partial M \times [0,\infty)$};
	\node at (2,1.5) {$M_\propto$};
	\draw (1.55,.2) -- (1.8,.2);
	\draw (1.55,-.2) -- (1.8,-.2);
	\node at (1.7,0) [left] {\tiny $D^{n-1}$};
	
	\begin{scope}[xshift=8cm]
	\draw (2,-0.6) to[out=180,in=0] (0,-1) to[out=180,in=-90] (-1,0) to[out=90,in=180] (0,1) to[out=0,in=180] (2,0.6);
	\draw [dotted] (2,-0.6) -- (5,-0.6);
	\draw [dotted] (2,0.6) -- (5,0.6);
	\draw (1.65,-.2) -- (3,-.2) to[out=100,in=-100] (3,.2) -- (1.65,.2);
	\node at (3,0) [right] {$t$};
	\draw (2,-0.6) to[out=180,in=-80] (1.65,-.2);
	\draw (2,0.6) to[out=180,in=80] (1.65,.2);
	\draw [densely dotted] (1.65,-.2) to[out=100,in=-100] (1.65,.2);
	\draw (.4,0) to[out=-90,in=-90] (-.4,0);
	\draw (.3,-0.15) to[out=90,in=90] (-.3,-0.15);
	\draw [dotted] (2,0.6) to[out=0,in=0] (2,-0.6);
	\node at (0,-1.3) {$M$};
	\node at (2,-.95) {$\partial M$};
	\node at (2,1.5) {$M_t$};
	\end{scope}
	\end{tikzpicture}
	\caption{The manifolds $M_t \subset M_\propto$.}
	\label{fig.mtminfty}
\end{figure}


\subsubsection{Moore modules of diffeomorphisms and classifying spaces of diffeomorphisms of a manifold} We now generalize the definitions of $\cat{D}$ and $\cat{BD}$ to an $n$-dimensional manifold $M$. To do so, we recall the manifold $M_\propto$, which shall serve as an analogue of $D^{n-1} \times [0,\infty)$ and $D^{n-1} \times [0,t]$:
\[M_\propto = M \cup_{(\partial M \times \{0\})} (\partial M \times [0,\infty)).\]
For $t \in [0,\infty)$ this contains a submanifold with corners:
\[M_t \coloneqq M \cup_{(D^{n-1} \times \{0\})} (D^{n-1} \times [0,t]).\]

\begin{definition}$\cat{M}$ is the \emph{Moore module of diffeomorphisms of $M$}. It is given by the subspace of pairs $(t,\phi) \in [0,\infty) \times \mr{Diff}(M_\propto)$ such that $\mr{supp}(\phi) \subset M_t$.

The right action of $\cat{D}$ on $\cat{M}$ is given by $\cat{M} \times \cat{D} \ni ((t,\phi),(t',\phi')) \mapsto (t+t',\phi \sqcup \phi') \in \cat{M}$ with $\phi \sqcup \phi'$ given by
\[(\phi \sqcup \phi')(m) \coloneqq \begin{cases} \phi(m) & \text{if $m \in M_t$} \\
(\phi'_1(x,s-t),\phi'_2(x,s-t)+t) & \text{if $m = (x,s) \in D^{n-1} \times [t,\infty)$} \\
m & \text{otherwise} \end{cases}\]
\end{definition}

Similarly to above, the thick realization of a module object over a unital monoid object in simplicial spaces is a module over the topological monoid.

\begin{definition}The simplicial space $[p] \mapsto {\cat{N}\cat{M}}_p$ has $p$-simplices given by $(p+1)$-tuples $(t,\phi_1,\ldots,\phi_p)$ in $[0,\infty) \times \mr{Diff}(M_\propto)^p$ such that $\bigcup_i \mr{supp}(\phi_i) \subset M_t$. The operations $\sqcup$ described before makes it a module object in simplicial spaces over the unital monoid object $\cat{ND}_\bullet$.

We call its thick geometric realization $\cat{BM} \coloneqq ||{\cat{N}\cat{M}}_\bullet||$ the \emph{Moore module of classifying spaces of diffeomorphisms of $M$}.\end{definition}

A similar proof as for Lemma's \ref{lem.diffdweq} and \ref{lem.bdiffbdweq} gives the following lemma. 

\begin{lemma}\label{lem.bmweq} The maps $\mr{Diff}_\partial(M) \to \cat{M}$ given by $\phi \mapsto (0,\phi)$ and $B\mr{Diff}_\partial(M) \to \cat{BM}$ given by $x \mapsto (0,x)$ are weak equivalences.\end{lemma}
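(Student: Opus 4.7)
The proof has three parts mirroring Lemmas \ref{lem.diffdweq}, \ref{lem.dgood} and \ref{lem.bdiffbdweq}. I use convention (c) throughout: diffeomorphisms in $\mr{Diff}_\partial(M)$ are the identity on a neighborhood of $\partial M$ and hence extend smoothly by the identity to diffeomorphisms of $M_\infty$ supported in $M_0 = M$. This makes $\phi \mapsto (0, \phi)$ a continuous inclusion $\mr{Diff}_\partial(M) \hookrightarrow \cat{M}$, with levelwise analogues $\mr{Diff}_\partial(M)^k \hookrightarrow \cat{M}_k$ assembling into a simplicial map $N_\bullet \mr{Diff}_\partial(M) \to \cat{M}_\bullet$ realizing to $B\mr{Diff}_\partial(M) \to \cat{BM}$. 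The heart of the argument is an explicit strong deformation retraction of $\cat{M}$ onto $\{0\} \times \mr{Diff}_\partial(M)$; applied levelwise, it gives both the first weak equivalence and a levelwise weak equivalence $N_\bullet \mr{Diff}_\partial(M) \to \cat{M}_\bullet$ of simplicial spaces.

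To construct the retraction, fix a collar $c \colon \partial M \times [0, 1) \hookrightarrow M$ and glue it to $\partial M \times [0, \infty) \subset M_\infty$ along $\partial M$ to obtain an open embedding $\tilde c \colon \partial M \times (-1, \infty) \hookrightarrow M_\infty$. Choose a family of compactly-supported diffeomorphisms $f^t_r \colon (-1, \infty) \to (-1, \infty)$ depending continuously on $(t, r) \in [0, \infty) \times [0, 1]$, satisfying $f^t_0 = f^0_r = \mr{id}$ and $f^t_1((-1, t]) \subset (-1, -1/4]$ for all $t$. Define $\rho^t_r \in \mr{Diff}(M_\infty)$ to be the identity outside the image of $\tilde c$ and $\tilde c(x, s) \mapsto \tilde c(x, f^t_r(s))$ inside; compact support of $f^t_r$ ensures smoothness at the boundary of the image. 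The deformation of $(t, \phi) \in \cat{M}$ proceeds in two stages: first conjugate, $r \mapsto (t, \rho^t_r \phi (\rho^t_r)^{-1})$; at $r = 1$ the support equals $\rho^t_1(\mr{supp}(\phi)) \subset \rho^t_1(M_t) \subset M \setminus c(\partial M \times [0, 1/4))$, which lies in $M_0$ with a buffer from $\partial M$. Second, decrease the parameter, $r \mapsto ((1 - r) t, \rho^t_1 \phi (\rho^t_1)^{-1})$; this remains in $\cat{M}$ since the support is already contained in $M_0$. The endpoint lies in $\{0\} \times \mr{Diff}_\partial(M)$ (by convention (c)), and both stages restrict to the identity on this subspace.

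For properness of $\cat{M}_\bullet$, it suffices (by Theorem 1 of \cite{kieboomcof}) to show that $\{\mr{id}\} \hookrightarrow \mr{Diff}(M_\infty)$ is a cofibration, whence each degeneracy $\cat{M}_k \to \cat{M}_{k+1}$ is too. Fix a complete Riemannian metric on $M_\infty$; the exponential map identifies a $C^\infty$-small neighborhood of the zero vector field with a neighborhood $V$ of $\mr{id} \in \mr{Diff}(M_\infty)$, with support preserved. Linearly scaling the vector fields to $0$ gives a neighborhood deformation retraction of $V$ to $\{\mr{id}\}$, hence the desired cofibration. Combining, the levelwise weak equivalence $N_\bullet \mr{Diff}_\partial(M) \to \cat{M}_\bullet$ between proper simplicial spaces (the source is proper as the nerve of a well-pointed topological monoid) realizes to a weak equivalence $B\mr{Diff}_\partial(M) \to \cat{BM}$.

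The main obstacle is the construction of the continuous family $f^t_r$, particularly its behavior as $t \to 0$, where the conditions force $\rho^0_r = \mr{id}$ matching the identity on the retract subspace; one arranges this by first producing a continuous family of targets $f^t_1$ with the stated properties, then choosing a continuous family of isotopies to the identity, using that the compactly-supported orientation-preserving diffeomorphism group of $(-1, \infty)$ is contractible. The rest is formally analogous to the proofs of Lemmas \ref{lem.diffdweq}, \ref{lem.dgood}, and \ref{lem.bdiffbdweq}.
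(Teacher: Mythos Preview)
Your approach mirrors the paper's (which simply says the proof is similar to Lemmas~\ref{lem.diffdweq}, \ref{lem.dgood}, \ref{lem.bdiffbdweq}), and the overall strategy---conjugate by collar-pushing diffeomorphisms to move the support into $M_0$, then shrink $t$---is correct. However, there is a genuine inconsistency in your conditions on $f^t_r$: you require both $f^0_r = \mr{id}$ for all $r$ and $f^t_1((-1,t]) \subset (-1,-1/4]$ for all $t$. At $(t,r)=(0,1)$ the first gives $f^0_1((-1,0]) = (-1,0]$, contradicting the second. More generally, $f^t_1(0) \le -1/4$ for $t>0$ cannot tend to $f^0_1(0) = 0$ as $t\to 0$. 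No continuous family satisfies both, so the strong deformation retraction as stated does not exist; your final paragraph acknowledges the difficulty but does not actually resolve it.

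The source of the trouble is insisting on a \emph{strong} deformation retraction. Note that the paper's proof of Lemma~\ref{lem.diffdweq} does not do this either: its two-step homotopy moves elements of the target $\{1\} \times \mr{Diff}_\partial(D^{n-1}\times[0,1])$, and then separately observes that the resulting retraction restricted to the target is homotopic to the identity. You should do likewise: drop the requirement $f^0_r = \mr{id}$ (while adding $f^t_r(s) \le s$ for all $s$, which you also need so that the intermediate conjugates have support in $M_t$ and hence stay in $\cat{M}$), obtain a homotopy from $\mr{id}_{\cat{M}}$ to a map landing in $\{0\}\times\mr{Diff}_\partial(M)$, and then check that its restriction to $\{0\}\times\mr{Diff}_\partial(M)$ is homotopic to the identity (run $r$ backwards). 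This yields the desired homotopy equivalence, and the rest of your argument---the support-preserving NDR for $\{\mr{id}\}\hookrightarrow\mr{Diff}(M_\infty)$ and the levelwise realization---goes through.
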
 

\subsection{A delooped fiber sequence} \label{subsec.delooping} We now construct the fiber sequence (\ref{eqn.weissfib2}):
\[ B\mr{Diff}_\partial (M) \longto B\mr{Emb}^{\cong}_{\half\partial}(M) \longto B(B\mr{Diff}_\partial(D^{n}),\natural).\] 
The precise statement is as follows, and the proof is given in the remainder of this subsection.

\begin{theorem}\label{thm.weissfibersequence} There is a fiber sequence
\[\cat{BM} \longto \cat{BM} \sslash \cat{BD} \longto * \sslash \cat{BD}\]
with $\cat{BM} \simeq B\mr{Diff}_\partial(M)$, $\cat{BM}\sslash\cat{BD} \simeq B\mr{Emb}^{\cong}_{\half\partial}(M)$ and $\Omega (*\sslash\cat{BD}) \simeq B\mr{Diff}_{\partial}(D^n)$. The map $\cat{BM} \to \cat{BM} \sslash \cat{BD}$ is weakly equivalent to the inclusion $B\mr{Diff}_\partial(M) \hookrightarrow B\mr{Emb}^{\cong}_{\half\partial}(M)$.
\end{theorem}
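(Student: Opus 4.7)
The plan is to apply Theorem \ref{thm.fibersequencemodule} to produce the fiber sequence and then identify its three terms. The monoid $\cat{BD}$ is well-pointed by Lemma \ref{lem.bdgood} and is path-connected since it is weakly equivalent to $B\mr{Diff}_\partial(D^{n-1}\times[0,1])$ by Lemma \ref{lem.bdiffbdweq}. Applying Theorem \ref{thm.fibersequencemodule} to the right $\cat{BD}$-module $\cat{BM}$ therefore yields the required fiber sequence. Two of the term identifications are immediate: Lemma \ref{lem.bmweq} gives $\cat{BM}\simeq B\mr{Diff}_\partial(M)$, and Corollary \ref{cor.fibersequencemonoid} together with Lemma \ref{lem.bdiffbdweq} gives $\Omega(*\sslash\cat{BD})\simeq\cat{BD}\simeq B\mr{Diff}_\partial(D^n)$.

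The heart of the proof is identifying $\cat{BM}\sslash\cat{BD}$ with $B\mr{Emb}^{\cong}_{\half\partial}(M)$ compatibly with the map from $\cat{BM}$. The plan is to introduce a third Moore construction: a space $\cat{E}$ whose elements are pairs $(t,e)\in[0,\infty)\times\mr{Emb}_{\half\partial}(M_\infty)$ with $e$ the identity outside $M_t$ and isotopic through such self-embeddings to a diffeomorphism of $M_\infty$, together with a simplicial classifying-space version $\cat{BE}$ modeled on Definition \ref{def.mooredisk} and its successors. Arguments parallel to those of Subsection \ref{subsec.moore} will yield $\cat{BE}\simeq B\mr{Emb}^{\cong}_{\half\partial}(M)$. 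The forgetful map $\cat{M}\to\cat{E}$ that views a Moore diffeomorphism as a self-embedding is compatible with the $\cat{D}$-action (with trivial action on $\cat{E}$ up to coherent homotopy, supplied by the isotopy that pushes any disk support off to infinity), inducing a map $\cat{BM}\sslash\cat{BD}\to\cat{BE}$ whose compatibility with the inclusion $B\mr{Diff}_\partial(M)\hookrightarrow B\mr{Emb}^{\cong}_{\half\partial}(M)$ follows from naturality of the bar construction.

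The main obstacle is showing this descended map is a weak equivalence, and the strategy is to establish that $\cat{D}\to\cat{M}\to\cat{E}$ is a fiber sequence of underlying spaces. Equivalently, the forgetful map $\cat{M}\to\cat{E}$ must be a quasi-fibration with fiber $\cat{D}$: geometrically, any Moore self-embedding isotopic to a diffeomorphism must admit a lift to a Moore diffeomorphism, with the space of lifts weakly equivalent to $\cat{D}$. The lift will come from a parametrized isotopy extension argument applied to the isotopy witnessing the ``$\cong$'' condition, with the Moore support parameter absorbing the additional support produced by the extension; some care will be needed to verify the Moore support conditions and the ``$\cong$'' condition at the level of $\pi_0$ cooperate. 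Once the fiber sequence of underlying spaces is in hand, combining with the fiber sequence from Theorem \ref{thm.fibersequencemodule} and applying the five lemma on long exact sequences of homotopy groups completes the identification.
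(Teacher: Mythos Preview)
Your overall strategy---apply Theorem~\ref{thm.fibersequencemodule} and then identify the three terms, with the identification of $\cat{BM}\sslash\cat{BD}$ being the real work---matches the paper's, and your first paragraph is essentially the paper's proof (citing Lemma~\ref{lem.bmweq} and Proposition~\ref{prop.diffdiskloop} for fiber and base, and deferring the total space to a separate argument).

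However, your proposed Moore embedding space $\cat{E}$ does not do what you want. A self-embedding $e\colon M_\infty\hookrightarrow M_\infty$ that is the identity outside $M_t$ is, by continuity, the identity on the frontier of $M_t$ in $M_\infty$, and hence restricts to an embedding $M_t\hookrightarrow M_t$ fixing the \emph{entire} boundary $\partial M_t$. Such an embedding is necessarily surjective (its image is a codimension-$0$ submanifold with the same boundary), so $e$ is already a diffeomorphism. Thus your $\cat{E}$ coincides with $\cat{M}$, the ``forgetful map'' is the identity, and the claimed fiber sequence $\cat{D}\to\cat{M}\to\cat{E}$ collapses. The ``space of lifts'' you describe is a single point, not $\cat{D}$.

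The paper repairs this by replacing the forgetful map with a \emph{restriction} map: one considers embeddings of the strictly smaller manifold $M'_\infty = M\cup_{(\partial M\setminus\mr{int}(D^{n-1}))\times\{0\}}(\partial M\setminus\mr{int}(D^{n-1}))\times[0,\infty)$ into $M_\infty$, and sends a Moore diffeomorphism to its restriction to $M'_\infty$ (after passing to the weakly equivalent subspace $\cat{M}^{(\mr{im})}$ one may equally well restrict to $M_0$, landing in $\mr{Emb}^{\cong}_{\half\partial}(M)$ on the nose). Restriction genuinely forgets the behavior on $D^{n-1}\times[0,\infty)$, so the point-set fiber is $\cat{D}$; isotopy extension then shows the map is a Serre fibration (Lemma~\ref{lem.mdtoemb}). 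Moreover, because the juxtaposed disk lies entirely in the part that is discarded under restriction, the $\cat{D}$-action on the target is \emph{literally} trivial---no ``coherent homotopy'' is needed. Your five-lemma endgame is then exactly the paper's: the induced map $\cat{M}\sslash\cat{D}\to\mr{Emb}^{\cong}_{\half\partial}(M'_\infty,M_\infty)$ has homotopy fiber $\cat{D}\sslash\cat{D}\simeq *$, hence is a weak equivalence.
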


\begin{proof}This follows from Theorem \ref{thm.fibersequencemodule}, because the monoid $\cat{BD}$ is path-connected. The identifications of fiber, total space and base are Lemma \ref{lem.bmweq}, and Propositions \ref{prop.embcat} and \ref{prop.diffdiskloop} respectively. The statement about the map $\cat{BM} \to \cat{BM} \sslash \cat{BD}$ is Lemma \ref{lem.compdiffemb}.\end{proof}

\subsubsection{The base $* \sslash \cat{BD}$} We start by describing the base, by showing that $\ast \sslash \cat{BD}$ is indeed a delooping of $B\mr{Diff}_\partial(D^{n})$.

\begin{proposition}\label{prop.diffdiskloop}We have a weak equivalence
\[B\mr{Diff}_{\partial}(D^{n}) \simeq \Omega (* \sslash \cat{BD}).\]
\end{proposition}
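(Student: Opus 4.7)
The plan is to deduce this directly from the general machinery already set up. The strategy is to combine two previously proved facts about $\cat{BD}$: (a) it is a well-pointed, path-connected topological monoid modelling $B\mr{Diff}_\partial(D^n)$, and (b) any such monoid has a canonical weak equivalence to the loops on the double bar construction $\ast \sslash \cat{BD}$.

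First I would observe that $\cat{BD}$ is path-connected. This is because $\cat{BD}$ is the geometric realization of the simplicial space $\cat{D}_\bullet$; the space $\cat{D}_0 = [0,\infty)$ is path-connected (it is contractible), so $\pi_0(\cat{BD}) = \pi_0(\cat{D}_0) = \ast$. Next, $\cat{BD}$ is well-pointed by Lemma \ref{lem.bdgood}. Therefore Corollary \ref{cor.fibersequencemonoid} applies, giving a natural weak equivalence
\[\cat{BD} \xrightarrow{\simeq} \Omega(* \sslash \cat{BD}).\]

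To finish, I would identify $\cat{BD}$ with $B\mr{Diff}_\partial(D^n)$. By Lemma \ref{lem.bdiffbdweq}, the map $B\mr{Diff}_\partial(D^{n-1}\times[0,1]) \hookrightarrow \cat{BD}$ sending $x\mapsto (1,x)$ is a weak equivalence. Since $D^{n-1}\times[0,1]$ is diffeomorphic to $D^n$ (after smoothing corners, which yields a canonical homotopy equivalence of diffeomorphism groups by the standard argument that the space of corner-smoothings is contractible), we have $B\mr{Diff}_\partial(D^{n-1}\times[0,1]) \simeq B\mr{Diff}_\partial(D^n)$. Concatenating these weak equivalences gives the desired $B\mr{Diff}_\partial(D^n) \simeq \Omega(*\sslash\cat{BD})$.

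There is no real obstacle: everything has been pre-packaged in the preceding subsections. The only mildly subtle point is verifying path-connectedness of $\cat{BD}$, but this is automatic once one remembers that geometric realization of a simplicial space $X_\bullet$ has $\pi_0$ equal to the coequalizer of $\pi_0(X_1)\rightrightarrows \pi_0(X_0)$, and here $\pi_0(\cat{D}_0) = \pi_0([0,\infty))$ is trivial. (One could alternatively invoke that $\cat{BD}$ models a classifying space, hence is automatically connected.)
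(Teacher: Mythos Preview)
Your proposal is correct and follows essentially the same route as the paper's proof: both apply Corollary~\ref{cor.fibersequencemonoid} to the path-connected well-pointed monoid $\cat{BD}$, then invoke Lemma~\ref{lem.bdiffbdweq} and the standard identification $B\mr{Diff}_\partial(D^{n-1}\times[0,1])\simeq B\mr{Diff}_\partial(D^n)$. Your version is slightly more explicit in justifying path-connectedness and well-pointedness, but the argument is the same.
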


\begin{proof}Since $\cat{BD}$ is path-connected, the map $\cat{BD} \to \Omega (* \sslash \cat{BD})$ is a weak equivalence by Corollary \ref{cor.fibersequencemonoid}. By Lemma \ref{lem.bdiffbdweq} the map $B\mr{Diff}_{\partial}(D^{n-1} \times [0,1]) \to \cat{BD}$ is a weak equivalence. Finally it is standard that $B\mr{Diff}_{\partial}(D^{n-1} \times [0,1]) \simeq B\mr{Diff}_{\partial}(D^{n})$.\end{proof}

\begin{remark}\label{rem.delooping} In fact, $B\mr{Diff}_{\partial}(D^{n})$ is an $n$-fold loop space. This follows from May's recognition principle \cite{M} after remarking it is a path-connected $E_n$-algebra. Smoothing theory provides a particular $n$-fold delooping: $B\mr{Diff}_{\partial}(D^{n}) \simeq \Omega^n_0 \mr{PL}(n)/O(n)$, cf.\ Theorem \ref{thm.smoothingtheory}. One can replace $\mr{PL}(n)$ with $\mr{Top}(n)$ if $n \neq 4$.\end{remark}

\subsubsection{Restrictions on images}
It will be useful to study diffeomorphisms and embeddings with a restriction on the images of certain submanifolds. 
We start with the case of diffeomorphisms.

\begin{definition}Let $\mr{Diff}^{(\mr{im})}(M_\propto)$ be the subspace of $\mr{Diff}(M_\propto)$ consisting of those diffeomorphisms $\varphi$ such that $\varphi(M_0) \subset M_0$.\end{definition}

Using this we can define $\cat{M}^{(\mr{im})}$ as the subspace of $\cat{M}$ consisting of $(t,\varphi) \in [0,\infty) \times \mr{Diff}(M_\propto)$ such that $\mr{supp}(\phi) \subset M_t$ \emph{and}  $\varphi \in \mr{Diff}^{(\mr{im})}(M_\propto)$. The $\cat{D}$-module structure on $\cat{M}$ restricts to a $\cat{D}$-module structure on $\cat{M}^{(\mr{im})}$.

\begin{lemma}\label{lem.diffimageweq} The inclusion $\cat{M}^{(\mr{im})} \hookrightarrow \cat{M}$ is a weak equivalence of $\cat{D}$-modules.\end{lemma}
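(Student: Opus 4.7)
The plan is to exhibit both $\cat{M}$ and $\cat{M}^{(\mr{im})}$ as deformation retracting onto the common subspace $\{0\} \times \mr{Diff}_\partial(M) \subset \cat{M}^{(\mr{im})}$, and then conclude by 2-out-of-3 that the inclusion $\cat{M}^{(\mr{im})} \hookrightarrow \cat{M}$ is a weak equivalence. The $\cat{D}$-equivariance will be immediate from the construction, as both retractions can be arranged to leave the region $D^{n-1} \times [t, \infty)$ (where $\cat{D}$ acts) untouched.

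First I would build the retraction for $\cat{M}$, parallel to Lemma \ref{lem.diffdweq}. Choose a collar neighborhood $\lambda \colon \partial M \times [-1, 0] \hookrightarrow M$, and combine it with the external collar to obtain an extended collar $\partial M \times [-1, \infty) \subset M_\infty$. Pick a smooth family of interval diffeomorphisms $h_s^t \colon [-1, \infty) \to [-1, \infty)$ (for $s \in [0, 1]$, $t \geq 0$) depending smoothly on $(s, t)$, with $h_0^t = \mr{id}$, supported in a compact interval, and such that $h_1^t([0, t]) \subset (-1, 0)$. Extend to diffeomorphisms $\alpha_s^t \colon M_\infty \to M_\infty$ acting by $(x, r) \mapsto (x, h_s^t(r))$ on the extended collar and identity elsewhere, and define
\[ H((t, \phi), s) := \bigl( (1-s) t,\, \alpha_s^t \circ \phi \circ (\alpha_s^t)^{-1} \bigr). \]
At $s = 1$ the support lies in $\alpha_1^t(M_t) \subset M$, so $H((t, \phi), 1)$ lies in $\{0\} \times \mr{Diff}_\partial(M) \subset \cat{M}^{(\mr{im})}$, giving the first retraction.

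The main obstacle is constructing a matching retraction of $\cat{M}^{(\mr{im})}$ onto $\{0\} \times \mr{Diff}_\partial(M)$: the homotopy $H$ above does not in general preserve $\cat{M}^{(\mr{im})}$, because $\alpha_s^t$ moves $M$ setwise and hence conjugation can destroy the condition $\phi(M) = M$. To remedy this I would use the crucial observation that any $\phi \in \cat{M}^{(\mr{im})}$ restricts to a diffeomorphism of the cylinder $D^{n-1} \times [0, t]$ fixing top and sides (and a matching diffeomorphism of $M$ fixing $\partial M \setminus \mr{int}(D^{n-1})$), so the cylinder piece can be directly shrunk via a Lemma \ref{lem.diffdweq}-style rescaling that collapses the collar factor while the boundary value $\phi|_{D^{n-1}}$ is transported into $M$ using the extended collar $\lambda$. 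This decomposition-respecting homotopy stays within $\cat{M}^{(\mr{im})}$ (since the cylinder and $M$ pieces are independently preserved) and lands at $(0, \phi') \in \{0\} \times \mr{Diff}_\partial(M)$.

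By 2-out-of-3 applied to the composition $\{0\} \times \mr{Diff}_\partial(M) \hookrightarrow \cat{M}^{(\mr{im})} \hookrightarrow \cat{M}$, where both the composition and the first inclusion are weak equivalences (by the two retractions above), the inclusion $\cat{M}^{(\mr{im})} \hookrightarrow \cat{M}$ is a weak equivalence. Since the $h_s^t$ can be chosen supported in $[-1, t + \tfrac{1}{2}]$ (and the cylinder shrinking only uses the region $D^{n-1} \times [0, t]$), both retractions act trivially on $D^{n-1} \times [t, \infty)$, so each step commutes with the juxtaposition action of $\cat{D}$ and the inclusion is a weak equivalence of $\cat{D}$-modules.
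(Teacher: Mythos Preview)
Your ``crucial observation'' for the second retraction is false, and this breaks the argument. The defining condition for $\cat{M}^{(\mr{im})}$ is $\phi(M_0) \subset M_0$, not $\phi(M_0) = M_0$. So $\phi|_{M_0}$ is in general only a self-embedding of $M$ fixing $\partial M \setminus \mr{int}(D^{n-1})$, not a diffeomorphism of $M$; correspondingly $\phi(D^{n-1} \times [0,t])$ can spill into $M$ and there is no splitting of $\phi$ into a cylinder piece and an $M$ piece. (Concretely, flow along a compactly-supported vector field on $M_\infty$ that near $D^{n-1} \times \{0\}$ points into $\mr{int}(M)$: the time-$1$ map has $\phi(M_0) \subsetneq M_0$.) This is not a technicality: the whole purpose of $\cat{M}^{(\mr{im})}$ in the paper is to admit a restriction map to $\mr{Emb}^{\cong}_{\half\partial}(M)$, a space of genuine non-surjective self-embeddings. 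So your cylinder-shrinking homotopy cannot be defined, and the second deformation retraction is missing.

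The paper's proof avoids this by never attempting to retract $\cat{M}^{(\mr{im})}$ onto $\{0\} \times \mr{Diff}_\partial(M)$ directly. Instead it first shrinks all of $\cat{M}$ (as in Lemma~\ref{lem.diffdweq}) to a subspace $\cat{M}'$ in which supports lie in $M_0 \cup (\tfrac{1}{2}D^{n-1} \times [0,\tfrac{1}{2}])$, and then conjugates by a single family $\psi_t$ of diffeomorphisms of $M_\infty$ satisfying $\psi_t(M_0 \cup (\tfrac{1}{2}D^{n-1} \times [0,t])) \subset M_0$. This one condition does double duty: at $t=1$ it pushes the supports of elements of $\cat{M}'$ into $M_0$, landing in $\cat{M}^{(\mr{im})}$; for all $t$ it ensures that conjugation preserves the condition $\phi(M_0) \subset M_0$, so $\cat{M}^{(\mr{im})}$ is carried into itself throughout the homotopy. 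That is what replaces your missing second retraction.

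A minor remark: ``weak equivalence of $\cat{D}$-modules'' here just means a $\cat{D}$-module map which is a weak equivalence of underlying spaces. The inclusion is automatically a $\cat{D}$-module map, so your closing discussion of the equivariance of the homotopies is unnecessary (and in fact your $H$ does not commute with the $\cat{D}$-action, since it changes the parameter $t$).
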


\begin{proof}Since it is a map of $\cat{D}$-modules, it suffices to prove the map of underlying spaces is a weak equivalence in a manner similar to Lemma \ref{lem.embimageweq}. Suppose we are given a commutative diagram
	\[\begin{tikzcd}S^i \dar \rar & \cat{M}^{(\mr{im})} \dar \\
	D^{i+1} \rar[swap]{f} \arrow[dotted]{ru}& \cat{M},\end{tikzcd}\]
	then we must provide a dotted lift making the diagram commute, possibly after changing it through a homotopy of commutative diagrams.
		
	It suffices to push the image of $M$ under $f_s$ out of $\mr{int}(D^{n-1}) \times (0,\infty)$ for each $s \in D^{i+1}$. To do this, let $\cat{M}'$ be the subspace of $\cat{M}$ consisting of those pairs $(t,\varphi)$ that satisfy the properties that $t \leq 1/2$ and that $\varphi$ is the identity on $(D^{n-1} \backslash \frac{1}{2}D^{n-1}) \times (0,\infty)$. Equivalently, these are the diffeomorphisms $\phi$ supported in $M_0 \cup (\frac{1}{2}D^{n-1} \times [0,1/2])$. In particular $\phi(M_0) \subset M_0 \cup (\frac{1}{2}D^{n-1} \times [0,1/2])$. The inclusion $\cat{M}' \hookrightarrow \cat{M}$ is a weak equivalence by a collar-sliding argument. Hence we may assume that $f_s \in \cat{M}'$.
	
	Now pick a family of compactly-supported diffeomorphisms $\psi_t\colon M_\propto \to M_\propto$ for $t\in [0,1]$ with the following properties:  
	\begin{enumerate}[\indent (i)]
		\item $\psi_t$ is the identity on $(\partial M \backslash \mr{int}(D^{n-1})) \times [0,\infty)$ and $D^{n-1} \times [1,\infty)$, 
		\item $\psi_0 = \mr{id}$,
		\item $\psi_t(M_0 \cup (\frac{1}{2}D^{n-1} \times [0,t])) \subset M_0$.
	\end{enumerate} 
	The diffeomorphism $\psi_t \phi \psi_t^{-1}$ has support $\psi_t(\mr{supp}(\phi))$. Hence conjugating with $\psi_t$ gives us a family $D^{i+1} \times [0,1] \to \cat{M}'$ starting at $f_s$ and ending at diffeomorphisms with support in $M_0$, by condition (iii) for $t=1$. This implies they must send $M_0$ into $M_0$, and hence lie in $\cat{M}^{(\mr{im})}$. By condition (iii) for all $t \in [0,1]$, this homotopy preserves the subspace $\cat{M}^{(\mr{im})}$, so we conclude that $\cat{M}^{(\mr{im})} \hookrightarrow \cat{M}$ is a weak equivalence.
\end{proof}

Recall $\mr{Emb}^{\cong}_{\half\partial}(M'_\propto,M_\propto)$ from Definition \ref{def.embmpm}. Restriction to $M$, resp. $M'_\propto$ gives maps
\[\rho \colon \cat{M}^{(\mr{im})} \longto \mr{Emb}_{\half\partial}(M) \qquad \text{and} \qquad \rho' \colon \cat{M} \longto \mr{Emb}_{\half\partial}(M'_\propto,M_\propto).\]

\begin{lemma}\label{lem.rhocomponents} The maps $\rho$ and $\rho'$ have image in those components of $\mr{Emb}_{\half\partial}(M)$ and $\mr{Emb}_{\half\partial}(M'_\propto,M_\propto)$ consisting of embeddings that are isotopic to a diffeomorphism, resp.\ embedding, which is the identity on $\partial M$ (i.e.\ as indicated by the superscript $\cong$).\end{lemma}

\begin{proof} We use a collaring trick, using the diffeomorphism itself to provide the isotopy. We give the proof in the second case only, the argument in the first case being similar. As in Lemma \ref{lem.diffimageweq}, without loss of generality $(t,\phi) \in \cat{M}'$, i.e.\ $\mr{supp}(\phi) \subset M_0 \cup (\frac{1}{2} D^{n-1} \times [0,1/2])$. Picking a collar $\partial M \times [0,1) \hookrightarrow M$, one may construct a family of diffeomorphisms $\varphi_s \colon M_\propto \to M_\propto$ that are identity on $(\partial M \setminus \mr{int}(D^{n-1})) \times [0,\infty)$, starting at the identity and such that $\varphi_1$ maps $\mr{supp}(\phi)$ into $M$. Restricting the conjugation of $\phi$ with this family to $M'_\propto$ gives the desired isotopy from $\phi|_{M'_\propto}$ to be an embedding that is the identity on $\partial M$.\end{proof}

This describes the vertical maps in a commutative diagram
\[\begin{tikzcd} \cat{M}^{(\mr{im})} \rar{\simeq} \dar & \cat{M} \dar \\
	\mr{Emb}^{\cong}_{\half\partial}(M) \rar{\simeq} & \mr{Emb}^{\cong}_{\half\partial}(M'_\propto,M_\propto).\end{tikzcd}\]
The top map is a weak equivalence by Lemma \ref{lem.diffimageweq}, and the bottom map is a weak equivalence using Lemma's \ref{lem.embimageweq} and \ref{lem.embextweq}. The advantage of the left map is that if both $\cat{M}^{(\mr{im})}$ and $\mr{Emb}^{\cong}_{\half\partial}(M)$ are given the topological monoid structure coming from composition, it is a map of topological monoids. This will be used in the next subsection.

\subsubsection{The total space} We will now finish the proof that $\cat{BM} \sslash \cat{BD} \simeq \smash{B\mr{Emb}^{\cong}_{\half \partial}(M)}$. To do so, we will construct a map $\smash{\cat{BM}^{(\mr{im})} \sslash \cat{BD} \to B\mr{Emb}^{\cong}_{\half \partial}(M)}$. Here $\cat{BM}^{(\mr{im})}$ is constructed as the thick geometric realization of the simplicial subspace ${\cat{N}\cat{M}}^{(\mr{im})}_\bullet$ of ${\cat{N}\cat{M}}_\bullet$ with $p$-simplices given by $(p+1)$-tuples $(t,\phi_1,\ldots,\phi_p)$ in $[0,\infty) \times \mr{Diff}^{(\mr{im})}(M_\propto)^p$. 

\begin{lemma}The inclusion $\cat{BM}^{(\mr{im})} \hookrightarrow \cat{BM}$ is a weak equivalence.
\end{lemma}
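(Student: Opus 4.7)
The plan is to prove the result levelwise on the defining simplicial spaces and then invoke the standard fact that a levelwise weak equivalence between proper simplicial spaces realizes to a weak equivalence. Concretely, I would show that for each $k \geq 0$ the inclusion
\[\cat{M}^{(\mr{im})}_k \hookrightarrow \cat{M}_k\]
is a weak equivalence, and that both $\cat{M}^{(\mr{im})}_\bullet$ and $\cat{M}_\bullet$ are proper.

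For the levelwise statement I would adapt the proof of Lemma \ref{lem.diffimageweq} in the evident parallel way. A point of $\cat{M}_k$ is a tuple $(t,\phi_1,\ldots,\phi_k)$ with every $\phi_i$ supported in $M_t$. First, by the same rescaling-of-the-support argument as in the proof of Lemma \ref{lem.diffdweq}, the subspace $\cat{M}_k'$ of tuples for which $t \leq 1/2$ and each $\phi_i$ is supported in $M_0 \cup (\tfrac{1}{2}D^{n-1}\times[0,1])$ is a weak deformation retract of $\cat{M}_k$, and the analogous subspace $\cat{M}_k^{(\mr{im}),\prime}$ is a weak deformation retract of $\cat{M}^{(\mr{im})}_k$. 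Then I would use the same family of compactly supported diffeomorphisms $\psi_t\colon M_\infty \to M_\infty$ as in Lemma \ref{lem.diffimageweq}, and deform $(t,\phi_1,\ldots,\phi_k) \in \cat{M}_k'$ by simultaneous conjugation
\[(t,\phi_1,\ldots,\phi_k) \longmapsto (t,\psi_r\phi_1\psi_r^{-1},\ldots,\psi_r\phi_k\psi_r^{-1}), \qquad r \in [0,1].\]
Because $\psi_r$ has compact support fixing $\partial M \backslash \mr{int}(D^{n-1})$, the support of each conjugate is $\psi_r(\mr{supp}(\phi_i)) \subset \psi_r(M_0\cup(\tfrac{1}{2}D^{n-1}\times[0,1]))$, which lies in $M_0$ at $r=1$. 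Moreover property (iii) of $\psi_r$ guarantees that at every intermediate time $r$ each conjugated diffeomorphism carries $M_0$ into $M_0$, so the homotopy preserves $\cat{M}_k^{(\mr{im}),\prime}$. This exhibits $\cat{M}_k^{(\mr{im})} \hookrightarrow \cat{M}_k$ as a weak equivalence.

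For properness, I would observe that the same argument used to prove that $\cat{D}$ is well-pointed (Lemma \ref{lem.dgood}) shows, more generally, that the inclusion of any subset of degenerate tuples into $\cat{M}_k$ is a cofibration: a degenerate $k$-simplex has some $\phi_i = \mr{id}$, and the cofibration $\{\mr{id}\} \hookrightarrow \mr{Diff}(M_\infty)$ gotten by the same derivative-convex-neighborhood argument, together with the stability of cofibrations under finite products (Theorem 1 of \cite{kieboomcof}), yields the claim. The same reasoning applies verbatim to $\cat{M}^{(\mr{im})}_\bullet$ because the condition $\varphi(M_0) \subset M_0$ is a closed condition preserved under the linear interpolations used. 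Hence both simplicial spaces are proper (in fact good), and the levelwise weak equivalence realizes to a weak equivalence $\cat{BM}^{(\mr{im})} \to \cat{BM}$.

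The main subtle point I would expect is verifying that the conjugation deformation genuinely preserves the image-of-$M_0$ condition for all intermediate $r$, not just at the endpoints; this is exactly what condition (iii) on $\psi_r$ is tailored to guarantee. Everything else is routine, parallel to material already developed in this section.
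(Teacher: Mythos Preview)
Your proposal is correct and follows essentially the same approach as the paper: reduce to a levelwise weak equivalence between proper simplicial spaces, and establish the levelwise statement by the conjugation argument of Lemma~\ref{lem.diffimageweq}. The only cosmetic difference is that the paper phrases the levelwise step as ``$\cat{M}_k^{(\mr{im})} \hookrightarrow \cat{M}_k$ is weakly equivalent to the $k$-fold product of $\cat{M}^{(\mr{im})} \hookrightarrow \cat{M}$'' and then cites Lemma~\ref{lem.diffimageweq} once, whereas you reprove that lemma directly for $k$-tuples via simultaneous conjugation; these amount to the same thing.
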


\begin{proof}Using Lemma \ref{lem.geomrelweq}, it suffices to show that ${\cat{N}\cat{M}}^{(\mr{im})}_\bullet \to {\cat{N}\cat{M}}_\bullet$ is a levelwise equivalence. The lemma follows by noting that the inclusion ${\cat{N}\cat{M}}_p^{(\mr{im})} \rightarrow \cat{M}_p$ is weakly equivalent to the $p$-fold product of the inclusion $\cat{M}^{(\mr{im})} \hookrightarrow \cat{M}$, which is a weak equivalence by Lemma \ref{lem.diffimageweq}.\end{proof}

Restriction to $M$ induces a simplicial map ${\cat{N}\cat{M}}^{(\mr{im})}_\bullet \to N_\bullet(\smash{\mr{Emb}^{\cong}_{\half \partial}}(M))$, which geometrically realizes to a map $\cat{BM}^{(\mr{im})} \to B\smash{\mr{Emb}^{\cong}_{\half \partial}}(M)$. As before, $\cat{BD}$-module structure on $\cat{BM}$ restricts to $\cat{BM}^{(\mr{im})}$, and this is a map of $\cat{BD}$-modules when we endow $B\mr{Emb}^{\cong}_{\half \partial}(M)$ with the trivial $\cat{BD}$-module structure. Taking the homotopy quotient by $\cat{BD}$ and projecting, we obtain the map
\[\eta \colon \cat{BM}^{(\mr{im})} \sslash \cat{BD} \longto B\mr{Emb}^{\cong}_{\half \partial}(M) \times * \sslash \cat{BD} \longto  B\mr{Emb}^{\cong}_{\half \partial}(M).\]
We now establish two important properties of this map. The first follows by inspecting the definitions.

\begin{lemma}\label{lem.compdiffemb} The composition 
\[B\mr{Diff}_\partial(M) \hookrightarrow \cat{BM}^{(\mr{im})}  \hookrightarrow \cat{BM}^{(\mr{im})} \sslash \cat{BD} \overset{\eta}{\longrightarrow} B\mr{Emb}^{\cong}_{\half \partial}(M)\] where the first map is as in Lemma \ref{lem.bmweq} and the second is the inclusion of $0$-simplices, coincides with the inclusion $B\mr{Diff}_\partial(M) \hookrightarrow  B\mr{Emb}^{\cong}_{\half \partial}(M)$.\end{lemma}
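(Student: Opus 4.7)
The plan is to trace each of the three maps in the composition at the level of the underlying simplicial spaces, so that after composing it becomes visibly the nerve of the inclusion $\mr{Diff}_\partial(M) \hookrightarrow \mr{Emb}^{\cong}_{\half\partial}(M)$. Since all three maps come from explicit simplicial constructions, this should reduce to a direct identification on $k$-simplices.

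By Lemma \ref{lem.bmweq} together with the construction of $\cat{BM}^{(\mr{im})}$, the inclusion $B\mr{Diff}_\partial(M) \hookrightarrow \cat{BM}^{(\mr{im})}$ is the realization of the simplicial map $N_\bullet\mr{Diff}_\partial(M) \to \cat{M}^{(\mr{im})}_\bullet$ sending $(\phi_1,\ldots,\phi_k)$ to $(0,\bar\phi_1,\ldots,\bar\phi_k)$, where $\bar\phi_i$ denotes the extension of $\phi_i$ to $M_\infty$ by the identity on $\partial M \times [0,\infty)$; this lands in $\cat{M}^{(\mr{im})}_k$ since $\bar\phi_i$ is supported in $M_0$. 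The second map is the inclusion of the $0$-simplex subspace into the bar simplicial space $B_\bullet(\cat{BM}^{(\mr{im})},\cat{BD},*)$, whose $0$-simplex space is exactly $\cat{BM}^{(\mr{im})}$. The map $\eta$, restricted to this subspace of $0$-simplices, is simply the realization of the restriction map $\cat{M}^{(\mr{im})}_\bullet \to N_\bullet\mr{Emb}^{\cong}_{\half\partial}(M)$ sending $(t,\psi_1,\ldots,\psi_k)$ to $(\psi_1|_{M_0},\ldots,\psi_k|_{M_0})$; this is because $*\sslash\cat{BD}$ has a unique $0$-simplex and hence both the splitting $B\mr{Emb}^{\cong}_{\half\partial}(M) \sslash \cat{BD} \cong B\mr{Emb}^{\cong}_{\half\partial}(M) \times (*\sslash\cat{BD})$ and the subsequent projection act trivially in the relevant factor.

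Composing on a $k$-simplex then sends $(\phi_1,\ldots,\phi_k)$ to $(\bar\phi_1|_{M_0},\ldots,\bar\phi_k|_{M_0}) = (\phi_1,\ldots,\phi_k)$, since $\bar\phi_i$ agrees with $\phi_i$ on $M = M_0$. This is exactly the nerve of the inclusion $\mr{Diff}_\partial(M) \hookrightarrow \mr{Emb}^{\cong}_{\half\partial}(M)$, and taking geometric realizations gives the conclusion. The only real subtlety is bisimplicial bookkeeping: $\cat{BM}^{(\mr{im})}$ is itself a geometric realization in a direction orthogonal to the bar construction, so strictly speaking one should work with the bisimplicial space $(p,k) \mapsto \cat{M}^{(\mr{im})}_k \times \cat{D}_k^p$ (and the analogous one for the target) and check that all maps in sight are induced by bisimplicial maps, but once set up this way the identification is immediate.
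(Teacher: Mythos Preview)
Your proposal is correct and is precisely the unpacking of the paper's one-word proof ``By inspection.'' You have simply made explicit the simplicial trace that the paper leaves to the reader, and there is no substantive difference in approach.
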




An equivalent description of the map $\eta$ is given by first interchanging the order of thick geometric realization 
\[\cat{BM}^{(\mr{im})} \sslash \cat{BD} \cong ||[p] \longmapsto \cat{NM}_p^{(\mr{im})}  \sslash \cat{ND}_p||,\]
and then taking the thick geometric realization of a simplicial map $\cat{NM}_p^{(\mr{im})}  \sslash \cat{ND}_p \to N_\bullet \mr{Emb}^{\cong}_{\half \partial}(M)$ obtained as follows: we first map $\cat{NM}_p^{(\mr{im})}  \sslash \cat{ND}_p$ to $(\cat{M}^{(\mr{im})}\sslash \cat{D})^p$ by projecting each of the $p$ terms, and then map each of the $p$ terms to $\mr{Emb}_{\half \partial}^{\cong}(M)$ by restricting the element of $\cat{M}^{(\mr{im})}$ to $M$.

\begin{proposition}\label{prop.embcat} The map 
\[\cat{BM}^{(\mr{im})} \sslash \cat{BD} \overset{\eta}{\longto} B\mr{Emb}^{\cong}_{\half\partial}(M)\] is a weak equivalence.\end{proposition}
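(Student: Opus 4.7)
The approach is to exploit the bisimplicial structure of $\cat{BM}^{(\mr{im})} \sslash \cat{BD}$ to reduce to a levelwise statement comparing homotopy quotients of topological monoids to spaces of embeddings.

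Since geometric realization commutes with finite products of compactly generated spaces, $\cat{BM}^{(\mr{im})} \sslash \cat{BD}$ is the realization of the bisimplicial space $X_{p,q} = \cat{M}^{(\mr{im})}_q \times \cat{D}_q^p$, with the $p$-direction carrying the bar construction structure and the $q$-direction inherited from $\cat{M}^{(\mr{im})}_\bullet$ and $\cat{D}_\bullet$. Similarly, $B\mr{Emb}^{\cong}_{\half\partial}(M)$ is the realization of the bisimplicial space $Y_{p,q} = N_q(\mr{Emb}^{\cong}_{\half\partial}(M))$, constant in $p$. The map $\eta$ lifts to a bisimplicial map $X \to Y$ sending $(t, \phi_1, \ldots, \phi_q; \psi_{\cdot, \cdot}) \mapsto (\phi_1|_{M_0}, \ldots, \phi_q|_{M_0})$; this is compatible with the $p$-direction bar face maps because $\cat{D}_q$ acts on $\cat{M}^{(\mr{im})}_q$ by modifications supported strictly outside $M_0$, leaving restrictions to $M_0$ unchanged.

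Both bisimplicial spaces are proper, so by realizing in the $p$-direction first and comparing levelwise in $q$, it suffices to show that for each fixed $q$ the induced map
\[
\cat{M}^{(\mr{im})}_q \sslash \cat{D}_q \longrightarrow \mr{Emb}^{\cong}_{\half\partial}(M)^q
\]
is a weak equivalence. The case $q = 0$ reduces to the contractible statement $[0,\infty) \sslash [0,\infty) \simeq *$. For $q \geq 1$, the key claim is that the restriction map $r_q \colon \cat{M}^{(\mr{im})}_q \to \mr{Emb}^{\cong}_{\half\partial}(M)^q$ is a $\cat{D}_q$-equivariant quasifibration (with $\cat{D}_q$ acting trivially on the target) whose homotopy fiber is weakly equivalent to $\cat{D}_q$ with its regular free action. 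Granting this, the induced map on homotopy quotients has homotopy fiber $\cat{D}_q \sslash \cat{D}_q \simeq *$ by Lemma~\ref{lem.aquotienta}, hence is a weak equivalence.

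The main obstacle is establishing this quasifibration structure on $r_q$. The fiber of $r_q$ over $(e_1, \ldots, e_q)$ consists of tuples of extensions of each $e_i$ to a compactly supported diffeomorphism of $M_\infty$ preserving $M_0$. Since each $e_i$ is isotopic to a diffeomorphism, the complementary region $M_t \setminus e_i(M_0)$ is diffeomorphic rel the appropriate boundary to $D^{n-1} \times [0, s_i]$, so extensions correspond to diffeomorphisms of these disk-like regions fixing boundary; letting the parameter $t$ vary then identifies the space of extensions with $\cat{D}_q$, and this identification is by inspection compatible with the gluing-at-infinity action of $\cat{D}_q$ on $\cat{M}^{(\mr{im})}_q$. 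Producing local sections of $r_q$ continuously in $(e_1, \ldots, e_q)$ uses parametrized isotopy extension and a continuous choice of collars for the complementary disks, which is where the technical content of the proof lives.
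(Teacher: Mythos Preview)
Your approach is correct and takes a genuinely different organizational route from the paper. The paper first reduces to loop spaces: since both sides are path-connected it checks $\Omega\eta$ is a weak equivalence, then uses May's theorem (Theorem~\ref{thm.mayloop}) together with the levelwise equivalence $\cat{M}^{(\mr{im})}\times\cat{D}^p \simeq \Omega(\cat{BM}^{(\mr{im})}\times\cat{BD}^p)$ to reduce to showing that the single map $\cat{M}\sslash\cat{D}\to\mr{Emb}^{\cong}_{\half\partial}(M'_\infty,M_\infty)$ is a weak equivalence. It then proves this (Lemma~\ref{lem.mdtoemb}) via exactly the mechanism you outline for general $q$: view the bar construction as augmented over the embedding space, use parametrized isotopy extension to show the augmentation is a levelwise fibration, and identify the fiber over a diffeomorphism representative with $\cat{D}\sslash\cat{D}\simeq *$. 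Your bisimplicial reorganization trades May's loop theorem for having to run that same isotopy-extension argument at every simplicial level $q\ge 1$; since each level is a product situation (a shared parameter $t$ and $q$ independent diffeomorphisms), this is no harder than $q=1$, so the trade is favorable. Two small points to tighten: your passage from ``$r_q$ is an equivariant quasifibration with fiber $\cat{D}_q$'' to ``the map on homotopy quotients has fiber $\cat{D}_q\sslash\cat{D}_q$'' is exactly Lemma~\ref{lem.hofibaugmentation} applied to the augmented bar construction and should be cited as such; and working directly with $\cat{M}^{(\mr{im})}$ rather than passing to $\cat{M}$ and $\mr{Emb}^{\cong}_{\half\partial}(M'_\infty,M_\infty)$ as the paper does means you must be slightly careful that isotopy extension respects the condition $\psi(M_0)\subset M_0$ and the support bound---the former is automatic since the image of $M_0$ is dictated by the embedding, but the latter is why the paper introduces the subspace $\cat{M}_1$ with $t\ge 1$ to guarantee room for the extension.
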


\begin{proof}On $p$-simplices, the map $\cat{NM}_p^{(\mr{im})}  \sslash \cat{ND}_p \to (\cat{M}^{(\mr{im})}\sslash \cat{D})^p$ is obtained by taking the thick geometric realization of a simplicial map
	\[B_\bullet(\cat{NM}_p^{(\mr{im})},\cat{ND}_p,\ast) \longto \mr{diag}\left(B_{\bullet}(\cat{NM}^{(\mr{im})},\cat{ND},\ast) \times \cdots \times B_{\bullet}(\cat{NM}^{(\mr{im})},\cat{ND},\ast)\right),\]
which is a levelwise weak equivalence by adjusting the support constraints $t$ on the right hand side. By Lemma \ref{lem.geomrelweq} it is a weak equivalence. By \cite[Theorem 7.2]{ebertrw}, the thick geometric realization of the diagonal of $k$-fold simplicial space is weakly equivalent to the iterated thick geometric realization of its $k$ simplicial directions, and we conclude that the map $\cat{NM}_p^{(\mr{im})}  \sslash \cat{ND}_p \to (\cat{M}^{(\mr{im})}\sslash \cat{D})^p$ is a weak equivalence.
	

Again using Lemma \ref{lem.geomrelweq}, it suffices to prove that $\cat{M}^{(\mr{im})} \sslash \cat{D} \to \mr{Emb}^{\cong}_{\half\partial}(M)$ is a weak equivalence. This map fits into a commutative diagram
	\[\begin{tikzcd} \cat{M}^{(\mr{im})} \sslash \cat{D} \rar{\simeq} \dar[swap]{\epsilon} & \dar{\epsilon} \cat{M} \sslash \cat{D} \\
	\mr{Emb}^{\cong}_{\half\partial}(M) \rar{\simeq} & \mr{Emb}^{\cong}_{\half\partial}(M'_\propto,M_\propto),\end{tikzcd}\]
	with top horizontal map a weak equivalence using Lemma \ref{lem.diffimageweq}, bottom horizontal map a weak equivalence using Lemma's \ref{lem.embimageweq} and \ref{lem.embextweq}, and right vertical map a weak equivalence by Lemma \ref{lem.mdtoemb}.\end{proof}

\begin{lemma}\label{lem.mdtoemb} The map $\epsilon \colon \cat{M} \sslash \cat{D} \to \mr{Emb}^{\cong}_{\half\partial}(M'_\propto,M_\propto)$ is a weak equivalence.\end{lemma}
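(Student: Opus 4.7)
The plan is to show $\epsilon$ is a weak equivalence by proving that every homotopy fiber is contractible. The natural map $r \colon \cat{M} \to \mr{Emb}^{\cong}_{\half\partial}(M'_\infty,M_\infty)$ sending $(t,\phi) \mapsto \phi|_{M'_\infty}$ lands in the target because $\phi(D^{n-1}\times[0,\infty))$ is the closure of the complement of $\phi(M'_\infty)$ and $\phi$ itself (which has compact support) provides the required compactly-supported trivialization of that complement rel boundary. Since the $\cat{D}$-action modifies $\phi$ only on $D^{n-1}\times[0,\infty) \subset M_\infty \setminus M'_\infty$, the map $r$ is $\cat{D}$-invariant, so it induces the map $\epsilon$ on the homotopy quotient, and in fact realizes from an augmentation $B_\bullet(\cat{M},\cat{D},\ast) \to \mr{Emb}^{\cong}_{\half\partial}(M'_\infty,M_\infty)$ on the bar construction.

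Next, fix $e$ in the target. The levelwise homotopy fiber of this augmented simplicial space at $e$ is $\mr{hofib}_e(r) \times \cat{D}^p$ in degree $p$, with simplicial structure realizing $B_\bullet(\mr{hofib}_e(r), \cat{D}, \ast)$ where $\cat{D}$ acts on $\mr{hofib}_e(r)$ via restriction of its action on $\cat{M}$ (the action preserves fibers of $r$). The levelwise homotopy fiber is proper by Lemma \ref{lem.hofibproper}, so Lemma \ref{lem.hofibaugmentation} gives
\[\mr{hofib}_e(\epsilon) \;\simeq\; \mr{hofib}_e(r) \sslash \cat{D}.\]
If I can identify $\mr{hofib}_e(r)$ with $\cat{D}$ as a right $\cat{D}$-module (with $\cat{D}$ acting on itself by right multiplication), then Lemma \ref{lem.aquotienta} gives $\mr{hofib}_e(\epsilon) \simeq \cat{D} \sslash \cat{D} \simeq \ast$, completing the proof.

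The identification proceeds in two steps. First, show $r$ is a Serre fibration (or quasifibration) via a parametric isotopy extension argument: given a path $e_s$ in the target together with a lift $(t_0,\phi_0)$ of $e_0$, the diffeomorphisms $\phi_0|_{D^{n-1}\times[0,\infty)}$ identify the complements $C(e_s)$ coherently in $s$, so $\phi_0$ extends along the path. Consequently $\mr{hofib}_e(r)$ is equivalent to the strict fiber $r^{-1}(e)$. Second, by definition of $\mr{Emb}^{\cong}_{\half\partial}(M'_\infty,M_\infty)$, choose a compactly supported diffeomorphism $\theta \colon D^{n-1}\times[0,\infty) \to C(e)$ rel boundary, supported in $D^{n-1}\times[0,T_\theta]$. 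Define $\iota_\theta \colon \cat{D} \to r^{-1}(e)$ by sending $(t,\psi)$ to the pair $(t+T_\theta, \phi)$ where $\phi$ is $e$ on $M'_\infty$ and $\theta\psi\theta^{-1}$ on $C(e)$. Because the $\cat{D}$-action on $\cat{M}$ is by juxtaposition $\sqcup$ past the Moore parameter, where $\theta$ is the identity, the map $\iota_\theta$ is $\cat{D}$-equivariant. That it is a weak equivalence follows from the observation that any element of the strict fiber differs from $\iota_\theta(0,\mr{id})$ by conjugation with such a $\psi$, parametrically.

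The main obstacle is this last step: reconciling the Moore parameters of $\cat{D}$ with the support of the auxiliary diffeomorphism $\theta$ to obtain a genuine $\cat{D}$-equivariant weak equivalence. A technical but essentially bookkeeping argument analogous to those of Lemmas \ref{lem.diffdweq} and \ref{lem.diffimageweq} (rescaling the Moore parameter and isotoping supports into a standard position) should suffice, but it must be arranged so that the homotopy commutes with the right $\cat{D}$-action, since the conclusion rests on Lemma \ref{lem.aquotienta} applied equivariantly.
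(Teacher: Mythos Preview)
Your overall strategy coincides with the paper's: realize $\epsilon$ as coming from an augmentation of the bar construction, use Lemma~\ref{lem.hofibaugmentation} to compute the homotopy fiber over $e$ as $\mr{hofib}_e(r)\sslash\cat{D}$, and then identify this with $\cat{D}\sslash\cat{D}\simeq\ast$. Two places where the paper handles things more cleanly than your sketch:

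First, for the fibration step the paper replaces $\cat{M}$ by the weakly equivalent submodule $\cat{M}_1=\{(t,\phi):t\geq 1\}$. Isotopy extension produces a lift whose Moore parameter must strictly dominate a continuous function of the target embedding; working in $\cat{M}$ gives only a non-strict inequality, which is not enough to guarantee the lift stays in the space. With $\cat{M}_1$ the strict inequality is automatic, so the restriction map is an honest fibration.

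Second, and this is the point that dissolves the obstacle you flag at the end: the paper does \emph{not} attempt to identify $r^{-1}(e)$ with $\cat{D}$ for an arbitrary $e$ via a chosen trivialization $\theta$. Instead it uses that the homotopy fiber depends only on the path component of $e$, and that by the very definition of the superscript $\cong$ every component of $\mr{Emb}^{\cong}_{\half\partial}(M'_\infty,M_\infty)$ contains a representative which is the identity on $\partial M_0$ and has image exactly $M_0$. For such a representative the strict fiber $\cat{M}_{e,1}$ is \emph{homeomorphic} to $\cat{D}_1$ as a right $\cat{D}$-module, so no equivariant-up-to-homotopy bookkeeping with $\theta$ is needed at all. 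Your route via $\iota_\theta$ is plausible but genuinely more work; the path-component trick makes it unnecessary.
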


\begin{proof}Consider the submodule $\cat{M}_{\geq 1}$ of $\cat{M}$ consisting of pairs $(t,x)$ with $t \geq 1$. We claim the inclusion $\cat{M}_{\geq 1} \hookrightarrow \cat{M}$ is a weak equivalence. To see this, note that it is in fact a deformation retract, with deformation retraction given by linearly increasing $t$ of an element $(t,\phi) \in \cat{M}$ to a number $\geq 1$:
\[[0,1] \ni r \longmapsto (\max(r,t),\phi) \in \cat{M}.\]
The map induced by restriction to $M'_\propto$ gives an augmentation
\[\epsilon\colon ||B_\bullet(\cat{M}_{\geq 1},\cat{D},*)|| \longto \mr{Emb}^{\cong}_{\half\partial}(M'_\propto,M_\propto).\] 
More precisely, this map is on $p$-simplices $\cat{M}_{\geq 1} \times \cat{D}^p$ given by projecting away the term $\cat{D}^p$ and then applying the map $\cat{M}_{\geq 1} \to \smash{\mr{Emb}^{\cong}_{\half\partial}(M'_\propto,M_\propto)}$ given by restriction to $M_0 \subset M_\propto$.

It suffices to show $\epsilon$ has weakly contractible homotopy fibers. We start by identifying the homotopy fibers using Lemma \ref{lem.hofibaugmentation}: for all $e \in \mr{Emb}^{\cong}_{\half\partial}(M'_\propto,M_\propto)$ we have
\[\begin{tikzcd}{||\mr{hofib}_e(B_\bullet(\cat{M}_{\geq 1},\cat{D},*) \to \mr{Emb}^{\cong}_{\half\partial}(M'_\propto,M_\propto))||} \dar{\simeq} \\ \mr{hofib}_e(||B_\bullet(\cat{M}_{\geq 1},\cat{D},*)|| \to \mr{Emb}^{\cong}_{\half\partial}(M'_\propto,M_\propto)). \end{tikzcd}\] 

Fixing $p \geq 0$, the map $B_p(\cat{M}_{\geq 1},\cat{D},*) = \cat{M}_{\geq 1} \times \cat{D}^p \to \mr{Emb}^{\cong}_{\half\partial}(M'_\propto,M_\propto)$ is given by the composition of the projection map $\cat{M}_{\geq 1} \times \cat{D}^p \to \cat{M}_{\geq 1}$ and the restriction map $\cat{M}_{\geq 1} \to \mr{Emb}^{\cong}_{\half\partial}(M'_\propto,M_\propto)$. Projection is a fibration, so to prove the composite is a fibration it suffices to prove the restriction map is a fibration. This will follow from the parametrized isotopy extension theorem. Suppose we are given for $i \geq 0$ some diagram
\[\begin{tikzcd} D^i \rar{g} \dar & \cat{M}_{\geq 1} \dar{\epsilon} \\
	D^i \times [0,1] \rar{G} & \mr{Emb}^{\cong}_{\half\partial}(M'_\propto,M_\propto).\end{tikzcd}\] 
Write the top map as $(\tau,\gamma)\colon D^i \to [0,\infty) \times \mr{Diff}(M_\propto)$.  There is a continuous map
\[\Theta\colon \mr{Emb}^{\cong}_{\half\partial}(M'_\propto,M_\propto) \longto [0,\infty),\]
recording for an embedding $e$ the minimal value of $t$ such that embedding $e$ has image contained in $M_t \cup M'_\propto$. We remark that $\tau > \Theta \circ G|_{D^i}$, because we used $\cat{M}_{\geq 1}$ and so the image of $M_0$ is not only contained in $M_{\tau}$ but in fact has an open neighborhood in $M_{\tau}$ (if we had used $\cat{M}$ we only would have $\geq$). Since $D^i$ is compact, we can find  a $\delta>0$ and a continuous function $T\colon D^i \times [0,1] \to [1,\infty)$ such that (a) $T|_{D^i} = \tau$ and (b) $T>\Theta \circ G + \delta$ on $D^i \times [0,1]$.

Recall isotopy extension for embeddings is proven by an argument that in essence amounts to taking the derivative of a family of embeddings, extending this to a time-dependent vector field on the entire manifold and flowing along it. We bring this up, because it implies we can control the support of our isotopies and find a map $\Psi\colon D^n \times [0,1] \to \mr{Diff}_c(M_\propto)$ such that (a) $\Psi(x,0) = \mr{id}$ and $\Psi(x,s) \circ G(x,0) = G(x,s)$ and (b) $\mr{supp}(\Psi(x,s)) \subset M_{\Theta \circ G(x,s) + \delta}$.

Then our lift is given by 
\begin{align*}L\colon D^i \times [0,1] &\longto \cat{M}_{\geq 1} \subset [0,\infty) \times \mr{Diff}(M_\propto) \\
(x,s) &\longmapsto (T(x,s),\Psi(x,s) \circ \gamma(x)).\end{align*}
Condition (a) implies this is a lift, and (b) implies that $\mr{supp}(\Psi(x,s) \circ \gamma(x)) \subset M_{T(x,s)}$.

As a consequence of proving that these maps are fibrations, we can replace the levelwise homotopy fiber $\mr{hofib}_e(B_\bullet(\cat{M}_{\geq 1},\cat{D},*) \to \mr{Emb}^{\cong}_{\half\partial}(M'_\propto,M_\propto))$ with the levelwise fiber $\epsilon^{-1}(e)_\bullet$. This satisfies $||\epsilon^{-1}(e)_\bullet|| \cong ||B_\bullet(\cat{M}_{e,\geq 1},\cat{D},*)||$ with $\cat{M}_{e,\geq 1}$ be the subspace of $\cat{M}_{\geq 1}$ of diffeomorphisms that agree with $e$ on $M_0$.

\vspace{1ex}

Hence it suffices to prove that $||B_\bullet(\cat{M}_{e,\geq 1},\cat{D},*)||$ is weakly contractible. Since homotopy fibers only depend on the path component of the base point, it suffices to check this only for a particular point in each path component. 

Note that the image of any element of $\mr{Diff}_\partial(M)$ in $\mr{Emb}^{\cong}_{\half\partial}(M'_\propto,M_\propto)$ is the identity on $\partial M_0$. By construction $\mr{Diff}_\partial(M) \to \smash{\mr{Emb}^{\cong}_{\half \partial}(M'_\propto,M_\propto)}$ is surjective on path components (this is the reason for including the superscript $\cong$) and thus in each component we can find an embedding $e$ that is equal to the identity on $\partial M_0$, from which it also follows that $\mr{im}(e) = M_0$. In that case, let $\cat{D}_{\geq 1}$ denote the subspace of $\cat{D}$ of pairs $(t,\phi)$ such that $t \geq 1$. Then $||B_\bullet(\cat{M}_{\mr{e},\geq 1},\cat{D},*)||$ is homeomorphic to $||B_\bullet(\cat{D}_{\geq 1},\cat{D},*)||$ and thus weakly equivalent to $||B_\bullet(\cat{D},\cat{D},*)|| = \cat{D} \sslash \cat{D}$, which is weakly contractible by Lemma \ref{lem.aquotienta}. 
\end{proof}

\subsection{Generalizations}\label{subsec.gen} We state three variations to Theorem \ref{thm.weissfibersequence} without proof, as the required modifications are straightforward.

\subsubsection{Identity components} The set of isotopy classes of diffeomorphisms of a path-connected manifold with boundary is acted upon by $\pi_0(\mr{Diff}_\partial(D^n)) \cong \Theta_{n+1}$ by taking the boundary connected sum of representatives and re-identifying the resulting manifold $D^n \natural M$ with $M$. This operation is denoted by $\natural$, and using it we define the second inertia group of $M$ \cite{levine}.

\begin{definition}\label{def.second-inertia} The \emph{second inertia group of $M$ rel $\partial M$} is the subgroup of $\pi_0(\mr{Diff}_\partial(D^n)) \cong \Theta_{n+1}$ of isotopy classes $h$ rel $\partial M$ with the property that $h \natural \mr{id}_M$ is isotopic to $\mr{id}_M$ rel $\partial M$, i.e.\ the stabilizer of $\mr{id}_M$.\end{definition}

\begin{definition}We now define variations on the various spaces defined earlier:\begin{itemize} 
\item Let $\mr{Emb}^\mr{id}_{\half\partial}(M)$ denote the identity component of $\mr{Emb}_{\half\partial}(M)$ and $\mr{Diff}_{\partial}^\mr{id}(M)$ the identity component of $\mr{Diff}_\partial(M)$. 
\item Let $\cat{BM}^\mr{id}$ be the subspace of $\cat{BM}$ where all diffeomorphisms lie in the identity component.
\item If $H$ is a subgroup of $\pi_0( \diff_\partial(D^n))$, let $\mr{Diff}_{\partial}^H(D^n)$ denote the subgroup of $\mr{Diff}_{\partial}(D^n)$ consisting of those connected components.
\item If $H$ is a subgroup of $\pi_0( \diff_\partial(D^n))$, let $\cat{BD}^{H}$ denote the subspace of $\cat{BD}$ where all diffeomorphisms lie in $H$.
\end{itemize}
\end{definition}

The following modification of Theorem \ref{thm.weissfibersequence} is obtained by modifying the definitions and statements appropriately. There are two places where these modifications seemingly impact the argument: Lemma's \ref{lem.rhocomponents} and \ref{lem.mdtoemb}. Lemma \ref{lem.rhocomponents} is now easier; $\rho$ and $\rho'$ clearly map the path-connected spaces $\cat{M}^\mr{im,id}$ and $\cat{M}^\mr{id}$ into the identity components. Lemma \ref{lem.mdtoemb} is the only place the superscript $\cong$ plays a significant role: it is used to show that for each $e$, $||B_\bullet(\cat{M}_{\mr{e},\geq 1},\cat{D},*)||$ is homeomorphic to $||B_\bullet(\cat{D}_{\geq 1},\cat{D},*)||$ by proving that we may assume $e$ is equal to the identity on $\partial M_0$ so that on the complement of $M_0$ a diffeomorphism of $D^{n-1} \times [0,t]$ for some $t \geq 1$ remains. When restricting to $M^\mr{id}$, this complement exactly carries a diffeomorphism of $D^{n-1} \times [0,t]$ whose isotopy class lies in the second inertia group.

\begin{corollary}\label{cor.weissfibersequenceid} Let $H \subset \pi_0(\diff_\partial(D^n))$ be the second inertia group of $M$. There is a fiber sequence
\[\cat{BM}^{\mr{id}} \longto \cat{BM}^{\mr{id}}\sslash\cat{BD}^{H}  \longto  *\sslash\cat{BD}^{H}\]
with $\cat{BM}^\mr{id} \simeq B\mr{Diff}_{\partial}^{\mr{id}} (M)$, $\cat{BM}^{\mr{id}}\sslash\cat{BD}^{H} \simeq B\mr{Emb}^\mr{id}_{\half\partial}(M) $ and $\Omega (*\sslash\cat{BD}^{H}) \simeq B\mr{Diff}_{\partial}^{H}(D^n)$.
\end{corollary}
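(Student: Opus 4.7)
The plan is to mimic the proof of Theorem \ref{thm.weissfibersequence} verbatim, restricting to identity components throughout and replacing $\cat{BD}$ by $\cat{BD}^H$; the second inertia group hypothesis is exactly what makes this restriction compatible with the module structure. First I would check that $\cat{BD}^H$ is a well-pointed path-connected topological monoid: it is a sub-monoid object in simplicial spaces of $\cat{D}_\bullet$ (since $H$ is a subgroup so $\natural$ closes up on it), and well-pointedness at $(0,\mr{id},\ldots,\mr{id})$ passes to the sub-simplicial space by the argument of Lemma \ref{lem.bdgood}; path-connectedness follows because $B\mr{Diff}_\partial^H(D^n)$ is connected. Analogues of Lemma \ref{lem.diffdweq} and Lemma \ref{lem.bdiffbdweq} give the weak equivalences $\cat{D}^H \simeq \mr{Diff}^H_\partial(D^{n-1}\times[0,1])$ and $\cat{BD}^H \simeq B\mr{Diff}^H_\partial(D^n)$, and Corollary \ref{cor.fibersequencemonoid} then yields $\Omega(\ast\sslash\cat{BD}^H)\simeq \cat{BD}^H\simeq B\mr{Diff}^H_\partial(D^n)$.

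Next I would verify that the $\cat{BD}$-module structure on $\cat{BM}$ restricts to a $\cat{BD}^H$-module structure on $\cat{BM}^{\mr{id}}$. This is where the second inertia group hypothesis is essential: for $\phi$ in the identity component of $\mr{Diff}_\partial(M)$ and $\psi\in\mr{Diff}_\partial^H(D^n)$, the boundary connected sum $\phi\natural\psi$ is isotopic to $\mr{id}_M\natural\psi$, which lies in the identity component of $\mr{Diff}_\partial(M)$ precisely because $H$ is the stabilizer of $\mr{id}_M$ under the $\Theta_{n+1}$-action. The analogue of Lemma \ref{lem.bmweq} then gives $\cat{BM}^{\mr{id}}\simeq B\mr{Diff}^{\mr{id}}_\partial(M)$. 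Applying Theorem \ref{thm.fibersequencemodule} to the well-pointed path-connected monoid $\cat{BD}^H$ and its right module $\cat{BM}^{\mr{id}}$ then produces the desired fiber sequence.

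The main obstacle is identifying the total space $\cat{BM}^{\mr{id}}\sslash\cat{BD}^H\simeq B\mr{Emb}^{\mr{id}}_{\half\partial}(M)$, which requires porting Proposition \ref{prop.embcat} and, underneath it, Lemma \ref{lem.mdtoemb}. The restriction map $\cat{M}^{(\mr{im})}\to \mr{Emb}^{\cong}_{\half\partial}(M)$ manifestly takes $\cat{M}^{(\mr{im}),\mr{id}}$ into $\mr{Emb}^{\mr{id}}_{\half\partial}(M)$, so we obtain a candidate map $\eta^{\mr{id}}\colon\cat{BM}^{(\mr{im}),\mr{id}}\sslash\cat{BD}^H\to B\mr{Emb}^{\mr{id}}_{\half\partial}(M)$. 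Reducing to the based loop spaces as in the proof of Proposition \ref{prop.embcat}, the question becomes whether $\cat{M}^{(\mr{im}),\mr{id}}\sslash\cat{D}^H\to \mr{Emb}^{\mr{id}}_{\half\partial}(M)$ is a weak equivalence. The fibration argument of Lemma \ref{lem.mdtoemb} (via the isotopy extension theorem) goes through unchanged, and the crucial path-component count in that lemma is now easier rather than harder: in the identity component the basepoint may be taken to be $e=\mr{id}$, so the homotopy fiber becomes $|B_\bullet(\cat{D}^H_1,\cat{D}^H,\ast)|=\cat{D}^H\sslash\cat{D}^H\simeq\ast$ by Lemma \ref{lem.aquotienta}.

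The one point that deserves care is the analogue of Lemma \ref{lem.comparisondifferentemb} restricted to identity components: we must show that the isotopy pulling an element of $\mr{E}^{(\mr{im})}\cap \mr{Emb}^{\mr{id}}_{\half\partial}(M'_\infty,M_\infty)$ back to an extension of a diffeomorphism stays in the identity component and produces a diffeomorphism in $\mr{Diff}^{\mr{id}}_\partial(M)$. This follows because the isotopy constructed there is an isotopy in $\mr{Emb}^{\cong}_{\half\partial}(M'_\infty,M_\infty)$, so it preserves path components, and because the resulting diffeomorphism of $M$ then agrees on $\pi_0$ with the original embedding via the inclusion $B\mr{Diff}_\partial(M)\hookrightarrow B\mr{Emb}^{\cong}_{\half\partial}(M)$. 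Finally, the compatibility statement for the map $\cat{BM}^{\mr{id}}\to\cat{BM}^{\mr{id}}\sslash\cat{BD}^H$ with the inclusion $B\mr{Diff}^{\mr{id}}_\partial(M)\hookrightarrow B\mr{Emb}^{\mr{id}}_{\half\partial}(M)$ is inspection, exactly as in Lemma \ref{lem.compdiffemb}.
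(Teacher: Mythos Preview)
Your proposal is correct and follows exactly the approach the paper intends: the paper states this corollary (and the two that follow) explicitly \emph{without proof}, saying only that ``the required modifications are straightforward.'' You have carried out precisely those modifications---restricting to identity components, checking that the second inertia group hypothesis makes $\cat{BM}^{\mr{id}}$ a $\cat{BD}^H$-module, and observing that in Lemma \ref{lem.mdtoemb} the fiber over $e=\mr{id}$ identifies with $\cat{D}^H_1\sslash\cat{D}^H\simeq *$---so there is nothing to compare beyond noting that you have supplied the details the paper omits.
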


\subsubsection{Setwise fixed subsets of the boundary} \label{subsubsec.setwise}

Our next generalization concerns diffeomorphisms and embeddings that fix a submanifold $A$ of the boundary setwise, instead of pointwise. Let $M$ be an $n$-dimensional manifold with boundary $\partial M$, $A \subset \partial M$ a codimension zero submanifold and  $D^{n-1} \subset \partial M 
\backslash \mr{int}(A)$ an embedded disk. 

\begin{definition}We now define some variations on the various spaces used before:
\begin{itemize}
\item Let $\mr{Diff}_{\partial,A}(M)$ be the diffeomorphisms that are the identity on $\partial M \backslash A$ and fix $A$ setwise. 
\item Let $\mr{Emb}_{\half\partial,A}(M)$ be the self-embeddings of $M$ that are the identity on $\partial M \backslash (\mr{int}(A) \cup \mr{int}(D^{n-1}))$ and fix $A$ setwise.
\item Let $\mr{Emb}^{\cong}_{\half\partial,A}(M)$ be the self-embeddings of $M$ that are the identity on $ \partial M \backslash (\mr{int}(A) \cup \mr{int}(D^{n-1}))$, fix $A$ setwise, and are isotopic through embeddings satisfying these conditions to a diffeomorphism that is the identity on $\partial M \backslash \mr{int}(A)$ and fixes $A$ setwise.
\end{itemize}\end{definition}

To adapt the proofs of Theorem \ref{thm.weissfibersequence} and Corollary \ref{cor.weissfibersequenceid} to include $A$ as above, no modifications is needed apart from the introduction of $A$'s in the definitions and statements; in none of the proofs the part of $\partial M$ away from $D^{n-1} \subset \partial M$ plays a significant role.

\begin{corollary}\label{cor.weissfibersequencewitha} There is a fiber sequence
\[\cat{BM}_A \longto \cat{BM}_A \sslash \cat{BD} \longto  * \sslash \cat{BD}\]
with $\cat{BM}_A \simeq B\mr{Diff}_{\partial,A}(M)$, $\cat{BM}_A \sslash \cat{BD} \simeq B\mr{Emb}^{\cong}_{\half\partial,A}(M)$ and $\Omega (*\sslash\cat{BD}) \simeq B\mr{Diff}_{\partial}(D^n)$.

Let $H \subset \pi_0 (\diff_\partial(D^n))$ be the second inertia group of $M$ rel $M \setminus A$. There is a fiber sequence
\[\cat{BM}_{A}^{\mr{id}} \longto \cat{BM}_{A}^{\mr{id}}\sslash\cat{BD}^{H}  \longto  *\sslash\cat{BD}^{H}\]
with $\cat{BM}_{A}^{\mr{id}} \simeq B\mr{Diff}_{\partial}^{\mr{id}} (M)$, $\cat{BM}_{A}^{\mr{id}}\sslash\cat{BD}^{H} \simeq B\mr{Emb}^\mr{id}_{\half\partial,A}(M) $ and $\Omega (*\sslash\cat{BD}^{H}) \simeq B\mr{Diff}_{\partial}^{H}(D^n)$.
\end{corollary}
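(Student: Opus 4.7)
The plan is to mirror the proofs of Theorem \ref{thm.weissfibersequence} and Corollary \ref{cor.weissfibersequenceid}, carrying out routine bookkeeping to track the setwise-fixed subset $A$. First I would define a variant $M_\infty^A$ of $M_\infty$ that attaches the collar $D^{n-1} \times [0,\infty)$ to $M$ only along $D^{n-1} \times \{0\}$, leaving $A$ intact inside $\partial M_\infty^A$. Then $\cat{M}_A$ is defined as the subspace of pairs $(t,\phi) \in [0,\infty) \times \mr{Diff}(M_\infty^A)$ whose support lies in $M_t$ and such that $\phi$ is the identity on $\partial M \setminus (\mr{int}(A) \cup \mr{int}(D^{n-1}))$ and fixes $A$ setwise; and $\cat{BM}_A$ is the realization of the evident simplicial space. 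Because the embedded disk used for boundary connected sum lies in $\partial M \setminus \mr{int}(A)$, the right $\cat{D}$-module structure from Subsection \ref{subsec.moore} restricts to $\cat{M}_A$ without change, and the arguments of Lemmas \ref{lem.bmweq} and \ref{lem.bdgood} adapt verbatim to identify $\cat{BM}_A \simeq B\mr{Diff}_{\partial,A}(M)$ and to establish well-pointedness and properness.

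With these building blocks in place, applying Theorem \ref{thm.fibersequencemodule} to the path-connected well-pointed monoid $\cat{BD}$ acting on $\cat{BM}_A$ immediately yields the first claimed fiber sequence, and the identification $\Omega(* \sslash \cat{BD}) \simeq B\mr{Diff}_\partial(D^n)$ is exactly Proposition \ref{prop.diffdiskloop}, unaffected by the presence of $A$. The identification $\cat{BM}_A \sslash \cat{BD} \simeq B\mr{Emb}^{\cong}_{\half\partial,A}(M)$ follows by repeating the argument of Proposition \ref{prop.embcat} after introducing the obvious decorated variants $\mr{E}^{(\mr{im})}_A$ and $\mr{Emb}^{\cong}_{\half\partial,A}(M_\infty',M_\infty)$, adding the conditions ``identity on $\partial M \setminus (\mr{int}(A) \cup \mr{int}(D^{n-1}))$ and $A$ preserved setwise'' everywhere. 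The isotopies used in Lemmas \ref{lem.diffimageweq}--\ref{lem.comparisondifferentemb} are all supported away from $A$, so they go through unchanged.

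For the second fiber sequence I would combine the above with the argument of Corollary \ref{cor.weissfibersequenceid}. If $H$ is the second inertia group of $M$, then boundary connected sum with an element of $H$ sends the identity component of $\mr{Diff}_{\partial,A}(M)$ into itself --- by definition of the second inertia group and the fact that the connected sum happens in $\partial M \setminus \mr{int}(A)$ --- and similarly for the embedding space. Concretely, the $\cat{BD}$-action restricts to a $\cat{BD}^H$-action on $\cat{BM}_A^{\mr{id}}$. A second application of Theorem \ref{thm.fibersequencemodule}, together with the adaptation of Proposition \ref{prop.diffdiskloop} already used in Corollary \ref{cor.weissfibersequenceid}, then yields the second fiber sequence with the desired identifications.

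The main obstacle is the analogue of Lemma \ref{lem.mdtoemb}: one must check that restriction to $M_0$ is a (quasi)fibration with the setwise constraint on $A$ imposed. The proof uses the isotopy extension theorem, which now needs the time-dependent vector field to be tangent to $A$ along $A$. This can be arranged by choosing compatible collars of $A$ inside $\partial M$ and a thickening of $\partial M$ inside $M$, and using a partition-of-unity construction relative to $A$; so the difficulty here is technical rather than conceptual, and does not affect the structure of the rest of the argument.
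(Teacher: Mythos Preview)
Your proposal is correct and matches the paper's intent. In fact, the paper gives no proof of this corollary at all: it is one of three variations stated in Subsection \ref{subsec.gen} ``without proof, as the required modifications are straightforward.'' What you have written is exactly the kind of bookkeeping the paper leaves to the reader, and your outline follows the structure of the proof of Theorem \ref{thm.weissfibersequence} step by step, including the correct observation that the only place requiring genuine care is the isotopy-extension argument in the analogue of Lemma \ref{lem.mdtoemb}.
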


\subsubsection{Homeomorphisms and PL-homeomorphisms}
The final generalization concerns other categories of manifolds: $\mr{CAT} = \topo, \pl$. In this case the Alexander trick tells us that $\mr{CAT}_\partial(D^n) \simeq *$, so the fiber sequences become weak equivalences. Adapting the proof of Corollary \ref{cor.weissfibersequencewitha} to $\mr{CAT}$-manifolds is routine. Firstly, one needs to work in simplicial sets and invoke Remark \ref{rem.simplicialset}. The reason for using simplicial sets is twofold: there is no reasonable topology on $\mr{PL}$-homeomorphisms or $\mr{PL}$-embeddings, and in both $\mr{PL}$ and $\mr{Top}$, all embeddings and families of embeddings should be locally flat for the isotopy extension theorem to be true. Locally flatness for families is not a pointwise condition, so requires the use of simplicial sets. Secondly, one needs to cite the relevant collaring and parametrized isotopy extension theorems, most of which can be found in \cite{siebenmanndef}.

\begin{corollary}\label{cor.weissfibersequenceothercat} There are weak equivalences
\[B\mr{CAT}_{\partial,A}(M) \simeq B\mr{Emb}^\mr{CAT}_{\half\partial,A}(M)\qquad \text{and} \qquad  B\mr{CAT}^\mr{id}_{\partial,A}(M) \simeq B\mr{Emb}^\mr{CAT,id}_{\half\partial,A}(M).\]
\end{corollary}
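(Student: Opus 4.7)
The plan is to run the entire construction of Subsections \ref{subsec.monoids}--\ref{subsec.gen} verbatim in the categories $\mr{CAT} = \topo, \pl$, and then observe that the base of the resulting Weiss fiber sequence is contractible. First I would define Moore analogues $\cat{D}^{\mr{CAT}}$, $\cat{BD}^{\mr{CAT}}$, $\cat{M}^{\mr{CAT}}_A$ and $\cat{BM}^{\mr{CAT}}_A$ by repeating Definition \ref{def.mooredisk} and its simplicial and parametrized generalizations with $\mr{Diff}(-)$ replaced by $\mr{CAT}(-)$. In the PL case one works with simplicial sets throughout, as flagged in Remark \ref{rem.simplicialset}, and in the topological case one uses the compact-open topology; in both settings the cofibrancy lemmas (Lemma \ref{lem.dgood}, Lemma \ref{lem.bdgood}) and the homotopy-equivalence lemmas (Lemmas \ref{lem.diffdweq}, \ref{lem.bdiffbdweq}, \ref{lem.bmweq}) go through because their proofs only use manipulation of supports, juxtaposition, and linear rescaling, all of which are available in $\topo$ and $\pl$.

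Second, I would run the proofs of Lemma \ref{lem.comparisondifferentemb}, Lemma \ref{lem.mdtoemb}, Proposition \ref{prop.embcat} and Proposition \ref{prop.diffdiskloop} with $\mr{Diff}$ replaced by $\mr{CAT}$. The only ingredient besides formal simplicial arguments is the isotopy extension theorem, used in the fibration check inside Lemma \ref{lem.mdtoemb}; this holds for $\topo$ by Edwards--Kirby and for $\pl$ by Hudson, as noted in the statement. Combined with the setwise generalization of Subsubsection \ref{subsubsec.setwise}, this yields, in each category, a fiber sequence
\[\cat{BM}^{\mr{CAT}}_A \to \cat{BM}^{\mr{CAT}}_A \sslash \cat{BD}^{\mr{CAT}} \to * \sslash \cat{BD}^{\mr{CAT}}\]
together with identifications $\cat{BM}^{\mr{CAT}}_A \simeq B\mr{CAT}_{\partial,A}(M)$, $\cat{BM}^{\mr{CAT}}_A \sslash \cat{BD}^{\mr{CAT}} \simeq B\mr{Emb}^{\mr{CAT}}_{\half\partial,A}(M)$, and $\Omega(* \sslash \cat{BD}^{\mr{CAT}}) \simeq B\mr{CAT}_\partial(D^n)$, and analogously in the identity-component version.

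Third, I invoke the Alexander trick: there is a canonical contracting homotopy $\phi_t(x) = t \cdot \phi(x/t)$ (extended by the identity for $t=0$) showing $\mr{CAT}_\partial(D^n) \simeq *$ for $\mr{CAT} = \topo, \pl$. Since $\cat{BD}^{\mr{CAT}}$ is a well-pointed path-connected topological monoid with $\Omega(* \sslash \cat{BD}^{\mr{CAT}}) \simeq B\mr{CAT}_\partial(D^n) \simeq *$, Corollary \ref{cor.fibersequencemonoid} (or rather the equivalence $\cat{BD}^{\mr{CAT}} \simeq \Omega(* \sslash \cat{BD}^{\mr{CAT}})$) gives $\cat{BD}^{\mr{CAT}} \simeq *$ and therefore $* \sslash \cat{BD}^{\mr{CAT}} \simeq *$. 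Applying the long exact sequence (or directly the fiber-sequence identification) collapses the Weiss fiber sequence to the desired weak equivalence $B\mr{CAT}_{\partial,A}(M) \simeq B\mr{Emb}^{\mr{CAT}}_{\half\partial,A}(M)$. The identity-component version is identical, using that for $\mr{CAT} = \topo, \pl$ the second inertia group of $M$ is the trivial subgroup of the trivial group $\pi_0(\mr{CAT}_\partial(D^n))$, so $\cat{BD}^{\mr{CAT},H} = \cat{BD}^{\mr{CAT}}$ is again contractible.

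The main obstacle is psychological rather than technical: one must check that every lemma in Subsections \ref{subsec.monoids}--\ref{subsec.gen} whose proof mentions smooth vector fields, supports, or isotopy extension has a valid analogue in $\topo$ and $\pl$. For $\topo$, topological isotopy extension requires locally flatness, which is built into the standard definition of $\mr{Top}(M)$; for $\pl$, one relies on PL isotopy extension for compactly supported isotopies. Once these replacements are in place, the collapse of the fiber sequence via the Alexander trick is formal.
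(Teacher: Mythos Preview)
Your proposal is correct and matches the paper's approach: run the Weiss fiber sequence construction in $\mr{CAT}$ and collapse it using the Alexander trick $\mr{CAT}_\partial(D^n)\simeq *$, with isotopy extension supplying the only nonformal ingredient. One small refinement: the paper (see the paragraph following the corollary) works in simplicial sets not only for $\mr{PL}$ but also for $\mr{Top}$, since local flatness of families of embeddings is not a pointwise condition and so is awkward to impose via the compact-open topology; you should adjust your topological-case remark accordingly and invoke Remark~\ref{rem.simplicialset} for both categories.
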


\section{Proofs of main results}\label{sec.proofs}

In this section we prove the results announced in the introduction, summarized as follows:

\begin{enumerate}[(5.1)]
\item $B\mr{Diff}_\partial(D^{2n})$. This uses the fiber sequence (\ref{eqn.weissfib2}). We understand the total space using embedding calculus through Theorem \ref{thm.emb} and the fiber using the results of Galatius and Randal-Williams.
\item $B\mr{Diff}_\partial(M)$ for $\dim(M) = 2n$. This uses the  fiber sequence (\ref{eqn.weissfib}). We understand the base using embedding calculus and the fiber using the information obtained about $B\mr{Diff}_\partial(D^{2n})$.
\item $B\mr{Diff}_\partial(D^{2n+1})$ and $B\mr{Diff}_\partial(M)$ for $\dim(M) = 2n+1$. These arguments are similar to those in Subsection \ref{subsec.diskeven} and \ref{subsec.mfdeven}. One replaces the results of Galatius and Randal-Williams with those of Botvinnik and Perlmutter, but to apply Theorem \ref{thm.emb} we shall also need some results for even-dimensional manifolds obtained in Subsection \ref{subsec.mfdeven}.
\item $B\mr{Top}(n)$ and $B\mr{PL}(n)$. Using smoothing theory, we relate $\mr{Top}(n)$ and $\mr{PL}(n)$ to diffeomorphisms of disks and we apply the results obtained in Subsections \ref{subsec.diskeven} and  \ref{subsec.diskodd}.
\item $B\mr{Top}_\partial(M)$ and $B\mr{PL}_\partial(M)$. These are studied using smoothing theory and the information obtained about $B\mr{Top}(n)$ and $B\mr{PL}(n)$ in Subsection \ref{subsec.toppl}.
\item $BC(D^n)$, $\mr{Wh}^\mr{Diff}(*)$ and $A(*)$. These follow from Subsections \ref{subsec.diskeven} and \ref{subsec.diskodd}.
\item $B\mr{haut}(M)$ and $B\widetilde{\mr{CAT}}(M)$. These are studied independently of the previous results. We also treat $\widetilde{\mr{CAT}}(M)/\mr{CAT}(M)$.
\end{enumerate}

\subsection{Diffeomorphisms of the even-dimensional disk} \label{subsec.diskeven} Let $\#$ denote connected sum. The manifolds 
\[W_{g,1} \coloneqq (\#_g S^n \times S^n) \backslash \mr{int}(D^{2n})\]
play an important role in the next proof. 

\begin{theorem}\label{thm.evendisk} If $2n \neq 4$, then $B\mr{Diff}_\partial(D^{2n})$ is in $\cat{Fin}$. It is thus in particular of homotopically and homologically finite type.\end{theorem}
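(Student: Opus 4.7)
The cases $2n=0,2$ are trivial or classical (for $n=1$, $B\mr{Diff}_\partial(D^2)\simeq *$ by Smale), so I would focus on $2n\geq 6$. The plan is to apply the delooped Weiss fiber sequence of Theorem~\ref{thm.weissfibersequence} to the manifold $M=W_{g,1}$, which is $(n-1)$-connected (hence $2$-connected for $n\geq 3$) and has boundary $S^{2n-1}$. This yields
\[ B\mr{Diff}_\partial(W_{g,1}) \longrightarrow B\mr{Emb}^{\cong}_{\half\partial}(W_{g,1}) \longrightarrow B(B\mr{Diff}_\partial(D^{2n}),\natural). \]
The key structural feature is that the base $B(B\mr{Diff}_\partial(D^{2n}),\natural)$ is simply connected, being a delooping of a path-connected $H$-space, and, crucially, does not depend on $g$. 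The strategy is to control the other two terms uniformly enough to extract finiteness information about the base, and then loop once to recover $B\mr{Diff}_\partial(D^{2n})$.

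For the total space, Theorem~\ref{thm.emb} directly applies (dimension $2n\geq 6$, $W_{g,1}$ is $2$-connected, $\partial W_{g,1}=S^{2n-1}$), giving $B\mr{Emb}^{\cong}_{\half\partial}(W_{g,1})\in\cat{\Pi Fin}\subset\cat{HFin}$. For the fiber, I would invoke the homological stability theorem of Galatius and Randal-Williams: for any fixed degree $k$, taking $g$ sufficiently large brings $H_k(B\mr{Diff}_\partial(W_{g,1});\bZ)$ into the stable range, where it is isomorphic to $H_k$ of the infinite loop space $\Omega^\infty_0 \mathrm{MT}\theta$ associated to the appropriate tangential structure. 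Since this Thom spectrum is of finite type, its $H_k$ is finitely generated.

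Combining these inputs via a truncated application of Lemma~\ref{lem.propfinitetypespaces}(i): to conclude $H_{n+1}$ of the base is finitely generated, one only needs $H_q$ of the fiber finitely generated for $q\leq n+1$, which can be arranged by choosing $g$ large enough (depending on $n$). Running this for every $n$ shows $B(B\mr{Diff}_\partial(D^{2n}),\natural)\in\cat{HFin}$. Since the base is simply connected, Lemma~\ref{lem.finitefin} upgrades this to membership in $\cat{Fin}$. Finally, $B\mr{Diff}_\partial(D^{2n})\simeq \Omega B(B\mr{Diff}_\partial(D^{2n}),\natural)$, so its homotopy groups agree with those of the delooping shifted by one and are therefore finitely generated, with $\pi_1=\Theta_{2n+1}$ finite. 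Hence $B\mr{Diff}_\partial(D^{2n})\in\cat{Fin}$.

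The main obstacle I anticipate is the interplay between the $g$-dependence of the fiber and the $g$-independence of the base. Stability only provides finite generation of $H_*(B\mr{Diff}_\partial(W_{g,1}))$ in a range depending on $g$, so one cannot feed Lemma~\ref{lem.propfinitetypespaces}(i) all degrees at once for a single $g$. The resolution is precisely that the base is independent of $g$: working one degree at a time, we are free to enlarge $g$, and a careful inspection of the Serre spectral sequence argument behind Lemma~\ref{lem.propfinitetypespaces}(i) shows that f.g.\ $H_k(B)$ only consumes f.g.\ fiber homology in degrees $\leq k$. The remaining technical point is verifying that Galatius--Randal-Williams stability is available in the form needed (including that $W_{g,1}$ genuinely hits the hypotheses of Theorem~\ref{thm.emb}), but this is by now standard input.
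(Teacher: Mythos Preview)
Your proposal is correct and follows essentially the same route as the paper: apply the delooped Weiss fiber sequence to $W_{g,1}$, use Theorem~\ref{thm.emb} for the total space and Galatius--Randal-Williams for the fiber, then invoke Lemma~\ref{lem.propfinitetypespaces}(i) degree by degree (letting $g$ grow since the base is independent of $g$), upgrade via Lemma~\ref{lem.finitefin} using simple-connectivity of $*\sslash\cat{BD}$, and loop once. Your anticipated ``obstacle'' and its resolution match exactly the subtlety the paper handles by fixing $N$ and choosing $g$ large.
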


\begin{proof}The case $2n=2$ follows from \cite{smaledisk}, so we restrict our attention to $2n \geq 6$. Let us prove that $*\sslash\cat{BD} \in \cat{HFin}$. Consider the fiber sequence of Theorem \ref{thm.weissfibersequence} for $M = W_{g,1}$:
\[\cat{BW}_{g,1} \longto \cat{BW}_{g,1}\sslash\cat{BD} \longto *\sslash\cat{BD}.\] 

To apply Lemma \ref{lem.propfinitetypespaces}(i), it suffices to prove that for fixed $N$, the homology groups $H_*(\cat{BW}_{g,1}\sslash\cat{BD})$ and $H_*(\cat{BW})$ are finitely generated for $* \leq N$ if $g$ is sufficiently large.

On the one hand, in \cite{grwcob,grwstab1} Galatius and Randal-Williams proved that if $2n \geq 6$ and $* \leq \frac{g-3}{2}$ we have an isomorphism $H_*(\cat{BW}) \cong H_{*}(B\mr{Diff}_\partial(W_{g,1})) \cong H_*(\Omega^{\infty}_0MT\theta)$. Here $MT\theta$ is the Thom spectrum of $-\theta^*\gamma$, with $\theta\colon BO(2n)\langle n \rangle \to BO(2n)$ the $n$-connective cover, $\gamma$ the universal vector bundle over $BO(2n)$, and $\Omega^\infty_0$ denotes a component of the infinite loop space. A component of an infinite loop space of a spectrum is of homotopically finite type if the spectrum has finitely generated homotopy groups in positive degrees. A bounded-below spectrum has finitely generated homotopy groups if and only it has finitely generated homology groups. This is true for $MT\theta$ since the Thom isomorphism says that its homology is a shift of the homology of $BO(2n)\langle n \rangle$, which is in $\cat{Fin}$ by Example \ref{exam.bo2nn}.

On the other hand, by Proposition \ref{prop.embcat}, $\cat{BW}_{g,1}\sslash\cat{BD} \simeq B\mr{Emb}^{\cong}_{\half\partial}(W_{g,1})$ and thus by Theorem \ref{thm.emb}, $\cat{BW}_{g,1}\sslash\cat{BD} \in \cat{HFin}$ for all $g$. By Lemma \ref{lem.propfinitetypespaces}(i) we conclude $* \sslash \cat{BD} \in \cat{HFin}$. Since $\cat{BD}$ is path-connected, $*\sslash\cat{BD}$ is simply-connected and using Lemma \ref{lem.finitefin} we see $*\sslash\cat{BD} \in \cat{Fin}$. Proposition \ref{prop.diffdiskloop} says $\Omega(*\sslash\cat{BD}) \simeq B\mr{Diff}_\partial(D^{2n})$, and the latter is in $\cat{\Pi Fin}$. But because we know $\pi_0(\mr{Diff}_\partial(D^{2n}))$ is finite abelian, in fact $B\mr{Diff}_\partial(D^{2n})$ is in $\cat{Fin}$.\end{proof}

Rationally, the cohomological Serre spectral sequence associated to the fiber sequence in the previous proof collapses at $E_2$ in a range. Theorem 1.1 of \cite{grwcob} describes $H^*(\Omega^\infty_0 MT\theta;\bQ)$, so this proposition reduces the problem of computing $\pi_*(B\mr{Diff}_\partial(D^{2n});\bQ)$ to computing $H^*(B\mr{Emb}^{\cong}_{\half \partial}(W_{g,1});\bQ)$.

\begin{proposition}\label{prop.collapse} If $2n \geq 6$, then for $* \leq \frac{g-4}{2}$
\[H^*(\cat{BW}_{g,1}\sslash\cat{BD};\bQ) \cong \bigoplus_{p+q = *} H^p(\Omega^\infty_0 MT\theta;\bQ) \otimes H^q(*\sslash\cat{BD};\bQ).\]\end{proposition}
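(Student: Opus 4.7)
The plan is to run a Leray--Hirsch-type collapse argument on the rational cohomological Serre spectral sequence of the fiber sequence
\[\cat{BW}_{g,1} \to \cat{BW}_{g,1}\sslash\cat{BD} \to *\sslash\cat{BD}\]
from Theorem \ref{thm.weissfibersequence}. Since $\cat{BD}$ is path-connected the base is simply-connected, so the $E_2$-page is a tensor product
\[E_2^{p,q} = H^p(*\sslash\cat{BD};\bQ) \otimes H^q(\cat{BW}_{g,1};\bQ)\]
as a bigraded $\bQ$-algebra, and it suffices to show $E_\infty^{p,q} = E_2^{p,q}$ for all $(p,q)$ with $p+q \leq (g-4)/2$.

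The crucial intermediate step is to show that the fiber-restriction map $\iota^*\colon H^q(\cat{BW}_{g,1}\sslash\cat{BD};\bQ) \to H^q(\cat{BW}_{g,1};\bQ)$ is surjective for $q \leq (g-3)/2$. The approach I would take is to lift the Galatius--Randal-Williams scanning map $\alpha_g\colon B\mr{Diff}_\partial(W_{g,1}) \to \Omega^\infty_0 MT\theta$ through $\iota$, producing a map $\tilde{\alpha}_g\colon \cat{BW}_{g,1}\sslash\cat{BD} \to \Omega^\infty_0 MT\theta$ with $\tilde{\alpha}_g \circ \iota \simeq \alpha_g$. Concretely, using the identification $\cat{BW}_{g,1}\sslash\cat{BD} \simeq B\mr{Emb}^{\cong}_{\half\partial}(W_{g,1})$ of Proposition \ref{prop.embcat}, a self-embedding $e\colon W_{g,1}\hookrightarrow W_{g,1}$ has image $e(W_{g,1})$, a codimension-zero submanifold diffeomorphic to $W_{g,1}$ with a canonical $\theta$-structure inherited from $W_{g,1}$; assembling these across families using the simplicial bar construction from Subsection \ref{subsec.moore} gives a universal smooth $W_{g,1}$-bundle with $\theta$-structure over $\cat{BW}_{g,1}\sslash\cat{BD}$, and the parameterized Pontryagin--Thom construction applied to it produces the desired lift $\tilde{\alpha}_g$. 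Since $\alpha_g^* = \iota^*\circ\tilde{\alpha}_g^*$ is an isomorphism on rational cohomology in degrees $\leq (g-3)/2$ by \cite{grwcob,grwstab1}, the map $\iota^*$ must be surjective in that range.

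Given this surjectivity the collapse in the required range is a routine multiplicative-spectral-sequence computation. Every class in $E_2^{0,q}$ with $q \leq (g-3)/2$ is a permanent cycle because it lifts to $H^q(\cat{BW}_{g,1}\sslash\cat{BD};\bQ)$; by multiplicativity of the differentials $d_r$ and the K\"unneth decomposition $E_2^{p,q}\cong E_2^{p,0}\otimes E_2^{0,q}$, the same holds for every class in $E_2^{p,q}$ with $q \leq (g-3)/2$. For a target position $(p,q)$ with $p+q\leq (g-4)/2$, any incoming differential $d_r\colon E_r^{p-r,q+r-1}\to E_r^{p,q}$ has source of total degree $p+q-1$, so its vertical coordinate is at most $p+q-1 \leq (g-6)/2 \leq (g-3)/2$; hence the source is a permanent cycle and $d_r=0$. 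Outgoing differentials from $(p,q)$ vanish for the same reason, since $q \leq (g-4)/2 \leq (g-3)/2$. Thus $E_\infty^{p,q} = E_2^{p,q}$ throughout the range $p+q\leq (g-4)/2$, and lifting a graded basis of $H^{\leq (g-4)/2}(\cat{BW}_{g,1};\bQ)$ to $H^*(\cat{BW}_{g,1}\sslash\cat{BD};\bQ)$ produces the tensor product splitting.

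The main technical obstacle is the construction of the lift $\tilde{\alpha}_g$: the space $B\mr{Emb}^{\cong}_{\half\partial}(W_{g,1})$ is not a standard moduli space of smooth manifolds, so building the universal image bundle with a $\theta$-structure compatible with the scanning map $\alpha_g$ for $B\mr{Diff}_\partial(W_{g,1})$ requires careful work with the simplicial models and isotopy-extension arguments of Section \ref{sec.weissfiber}.
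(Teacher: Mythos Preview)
Your overall strategy---Leray--Hirsch collapse via surjectivity of the fiber-restriction map---is exactly the one the paper uses. The difference lies in how the surjectivity of $\iota^*$ is obtained.

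The paper does not build a lift $\tilde{\alpha}_g$ of the scanning map. Instead it invokes Lemma~1.4.1 of \cite{weissdalian}, which identifies $\mr{Emb}^{\cong}_{\half\partial}(W_{g,1})$ with the group $\mr{Diff}^*_\partial(W_{g,1})$ of homeomorphisms smooth away from a single boundary point, and then observes that the inclusion $B\mr{Diff}_\partial(W_{g,1}) \to B\mr{Top}_\partial(W_{g,1})$ factors as
\[B\mr{Diff}_\partial(W_{g,1}) \to \cat{BW}_{g,1}\sslash\cat{BD} \to B\mr{Top}_\partial(W_{g,1}).\]
Ebert and Randal-Williams \cite{oscarjohannesblock} show the composite is surjective on rational cohomology in the range $* \leq \frac{g-4}{2}$, whence so is the first map. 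This sidesteps entirely the construction you flag as the ``main technical obstacle'': no universal image bundle or parametrized Pontryagin--Thom map over $B\mr{Emb}^{\cong}_{\half\partial}(W_{g,1})$ needs to be built, because the relevant $\kappa$-classes are already known to live on the still-larger space $B\mr{Top}_\partial(W_{g,1})$.

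Your direct approach is morally the same---the content of \cite{oscarjohannesblock} is precisely that the generating $\kappa$-classes extend to topological bundles---and would work if carried out, but the paper's factorization is considerably more economical.
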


\begin{proof}By the homological stability results of Galatius and Randal Williams \cite{grwstab1,grwcob}, we may replace $\Omega^\infty_0 MT\theta$ by $B\mr{Diff}_\partial(W_{g,1})$ in the relevant range. Thus the statement is a consequence of Leray-Hirsch, once we prove that $B\mr{Diff}_\partial(W_{g,1}) \to  \cat{BW}_{g,1}\sslash\cat{BD}$ induces a surjection in rational cohomology in the range $* \leq \frac{g-4}{2}$.

By \cite[Lemma 1.4.1]{weissdalian}, $\mr{Emb}^{\cong}_{\half\partial}(W_{g,1})$ is weakly equivalent as a topological monoid to $\mr{Diff}^\ast_{\partial}(W_{g,1})$, the topological group of homeomorphisms that are smooth away from a single point $x \in D^{n-1} \subset \partial W_{g,1}$. Using Lemma \ref{lem.compdiffemb} the map $B\mr{Diff}_\partial(W_{g,1})  \to B\mr{Top}_\partial(W_{g,1})$ can be factored as
\[B\mr{Diff}_\partial(W_{g,1}) \longto \cat{BW}_{g,1}\sslash\cat{BD} \longto B\mr{Top}_\partial(W_{g,1}).\]
In \cite{oscarjohannesblock}, Ebert and Randal-Williams proved that $B\mr{Diff}_\partial(W_{g,1})  \to B\mr{Top}_\partial(W_{g,1})$ is surjective on rational cohomology in the range $* \leq \frac{g-4}{2}$, and hence so is $B\mr{Diff}_\partial(W_{g,1}) \to \cat{BW}_{g,1}\sslash\cat{BD}$.
\end{proof}


\subsection{Diffeomorphisms of $2n$-dimensional manifolds} \label{subsec.mfdeven} We now prove the first corollaries, about diffeomorphisms of 2-connected manifolds. We single out spheres because $\mr{Diff}(S^n) \simeq \mr{Diff}_\partial(D^{n}) \times O(n+1)$, and hence the following is a direct consequence of Theorem \ref{thm.evendisk}.

\begin{corollary}\label{cor.evenspheres}If $2n \neq 4$, then $B\mr{Diff}(S^{2n})$ is of finite type.
\end{corollary}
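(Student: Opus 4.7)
The plan is to reduce the corollary to Theorem \ref{thm.evendisk} via the restriction fibration
\[\mr{Diff}_\partial(D^{2n}) \longrightarrow \mr{Diff}(S^{2n}) \longrightarrow \mr{Emb}(D^{2n},S^{2n}),\]
obtained by restricting a diffeomorphism of $S^{2n}$ to a fixed standard embedded disk $D^{2n} \hookrightarrow S^{2n}$; the fiber over this embedding is naturally identified with the diffeomorphisms of the complementary disk fixing its boundary. The case $2n=2$ I would handle separately, invoking Smale's theorem $\mr{Diff}(S^2) \simeq O(3)$ together with Example \ref{exam.bo2nn}. Henceforth assume $2n \geq 6$.

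First I would identify the base up to homotopy. Differentiating an embedding at the origin induces a weak equivalence $\mr{Emb}(D^{2n},S^{2n}) \simeq \mr{Fr}(TS^{2n})$, and the frame bundle sits in the standard fibration $O(2n) \to \mr{Fr}(TS^{2n}) \to S^{2n}$. Example \ref{exam.bo2nn} says $BO(2n) \in \cat{Fin}$, so $O(2n)$ has finitely generated homotopy groups; combined with the fact that $S^{2n}$ is a simply-connected compact manifold, two applications of Lemma \ref{lem.les} show that each component of $\mr{Emb}(D^{2n},S^{2n})$ has finitely generated $\pi_i$ for $i \geq 1$, and $\pi_0(\mr{Emb}(D^{2n},S^{2n})) = \pi_0(O(2n)) = \bZ/2$ (indexing orientation).

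Second, I would feed this into the long exact sequence of the restriction fibration. Theorem \ref{thm.evendisk} gives $B\mr{Diff}_\partial(D^{2n}) \in \cat{Fin}$, so the fiber has finitely generated homotopy groups and in particular $\pi_0(\mr{Diff}_\partial(D^{2n})) \cong \Theta_{2n+1}$ is finite. Applying parts (ii) and (ii') of Lemma \ref{lem.les}---with condition (c) available because $\pi_0$ of the fiber is finite---then yields that $\pi_i(\mr{Diff}(S^{2n}))$ is finitely generated for each $i \geq 1$. For $\pi_0$, the tail
\[\pi_0(\mr{Diff}_\partial(D^{2n})) \to \pi_0(\mr{Diff}(S^{2n})) \to \pi_0(\mr{Emb}(D^{2n},S^{2n})) \to 1\]
exhibits $\pi_0(\mr{Diff}(S^{2n}))$ as an extension of $\bZ/2$ by a quotient of the finite group $\Theta_{2n+1}$, hence finite. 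Translating these facts about $\mr{Diff}(S^{2n})$ into statements about $B\mr{Diff}(S^{2n})$ (shifting indices by one and using that finite $\pi_1$ is what $\cat{Fin}$ requires) then gives $B\mr{Diff}(S^{2n}) \in \cat{Fin}$.

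The genuine difficulty is already absorbed in Theorem \ref{thm.evendisk}; the remaining obstacles here---verifying that $\mr{Emb}(D^{2n},S^{2n})$ has the homotopy type of the frame bundle, and tracking $\pi_0$ through the long exact sequence---are standard and minor.
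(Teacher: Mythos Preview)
Your argument is correct, but the paper's proof is a one-liner: it simply invokes the classical splitting $\mr{Diff}(S^{n}) \simeq \mr{Diff}_\partial(D^{n}) \times O(n+1)$ and cites Theorem~\ref{thm.evendisk}. Your restriction fibration is exactly the fibration underlying that splitting---the action of $O(2n+1)$ on $S^{2n}$ provides a section, and the base $\mr{Emb}(D^{2n},S^{2n}) \simeq \mr{Fr}(TS^{2n})$ is itself homotopy equivalent to $O(2n+1)$ since $S^{2n} = SO(2n+1)/SO(2n)$---so you are in effect re-deriving the product decomposition rather than quoting it. The paper's route is shorter; yours has the virtue of not presupposing the splitting and of making explicit how the long exact sequence and Lemma~\ref{lem.les} do the work, which is more in keeping with the machinery developed in Section~\ref{sec.fintype}. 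Either way the substance lies in Theorem~\ref{thm.evendisk}, as you note.
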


\begin{corollary}\label{cor.evenmfd}Let $2n \neq 4$. If $M$ is a closed 2-connected oriented smooth $2n$-dimensional manifold and we denote $M^\circ \coloneqq M \backslash \mr{int}(D^{2n})$, then
\begin{enumerate}[\indent (i)]
\item $B\diff_\partial^\mr{id}(M^\circ) \in \cat{Fin}$,
\item $B\diff_\partial(M^\circ) \in \cat{HFin}$,
\item $B\diff^\mr{id}(M) \in \cat{Fin}$,
\item $B\diff^+(M) \in \cat{HFin}$.
\end{enumerate}
\end{corollary}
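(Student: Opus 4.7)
The plan is to prove (i)--(iv) in order, using the two variants of the Weiss fiber sequence from Section \ref{sec.weissfiber} together with the standard evaluation fibration, with principal inputs Theorem \ref{thm.evendisk} ($B\mr{Diff}_\partial(D^{2n}) \in \cat{Fin}$) and Theorem \ref{thm.emb} ($B\mr{Emb}^{\cong}_{\half\partial}(N) \in \cat{\Pi Fin}$ for $2$-connected $N$ with $\partial N = S^{2n-1}$ and $2n \geq 6$). First, a Van Kampen plus Mayer--Vietoris argument applied to the decomposition $M = M_1 \cup_{S^{2n-1}} D^{2n}$ shows that $M_1$ is itself $2$-connected with $\partial M_1 = S^{2n-1}$, so the inputs apply to $M_1$. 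The case $2n = 2$ forces $M = S^2$ and is handled by Smale's $\mr{Diff}^+(S^2) \simeq SO(3)$; I henceforth assume $2n \geq 6$.

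For (i), I would apply Corollary \ref{cor.weissfibersequenceid} to $M_1$, giving the fiber sequence
\[ B\mr{Diff}_\partial^{\mr{id}}(M_1) \longrightarrow B\mr{Emb}^{\mr{id}}_{\half\partial}(M_1) \longrightarrow *\sslash\cat{BD}^H, \]
where $H$ is the second inertia group of $M_1$, which is finite because it is a subgroup of $\pi_0(\mr{Diff}_\partial(D^{2n})) = \Theta_{2n+1}$. The base is simply connected with $\pi_2 = H$ finite and higher $\pi_i$ finitely generated (using Theorem \ref{thm.evendisk}), hence lies in $\cat{Fin}$; the total space is the simply connected identity component of $B\mr{Emb}^{\cong}_{\half\partial}(M_1) \in \cat{\Pi Fin}$, hence also lies in $\cat{Fin}$. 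Part (iii) of Lemma \ref{lem.les} then yields finitely generated $\pi_i(B\mr{Diff}_\partial^{\mr{id}}(M_1))$ for $i \geq 2$, and $\pi_1$ vanishes trivially. For (ii), I would apply the undecorated Weiss fiber sequence (\ref{eqn.weissfib}) directly: the fiber $B\mr{Diff}_\partial(D^{2n}) \in \cat{HFin}$ by Theorem \ref{thm.evendisk} and the base $B\mr{Emb}^{\cong}_{\half\partial}(M_1) \in \cat{HFin}$ by Theorem \ref{thm.emb}, so part (ii) of Lemma \ref{lem.propfinitetypespaces} gives the claim.

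For (iv) and (iii), I would use the fibration
\[ \mr{Fr}^+(TM) \simeq \mr{Emb}^+(D^{2n}, M) \longrightarrow B\mr{Diff}_\partial(M_1) \longrightarrow B\mr{Diff}^+(M) \]
arising from viewing $\mr{Diff}_\partial(M_1)$ as the stabilizer of the standard embedded disk under the action of $\mr{Diff}^+(M)$ on $\mr{Emb}^+(D^{2n},M)$. Part (iv) follows at once from part (i) of Lemma \ref{lem.propfinitetypespaces}, since the fiber is a compact manifold (hence in $\cat{HFin}$ by Example \ref{exam.handlemfdfin}) and the total space is in $\cat{HFin}$ by (ii). For (iii), note that $B\mr{Diff}^{\mr{id}}(M)$ is the universal cover of $B\mr{Diff}^+(M)$, so it is simply connected with the same $\pi_i$ for $i \geq 2$; feeding (i) and the fact that $\mr{Fr}^+(TM) \in \cat{Fin}$ (a compact manifold with finite $\pi_1 = \bZ/2$) into the long exact sequence of the fibration above shows $\pi_i(B\mr{Diff}^+(M))$ is finitely generated for $i \geq 2$, completing the proof.

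The main subtlety is that (iii) does \emph{not} follow from (iv) by simply passing to the universal cover and applying Lemma \ref{lem.finitefin}, because the mapping class group $\pi_1(B\mr{Diff}^+(M)) = \pi_0(\mr{Diff}^+(M))$ is in general an infinite arithmetic group (cf.\ Proposition \ref{prop.embpi0hfin}), so the universal cover's homology is not a priori finitely generated. The fix is precisely to prove the sharper identity-component statement (i) first, and then propagate it through the long exact sequence of the evaluation fibration; this explains why (i) must be handled by the separate Corollary \ref{cor.weissfibersequenceid} rather than being derived from (ii).
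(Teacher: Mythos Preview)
Your proof is correct and follows the same broad strategy as the paper, but the organisation differs in instructive ways for parts (ii)--(iv).

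For (i) you use the fiber sequence of Corollary~\ref{cor.weissfibersequenceid} directly and invoke the ``fibers'' case of Lemma~\ref{lem.les}; the paper instead loops that sequence once to place $B\mr{Diff}^{\mr{id}}_\partial(M_1)$ in the middle and uses the ``total spaces'' case. These are equivalent. For (ii) you apply the undelooped Weiss sequence~(\ref{eqn.weissfib}) and Lemma~\ref{lem.propfinitetypespaces}(ii) directly, whereas the paper deduces (ii) from (i) via the fiber sequence $B\mr{Diff}^{\mr{id}}_\partial(M_1)\to B\mr{Diff}_\partial(M_1)\to B\pi_0(\mr{Diff}_\partial(M_1))$ together with Proposition~\ref{prop.embpi0hfin}. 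Your route is slightly more direct. For (iii) and (iv) you reverse the paper's order: you obtain (iv) from (ii) via Lemma~\ref{lem.propfinitetypespaces}(i) applied to the evaluation fibration, and then read off (iii) from the long exact sequence. The paper instead first proves (iii) by restricting the evaluation fibration to identity components, which introduces the finite group $T$ of components of $\mr{Diff}_\partial(M_1)$ generated by the boundary Dehn twist; it then gets (iv) from (iii) by a separate appeal to Sullivan's arithmeticity for $\pi_0(\mr{Diff}^+(M))$. Your argument for (iv) avoids that second invocation of Sullivan (it is already buried in Theorem~\ref{thm.emb}), which is a mild simplification; on the other hand the paper's treatment of (iii) makes the Dehn-twist phenomenon explicit rather than hiding it in the passage to $\pi_i$ for $i\geq 2$.
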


\begin{proof}\begin{enumerate}[(i)]
		\item We may restrict to $2n \geq 6$ because there is no $2$-connected closed surface. Consider the fiber sequence of Corollary \ref{cor.weissfibersequenceid}, which loops to a fiber sequence 
\begin{equation}\label{eqn.evenmfdfib} B\mr{Diff}_{\partial}^H(D^{2n}) \longto B\mr{Diff}_{\partial}^{\mr{id}}(M^\circ) \longto \cat{BM}^{\circ,\mr{id}}\sslash\cat{BD}^H \end{equation}
with $H$ the second inertia group of $M^\circ$ (cf.\ Definition \ref{def.second-inertia}). Since $\pi_0(\diff_\partial(D^{2n})) \cong \Theta_{n+1}$ is finite, $B\mr{Diff}_{\partial}^H(D^{2n})$ is a finite cover of $B\mr{Diff}_{\partial}(D^{2n})$. Hence by Theorem \ref{thm.evendisk} and Lemma \ref{lem.propfinitetypespaces}, the fiber of (\ref{eqn.evenmfdfib}) is in $\cat{Fin}$. By Proposition \ref{prop.embidhfin}, the base of (\ref{eqn.evenmfdfib}) is in $\cat{Fin}$. By Lemma \ref{lem.les}(ii), (ii') and the fact that base is simply-connected, the total space of (\ref{eqn.evenmfdfib}) is in $\cat{Fin}$.

\item There is a second fiber sequence
\[B\mr{Diff}_{\partial}^{\mr{id}}(M^\circ) \longto B\mr{Diff}_{\partial}(M^\circ) \longto B\pi_0( \mr{Diff}_{\partial}(M^\circ)).\]
By Proposition \ref{prop.embpi0hfin} its base is in $\cat{HFin}$ and by part (i) its fiber is in $\cat{HFin}$. Using Lemma \ref{lem.propfinitetypespaces}(ii) we conclude $B\mr{Diff}_{\partial}(M^\circ) \in \cat{HFin}$.

\item Letting the identity component of $\diff(M)$ act on $\mr{Emb}^+(D^{2n},M)$, we get a fiber sequence
\[\mr{Emb}^+(D^{2n},M) \longto B\diff_{\partial}^{T}(M^\circ) \longto B\diff^\mr{id}(M), \]
where $B\diff_{\partial}^{T}(M^\circ)$ is the classifying space of the subgroup of $\diff_\partial(M^\circ)$ given by those components consisting of diffeomorphisms which become isotopic to the identity when gluing in a disk. Each of these components is homotopy equivalent to $\mr{Diff}^\mr{id}_\partial(M^\circ)$, which by part (i) is in $\cat{Fin}$. The group of components $\pi_0(\diff_{\partial}^{T}(M^\circ))$ is generated by a Dehn twist along the boundary (i.e.\ the diffeomorphisms obtained by inserting an element of $\pi_1(SO(n)) \cong \bZ/2\bZ$ on a collar $[0,1] \times S^{n-1}$ of the boundary of $M^\circ$), and hence is either $\bZ/2\bZ$ or trivial. See \cite[page 302]{wall3} for an example. The fiber is easily seen to lie in $\cat{Fin}$. Thus $B\diff_{\partial}^{T}(M^\circ) \in \cat{Fin}$. By Lemma \ref{lem.les}(i) and the fact that the base is simply-connected, we conclude that $B\diff^\mr{id}(M) \in \cat{Fin}$.

\item We use that \cite[Theorem 13.3]{sullivaninf} proves $B\pi_0(\diff^+(M)) \in \cat{HFin}$, and finish the proof by applying Lemma \ref{lem.propfinitetypespaces}(ii) to the fiber sequence
\[B\diff^\mr{id}(M) \longto B\diff^+(M) \longto B\pi_0(\diff^+(M)).\qedhere\]
\end{enumerate}
\end{proof}

In \cite{oscarjohannestorelli}, Ebert and Randal-Williams studied the higher-dimensional analogue of the Torelli group, $\mr{Tor}_\partial(W_{g,1})$, defined as the subgroup of $\mr{Diff}_\partial(W_{g,1})$ given by components that act trivially on $H_n(W_{g,1})$. For surfaces of genus $\geq 7$, some rational homology group of the Torelli group is infinite-dimensional \cite{akita}. In high dimensions this is not the case:

\begin{corollary}For $2n \geq 6$, $B\mr{Tor}_\partial(W_{g,1})$ is of homologically finite type.\end{corollary}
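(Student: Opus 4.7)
The plan is to decompose the Torelli group via its identity component, which reduces the problem to its discrete mapping class group. Since $\mr{Tor}_\partial(W_{g,1})$ is the open subgroup of $\mr{Diff}_\partial(W_{g,1})$ consisting of path components acting trivially on $H_n(W_{g,1})$, its identity component agrees with that of $\mr{Diff}_\partial(W_{g,1})$, and there is a fibration
\[
B\mr{Diff}_\partial^{\mr{id}}(W_{g,1}) \longrightarrow B\mr{Tor}_\partial(W_{g,1}) \longrightarrow B\Gamma_g,
\]
where $\Gamma_g \coloneqq \pi_0(\mr{Tor}_\partial(W_{g,1}))$ is the discrete Torelli mapping class group. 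By part (ii) of Lemma \ref{lem.propfinitetypespaces} it then suffices to place the fiber and the base in $\cat{HFin}$.

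For the fiber, I would apply Corollary \ref{cor.evenmfd}(i) with $M = \#_g(S^n \times S^n)$, a closed $2$-connected oriented smooth $2n$-manifold (recall $n \geq 3$) whose disk complement $M_1$ is $W_{g,1}$; this yields $B\mr{Diff}_\partial^{\mr{id}}(W_{g,1}) \in \cat{Fin} \subset \cat{HFin}$.

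For the base, I aim to show that $\Gamma_g$ is commensurable with an arithmetic group; Theorem \ref{thm.arithmetic} and part (iii) of Lemma \ref{lem.groupsprops} will then yield $B\Gamma_g \in \cat{HFin}$. Writing $N = \#_g(S^n \times S^n)$, the fiber sequence $\mr{Diff}_\partial(W_{g,1}) \to \mr{Diff}^+(N) \to \mr{Emb}^+(D^{2n},N)$ from the proof of Proposition \ref{prop.embpi0hfin} has base with finite $\pi_0$ and finite $\pi_1$ (since $N$ is simply connected), so its long exact sequence shows that $\pi_0(\mr{Diff}_\partial(W_{g,1}))$ and $\pi_0(\mr{Diff}^+(N))$ differ by finite groups; since gluing in a $2n$-disk induces an isomorphism on $H_n$ for $n \geq 2$, this comparison intertwines the actions on middle homology, so the two Torelli subgroups likewise differ by finite groups. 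The theorems of Sullivan \cite{sullivaninf} and Triantafillou \cite{trianta} realize a finite-index subgroup of $\pi_0(\mr{Diff}^+(N))$ as an arithmetic subgroup of a $\bQ$-algebraic group $G$ whose tautological representation on $H_n(N;\bQ)$ is algebraic; the kernel of this representation is a $\bQ$-algebraic subgroup of $G$ whose $\bZ$-points form an arithmetic group commensurable with the Torelli subgroup of $\pi_0(\mr{Diff}^+(N))$, and hence with $\Gamma_g$.

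The main obstacle is verifying that Sullivan's $\bQ$-algebraic structure on the mapping class group makes the action on middle homology algebraic. This is implicit in the construction of the arithmetic group from the rational homotopy type, where the algebra generators in degree $n$ give an algebraic representation on $H_n$, but it is not spelled out in the excerpt; an alternative would be to invoke a direct result establishing arithmeticity of high-dimensional Torelli mapping class groups of $W_{g,1}$.
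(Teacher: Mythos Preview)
Your overall architecture matches the paper's: the same fiber sequence
\[
B\mr{Diff}_\partial^{\mr{id}}(W_{g,1}) \longrightarrow B\mr{Tor}_\partial(W_{g,1}) \longrightarrow B\pi_0(\mr{Tor}_\partial(W_{g,1})),
\]
the same appeal to Corollary \ref{cor.evenmfd}(i) for the fiber, and the same use of part (ii) of Lemma \ref{lem.propfinitetypespaces} to conclude.

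Where you diverge is in handling the base. You try to deduce that $\Gamma_g$ is commensurable with an arithmetic group by pushing Sullivan's and Triantafillou's arithmeticity of $\pi_0(\mr{Diff}^+(N))$ through to the Torelli subgroup, via an argument that the action on $H_n$ is algebraic in Sullivan's $\bQ$-algebraic model. As you yourself flag, this step is not contained in the cited results and would require additional work to justify; it is a genuine gap in your argument as written.

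The paper avoids this entirely by invoking Kreck's computation \cite{kreckisotopy}: $\pi_0(\mr{Tor}_\partial(W_{g,1}))$ is a finitely generated nilpotent group (indeed, it is the group $I^n_{g,1}$ appearing in the short exact sequence recalled in Lemma \ref{lem.hgpi0}, an extension of a finitely generated abelian group by $\Theta_{2n+1}$). Torsion-free finitely generated nilpotent groups are arithmetic, so Theorem \ref{thm.arithmetic} gives $B\Gamma_g \in \cat{HFin}$ directly. This is precisely the ``direct result'' you mention as an alternative at the end of your proposal; it is both available and considerably simpler than the Sullivan route.
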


\begin{proof}By work of Kreck \cite{kreckisotopy}, the components of $\mr{Tor}_\partial(W_{g,1})$ are an extension of a finitely generated abelian group by a finitely generated abelian group. Such groups have classifying spaces in $\cat{HFin}$ by Lemma's \ref{lem.emhfin} and \ref{lem.groupsprops}. The corollary then follows from Corollary \ref{cor.evenmfd}(i), Lemma \ref{lem.propfinitetypespaces}(ii), and the fiber sequence
\[B\mr{Diff}_{\partial}^\mr{id}(W_{g,1}) \longto B\mr{Tor}_\partial(W_{g,1}) \longto B\pi_0 (\mr{Tor}_\partial(W_{g,1})).\qedhere\]
\end{proof}

\subsection{Diffeomorphisms of the odd-dimensional disk} \label{subsec.diskodd} For the odd-dimensional case, we replace $W_{g,1}$ with the manifold
\[H_g \coloneqq \natural_g (D^{n+1} \times S^n),\]
where $\natural$ denotes boundary connected sum. Its boundary is $\partial H_g \cong W_g$. We establish some notation. \begin{itemize}
\item Fix a disk $D^{2n} \subset \partial H_g$, so that $\partial H_g \backslash \mr{int}(D^{2n}) \cong W_{g,1}$. Recall from Subsection \ref{subsubsec.setwise} that $\mr{Diff}_{\partial,W_{g,1}}(H_g)$ is the subgroup of $\mr{Diff}(H_g)$ consisting of those diffeomorphisms that fix $D^{2n}$ pointwise and $\partial H_g \backslash \mr{int}(D^{2n})$ setwise.
\item  Fix a smaller disk $D' \cong D^{2n} \subset \mr{int}(D^{2n})$. Recall that $\smash{\mr{Emb}^{\cong}_{\half\partial,W_{g,1}}(H_g)}$ is the subspace of $\mr{Emb}(H_g)$ consisting of those self-embeddings that fix $D^{2n} \backslash \mr{int}(D')$ pointwise and fix $\partial H_g \backslash \mr{int}(D^{2n})$ setwise, and are isotopic through such embeddings to a diffeomorphism.
\item Let $\mr{Diff}^\mr{ext}_{\partial}(W_{g,1})$ be the subgroup of $\mr{Diff}_\partial(W_{g,1})$ consisting of those diffeomorphisms of $W_{g,1}$ fixing the boundary that extend to a diffeomorphism of $H_g$ that fixes $D^{2n}$.
\end{itemize} 

Proposition \ref{prop.embpi0hfin} does not apply to the connected components of the last of these groups. Instead we will use work of Kreck and Wall on isotopy classes of diffeomorphisms of handlebodies \cite{kreckisotopy,wall3}. Wall's results are stated for pseudoisotopy classes, but pseudoisotopy classes coincide with isotopy classes for simply-connected manifolds of dimension $\geq 5$ \cite{cerf}.

\begin{lemma}\label{lem.hgpi0} If $n \geq 4$, the group $\pi_0( \mr{Diff}^\mr{ext}_{\partial}(W_{g,1}))$ has a classifying space in $\cat{HFin}$.\end{lemma}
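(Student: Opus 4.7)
The strategy is to realize $\pi_0(\mr{Diff}^\mr{ext}_{\partial}(W_{g,1}))$ as an arithmetic group, so that Theorem~\ref{thm.arithmetic} yields the conclusion directly. Since $n \geq 4$ forces $\dim W_{g,1} = 2n \geq 8$, Cerf's theorem \cite{cerf} identifies isotopy with pseudoisotopy, so the Kreck--Wall classification of diffeomorphisms of $(n-1)$-connected almost parallelizable manifolds is available throughout.

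First, I would invoke Kreck's theorem \cite{kreckisotopy}, producing a short exact sequence
\[1 \to I \to \pi_0(\mr{Diff}_\partial(W_{g,1})) \to G_g \to 1,\]
in which $G_g$ is an arithmetic subgroup of $\aut(H_n(W_{g,1};\bZ); \mu, q)$ (automorphisms preserving the intersection form and a quadratic refinement) and $I$ is a finitely generated abelian group of homology-trivial isotopy classes. Intersecting with $\pi_0(\mr{Diff}^\mr{ext}_{\partial}(W_{g,1}))$ gives
\[1 \to I' \to \pi_0(\mr{Diff}^\mr{ext}_{\partial}(W_{g,1})) \to G'_g \to 1,\]
with $I' \subset I$ and $G'_g \subset G_g$. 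By Wall's analysis of extensions over handlebodies \cite{wall3}, a diffeomorphism of $W_{g,1}$ extends over $H_g$ fixing $D^{2n}$ if and only if the induced automorphism of $H_n(W_{g,1})$ stabilizes the Lagrangian $L_g := \ker(H_n(\partial H_g) \to H_n(H_g))$, subject possibly to further constraints in $I$. Consequently $G'_g$ is the intersection of the arithmetic group $G_g$ with the $\bQ$-algebraic Siegel parabolic stabilizing $L_g$, and is therefore arithmetic.

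The subgroup $I' \subset I$ is a subgroup of a finitely generated abelian group and hence is itself finitely generated abelian, so arithmetic. Theorem~\ref{thm.arithmetic} gives $BG'_g, BI' \in \cat{HFin}$, and Lemma~\ref{lem.groupsprops}(ii) applied to the extension produces $B\pi_0(\mr{Diff}^\mr{ext}_{\partial}(W_{g,1})) \in \cat{HFin}$. The main obstacle will be extracting the precise form of Wall's extendability criterion: his papers treat several related classification problems with various boundary conditions, and one has to verify that the Lagrangian condition, together with a further (automatically finitely generated abelian) constraint inside $I$, exactly captures when a diffeomorphism extends over $H_g$ with $D^{2n} \subset \partial H_g$ fixed pointwise. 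Any additional obstructions beyond the Lagrangian condition land inside the finitely generated abelian group $I$ and hence are automatically arithmetic, so no further work is needed to control $I'$.
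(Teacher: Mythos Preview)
Your proposal is correct and follows essentially the same approach as the paper: both invoke Kreck's short exact sequence for $\pi_0(\mr{Diff}_\partial(W_{g,1}))$, restrict to the extendable subgroup via Wall's Theorem~6 of \cite{wall3} (which replaces the automorphism group by the stabilizer of the Lagrangian $\ker(H_n(W_{g,1}) \to H_n(H_g))$ and the abelian kernel by a subgroup), and then conclude using Theorem~\ref{thm.arithmetic} and Lemma~\ref{lem.groupsprops}(ii). Your justification that the Lagrangian stabilizer is arithmetic (as the intersection of an arithmetic group with a $\bQ$-algebraic parabolic) is spelled out more explicitly than in the paper, which simply asserts the result.
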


\begin{proof}Kreck gave a complete description of $\pi_0(\mr{Diff}_\partial(W_{g,1}))$ up to extensions in terms of two short exact sequences \cite[Section 7]{sorenoscarabelian}. We will only need one:
\[1 \longto I^n_{g,1} \longto \pi_0(\mr{Diff}_\partial(W_{g,1})) \longto \mr{Aut}(H_n,\lambda,\alpha) \longto 1.\]
Here $H_n$ is the middle-dimensional homology group $H_n(W_{g,1})$ and $\mr{Aut}(H_n,\lambda,\alpha)$ is the arithmetic group of automorphisms of the intersection form and its quadratic refinement. The group $I^n_{g,1} $ is an extension of finitely generated abelian group by a finitely generated abelian group. Thus $\pi_0(\mr{Diff}_\partial(W_{g,1}))$ is an iterated extension of an arithmetic group by finitely generated abelian groups, confirming the first part of Proposition \ref{prop.embpi0hfin} in this particular case.

Theorem 6 of \cite{wall3} says we can describe the subgroup of isotopy classes of diffeomorphisms that extend to $H_g$ by replacing $\mr{Aut}(H_n,\lambda,\alpha)$ with the subgroup consisting of those elements that preserve $K = \ker(H_n(W_{g,1}) \to H_n(H_g)) \subset H_n$, and replacing $I^n_{g,1}$ by a subgroup. This is also an iterated extension of an arithmetic group by finitely generated abelian groups, so has a classifying space in $\cat{HFin}$.\end{proof}

\begin{corollary}\label{cor.odddisk} If $2n+1 \neq 5,7$, then $B\mr{Diff}_\partial(D^{2n+1})$ is in $\cat{Fin}$. It is thus in particular of homologically and homotopically finite type.
\end{corollary}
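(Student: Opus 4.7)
Low dimensions $d := 2n+1 \leq 3$ are classical (Smale, Hatcher), so assume $d \geq 9$, i.e.\ $n \geq 4$. Following the strategy of Theorem \ref{thm.evendisk}, with $W_{g,1}$ replaced by $H_g = \natural_g(D^{n+1} \times S^n)$ and Galatius--Randal-Williams replaced by Botvinnik--Perlmutter, apply the delooped Weiss fiber sequence with setwise-fixed boundary (Corollary \ref{cor.weissfibersequencewitha}) to $M = H_g$ and $A = W_{g,1} \subset \partial H_g = W_g$:
\[
B\mr{Diff}_{\partial, W_{g,1}}(H_g) \to B\mr{Emb}^{\cong}_{\half\partial, W_{g,1}}(H_g) \to *\sslash\cat{BD},
\]
with $\Omega(*\sslash\cat{BD}) \simeq B\mr{Diff}_\partial(D^d)$. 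It will suffice to show $*\sslash\cat{BD} \in \cat{HFin}$: since $\cat{BD}$ is path-connected the base is simply-connected, so Lemma \ref{lem.finitefin} upgrades this to $\cat{Fin}$, and looping together with the finiteness of $\pi_0(\mr{Diff}_\partial(D^d)) \cong \Theta_{d+1}$ gives $B\mr{Diff}_\partial(D^d) \in \cat{Fin}$.

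To apply Lemma \ref{lem.propfinitetypespaces}(i), for fixed $N$ we need (a) $H_*(B\mr{Diff}_{\partial, W_{g,1}}(H_g))$ finitely generated for $* \leq N$ and $g \gg N$, and (b) $B\mr{Emb}^{\cong}_{\half\partial, W_{g,1}}(H_g) \in \cat{HFin}$ for all $g$. For (a), use the restriction-to-boundary fibration
\[
B\mr{Diff}_\partial(H_g) \to B\mr{Diff}_{\partial, W_{g,1}}(H_g) \to B\mr{Diff}^\mr{ext}_\partial(W_{g,1}).
\]
The base is in $\cat{HFin}$: its identity component equals $B\mr{Diff}^\mr{id}_\partial(W_{g,1})$, which Corollary \ref{cor.evenmfd}(i), applied to the closed 2-connected manifold $\#_g S^n \times S^n$, places in $\cat{Fin}$; its group of components has classifying space in $\cat{HFin}$ by Lemma \ref{lem.hgpi0}. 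The homology of the fiber $B\mr{Diff}_\partial(H_g)$ stabilizes as $g \to \infty$ by Botvinnik--Perlmutter, with stable value the homology of a component of the infinite loop space of a bounded-below Thom spectrum whose homotopy groups are finitely generated in each positive degree; hence $H_*(B\mr{Diff}_\partial(H_g))$ is finitely generated in each degree for $g$ sufficiently large. Two applications of Lemma \ref{lem.propfinitetypespaces}(ii) then give (a).

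For (b), develop a setwise-boundary analog of Theorem \ref{thm.emb}. The identity component is handled by the embedding-calculus argument of Proposition \ref{prop.embidhfin}: $H_g$ admits a handle decomposition relative to a half-disk in its boundary with handles of index $n$, and $n < d-2 = 2n-1$ for $n \geq 2$, so the Taylor tower converges and its layers are governed by section spaces as in Lemma \ref{lem.sectionspace}. The component group is treated as in Proposition \ref{prop.embpi0hfin}: restriction to $W_{g,1}$ gives a map $\pi_0(\mr{Emb}^{\cong}_{\half\partial, W_{g,1}}(H_g)) \to \pi_0(\mr{Diff}^\mr{ext}_\partial(W_{g,1}))$ with kernel a quotient of the finite group $\Theta_{d+1}$, and the target has classifying space in $\cat{HFin}$ by Lemma \ref{lem.hgpi0}; Lemma \ref{lem.groupsprops} then finishes this step.

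The main obstacle is promoting Propositions \ref{prop.embidhfin} and \ref{prop.embpi0hfin} to the setwise-fixed boundary setting, since the standing hypothesis $\partial M = S^{n-1}$ in Section \ref{sec.selfemb} is not met by $H_g$; in particular one must verify that embedding calculus is compatible with fixing $W_{g,1}$ setwise and that the restriction fibration above has the expected fiber via isotopy extension for the pair $(H_g, W_{g,1})$. The dimension restriction $d \neq 5, 7$ arises not from embedding calculus (which applies for $n \geq 2$) but from the handle-dimension hypothesis of Botvinnik--Perlmutter's stability theorem and the invocation of Corollary \ref{cor.evenmfd}, both of which require $n \geq 4$.
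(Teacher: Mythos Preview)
Your overall plan matches the paper's, but the restriction-to-boundary fibration is placed on the wrong side. Botvinnik--Perlmutter's homological stability and stable computation are for $B\mr{Diff}_{\partial,W_{g,1}}(H_g)$ --- diffeomorphisms fixing a disk $D^{2n}\subset\partial H_g$ pointwise and the complementary $W_{g,1}$ only setwise --- not for $B\mr{Diff}_\partial(H_g)$ with the entire boundary fixed. So in your step (a) you decompose precisely the group to which Botvinnik--Perlmutter already applies directly, and then invoke their result on a group (the fiber $B\mr{Diff}_\partial(H_g)$) for which it is not stated.

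The paper instead applies the restriction-to-boundary fibration on the \emph{embedding} side:
\[
B\mr{Emb}^{\cong}_{\half\partial}(H_g) \longrightarrow B\mr{Emb}^{\cong}_{\half\partial,W_{g,1}}(H_g) \longrightarrow B\mr{Diff}^\mr{ext}_\partial(W_{g,1}).
\]
Now the base is handled exactly as you propose (Corollary~\ref{cor.evenmfd} for the identity component, Lemma~\ref{lem.hgpi0} for $\pi_0$), while the fiber is the ordinary half-boundary embedding space with $\partial H_g\setminus\mr{int}(D^{2n})$ fixed \emph{pointwise}. The argument of Theorem~\ref{thm.emb} applies to this fiber without any setwise-boundary extension: Proposition~\ref{prop.embidhfin} goes through since $H_g$ has handle dimension $n<2n-1$. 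Meanwhile, for the fiber of the Weiss sequence one simply quotes Botvinnik--Perlmutter for $B\mr{Diff}_{\partial,W_{g,1}}(H_g)$, exactly parallel to quoting Galatius--Randal-Williams for $B\mr{Diff}_\partial(W_{g,1})$ in the even case. This sidesteps entirely the obstacle you flag in your final paragraph. In short: swap which side of the Weiss sequence carries the restriction fibration, and both (a) and (b) become direct applications of existing results.
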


\begin{proof}The cases $2n+1=1,3$ are respectively folklore and \cite{hatcherdisk}, so we focus on the case $2n+1 \geq 9$. Similar to the proof in Theorem \ref{thm.evendisk}, we start by remarking it suffices to prove that $*\sslash\cat{BD} \in \cat{Fin}$ using the fiber sequence of Corollary \ref{cor.weissfibersequencewitha} applied to $M = H_g$ and $A = W_{g,1} \subset \partial H_g$. Its middle term $\cat{BH}_{g,W_{g,1}} \sslash \cat{BD}$ is weakly equivalent to $B\mr{Emb}^{\cong}_{\half\partial,W_{g,1}}(H_g)$ and there is a fiber sequence
\[B\mr{Emb}^{\cong}_{\half\partial}(H_g) \longto B\mr{Emb}^{\cong}_{\half\partial,W_{g,1}}(H_g) \longto B\mr{Diff}^\mr{ext}_{\partial}(W_{g,1}).\]

The base has a classifying space in $\cat{\Pi Fin}$, using Corollary \ref{cor.evenmfd} and Lemma \ref{lem.hgpi0} for the identity component and group of components respectively, and hence in $\cat{HFin}$ by Lemma \ref{lem.pfinhfin}. The fiber has a classifying space in $\cat{HFin}$ using Theorem \ref{thm.emb}.
Hence using Lemma \ref{lem.propfinitetypespaces}(ii) we obtain that $\cat{BH}_{g,W_{g,1}} \sslash B\cat{D} \in \cat{HFin}$. From this point, the argument continues as in the proof of Theorem \ref{thm.evendisk} with the results of Botvinnik-Perlmutter \cite{perlmutterbotvinnik,perlmutterstab} replacing those of Galatius-Randal-Williams \cite{grwcob,grwstab1}. These results say that for $2n+1 \geq 9$ and $\ast \leq \frac{g-4}{2}$, we have an isomorphism $H_*(B\mr{Diff}_{D^{2n}}(H_g)) \cong H_*(Q_0 BO(2n + 1)\langle n \rangle_+)$. Here $\mr{Diff}_{D^{2n}}(H_g)$ denotes the topological group of diffeomorphisms of $H_g$ fixing $D^{2n} \subset \partial H_g$ pointwise (and necessarily $\partial H_g$ setwise).
\end{proof}

\begin{remark}The restriction $2n+1 \neq 5,7$ is due to the use of a higher version of the Whitney trick. Just like the ordinary Whitney trick is limited to dimension $\geq 5$ due to an application of transversality to make an immersed Whitney disk embedded, a transversality result limits the higher version of the Whitney trick to dimensions $\geq 9$. It may be that automorphisms of manifolds of dimension 5 and 7 behave in a qualitatively different manner.\end{remark}

A similar proof as that for Corollaries \ref{cor.evenspheres} and \ref{cor.evenmfd} now gives us the odd-dimensional case.

\begin{corollary}\label{cor.oddspheres}Let $2n+1 \neq 5,7$, then $B\mr{Diff}(S^{2n+1})$ is of finite type.
\end{corollary}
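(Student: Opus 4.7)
The plan is to mirror the argument used for Corollary \ref{cor.evenspheres}, replacing Theorem \ref{thm.evendisk} by Corollary \ref{cor.odddisk}. The low-dimensional cases $2n+1 = 1,3$ are classical: $\mr{Diff}(S^1) \simeq O(2)$, and Hatcher's contractibility result \cite{hatcherdisk} together with the splitting below gives $\mr{Diff}(S^3) \simeq O(4)$. Both are evidently in $\cat{Fin}$, so the only case requiring work is $2n+1 \geq 9$.

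The key ingredient I would invoke is the standard weak equivalence of spaces
\[\mr{Diff}(S^{2n+1}) \simeq \mr{Diff}_\partial(D^{2n+1}) \times O(2n+2).\]
This arises from the fiber sequence
\[\mr{Diff}_\partial(D^{2n+1}) \to \mr{Diff}(S^{2n+1}) \to \mr{Emb}(D^{2n+1},S^{2n+1})\]
(taken over the standard embedding) together with the identification $\mr{Emb}(D^{2n+1},S^{2n+1}) \simeq O(2n+2)$ obtained by differentiating at the origin, and the section produced by the isometric linear $O(2n+2)$-action on $S^{2n+1}$. Multiplication using the topological group structure on $\mr{Diff}(S^{2n+1})$ then yields the product decomposition above.

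From the splitting, $\pi_i(\mr{Diff}(S^{2n+1})) \cong \pi_i(\mr{Diff}_\partial(D^{2n+1})) \oplus \pi_i(O(2n+2))$ for each $i \geq 0$. By Corollary \ref{cor.odddisk}, $B\mr{Diff}_\partial(D^{2n+1}) \in \cat{Fin}$, so $\pi_0(\mr{Diff}_\partial(D^{2n+1})) \cong \Theta_{2n+2}$ is finite and the higher homotopy groups are finitely generated. Example \ref{exam.bo2nn} gives the analogous statement for $O(2n+2)$. Consequently $\pi_0(\mr{Diff}(S^{2n+1}))$ is finite and $\pi_i(\mr{Diff}(S^{2n+1}))$ is finitely generated for $i \geq 1$, so $B\mr{Diff}(S^{2n+1})$ has finite $\pi_1$ and finitely generated $\pi_i$ for $i \geq 2$, placing it in $\cat{Fin}$. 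No real obstacle is anticipated; all the content is packaged into Corollary \ref{cor.odddisk}, whose hypotheses are exactly what excludes the dimensions $5$ and $7$.
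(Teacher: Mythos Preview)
Your proposal is correct and follows exactly the paper's approach: the paper states that ``a similar proof as that for Corollaries \ref{cor.evenspheres} and \ref{cor.evenmfd}'' works, and Corollary \ref{cor.evenspheres} is precisely the splitting $\mr{Diff}(S^n) \simeq \mr{Diff}_\partial(D^n) \times O(n+1)$ combined with the disk result. If anything, you give more detail than the paper does.
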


\begin{corollary}\label{cor.oddmfd} If $M$ is a closed compact 2-connected $(2n+1)$-dimensional oriented smooth manifold and $2n+1 \neq 5,7$ and we denote $M^\circ \coloneqq M \backslash \mr{int}(D^{2n+1})$, then
\begin{enumerate}[\indent (i)]
\item $B\diff_\partial^\mr{id}(M^\circ) \in \cat{Fin}$,
\item $B\diff_\partial(M^\circ) \in \cat{HFin}$,
\item $B\diff^\mr{id}(M) \in \cat{Fin}$,
\item $B\diff^+(M) \in \cat{HFin}$.
\end{enumerate}
\end{corollary}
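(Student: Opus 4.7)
The plan is to mirror the proof of Corollary~\ref{cor.evenmfd} verbatim, swapping Theorem~\ref{thm.evendisk} for Corollary~\ref{cor.odddisk}. The very small cases $2n+1\in\{1,3\}$ are trivial: there is no closed $2$-connected $1$-manifold, and in dimension $3$ the only possibility is $M=S^3$, so $M_1=D^3$ and everything reduces to Hatcher's $\mr{Diff}_\partial(D^3)\simeq *$. Henceforth I would assume $2n+1\ge 9$, which is exactly the regime in which Lemma~\ref{lem.handle} applies to give $M_1$ a handle decomposition relative to a disk in $\partial M_1=S^{2n}$ with handles of dimension $<2n-1$, and Proposition~\ref{prop.embidhfin} and Corollary~\ref{cor.odddisk} are both available.

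For part~(i), I would loop the Weiss fiber sequence of Corollary~\ref{cor.weissfibersequenceid} applied to $M_1$:
\[
B\mr{Diff}_\partial^H(D^{2n+1})\longrightarrow B\mr{Diff}_\partial^{\mr{id}}(M_1)\longrightarrow \cat{BM}_1^{\mr{id}}\sslash\cat{BD}^H,
\]
with $H\subset\Theta_{2n+2}$ the (finite) second inertia group of $M_1$. The fiber is a finite cover of $B\mr{Diff}_\partial(D^{2n+1})$ and hence lies in $\cat{Fin}$ by Corollary~\ref{cor.odddisk}. The base is $B\mr{Emb}^\mr{id}_{\half\partial}(M_1)$; by Proposition~\ref{prop.embidhfin} the monoid $\mr{Emb}^\mr{id}_{\half\partial}(M_1)$ lies in $\cat{\Pi Fin}\subset\cat{HFin}$, so its classifying space lies in $\cat{HFin}$ (apply part~(i) of Lemma~\ref{lem.propfinitetypespaces} to the path fibration), and being simply-connected it is in $\cat{Fin}$ by Lemma~\ref{lem.finitefin}. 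Parts~(ii) and~(ii') of Lemma~\ref{lem.les} then yield $B\mr{Diff}_\partial^{\mr{id}}(M_1)\in\cat{Fin}$. Part~(ii) follows by feeding this into the fiber sequence
\[
B\mr{Diff}_\partial^{\mr{id}}(M_1)\longrightarrow B\mr{Diff}_\partial(M_1)\longrightarrow B\pi_0(\mr{Diff}_\partial(M_1)),
\]
using Proposition~\ref{prop.embpi0hfin} to place the base in $\cat{HFin}$ and part~(ii) of Lemma~\ref{lem.propfinitetypespaces} for the total space.

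For part~(iii), I would let the identity component of $\mr{Diff}(M)$ act on $\mr{Emb}^+(D^{2n+1},M)$, producing the fiber sequence
\[
\mr{Emb}^+(D^{2n+1},M)\longrightarrow B\mr{Diff}_\partial^T(M_1)\longrightarrow B\mr{Diff}^{\mr{id}}(M),
\]
where $T\subset\pi_0(\mr{Diff}_\partial(M_1))$ consists of those components becoming isotopic to the identity after capping off a disk. As in the even case, $T/\mr{id}$ is generated by a boundary Dehn twist and hence is at most $\bZ/2\bZ$, coming from $\pi_1(SO(2n+1))=\bZ/2\bZ$; so $B\mr{Diff}_\partial^T(M_1)\in\cat{Fin}$ by part~(i). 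The fiber $\mr{Emb}^+(D^{2n+1},M)\simeq\mr{Fr}^+(M)$ is a compact manifold with $\pi_1=\bZ/2\bZ$ (since $M$ is $2$-connected), hence in $\cat{Fin}$. Part~(i) of Lemma~\ref{lem.les} under hypothesis~(a) for $\pi_2$, together with $\pi_1(B\mr{Diff}^{\mr{id}}(M))=0$, gives $B\mr{Diff}^{\mr{id}}(M)\in\cat{Fin}$. Finally, part~(iv) follows from the fiber sequence
\[
B\mr{Diff}^{\mr{id}}(M)\longrightarrow B\mr{Diff}^+(M)\longrightarrow B\pi_0(\mr{Diff}^+(M)),
\]
Sullivan's theorem \cite[Theorem~13.3]{sullivaninf} (applicable since $M$ is simply-connected) which puts the base in $\cat{HFin}$, and part~(ii) of Lemma~\ref{lem.propfinitetypespaces}.

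I do not anticipate any genuine obstacle: every hard input — finiteness for $B\mr{Diff}_\partial(D^{2n+1})$, the embedding-calculus finiteness of Proposition~\ref{prop.embidhfin}, and arithmeticity of $\pi_0$ — has already been set up for the odd case. The only delicate step is bookkeeping: ensuring the dimension hypotheses of Lemma~\ref{lem.handle}, Proposition~\ref{prop.embidhfin}, and Corollary~\ref{cor.odddisk} hold simultaneously, which is precisely why the statement excludes $2n+1=5,7$.
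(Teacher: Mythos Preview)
Your proposal is correct and follows exactly the route the paper intends: the paper simply states that ``a similar proof as that for Corollaries~\ref{cor.evenspheres} and~\ref{cor.evenmfd} now gives us the odd-dimensional case,'' and you have faithfully transcribed that argument with Corollary~\ref{cor.odddisk} in place of Theorem~\ref{thm.evendisk}. Your added detail (the low-dimensional disposal, the explicit justification that $B\mr{Emb}^{\mr{id}}_{\half\partial}(M_1)\in\cat{Fin}$ via Lemma~\ref{lem.finitefin}, and the check of the handle hypotheses) is sound and in fact slightly more careful than the paper's own treatment of the even case.
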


\subsection{Homeomorphisms and PL-homeomorphisms of $\bR^n$} \label{subsec.toppl}

Recall that $\mr{Top}(n)$ denotes the topological group of homeomorphisms of $\bR^n$ in the compact-open topology and $\mr{PL}(n)$ denote the simplicial group of piecewise-linear homeomorphisms of $\bR^n$. We recall two results about these groups, the first from \cite[Section V.5]{kirbysiebenmann}:
\begin{theorem}[Kirby-Siebenmann] \label{thm.ks} If $n \geq 5$, then $\pi_i(\mr{PL}(n)/O(n)) \to \pi_i(\mr{PL}/O)$ is an isomorphism for $i \leq n+1$ and a surjection for $i=n+2$. Furthermore, there are isomorphisms \[\pi_i(\mr{PL}/O)  \cong \begin{cases} 0 & \text{if $ i \leq 4$,} \\ \Theta_i & \text{otherwise,}\end{cases} \quad \pi_i(\mr{Top}(n)/\mr{PL}(n)) \cong \pi_i(\mr{Top}/\mr{PL}) \cong \begin{cases} \bZ/2\bZ & \text{if $i=3$,} \\ 0 & \text{otherwise.}\end{cases}\]
\end{theorem}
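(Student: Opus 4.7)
The statement bundles together three independent classical ingredients, and my plan is to address each separately, citing \cite{kirbysiebenmann} for the final assembly but sketching the architecture.

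First I would reduce the claim about $\pi_i(\mr{PL}(n)/O(n))\to \pi_i(\mr{PL}/O)$ to the vanishing of $\pi_i(O/O(n))$ and $\pi_i(\mr{PL}/\mr{PL}(n))$ in the relevant range, via the five-lemma applied to the long exact sequences of the horizontal fibrations in the commutative square
\[
\xymatrix{O(n) \ar[r] \ar[d] & \mr{PL}(n) \ar[d] \\ O \ar[r] & \mr{PL}.}
\]
Stability of $\pi_i(O(n))\to \pi_i(O)$ holds for $i < n-1$ by the standard transversality/Gram-Schmidt argument, comfortably within the range $i\le n+1$. The nontrivial input is $\pi_i(\mr{PL}/\mr{PL}(n)) = 0$ for $i\le n+1$, a Haefliger-Poenaru/Morlet style ``stability of block bundles vs.\ bundles''; equivalently, via smoothing theory for PL microbundles, one proves that a PL $\bR^n$-bundle over $S^i$ admits a reduction to $\mr{PL}(n)$ when $i\le n+1$ by an engulfing argument.

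Second, for the identification $\pi_i(\mr{PL}/O) \cong \Theta_i$ when $i\ge 5$ (with vanishing for $i\le 4$), the plan is to invoke smoothing theory: $\mr{PL}/O$ classifies concordance classes of compatible smooth structures on PL manifolds, so by evaluating on $S^i$ one identifies $\pi_i(\mr{PL}/O)$ with the set of smooth structures on the standard PL sphere $S^i$, which is exactly $\Theta_i$ for $i\ge 5$ by the Kervaire-Milnor classification \cite{kervairemilnor}. The vanishing for $i\le 4$ follows from $\Theta_1=\Theta_2=\Theta_3=0$ (the first two elementary, the third Perelman) and the fact that in low degrees the smoothing obstruction theory forces $\pi_{\le 4}(\mr{PL}/O) = 0$ directly from Munkres-Hirsch-Mazur.

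Third, the Kirby-Siebenmann computation of $\pi_*(\mr{Top}/\mr{PL})$ is the heart of the matter, and I would not attempt to reprove it. The main obstacle is the torus trick: one shows by pulling back along a finite cover $T^n \to T^n$ and invoking surgery on the torus that every topological $\bR^n$-bundle admits a PL reduction up to a single obstruction in $H^4(-;\bZ/2)$, yielding $\mr{Top}/\mr{PL}\simeq K(\bZ/2,3)$. For the unstable statement, the same circle of ideas combined with a further engulfing argument (Section V.5 of \cite{kirbysiebenmann}) gives the required stability range $\pi_i(\mr{Top}(n)/\mr{PL}(n))\cong \pi_i(\mr{Top}/\mr{PL})$ for $i\le n+1$. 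The main obstacle, both historically and in any present attempt, is to establish the unstable PL stability together with the torus trick; since both are fully worked out in \cite{kirbysiebenmann}, the cleanest proof is simply to cite that reference.
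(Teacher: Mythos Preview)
The paper does not prove this statement at all; it records it as a black-box input with the citation ``see Section V.5 of \cite{kirbysiebenmann}'' and moves on. So in that sense your proposal, which also ultimately defers to \cite{kirbysiebenmann}, is aligned with the paper's treatment.

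That said, the sketch you give for the first claim contains a genuine error, and it is worth flagging because it reflects a common misconception about how this stability is proved. You propose to deduce the connectivity of $\mr{PL}(n)/O(n) \to \mr{PL}/O$ from the separate connectivities of $O/O(n)$ and $\mr{PL}/\mr{PL}(n)$ via the five-lemma. But the stability range for the orthogonal groups is only $\pi_i(O/O(n)) = 0$ for $i \le n-1$ (from the fibration $O(n) \to O(n+1) \to S^n$), and your sentence ``holds for $i < n-1$ \ldots\ comfortably within the range $i \le n+1$'' is simply arithmetically false: $n-1 < n+1$, so the range $i < n-1$ misses $i = n-1, n, n+1$ entirely. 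The five-lemma approach can therefore at best yield the result for $i \lesssim n-2$, two degrees short of what is asserted. The same issue arises for $\mr{PL}/\mr{PL}(n)$.

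The point is that the stabilization map $\mr{PL}(n)/O(n) \to \mr{PL}(n+1)/O(n+1)$ is \emph{more} connected than either of $O(n) \to O(n+1)$ or $\mr{PL}(n) \to \mr{PL}(n+1)$ individually; there is cancellation. Kirby and Siebenmann prove this directly by interpreting $\pi_i(\mr{PL}(n)/O(n))$ in terms of concordance classes of smoothings of $D^i$ with a trivialized normal bundle and invoking the product structure theorem for smoothings, not by reducing to the separate stabilities. Your sketches for the second and third ingredients are fine in spirit, but for the first you should either cite \cite{kirbysiebenmann} outright (as the paper does) or indicate the correct mechanism.
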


The second result is smoothing theory for PL-manifolds, e.g.\ \cite[Theorem I.4.4]{burglashof}.

\begin{theorem}[Burghelea-Lashof] \label{thm.smoothingtheory} $\Theta_n \times B\mr{Diff}_{\partial}(D^{n}) \simeq \Omega^n \mr{PL}(n)/O(n)$.
\end{theorem}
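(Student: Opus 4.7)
The strategy is to identify both sides of the theorem with the simplicial set $\mr{Sm}_\partial(D^n)$ of smoothings of $D^n$ extending the standard smoothing on $\partial D^n$, and to do so in two different ways.

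First, I would invoke the parametrized smoothing theorem of Cairns–Hirsch–Morlet–Kirby–Siebenmann: for any PL $n$-manifold $M$ with a smoothing fixed on a codimension-zero submanifold of the boundary, the simplicial set of smoothings is weakly equivalent to the space of sections, rel the prescribed section on the boundary, of a bundle over $M$ with fiber $\mr{PL}(n)/O(n)$ classified by the PL tangent microbundle $\tau_M\colon M \to B\mr{PL}(n)$. For $M = D^n$ the tangent microbundle is null-homotopic, so the bundle is trivial and one obtains
\[
\mr{Sm}_\partial(D^n) \simeq \mr{Map}(D^n,\partial D^n;\mr{PL}(n)/O(n),\ast) \simeq \Omega^n(\mr{PL}(n)/O(n)).
\]

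Second, I would identify the identity component of $\mr{Sm}_\partial(D^n)$ with $B\mr{Diff}_\partial(D^n)$. The simplicial group $\mr{PL}_\partial(D^n)$ acts on $\mr{Sm}_\partial(D^n)$ by pullback with stabilizer $\mr{Diff}_\partial(D^n)$ at the standard smoothing, and the associated principal fibration combined with the Alexander trick $\mr{PL}_\partial(D^n) \simeq *$ yields
\[
\mr{PL}_\partial(D^n)/\mr{Diff}_\partial(D^n) \simeq B\mr{Diff}_\partial(D^n).
\]
Since $\mr{PL}_\partial(D^n)$ is path-connected this orbit lies in the identity component of $\mr{Sm}_\partial(D^n)$, and it exhausts that component by isotopy extension for smoothings (equivalently, by the smoothing-theoretic fibration having contractible base). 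To propagate the identification to all components, I would use that $\Omega^n(\mr{PL}(n)/O(n))$ is an $n$-fold loop space for $n \geq 1$, whose $H$-space multiplication translates the identity component homotopy equivalently onto each other path component. The set of components is $\pi_n(\mr{PL}(n)/O(n)) \cong \Theta_n$ by Theorem \ref{thm.ks}, and combining this with the previous step yields $\Theta_n \times B\mr{Diff}_\partial(D^n) \simeq \Omega^n(\mr{PL}(n)/O(n))$.

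The hard part is the first step: formulating and proving the parametrized smoothing theorem with precise boundary conditions requires a microbundle handle-induction carrying out the classical obstruction theory, and genuinely uses $n \geq 5$ (the case $n=4$ being anomalous, and lower dimensions being trivial since both sides are contractible).
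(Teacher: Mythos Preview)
The paper does not prove this theorem; it is quoted as a result of Burghelea--Lashof with a citation to \cite{burglashof}, Theorem~4.4 of part~I, and is used as a black box in the proof of Corollary~\ref{cor.toppl}. There is therefore no ``paper's own proof'' to compare against.

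Your sketch is essentially the standard smoothing-theoretic argument one finds in that reference (and in Kirby--Siebenmann): identify $\mr{Sm}_\partial(D^n)$ with the section space $\Omega^n(\mr{PL}(n)/O(n))$ via the classification of smoothings, identify the standard component with $\mr{PL}_\partial(D^n)/\mr{Diff}_\partial(D^n) \simeq B\mr{Diff}_\partial(D^n)$ via the Alexander trick, and use the loop-space structure to see that all components are homotopy equivalent. This is correct in outline. Two small comments: first, your appeal to Theorem~\ref{thm.ks} to identify $\pi_n(\mr{PL}(n)/O(n)) \cong \Theta_n$ requires $n \geq 5$, but the paper only uses the result in that range anyway. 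Second, the step ``it exhausts that component by isotopy extension for smoothings'' hides the concordance-implies-isotopy theorem for smoothings (the product structure theorem), which is where the genuine work lies; you acknowledge this in your final paragraph, so this is not a gap so much as an honest flag.
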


\begin{corollary}\label{cor.toppl} Let $n \neq 4,5,7$. $B\mr{Top}(n)$ and $B\mr{PL}(n)$ are in $\cat{Fin}$. They are thus in particular of homotopically and homologically finite type.\end{corollary}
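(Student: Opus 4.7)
My plan is to first handle the easy low-dimensional cases and then reduce the high-dimensional case to $B\mr{Diff}_\partial(D^n)$ via smoothing theory together with Kirby-Siebenmann. For $n \leq 3$ every topological or PL manifold admits a unique smooth structure, hence $\mr{Top}(n) \simeq \mr{PL}(n) \simeq O(n)$, and $BO(n) \in \cat{Fin}$ by Example \ref{exam.bo2nn}. So I may restrict attention to $n \geq 6$ with $n \neq 7$.

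The heart of the argument is to prove that $\mr{PL}(n)/O(n) \in \cat{Fin}$. I would establish this by splitting the range of homotopy groups. For $i \geq n$, Theorem \ref{thm.smoothingtheory} gives $\pi_i(\mr{PL}(n)/O(n)) \cong \pi_{i-n}(\Theta_n \times B\mr{Diff}_\partial(D^n))$, and Theorem \ref{thm.evendisk} together with Corollary \ref{cor.odddisk} (applicable since $n \neq 4,5,7$) says $B\mr{Diff}_\partial(D^n) \in \cat{Fin}$, so these groups are finitely generated; finiteness of $\Theta_n$ handles the extra factor at $i=n$. For $i \leq n+1$, Theorem \ref{thm.ks} identifies these groups with $\pi_i(\mr{PL}/O)$, which is either $0$ or a finite group $\Theta_i$. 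Thus all homotopy groups of $\mr{PL}(n)/O(n)$ are finitely generated. Moreover $\pi_0(\mr{PL}(n)/O(n)) = 0 = \pi_1(\mr{PL}(n)/O(n))$ by the same Kirby-Siebenmann statement, so $\mr{PL}(n)/O(n)$ is simply-connected and hence in $\cat{Fin}$.

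Next I would use the fiber sequence $\mr{PL}(n)/O(n) \to BO(n) \to B\mr{PL}(n)$ coming from $O(n) \hookrightarrow \mr{PL}(n)$. The fiber and total space are both in $\cat{Fin}$ (the latter by Example \ref{exam.bo2nn}), so part (i) of Lemma \ref{lem.propfinitetypespaces} gives $B\mr{PL}(n) \in \cat{HFin}$. The long exact sequence on $\pi_0$ shows that $\pi_1(B\mr{PL}(n)) = \pi_0(\mr{PL}(n))$ is a quotient of $\pi_0(O(n)) = \bZ/2\bZ$, hence finite, so Lemma \ref{lem.finitefin} upgrades this to $B\mr{PL}(n) \in \cat{Fin}$. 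For $B\mr{Top}(n)$, Theorem \ref{thm.ks} says $\mr{Top}(n)/\mr{PL}(n)$ has the rational and integral homotopy of $K(\bZ/2\bZ,3)$ (in particular is simply-connected with finite homotopy groups), so $\mr{Top}(n)/\mr{PL}(n) \in \cat{Fin}$. Applying part (i) of Lemma \ref{lem.propfinitetypespaces} to the fiber sequence $\mr{Top}(n)/\mr{PL}(n) \to B\mr{PL}(n) \to B\mr{Top}(n)$ and checking as above that $\pi_1(B\mr{Top}(n)) = \pi_0(\mr{Top}(n))$ is a quotient of $\pi_0(\mr{PL}(n))$ and hence finite, I would conclude $B\mr{Top}(n) \in \cat{Fin}$ as well.

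The main subtlety is not computational but conceptual: the Kirby-Siebenmann bound $i \leq n+1$ exactly meets the smoothing-theoretic range $i \geq n$, so the two finiteness inputs cover all degrees without a gap. Once this overlap is recognized, the argument is essentially a bookkeeping exercise with the closure properties of $\cat{Fin}$ under fiber sequences established in Section \ref{sec.fintype}; the dimension restriction $n \neq 4,5,7$ enters solely through the input Theorem \ref{thm.evendisk} and Corollary \ref{cor.odddisk} for the disk.
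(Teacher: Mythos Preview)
Your proof is correct and follows essentially the same route as the paper: low dimensions by $O(n)\simeq\mr{PL}(n)\simeq\mr{Top}(n)$, then Kirby--Siebenmann for the low homotopy groups of $\mr{PL}(n)/O(n)$, smoothing theory plus Theorem~\ref{thm.main} for the high ones, and finally the fiber sequences $\mr{PL}(n)/O(n)\to BO(n)\to B\mr{PL}(n)$ and $\mr{Top}(n)/\mr{PL}(n)\to B\mr{PL}(n)\to B\mr{Top}(n)$. The only cosmetic difference is that the paper reads off $B\mr{PL}(n)\in\cat{Fin}$ directly from the long exact sequence of homotopy groups (Lemma~\ref{lem.les}), whereas you pass through $\cat{HFin}$ via Lemma~\ref{lem.propfinitetypespaces} and then upgrade with Lemma~\ref{lem.finitefin}; both are valid.
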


\begin{proof}For $n \leq 3$ it is true that $O(n) \simeq \pl(n) \simeq \topo(n)$, so we will focus on $n \geq 6$. Since $O(n)$ is homotopically finite type and $\Theta_i$ is a finite abelian group, Theorem \ref{thm.ks} implies that $\pi_i(O(n)/ \pl(n))$ is finite for $i \leq n+1$. Theorem \ref{thm.smoothingtheory}  says $\Theta_n \times B\mr{Diff}_{\partial}(D^{n}) \simeq \Omega^n \mr{PL}(n)/O(n)$. Since $B\mr{Diff}_{\partial}(D^{n}) \in \cat{Fin}$ by Theorem \ref{thm.main}, $\Omega^{n} \mr{PL}(n)/O(n) \in \cat{Fin}$ and thus we also have $\mr{PL}(n)/O(n) \in \cat{Fin}$. There is a fiber sequence
\[\mr{PL}(n)/O(n) \longto BO(n) \longto B\mr{PL}(n), \]
and since $BO(n) \in \cat{Fin}$, the long exact sequence for homotopy groups implies $B\mr{PL}(n) \in \cat{Fin}$. That $B\topo(n) \in \cat{Fin}$ then follows from the last part of Theorem \ref{thm.ks}.
\end{proof}

\subsection{Homeomorphisms of $2n$-dimensional manifolds} Next, we prove similar results for homeomorphisms and PL-homeomorphisms.

\begin{corollary}\label{cor.topspheres}Let $n\neq 4,5,7$, then $B\mr{Top}(S^n)$ and $B\mr{PL}(S^n)$ are of finite type.
\end{corollary}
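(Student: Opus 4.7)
The plan is to combine Corollary \ref{cor.toppl} with a Borel-type fiber sequence that expresses $\mr{Top}(S^n)$ as an extension of $S^n$ by $\mr{Top}(n)$, and analogously for $\mr{PL}$. Concretely, I would fix a basepoint $p \in S^n$ and use the evaluation map $\mr{Top}(S^n) \to S^n$, $\phi \mapsto \phi(p)$, which is a fibration by topological isotopy extension (and its PL analogue via Remark \ref{rem.simplicialset}). The fiber is the stabilizer $\mr{Top}_p(S^n)$, and I would identify it with $\mr{Top}(n) = \mr{Top}(\bR^n)$ up to weak equivalence via the restriction map $S^n \setminus \{p\} \cong \bR^n$: every self-homeomorphism of $\bR^n$ is automatically proper and so extends uniquely over the one-point compactification $S^n$, and a parametrized version of this argument upgrades the bijection of groups to a weak equivalence. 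This gives fiber sequences
\[ \mr{Top}(n) \to \mr{Top}(S^n) \to S^n \qquad \text{and} \qquad \mr{PL}(n) \to \mr{PL}(S^n) \to S^n. \]

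By Corollary \ref{cor.toppl}, for $n \neq 4,5,7$ both $B\mr{Top}(n)$ and $B\mr{PL}(n)$ lie in $\cat{Fin}$, equivalently $\mr{Top}(n)$ and $\mr{PL}(n)$ have finite $\pi_0$ and finitely generated $\pi_i$ for $i \geq 1$. Since $S^n$ also has finitely generated homotopy groups and is simply connected for $n \geq 2$, parts (ii) and (ii$'$) of Lemma \ref{lem.les} applied to the long exact sequence of the fibration yield that $\mr{Top}(S^n)$ and $\mr{PL}(S^n)$ have finitely generated $\pi_i$ for $i \geq 1$. For the group of components, the tail of the long exact sequence reads $\pi_1(S^n) \to \pi_0(\mr{Top}(n)) \twoheadrightarrow \pi_0(\mr{Top}(S^n)) \to \pi_0(S^n)=\ast$ for $n \geq 2$, so $\pi_0(\mr{Top}(S^n))$ is a quotient of a finite group and is therefore finite; the same holds for $\mr{PL}$. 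Hence $\mr{Top}(S^n), \mr{PL}(S^n) \in \cat{Fin}$, and consequently so are their classifying spaces. The remaining low-dimensional cases $n=1,2,3$ are handled directly: $\mr{Top}(S^1) \simeq O(2)$ is a compact Lie group, and for $n=2,3$ the classical identifications $\mr{Top}(S^n) \simeq \mr{PL}(S^n) \simeq \mr{Diff}(S^n)$ reduce to Corollaries \ref{cor.evenspheres} and \ref{cor.oddspheres}.

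The main technical obstacle is establishing that $\mr{Top}_p(S^n) \to \mr{Top}(n)$ is a weak equivalence and not merely a bijection of abstract groups. In the compact-open topology the restriction map is clearly continuous, but the inverse (extension by fixing $\infty$) is not obviously continuous, because compacts in $S^n$ that meet $\infty$ do not correspond to compacts in $\bR^n$. The issue is resolved by the observation that any continuous family of homeomorphisms of $\bR^n$ parametrized by a compact space is uniformly proper, since both the family and its pointwise inverse are continuous and hence uniformly bounded on compacts of the parameter space; this ensures their one-point compactifications vary continuously on $S^n$. For $\mr{PL}$ the same reasoning goes through after passing to simplicial sets as prescribed by Remark \ref{rem.simplicialset}.
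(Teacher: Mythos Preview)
Your proof is correct and reaches the same conclusion by a closely related but differently packaged route. The paper instead acts by $\mr{CAT}(S^n)$ on the tangent microbundle to obtain a fiber sequence $B\mr{CAT}_\partial(D^n) \to B\mr{CAT}(S^n) \to \mr{Fr}^\mr{CAT}(TS^n)$, invokes the Alexander trick to kill the fiber, and then applies Lemma~\ref{lem.les} to the frame bundle $\mr{CAT}(n) \to \mr{Fr}^\mr{CAT}(TS^n) \to S^n$. In effect the paper evaluates at a \emph{framed} point, so that the stabilizer $\mr{CAT}_\partial(D^n)$ is contractible and the content is pushed into the frame bundle; you evaluate at a \emph{bare} point, so that the stabilizer $\mr{CAT}_p(S^n)$ already has the homotopy type of $\mr{CAT}(n)$ and no separate Alexander-trick step is needed. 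Your approach is slightly more elementary in that it avoids tangent microbundles, at the cost of the compactification continuity argument you flag; the paper's approach avoids that analytic point but needs the microbundle formalism. Both end with the identical application of parts (ii) and (ii$'$) of Lemma~\ref{lem.les} and Corollary~\ref{cor.toppl}.
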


\begin{proof}Let $\mr{CAT} = \mr{Top}$ or $\mr{PL}$. Let $\mr{Fr}^\mr{CAT}(TS^n)$ be the bundle over $S^n$ with fiber over $x \in S^n$ given by the $\mr{CAT}$-isomorphisms $\bR^n \to T_x S^n$. A homeomorphism of $S^N$ acts on the tangent microbundle $TS^n$,  giving rise to a fiber sequence
\[B\mr{CAT}_\partial(D^n) \longto B\mr{CAT}(S^n) \longto \mr{Fr}^\mr{CAT}(TS^n).\]
By the Alexander trick $\mr{CAT}_\partial(D^n) \simeq *$, so $B\mr{CAT}(S^n) \simeq \mr{Fr}^\mr{CAT}(TS^n)$. The fiber of $\mr{Fr}^\mr{CAT}(TS^n)$ is equivalent to $\mr{CAT}(n)$, so we can prove the result using Corollary \ref{cor.toppl} by applying Lemma \ref{lem.les}(ii), (ii') to the fiber sequence
\[\mr{CAT}(n) \longto \mr{Fr}^\mr{CAT}(TS^n) \longto S^n. \qedhere\]
\end{proof}

For general manifolds, the technique is smoothing theory for embeddings \cite{lashofembeddings}.

\begin{corollary}\label{cor.mfdcat} Suppose that $M$ is a closed compact 2-connected $n$-dimensional smooth manifold, and $n \neq 4,5,7$, and let $M^\circ \coloneqq M \backslash \mr{int}(D^n)$. If $\mr{CAT} = \topo,\pl$ then $B\mr{CAT}_\partial(M^\circ)$ and $B\mr{CAT}(M)$ are of homologically finite type. Furthermore, the classifying spaces of their identity components are of homotopically and homologically finite type.
\end{corollary}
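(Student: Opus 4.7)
The strategy is to reduce the CAT statements to the smooth ones (Corollaries \ref{cor.evenmfd} and \ref{cor.oddmfd}) via smoothing theory, using also the finiteness of $\mr{CAT}(n)$ and of $\mr{CAT}(n)/O(n)$ established in the proof of Corollary \ref{cor.toppl}. First I would reduce the closed case to the case of $M_1$: the action of $\mr{CAT}(M)$ on CAT-embeddings of a disk into $M$ has stabilizer $\mr{CAT}_\partial(M_1)$ at the standard embedding, giving a fiber sequence
\[\mr{Emb}^{\mr{CAT}}(D^n,M) \longrightarrow B\mr{CAT}_\partial(M_1) \longrightarrow B\mr{CAT}(M),\]
which also restricts to identity components. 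CAT immersion theory identifies $\mr{Emb}^{\mr{CAT}}(D^n,M)$ with the frame bundle of the CAT tangent microbundle of $M$, a fiber bundle over $M$ with fiber $\mr{CAT}(n)$. Since $M$ is a 2-connected closed manifold, hence in $\cat{Fin}$ by Example \ref{exam.handlemfdfin} and Lemma \ref{lem.finitefinprep}, and $\mr{CAT}(n) \in \cat{Fin}$ by Corollary \ref{cor.toppl}, Lemma \ref{lem.les} puts this embedding space in $\cat{Fin}$. Lemma \ref{lem.propfinitetypespaces}(i) then reduces the closed case (both for the full group and for the identity component) to the bounded case.

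Second, to establish $B\mr{CAT}_\partial(M_1) \in \cat{HFin}$, I would invoke smoothing theory (Kirby--Siebenmann, Burghelea--Lashof): the forgetful map $B\mr{Diff}_\partial(M_1) \to B\mr{CAT}_\partial(M_1)$ has homotopy fiber equivalent to a relative section space $\Gamma(E, M_1; \partial M_1)$ of a bundle $E \to M_1$ with fiber $\mr{CAT}(n)/O(n)$, sections prescribed on the boundary by the given smoothing. From the proof of Corollary \ref{cor.toppl} we have $\mr{CAT}(n)/O(n) \in \cat{Fin}$, and $(M_1, \partial M_1)$ is a finite CW pair with nonempty boundary (Example \ref{exam.handlemfdfin}), so Lemma \ref{lem.sectionspace} gives that each component of this section space has finitely generated homotopy groups; inspecting the inductive proof of that lemma, each such $\pi_1$ is an iterated extension of abelian groups, hence finitely generated nilpotent, so by Theorem \ref{thm.arithmetic} and Lemma \ref{lem.pfinhfin} the section space lies in $\cat{HFin}$. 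Combined with $B\mr{Diff}_\partial(M_1) \in \cat{HFin}$ from Corollaries \ref{cor.evenmfd} and \ref{cor.oddmfd}, Lemma \ref{lem.propfinitetypespaces}(i) yields $B\mr{CAT}_\partial(M_1) \in \cat{HFin}$, and then the first paragraph gives $B\mr{CAT}(M) \in \cat{HFin}$.

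Third, for the identity components the same two fiber sequences restrict: the fibers remain (unions of components of) section spaces or CAT-frame bundles, still in $\cat{HFin}$, and the smooth total spaces are in $\cat{Fin}$ by the cited corollaries. Lemma \ref{lem.propfinitetypespaces}(i) gives $B\mr{CAT}^{\mr{id}}_\partial(M_1),\ B\mr{CAT}^{\mr{id}}(M) \in \cat{HFin}$, and since both are simply connected as classifying spaces of connected topological groups, Lemma \ref{lem.finitefinprep} upgrades this to $\cat{Fin}$. The main obstacle is carefully invoking smoothing theory to produce the stated fiber sequence with the correct identification of the homotopy fiber as a section space; the PL case must additionally be done simplicially per Remark \ref{rem.simplicialset}. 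A secondary technicality is ensuring the section space itself lies in $\cat{HFin}$ rather than only that each component has finitely generated homotopy groups, since this is what Lemma \ref{lem.propfinitetypespaces}(i) actually requires.
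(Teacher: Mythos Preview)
Your approach is correct and reaches the same conclusion, but the route differs from the paper's. The paper works at the level of self-embedding spaces rather than automorphism groups: it invokes smoothing theory for \emph{embeddings} (Lashof) together with immersion theory to obtain a fiber sequence
\[\Gamma(E,M_1;\tfrac{1}{2}\partial M_1) \to \mr{Emb}^{\diff,\mr{id}}_{\half\partial}(M_1) \to \mr{Emb}^{\mr{CAT},\mr{id}}_{\half\partial}(M_1),\]
feeds in Proposition~\ref{prop.embidhfin} (pure embedding calculus) for the middle term, and then uses the Alexander-trick equivalence of Corollary~\ref{cor.weissfibersequenceothercat} to pass to $B\mr{CAT}^{\mr{id}}_\partial(M_1)$; the group of components is handled separately by observing that the Sullivan--Triantafillou arithmeticity argument carries over to $\mr{CAT}$. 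You instead compare $B\mr{Diff}_\partial(M_1)$ to $B\mr{CAT}_\partial(M_1)$ directly via Morlet/Kirby--Siebenmann smoothing theory for automorphisms and feed in the already-established Corollaries~\ref{cor.evenmfd} and \ref{cor.oddmfd}; this has the pleasant side-effect of absorbing the $\pi_0$ statement into a single application of Lemma~\ref{lem.propfinitetypespaces}(i), avoiding a separate appeal to $\mr{CAT}$ arithmeticity. One caution on your secondary technicality: the inference ``$\pi_1$ is an iterated extension of abelian groups, hence nilpotent'' is not valid in general (that only yields solvable). What actually rescues the argument is that $\pi_i(\mr{CAT}(n)/O(n))$ is \emph{finite} for $i \leq n+1$ by Theorem~\ref{thm.ks}, so the relative section space over an $n$-manifold has finite $\pi_0$ and finite $\pi_1$, which certainly places it in $\cat{HFin}$; the paper uses precisely this finiteness to control $\pi_0$ of its section space.
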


\begin{proof}We start with the identity component of $\mr{Diff}_\partial(M^\circ)$. Let $n \geq 5$ and $\mr{CAT} = \pl,\topo$, then  Corollary 2 of \cite{lashofembeddings} says that
\[\mr{hofib}_{\mr{id}}(\mr{Imm}^{\cong}_{\half\partial}(M^\circ) \to \mr{Imm}^\mr{CAT}_{\half\partial}(M^\circ)) \simeq \mr{hofib}_\mr{id}(\mr{Emb}^{\cong}_{\half\partial}(M^\circ) \to \mr{Emb}^\mr{CAT}_{\half\partial}(M^\circ))\]
By immersion theory \cite{haefligerimm,leesimm}, the left hand side is equivalent to the homotopy fiber $F$ of a map $\Gamma(E^O,M^\circ;\half \partial M^\circ) \to  \Gamma(E^\mr{CAT},M^\circ;\half \partial M^\circ)$ between spaces of sections of bundles $E^O$ and $E^\mr{CAT}$ with fibers $O(n)$ and $\mr{CAT}(n)$ respectively, equal to a fixed section on half the boundary. As a consequence of Lemma \ref{lem.sectionspace} and Corollary \ref{cor.toppl}, each of the components of $F$ is in $\cat{Fin}$. Furthermore, since the first $n$ homotopy groups of $ \mr{CAT}(n)/O(n)$ are finite by Theorem \ref{thm.ks}, obstruction theory tells us $F$ has finitely many components. 

We conclude there is a fiber sequence
\[F \longto \mr{Emb}^{\diff,\mr{id}}_{\half\partial}(M^\circ) \longto \mr{Emb}^{\mr{CAT},\mr{id}}_{\half\partial}(M^\circ)\]
with $F$ in $\cat{Fin}$. By Proposition \ref{prop.embidhfin}, $\mr{Emb}^{\diff,\mr{id}}_{\half\partial}(M^\circ) \in \cat{\Pi Fin}$. We can then use Lemma \ref{lem.les}(i), (i') to conclude $\smash{\mr{Emb}^{\mr{CAT},\mr{id}}_{\half\partial}(M^\circ)} \in \cat{\Pi Fin}$. By Corollary \ref{cor.weissfibersequenceothercat}, $B\mr{CAT}^{\mr{id}}_{\half\partial}(M^\circ) \simeq B\mr{Emb}_{\half\partial}^{\mr{id}}(M^\circ)$, so the former is in $\cat{Fin}$. 

Next is the group of connected components of $\mr{Diff}_\partial(M^\circ)$. The proof of \cite[Theorem 13.3]{sullivaninf} only uses surgery theory and that the first $n$ rational homotopy groups of $O(n)$ are finite-dimensional, so it also applies to $\mr{CAT}$-manifolds (cf.\ \cite[page 3392]{trianta}). As in Proposition \ref{prop.embpi0hfin}, $B\pi_0( \mr{CAT}_{\half\partial}(M^\circ)) \in \cat{HFin}$. Applying Lemma \ref{lem.propfinitetypespaces}(ii) to
\[B\mr{CAT}_{\half\partial}^{\mr{id}}(M^\circ) \longto B\mr{CAT}_{\half\partial}(M^\circ) \longto B\pi_0( \mr{CAT}_{\half\partial}(M^\circ))\]
we conclude that $B\mr{CAT}_{\half\partial}(M^\circ) \in \cat{HFin}$.

As in the smooth case, there is a fiber sequence
\[\mr{Emb}^{\mr{CAT},+}(D^n,M) \longto B\mr{CAT}_{\partial}^{T}(M^\circ) \longto B\mr{CAT}^\mr{id}(M), \]
where $\mr{Emb}^{\mr{CAT},+}(D^n,M)$ is the space of orientation-preserving embeddings and $B\mr{CAT}_{\partial}^{T}(M^\circ)$ is the classifying space of the subgroup of $\mr{CAT}_\partial(M^\circ)$ given by those components consisting of $\mr{CAT}$-isomorphisms which become isotopic to the identity after gluing in a disk. As before, this group of components is either $\bZ/2\bZ$ or trivial, and we conclude that $B\mr{CAT}^\mr{id}(M) \in \cat{\Pi Fin}$. We also have $B\pi_0( \mr{CAT}(M)) \in \cat{HFin}$, which finishes the proof.
\end{proof}

\subsection{Concordance diffeomorphisms and the $\mr{Wh}^{\diff}$-spectrum} \label{subsec.concordance} Finally, we give an application to concordance diffeomorphisms and algebraic K-theory of spaces. 

\begin{definition}\label{def.concordance}
Let $C(M)$ be the topological group of diffeomorphisms of $M \times I$ fixing $\partial M \times I$ and $M \times \{0\}$ pointwise. These are called the \emph{concordance diffeomorphisms}.\end{definition}

\begin{corollary}\label{cor.concordance} If $n \geq 8$, then $C(D^n)$ is of finite type.
\end{corollary}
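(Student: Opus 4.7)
The plan is to construct a fibration $C(D^n) \to \mr{Diff}_\partial(D^n)_0$ with fiber $\mr{Diff}_\partial(D^{n+1})$, and then invoke the finite-type results for $B\mr{Diff}_\partial(D^k)$ from Theorem \ref{thm.evendisk} and Corollary \ref{cor.odddisk}.

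Concretely, restricting a concordance $\varphi \in C(D^n)$ to the top face $D^n \times \{1\}$ gives a homomorphism of topological groups $C(D^n) \to \mr{Diff}_\partial(D^n)$. By the isotopy extension theorem applied to isotopies of $D^n$ fixing the boundary (extended to isotopies of $D^n \times I$ fixing $\partial D^n \times I \cup D^n \times \{0\}$), this is a Serre fibration onto its image. The image consists of those diffeomorphisms of $D^n$ that are pseudoisotopic to the identity; by Cerf's pseudoisotopy theorem, applicable because $D^n$ is simply connected and $n \geq 8 \geq 5$, pseudoisotopy coincides with isotopy, so the image is precisely the identity component $\mr{Diff}_\partial(D^n)_0$. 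The kernel consists of diffeomorphisms of $D^n \times I$ fixing all of the boundary, and after corner smoothing this is $\mr{Diff}_\partial(D^{n+1})$. Delooping yields a fiber sequence
\[B\mr{Diff}_\partial(D^{n+1}) \longrightarrow BC(D^n) \longrightarrow B\mr{Diff}_\partial(D^n)_0.\]

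Because $n \geq 8$, both $n$ and $n+1$ lie outside $\{4,5,7\}$, so $B\mr{Diff}_\partial(D^n)$ and $B\mr{Diff}_\partial(D^{n+1})$ are in $\cat{Fin}$ by the earlier results. Since $\pi_0(\mr{Diff}_\partial(D^n)) \cong \Theta_{n+1}$ is finite, the cover $B\mr{Diff}_\partial(D^n)_0$ of $B\mr{Diff}_\partial(D^n)$ also lies in $\cat{Fin}$ (its $\pi_1$ is a finite-index subgroup of $\Theta_{n+1}$, and its higher homotopy agrees with that of $B\mr{Diff}_\partial(D^n)$). Applying part (ii) of Lemma \ref{lem.propfinitetypespaces} to the delooped fiber sequence gives $BC(D^n) \in \cat{HFin}$. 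From the long exact sequence of the original fibration, $\pi_0(C(D^n))$ is a quotient of $\pi_0(\mr{Diff}_\partial(D^{n+1})) \cong \Theta_{n+2}$, hence finite; equivalently, $\pi_1(BC(D^n))$ is finite. Lemma \ref{lem.finitefin} then promotes $BC(D^n)$ to an element of $\cat{Fin}$, and the finite-type statement for $C(D^n)$ follows upon looping.

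The step that deserves the most care is the identification of the image of the restriction map via Cerf's theorem, since it is this input that both identifies the base of the fiber sequence with something of known finite type and forces $\pi_0(C(D^n))$ to factor through the finite group $\Theta_{n+2}$. Once that is in hand, the rest of the argument is a straightforward application of the long exact sequence of homotopy groups together with the finiteness-transfer lemmas of Section \ref{sec.fintype}; no further manifold-theoretic input beyond Theorem \ref{thm.evendisk} and Corollary \ref{cor.odddisk} is needed.
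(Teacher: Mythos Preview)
Your proposal is correct and follows the same approach as the paper: both use the fiber sequence $\mr{Diff}_\partial(D^{n+1}) \to C(D^n) \to \mr{Diff}_\partial(D^n)$ together with Theorem \ref{thm.evendisk} and Corollary \ref{cor.odddisk}. The paper's proof is a single sentence working directly with the long exact sequence of homotopy groups of this fiber sequence; your detour through $BC(D^n)$, Lemma \ref{lem.propfinitetypespaces}(ii), and Lemma \ref{lem.finitefin} is valid but more elaborate than needed, and the appeal to Cerf's theorem to pin down the image as the identity component is not strictly necessary since $\pi_0$ of both fiber and base are already finite.
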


\begin{proof}There is a fiber sequence $\mr{Diff}_\partial(D^{n+1}) \to C(D^n) \to \mr{Diff}_\partial(D^n)$  so the result follows from Theorem \ref{thm.evendisk} and Corollary \ref{cor.odddisk}.
\end{proof}

Igusa's pseudoisotopy stability theorem \cite{igusastab} says there is a stabilization map $C(M \times I^k) \to C(M \times I^{k+1})$, which is an equivalence in a range going to $\infty$ as $k \to \infty$. Assume for simplicity that $M$ is 1-connected. By Waldhausen's parametrized stable $h$-cobordism theorem, the space $\mr{colim}_{k \to \infty} BC(M)$ is the infinite loop space $\Omega^{\infty+1} \mr{Wh}^{\diff}(M)$ \cite{waldhausenstable}. The spectrum $\mr{Wh}^\mr{Diff}(M)$ is a summand of $A(M)$: $A(M) \simeq \mr{Wh}^\mr{Diff}(M) \vee \Sigma^\infty M_+$. Both $A(M)$ and $\mr{Wh}^\mr{Diff}(M)$ only depend on the homotopy type of $M$. Since $A(M)$ is connective and $\pi_0(A(M)) \cong \bZ$, the splitting implies that $\pi_i(\mr{Wh}^{\diff}(M)) = 0$ for $i \leq 0$. Thus Corollary \ref{cor.concordance} implies the following result of Dwyer \cite{dwyertwisted}:
 
\begin{corollary}\label{cor.whdiff} Both $\mr{Wh}^\mr{Diff}(*)$ and $A(*)$ are of finite type.\end{corollary}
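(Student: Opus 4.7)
The plan is to deduce the corollary directly from Corollary \ref{cor.concordance} by combining Igusa's pseudoisotopy stability theorem with Waldhausen's parametrized stable $h$-cobordism theorem, followed by Serre's theorem for the second statement.

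First I would fix $i \geq 0$ and study $\pi_i(\mr{Wh}^{\diff}(*))$. By Corollary \ref{cor.concordance}, the topological group $C(D^n)$ is of finite type for every $n \geq 8$, and hence so is $BC(D^n)$ — in particular each homotopy group $\pi_i(BC(D^n))$ is finitely generated for such $n$. Igusa's pseudoisotopy stability theorem provides a function $f(n) \to \infty$ such that the stabilization map $C(D^n) \to C(D^{n+1})$ is $f(n)$-connected, so for each fixed $i$ and all $n$ sufficiently large (and in particular $n \geq 8$), the map $\pi_{i+1}(BC(D^n)) \to \pi_{i+1}(BC(D^{n+1}))$ is an isomorphism. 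Therefore
\[ \pi_{i+1}\bigl(\mr{colim}_{k\to\infty} BC(D^k)\bigr) \cong \pi_{i+1}(BC(D^n)) \]
for any sufficiently large $n$, and the right hand side is a finitely generated abelian group.

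Next I would invoke Waldhausen's parametrized stable $h$-cobordism theorem, which identifies $\mr{colim}_{k\to\infty} BC(D^k)$ with $\Omega^{\infty+1} \mr{Wh}^{\diff}(*)$. Combining this with the previous step shows $\pi_{i+1}(\mr{Wh}^{\diff}(*))$ is finitely generated for each $i \geq 0$. Together with the vanishing $\pi_i(\mr{Wh}^{\diff}(*)) = 0$ for $i \leq 0$ recalled in the paragraph preceding the corollary, this establishes that every homotopy group of $\mr{Wh}^{\diff}(*)$ is finitely generated, so the spectrum is of finite type.

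Finally, to pass from $\mr{Wh}^{\diff}(*)$ to $A(*)$ I would use the splitting $A(*) \simeq \mr{Wh}^{\diff}(*) \vee \Sigma^\infty S^0$, which gives $\pi_i(A(*)) \cong \pi_i(\mr{Wh}^{\diff}(*)) \oplus \pi_i^s$. By Serre's theorem the stable stems $\pi_i^s$ are finite for $i > 0$ and equal to $\bZ$ for $i = 0$, hence finitely generated in every degree, and combined with the previous step we conclude $A(*)$ is of finite type. There is no real obstacle in this argument — the content has already been done in Corollary \ref{cor.concordance}; the only point requiring care is to apply Igusa's stability range one degree at a time so that for each $i$ one genuinely lands in the region $n \geq 8$ where Corollary \ref{cor.concordance} applies.
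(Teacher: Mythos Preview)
Your proposal is correct and follows essentially the same approach as the paper: the paper's argument, given in the paragraph immediately preceding the corollary, likewise combines Corollary~\ref{cor.concordance} with Igusa's pseudoisotopy stability theorem and Waldhausen's parametrized stable $h$-cobordism theorem to handle $\mr{Wh}^{\diff}(*)$, and then uses the splitting $A(*) \simeq \mr{Wh}^{\diff}(*) \vee \Sigma^\infty S^0$ to deduce the result for $A(*)$. Your version is slightly more explicit in invoking Serre's theorem for the stable stems and in spelling out why the stability range eventually lands in $n \geq 8$, but the strategy is identical.
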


\subsection{Homotopy automorphisms and block automorphisms} Two types of automorphism groups of manifolds remain to be discussed: homotopy automorphisms and block automorphisms. Finiteness results for these can be obtained with relative ease, and we include their proofs for completeness. Similar results were obtained by Farrell-Hsiang and Burghelea \cite{farrellhsiang,burgheleafiniteness}, and more recently, rationally by Berglund and Madsen \cite{berglundmadsen2}.  We will assume that our manifolds are closed, but we expect this restriction to be unnecessary.

\begin{definition}Let $\mr{haut}(M)$ be the group-like topological monoid of \emph{homotopy automorphisms} of $M$, i.e.\ the union of those components of $\mr{Map}(M,M)$ consisting of continuous maps that are homotopy equivalences.\end{definition}

\begin{proposition}\label{prop.haut} If $M$ is a closed $n$-dimensional manifold with finite fundamental group, then
\begin{enumerate}[\indent i)]
\item $B\mr{haut}^\mr{id}(M)  \in \cat{Fin}$,
\item $B\mr{haut}(M) \in \cat{HFin}$.
\end{enumerate}\end{proposition}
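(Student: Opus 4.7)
The plan for part (i) is to exhibit $\mr{Map}(M,M)$ as a space of sections and apply Lemma~\ref{lem.sectionspace}. First I would verify that $M \in \cat{Fin}$: as a compact manifold, $M \in \cat{HFin}$ by Example~\ref{exam.handlemfdfin}, and together with the hypothesis that $\pi_1(M)$ is finite, Lemma~\ref{lem.finitefin} gives $M \in \cat{Fin}$. In particular, $\pi_1(M)$ is finite and all higher homotopy groups of $M$ are finitely generated. Now view $\mr{Map}(M,M)$ as the space of sections of the trivial fibration $M \times M \to M$ projecting on the first factor, which has fiber $M$ and obvious constant sections. Since $M$ has the homotopy type of a finite CW complex, Lemma~\ref{lem.sectionspace} applies with $A = \emptyset$ (the condition to drop $A$ being satisfied as $F = M$ is connected and has finite $\pi_1$), yielding that every component of $\mr{Map}(M,M)$ has finitely generated homotopy groups in each positive degree. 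Specializing to the identity component $\mr{haut}^\mr{id}(M)$, which is path-connected, $B\mr{haut}^\mr{id}(M)$ is simply-connected with finitely generated higher homotopy groups, so $B\mr{haut}^\mr{id}(M) \in \cat{Fin}$, establishing (i).

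For (ii), the plan is to combine (i) with the arithmeticity of the group of homotopy classes. Consider the fiber sequence
\[B\mr{haut}^\mr{id}(M) \longrightarrow B\mr{haut}(M) \longrightarrow B\pi_0(\mr{haut}(M)).\]
By (i) and Lemma~\ref{lem.pfinhfin} the fiber is in $\cat{HFin}$, so by part (ii) of Lemma~\ref{lem.propfinitetypespaces} it suffices to show $B\pi_0(\mr{haut}(M)) \in \cat{HFin}$. For this I would invoke Sullivan's arithmeticity theorem \cite{sullivaninf}, together with the extension by Triantafillou \cite{trianta} to the case of finite $\pi_1(M)$, to conclude that $\pi_0(\mr{haut}(M))$ is commensurable with an arithmetic group. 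Theorem~\ref{thm.arithmetic} combined with Lemma~\ref{lem.groupsprops} then yields $B\pi_0(\mr{haut}(M)) \in \cat{HFin}$, completing the proof. In low dimensions where the arithmeticity theorems may not directly apply, the statement is handled by the classical surface and $3$-manifold theory recalled in the historical remarks.

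The main obstacle is part (ii): the arithmeticity of $\pi_0(\mr{haut}(M))$ is genuine outside input, and one must take care to invoke a version of Sullivan's theorem that applies for manifolds with arbitrary finite fundamental group, rather than just the simply-connected case. Part (i), by contrast, is essentially formal given the section-space machinery of Section~\ref{sec.fintype}, and the manifold structure enters only through the observations that compact manifolds are finite CW complexes and that closed manifolds with finite $\pi_1$ lie in $\cat{Fin}$.
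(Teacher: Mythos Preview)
Your proposal is correct and follows essentially the same route as the paper. For part (i) you invoke the second clause of Lemma~\ref{lem.sectionspace} (with $A=\emptyset$) directly, whereas the paper effectively re-derives that clause in this special case by applying the first clause to the pointed mapping space $\mr{Map}_*^{\mr{id}}(M,M)$ and then using the evaluation fiber sequence $\mr{Map}_*^{\mr{id}}(M,M)\to\mr{Map}^{\mr{id}}(M,M)\to M$ together with Lemma~\ref{lem.les}; your packaging is slightly cleaner but logically equivalent. For part (ii) the arguments coincide: the paper also reduces to Triantafillou's arithmeticity of $\pi_0(\mr{haut}(M))$ for finite CW complexes with finite $\pi_1$, so your remark about low dimensions is unnecessary since that result has no dimension restriction.
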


\begin{proof}\begin{enumerate}[(i)]
		\item Since the identity component of $\mr{haut}(M)$ is a path-connected topological monoid, it suffices to prove it has finitely-generated homotopy groups. By Lemma \ref{lem.sectionspace}, the identity component of the space $\mr{Map}_*(M,M)$ of pointed maps has finitely-generated homotopy groups. It fits into a fiber sequence
\[\mr{Map}^\mr{id}_*(M,M) \longto \mr{Map}^\mr{id}(M,M) \longto M,\]
and the result follows from Lemma \ref{lem.les}(ii), (ii') using the fact that there is a section $M \to \mr{Map}^\mr{id}(M,M)$.

	\item After having established part (i), we want to apply Lemma \ref{lem.propfinitetypespaces}(ii) to the fiber sequence
\[B\mr{haut}^\mr{id}(M) \longto B\mr{haut}(M) \longto B\pi_0(\mr{haut}(M)),\]
and to do so, it suffices to show $B\pi_0(\mr{haut}(M)) \in \cat{HFin}$. Triantafillou proved that for a finite CW complex $X$ with finite fundamental group $\pi_0(\mr{haut}(X))$ is arithmetic \cite{trianta}, and hence we can apply Theorem \ref{thm.arithmetic}.\qedhere
\end{enumerate}
\end{proof}

Next we consider block automorphisms. We take $\mr{CAT} = \mr{Top}$, $\mr{PL}$ or $\mr{Diff}$.

\begin{definition}The group $\widetilde{\mr{CAT}}(M)$ of \emph{$\mr{CAT}$ block automorphisms}  of a $\mr{CAT}$-manifold $M$ is the simplicial group with $k$-simplices given by the $\mr{CAT}$-isomorphisms 
\[f\colon \Delta^k \times M \longto \Delta^k \times M\]
such that $f(\sigma \times M) = \sigma \times M$ for every face $\sigma$ of $\Delta^k$.\end{definition}

This group is designed to be studied by surgery. The difference between block automorphisms and homotopy automorphisms is the subject of Quinn's surgery exact sequence \cite{quinnsurgery}: if $n \geq 5$ (or $n = 4$, $\mr{CAT} = \mr{Top}$ and $\pi_1(M)$ is good \cite{freedmanquinn}) the following is a fiber sequence when restricted to identity components
\begin{equation}\label{eqn:surgery} \mr{haut}(M)/\widetilde{\mr{CAT}}(M) \longto \mr{Map}(M,G/\mr{CAT}) \longto \bL(M),\end{equation}
where $G = \mr{colim}_{n \to \infty} \mr{haut}_*(S^n)$ and $\bL(-)$ is the quadratic $L$-theory space.

\begin{proposition}\label{prop.block} If $M$ is a closed oriented $\mr{CAT}$-manifold of dimension $n \geq 5$ with finite fundamental group or a 1-connected topological 4-manifold, then
\begin{enumerate}[\indent (i)]
\item $B\widetilde{\mr{CAT}}^\mr{id}(M) \in \cat{Fin}$,
\item $B\widetilde{\mr{CAT}}(M) \in \cat{HFin}$.
\end{enumerate}\end{proposition}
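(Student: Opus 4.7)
The plan is to combine Quinn's surgery exact sequence on identity components
\[(\mr{haut}(M)/\widetilde{\mr{CAT}}(M))^\mr{id} \to \mr{Map}^\mr{id}(M, G/\mr{CAT}) \to \bL^\mr{id}(M)\]
with Proposition \ref{prop.haut}. I would first handle part (i), then reduce (ii) to an arithmeticity statement for $\pi_0(\widetilde{\mr{CAT}}(M))$ obtained by adapting Sullivan's argument to the block setting.

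For (i), the homotopy groups of $G/\mr{CAT}$ are finitely generated in each degree: for $\mr{CAT} = \diff$ this is Kervaire-Milnor, and for $\mr{CAT} = \pl,\topo$ it then follows using Theorem \ref{thm.ks}. Hence Lemma \ref{lem.sectionspace} gives that each component of $\mr{Map}(M, G/\mr{CAT})$ has finitely generated homotopy groups. The homotopy groups of $\bL(M)$ are Wall's L-groups $L_{n+*}(\bZ[\pi_1(M)])$, which are finitely generated because $\pi_1(M)$ is finite. The long exact sequence of the surgery fiber sequence then shows that $(\mr{haut}(M)/\widetilde{\mr{CAT}}(M))^\mr{id}$ has finitely generated homotopy groups. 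Combining this via Lemma \ref{lem.les} with Proposition \ref{prop.haut}(i) through the fiber sequence
\[\widetilde{\mr{CAT}}^\mr{id}(M) \to \mr{haut}^\mr{id}(M) \to (\mr{haut}(M)/\widetilde{\mr{CAT}}(M))^\mr{id}\]
yields that $\pi_i(\widetilde{\mr{CAT}}^\mr{id}(M))$ is finitely generated for all $i \geq 1$. Since $\widetilde{\mr{CAT}}^\mr{id}(M)$ is path-connected, $B\widetilde{\mr{CAT}}^\mr{id}(M)$ is simply-connected with finitely generated higher homotopy groups, hence in $\cat{Fin}$.

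For (ii), applying Lemma \ref{lem.propfinitetypespaces}(ii) to
\[B\widetilde{\mr{CAT}}^\mr{id}(M) \to B\widetilde{\mr{CAT}}(M) \to B\pi_0(\widetilde{\mr{CAT}}(M))\]
reduces to showing $B\pi_0(\widetilde{\mr{CAT}}(M)) \in \cat{HFin}$. The long exact sequence of the fiber sequence $\widetilde{\mr{CAT}}(M) \to \mr{haut}(M) \to \mr{haut}(M)/\widetilde{\mr{CAT}}(M)$ presents $\pi_0(\widetilde{\mr{CAT}}(M))$ as an extension
\[1 \to K \to \pi_0(\widetilde{\mr{CAT}}(M)) \to Q \to 1\]
in which $K$ is a finitely generated abelian quotient of $\pi_1(\mr{haut}(M)/\widetilde{\mr{CAT}}(M))$ (finite generation coming from the first paragraph together with the fact that $\pi_1$ of a path component of an $H$-space is abelian), and $Q \subset \pi_0(\mr{haut}(M))$ is the image. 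Then $BK \in \cat{HFin}$ by Theorem \ref{thm.arithmetic}, so by Lemma \ref{lem.groupsprops}(ii) it suffices to show $BQ \in \cat{HFin}$.

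The main obstacle is this last step: a priori a subgroup of an arithmetic group (here Proposition \ref{prop.haut}(ii) gives arithmeticity of $\pi_0(\mr{haut}(M))$) need not itself be arithmetic. My proposed resolution is to adapt Sullivan's argument from \cite{sullivaninf} to the block setting, in parallel with the remark on p.~3392 of \cite{trianta} invoked in the proof of Corollary \ref{cor.mfdcat}: the surgery obstruction measuring the failure of a self-equivalence to lift to a block CAT-automorphism lies in a finitely generated abelian group on which $\pi_0(\mr{haut}(M))$ acts linearly, and arithmeticity forces the stabilizer $Q$ of the base point to have finite index in $\pi_0(\mr{haut}(M))$. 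Given this, Lemma \ref{lem.groupsprops}(iii) yields $BQ \in \cat{HFin}$, completing the proof.
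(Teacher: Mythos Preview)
Your treatment of part (i) matches the paper's: both use Quinn's surgery fiber sequence to control $\mr{haut}^{\mr{id}}(M)/\widetilde{\mr{CAT}}^{\mr{id}}(M)$ and then feed this into the fibration over $B\mr{haut}^{\mr{id}}(M)$. One small wrinkle: you invoke an $H$-space structure on $(\mr{haut}/\widetilde{\mr{CAT}})^{\mr{id}}$ to get abelian $\pi_1$, which is not immediate; the paper instead uses that $\pi_1(\mr{Map}(M,G/\mr{CAT}))$ is abelian (since $G/\mr{CAT}$ is an infinite loop space) and applies Lemma~\ref{lem.les}(iii') with the nilpotent hypothesis.

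For part (ii) you and the paper agree on the reduction to $B\pi_0(\widetilde{\mr{CAT}}(M))\in\cat{HFin}$, but then take different routes. You approach $\pi_0(\widetilde{\mr{CAT}}(M))$ \emph{from above}, writing it as an extension of a subgroup $Q\subset\pi_0(\mr{haut}(M))$ by a finitely generated abelian $K$. The paper approaches it \emph{from below}, as a quotient of $\pi_0(\mr{CAT}(M))$: Triantafillou gives arithmeticity of $\pi_0(\mr{CAT}(M))$, Lemma~\ref{lem.pseudokerfg} (via Hatcher--Wagoner's computation of $\pi_0(C(M))$ and finiteness of $K_2(\bZ[\pi_1])$) shows the kernel has classifying space in $\cat{HFin}$, and Lemma~\ref{lem.groupsprops}(i) handles quotients.

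Your final step has a genuine gap. The slogan ``arithmeticity forces the stabilizer $Q$ of the base point to have finite index'' is false as a general principle: arithmetic groups acting linearly on lattices can have infinite-index point stabilizers (e.g.\ $SL_n(\bZ)$ acting on $\bZ^n$). What one actually needs is the content of Sullivan's argument: the normal invariant $f\mapsto\eta(f)$ is a \emph{derivation} from $\pi_0(\mr{haut}(M))$ into $[M,G/\mr{CAT}]$, and one must analyze this derivation rationally (where it becomes a coboundary determined by characteristic classes) and then handle torsion separately to conclude that $Q$ is commensurable with an arithmetic subgroup. This is doable, and is essentially what Sullivan and Triantafillou carry out for $\pi_0(\mr{CAT}(M))$, but your sketch does not supply it. The paper's route sidesteps the issue entirely by quoting the existing arithmeticity result for $\pi_0(\mr{CAT}(M))$ and descending via a quotient, which is why it needs the auxiliary Lemma~\ref{lem.pseudokerfg} but avoids reproving anything in the Sullivan--Triantafillou framework.
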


\begin{proof}\begin{enumerate}[(i)] \item There is a fiber sequence
\[\mr{haut}^\mr{id}(M)/\widetilde{\mr{CAT}}^\mr{id}(M) \longto B\widetilde{\mr{CAT}}^\mr{id}(M) \longto B\mr{haut}^\mr{id}(M).\]
By Proposition \ref{prop.haut}(i) its base is in $\cat{Fin}$, so using Lemma \ref{lem.les} it suffices to prove that the fiber has finitely generated homotopy groups. This uses the surgery exact sequence (\ref{eqn:surgery}). We will first prove that the path components of $\mr{Map}(M,G/\mr{CAT})$ and $\bL(M)$ have finitely generated homotopy groups.  Note that $\pi_i(G) \cong \pi_i(\bS)$ for $i \geq 1$, while $\pi_0(G) = \bZ/2\bZ$. In the case $\mr{CAT} = \mr{Diff}$, $\mr{colim}_{n \to \infty} \mr{CAT}(n) \simeq O$ and by Bott periodicity the homotopy groups of $O$ are finitely generated, so that $G/O \in \cat{Fin}$. From this and Theorem \ref{thm.ks}, we can conclude that $G/\mr{CAT} \in \cat{Fin}$ and it has abelian $\pi_1$ because $G/\mr{CAT}$ is an infinite loop space \cite[chapter 6]{madsenmilgram}. By Lemma \ref{lem.sectionspace} each component of $\mr{Map}(M,G/\mr{CAT})$ has finitely generated homotopy groups. For the second term, $\pi_i(\bL(M)) \cong \pi_{i +n}(\bL(\bZ[\pi_1(M)]))$, which is finitely generated when $\pi_1(M)$ is finite \cite{hambletontaylor}. 

That $\mr{haut}^\mr{id}(M)/\widetilde{\mr{CAT}}^\mr{id}(M)$ has finitely generated homotopy groups then follows from  Lemma \ref{lem.les}(iii), (iii'), where we use that condition (a) since $\pi_1(\mr{Map}(M,G/\mr{CAT}))$ is abelian again because $G/\mr{CAT}$ is an infinite loop space. We conclude that $B\smash{\widetilde{\mr{CAT}}^\mr{id}}(M) \in \cat{Fin}$, which finishes the proof of part (i) of this Proposition.

\item We intend to apply Lemma \ref{lem.propfinitetypespaces}(ii) to the fiber sequence
\[B\widetilde{\mr{CAT}}^\mr{id}(M) \longto B\widetilde{\mr{CAT}}(M) \longto B\pi_0(\widetilde{\mr{CAT}}(M)),\] and it thus suffices to prove that $B\pi_0(\widetilde{\mr{CAT}}(M)) \in \cat{HFin}$. First, by Lemma \ref{lem.pseudokerfg} the classifying space of the kernel of the surjection $\pi_0(\mr{CAT}(M)) \to \pi_0(\widetilde{\mr{CAT}}(M))$ is in $\cat{HFin}$. Next, the group $\pi_0(\mr{CAT}(M))$ is arithmetic when $\dim M \geq 5$ and $\pi_1(M)$ is finite \cite{trianta}, and thus by Theorem \ref{thm.arithmetic} we have $B\pi_0(\mr{CAT}(M)) \in \cat{HFin}$. Using part (i) of Lemma \ref{lem.groupsprops}, which says that a group with classifying spaces in $\cat{HFin}$ are closed under quotients, we conclude that $\pi_0(\widetilde{\mr{CAT}}(M)) \in \cat{HFin}$. If $n=4$ and $\pi_1(M) = 0$, then $\pi_0(\mr{Top}(M))$ is arithmetic and pseudoisotopy implies isotopy \cite{quinnisotopy}.\qedhere
\end{enumerate}\end{proof}

\begin{lemma}\label{lem.pseudokerfg} If $n \geq 5$, $\pi_1(M)$ is finite and $M$ is oriented, then the kernel $K$ of the map $\pi_0(\mr{CAT}(M)) \to \pi_0(\widetilde{\mr{CAT}}(M))$ has classifying space in $\cat{HFin}$.\end{lemma}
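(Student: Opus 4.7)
The plan is to show that $K$ is a finitely generated nilpotent group, after which Theorem \ref{thm.arithmetic} immediately gives $BK \in \cat{HFin}$. Consider the fiber sequence
\[F \longrightarrow B\mr{CAT}(M) \longrightarrow B\widetilde{\mr{CAT}}(M), \qquad F \coloneqq \widetilde{\mr{CAT}}(M)/\mr{CAT}(M).\]
Since $\pi_0(\mr{CAT}(M)) \to \pi_0(\widetilde{\mr{CAT}}(M))$ is surjective, $F$ is path-connected, and its long exact sequence of homotopy groups identifies $K$ with the image of the connecting map $\pi_1(F) \to \pi_0(\mr{CAT}(M))$. Thus $K$ is a quotient of $\pi_1(F)$, and it suffices to show that $\pi_1(F)$ is finitely generated and nilpotent.

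To control $\pi_1(F)$, exploit the longer chain of inclusions $\mr{CAT}(M) \subset \widetilde{\mr{CAT}}(M) \subset \mr{haut}(M)$, which yields the fiber sequence
\[F \longrightarrow \mr{haut}(M)/\mr{CAT}(M) \longrightarrow \mr{haut}(M)/\widetilde{\mr{CAT}}(M),\]
so that by restriction to identity components we may bound $\pi_1(F)$ via the homotopy groups of the identity components of the total space and base. Each of these quotient spaces (of identity components) sits in a further fibration of the form $\mr{haut}^{\mr{id}}/\mr{CAT}^{\mr{id}} \to B\mr{CAT}^{\mr{id}}(M) \to B\mr{haut}^{\mr{id}}(M)$ (respectively with $\widetilde{\mr{CAT}}^{\mr{id}}$ in place of $\mr{CAT}^{\mr{id}}$), and both right-hand terms lie in $\cat{Fin}$: $B\mr{haut}^{\mr{id}}(M)$ by Proposition \ref{prop.haut}, $B\widetilde{\mr{CAT}}^{\mr{id}}(M)$ by Proposition \ref{prop.block}(i) (proved independently of the present lemma using surgery), and $B\mr{CAT}^{\mr{id}}(M)$ by Corollaries \ref{cor.evenmfd}, \ref{cor.oddmfd}, or \ref{cor.mfdcat}. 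Iterated application of Lemma \ref{lem.les} yields that $\pi_*$ of these auxiliary quotient spaces are finitely generated, with $\pi_1$ of each being abelian, as each is realised as a quotient of $\pi_1(\mr{haut}^{\mr{id}}(M))$, which is abelian because $\mr{haut}^{\mr{id}}(M)$ is a path-connected $H$-space. Applying Lemma \ref{lem.les} part (iii$'$), with the nilpotence condition (b) verified, gives that $\pi_1(F)$ is finitely generated.

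The same long exact sequence presents $\pi_1(F)$ as an extension
\[0 \longrightarrow A \longrightarrow \pi_1(F) \longrightarrow B \longrightarrow 0\]
where $A$ is a quotient of the abelian group $\pi_2$ of the base and $B$ is an abelian subgroup of $\pi_1$ of the total space. By the classical fact that in any fibration the image of the connecting homomorphism $\pi_2(\mathrm{base}) \to \pi_1(\mathrm{fiber})$ lies in the center of $\pi_1$ of the fiber, this is a central extension of abelian by abelian, and hence $\pi_1(F)$ is nilpotent of class at most $2$. Therefore $K$, as a quotient of $\pi_1(F)$, is finitely generated nilpotent; such groups are arithmetic (by Section 1.2 of \cite{serrearithmetic}), so Theorem \ref{thm.arithmetic} gives $BK \in \cat{HFin}$.

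The main obstacle is the careful bookkeeping around identity components: one must verify that the identity-component restriction of the middle fiber sequence indeed has fiber with the homotopy type of $F$ (rather than the smaller $\widetilde{\mr{CAT}}^{\mr{id}}/\mr{CAT}^{\mr{id}}$), and that the auxiliary fibrations over $B\mr{haut}^{\mr{id}}(M)$ are set up so that Lemma \ref{lem.les} can be applied correctly. Everything else reduces to routine invocations of the long exact sequence, the nilpotence of central extensions of abelian by abelian, and Theorem \ref{thm.arithmetic}.
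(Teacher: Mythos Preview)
There is a genuine gap: a hypothesis mismatch. To conclude $B\mr{CAT}^{\mr{id}}(M) \in \cat{Fin}$ you invoke Corollaries \ref{cor.evenmfd}, \ref{cor.oddmfd}, \ref{cor.mfdcat}, but each of these requires $M$ to be $2$-connected and of dimension $\neq 4,5,7$. The present lemma assumes only that $\pi_1(M)$ is finite and $n \geq 5$, and it is used in Proposition \ref{prop.block}(ii) in exactly that generality. Your argument therefore proves nothing when $n \in \{5,7\}$, or when $\pi_1(M)$ is a nontrivial finite group, or when $\pi_2(M) \neq 0$. (There is no circularity with Proposition \ref{prop.block}(i), as you note; the problem is purely that the corollaries you cite are not available under the stated hypotheses.)

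The paper's proof sidesteps any finiteness input about $B\mr{CAT}^{\mr{id}}(M)$. It uses the Hatcher--Wagoner description of $K$ as a quotient of $\pi_0(C(M))$ by the image of $\pi_0(\mr{Diff}_\partial(M \times I))$, and then controls $\pi_0(C(M))$ via Hatcher's exact sequence, whose outer terms are coinvariants of $\pi_1(M)$ acting on finitely generated abelian groups together with $\mr{Wh}_2(\pi_1(M))$, a quotient of $K_2(\bZ[\pi_1(M)])$. All of these are finitely generated abelian when $\pi_1(M)$ is finite, so $\pi_0(C(M))$ and hence $K$ have classifying spaces in $\cat{HFin}$ by Theorem \ref{thm.arithmetic} and Lemma \ref{lem.groupsprops}. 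This works uniformly for all $n \geq 5$ and all finite $\pi_1$.
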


\begin{proof}\cite[Proposition II.5.1]{hatcherwagoner} describes the kernel $K$ as the quotient of the group $\pi_0(C(M))$ by the abelian subgroup $\pi_0(\mr{Diff}_\partial(M \times I))$, with $C(M)$ as in Definition \ref{def.concordance}. To apply Lemma \ref{lem.groupsprops}(i) it suffices to show $\pi_0(C(M))$ is finitely generated nilpotent, as then $\pi_0(\mr{Diff}_\partial(M \times I))$ will be finitely generated abelian and by Lemma's \ref{lem.emhfin} and \ref{lem.groupsprops}(ii) both $B\pi_0(\mr{Diff}_\partial(M \times I))$ and $B\pi_0(C(M))$ will be in $\cat{HFin}$. \cite[Theorem 3.1]{hatcherconcordance} gives an exact sequence
\begin{align}\label{eqn.hatcher} \begin{split} H_0(\pi_1(M);&\pi_2(M)[\pi_1(M)]/\pi_2(M)[1]) \longto \pi_0(C(M)) \\
& \longto \mr{Wh}_2(\pi_1(M)) \oplus H_0(\pi_1(M);\bZ/2\bZ[\pi_1(M)]/\bZ/2\bZ[1]) \longto 0,\end{split} \end{align}
with $\mr{Wh}_2(\pi_1(M))$ a quotient of $K_2(\bZ[\pi_1(M)])$. Because $\pi_1(M)$ is finite, the universal cover $\tilde{M}$ is a finite CW-complex and thus $\pi_2(M) \cong H_2(\tilde{M})$ is finitely generated. Both $H_0$-terms of (\ref{eqn.hatcher}) are thus coinvariants of actions on finitely generated abelian groups, and hence finitely generated abelian. For $\mr{Wh}_2(\pi_1(M))$ we use that $K_2(\bZ[G])$ is finitely generated abelian if $G$ is finite by \cite[Theorem 1.1.(i)]{kuku}. 
\end{proof}

Finally we deduce a result about the difference between block diffeomorphisms and diffeomorphisms, previously known only in the concordance stable range.

\begin{corollary}If $M$ is closed smooth 2-connected manifold of dimension $6$ or $\geq  8$, then $\widetilde{\mr{CAT}}(M)/\mr{CAT}(M)$ is in $\cat{\Pi Fin}$.\end{corollary}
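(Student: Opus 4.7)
Plan: Apply the long exact sequence of homotopy groups to the fibration
$$Q := \widetilde{\mr{CAT}}(M)/\mr{CAT}(M) \to B\mr{CAT}(M) \to B\widetilde{\mr{CAT}}(M)$$
arising from the inclusion of simplicial subgroup $\mr{CAT}(M) \hookrightarrow \widetilde{\mr{CAT}}(M)$, and verify the three conditions for $\cat{\Pi Fin}$ for $Q$ directly. The inputs from earlier in the paper are that $B\mr{CAT}^\mr{id}(M) = B\mr{Diff}^\mr{id}(M) \in \cat{Fin}$ by Corollary \ref{cor.evenmfd}(iii) or \ref{cor.oddmfd}(iii) depending on the parity of $\dim M$, and that $B\widetilde{\mr{CAT}}^\mr{id}(M) \in \cat{Fin}$ by Proposition \ref{prop.block}(i); in particular, the groups $\pi_i$ of both $B\mr{CAT}(M)$ and $B\widetilde{\mr{CAT}}(M)$ are finitely generated abelian for each $i \geq 2$.

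First I would check that $Q$ is path-connected. A $0$-simplex of the simplicial group $\widetilde{\mr{CAT}}(M)$ is by definition a $\mr{CAT}$-isomorphism of $M$, so the inclusion induces a surjection $\pi_0(\mr{CAT}(M)) \twoheadrightarrow \pi_0(\widetilde{\mr{CAT}}(M))$ (as already used in the proof of Proposition \ref{prop.block}), and the tail of the long exact sequence yields $\pi_0(Q) = \ast$. For each $i \geq 2$, the piece
$$\pi_{i+1}(B\widetilde{\mr{CAT}}(M)) \to \pi_i(Q) \to \pi_i(B\mr{CAT}(M)) \to \pi_i(B\widetilde{\mr{CAT}}(M))$$
of the long exact sequence has finitely generated abelian outer terms, so $\pi_i(Q)$ is finitely generated abelian.

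The remaining condition, $B\pi_1(Q) \in \cat{HFin}$, is the step I expect to be the main obstacle. The long exact sequence gives a short exact sequence of groups
$$1 \to A \to \pi_1(Q) \to K \to 1,$$
where $A$ is the image of the boundary map $\pi_2(B\widetilde{\mr{CAT}}(M)) \to \pi_1(Q)$ and $K = \ker(\pi_0(\mr{CAT}(M)) \to \pi_0(\widetilde{\mr{CAT}}(M)))$; normality of $A$ is automatic as it equals the kernel of $\pi_1(Q) \to \pi_1(B\mr{CAT}(M))$. The group $A$ is a quotient of the finitely generated abelian group $\pi_2(B\widetilde{\mr{CAT}}^\mr{id}(M))$, hence $BA \in \cat{HFin}$ by Theorem \ref{thm.arithmetic}, while $BK \in \cat{HFin}$ by Lemma \ref{lem.pseudokerfg}. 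Part (ii) of Lemma \ref{lem.groupsprops} then yields $B\pi_1(Q) \in \cat{HFin}$, completing the verification that $Q \in \cat{\Pi Fin}$. The crucial input is Lemma \ref{lem.pseudokerfg}: without it, $K$ would a priori be only a subgroup of the arithmetic group $\pi_0(\mr{CAT}(M))$, and such subgroups can fail to be arithmetic or even finitely generated.
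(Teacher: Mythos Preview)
Your argument is correct, but it takes a detour that the paper avoids. The paper observes that under the hypotheses (2-connected, dimension $6$ or $\geq 8$), pseudoisotopy implies isotopy \cite{cerf,rourkeemb,blr}, so the map $\pi_0(\mr{CAT}(M)) \to \pi_0(\widetilde{\mr{CAT}}(M))$ is an \emph{isomorphism}, not merely a surjection. Hence your kernel $K$ vanishes, the fiber sequence descends to identity components
\[\widetilde{\mr{CAT}}(M)/\mr{CAT}(M) \to B\mr{CAT}^{\mr{id}}(M) \to B\widetilde{\mr{CAT}}^{\mr{id}}(M),\]
and one can directly invoke parts (iii) and (iii') of Lemma~\ref{lem.les} with simply-connected total space and base. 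In particular $\pi_1(Q)$ is just a quotient of the finitely generated abelian group $\pi_2(B\widetilde{\mr{CAT}}^{\mr{id}}(M))$, and the appeal to Lemma~\ref{lem.pseudokerfg} becomes unnecessary.

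Your route, handling a possibly nontrivial $K$ via Lemma~\ref{lem.pseudokerfg} and the extension closure in Lemma~\ref{lem.groupsprops}(ii), is more robust---it would still work under the weaker hypotheses of Lemma~\ref{lem.pseudokerfg} (finite $\pi_1$, $n\geq 5$, oriented) rather than the stronger 2-connectedness needed here for the identity-component inputs. One small omission: you only cite Corollaries~\ref{cor.evenmfd} and~\ref{cor.oddmfd}, which cover $\mr{CAT}=\mr{Diff}$; for $\mr{CAT}=\mr{Top},\mr{PL}$ you should also invoke Corollary~\ref{cor.mfdcat} to obtain $B\mr{CAT}^{\mr{id}}(M)\in\cat{\Pi Fin}$.
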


\begin{proof}Under the conditions of the corollary, pseudoisotopy classes coincide with isotopy classes \cite{cerf,rourkeemb,blr}. Thus the map $\pi_0(\mr{CAT}(M)) \to \pi_0(\widetilde{\mr{CAT}}(M))$ is an isomorphism and hence there is a fiber sequence
\[\widetilde{\mr{CAT}}(M)/\mr{CAT}(M) \longto B\mr{CAT}^\mr{id}(M) \longto B\widetilde{\mr{CAT}}^\mr{id}(M).\]
From Lemma \ref{lem.les}(iii), (iii'), Corollaries \ref{cor.evenmfd}, \ref{cor.oddmfd} and \ref{cor.mfdcat}, and Proposition \ref{prop.block} we conclude that $\widetilde{\mr{CAT}}(M)/\mr{CAT}(M) \in \cat{Fin}$.
\end{proof}

\bibliographystyle{amsalpha}
\bibliography{cell}

\vspace{.5cm}

\end{document}